\documentclass[11pt]{article}

\RequirePackage[OT1]{fontenc}
\RequirePackage{amsthm,amsmath}
\RequirePackage[numbers]{natbib}
\RequirePackage[colorlinks,citecolor=blue,urlcolor=blue]{hyperref}
\usepackage{graphicx}

\usepackage{amsfonts,amssymb,amstext}
\usepackage{amsmath}
\usepackage{amstext}
\usepackage{amsopn}
\usepackage{amsthm}

\makeatletter

\newcommand{\tr}{\hbox{tr}}
\newcommand{\n}{\noindent}
\newcommand{\q}{\quad}
\newcommand{\E}{\mathbb{E}}
\newcommand{\g}{\gamma}
\newcommand{\I}{\varphi}
\newcommand{\G}{\Gamma}
\newcommand{\de}{\delta}
\newcommand{\De}{\Delta}
\newcommand{\La}{\Lambda}
\newcommand{\al}{\alpha}
\newcommand{\la}{\lambda}
\newcommand{\f}{\infty}
\newcommand{\vs}{\varepsilon}
\newcommand{\cd}{\cdot}
\newcommand{\si}{\sigma}

\newcommand{\be}{\beta}

\newcommand{\Om}{\Omega}
\newcommand{\om}{\omega}

\newcommand{\inl}{\int_{-\pi}^{\pi}}
\newcommand {\ol} {\overline}

\newcommand{\Var}{\mathrm{Var}}

\newcommand {\s} {\section}
\newcommand {\sn} {\subsection}
\newcommand {\ssn} {\subsubsection}

\theoremstyle{plain}

\newtheorem{thm}{Theorem}[section]
\newtheorem{lem}{Lemma}[section]
\newtheorem{pp}{Proposition}[section]
\newtheorem{cor}{Corollary}[section]
\newtheorem{exa}{Example}[section]
\newtheorem{rem}{Remark}[section]
\newtheorem{den}{Definition}[section]

\usepackage{enumerate}

\numberwithin{equation}{section}

\newcommand{\beq}{\begin{equation}}
\newcommand{\eeq}{\end{equation}}
\newcommand{\bea}{\begin{eqnarray}}
\newcommand{\eea}{\end{eqnarray}}
\newcommand{\beaa}{\begin{eqnarray*}}
\newcommand{\eeaa}{\end{eqnarray*}}

\usepackage[active]{srcltx}

\newcommand{\Ph}{\mathbb{P}}
\newcommand{\Exp}{\mathbb{E}}

\begin{document}

\title{Asymptotic behavior of the variance of BLUE
	for the mean of stationary processes}
\author{Mamikon S. Ginovyan\\ e-mail: ginovyan@bu.edu}

\date{}
\maketitle

\begin{abstract}
\noindent
In this paper, we survey results on the asymptotic behavior of the variance of 
the best linear unbiased estimator (BLUE) for the mean of stationary processes.
This behavior is influenced by the regularity and memory structures of the
observed models. 
The results show that the asymptotic behavior of the variance of the BLUE
is determined solely by the behavior of the spectrum near the origin. 
For nondeterministic models, the variance of the BLUE exhibits 
hyperbolic behavior, similar to the power function, while for purely deterministic models, the variance decreases at an exponential rate. 
Specifically, a necessary condition for the variance of the BLUE to approach zero
exponentially is that the spectral density of the model vanishes on a set of positive
Lebesgue measure in any neighborhood of zero.
We also present results on the asymptotic efficiency of various unbiased linear estimators in comparison to the BLUE.

\end{abstract}

\vskip2mm
\noindent
{\bf Keywords.} Best linear unbiased estimator (BLUE); stationary processes; spectral density; deterministic and nondeterministic models; asymptotic efficiency; memory; singularity.
\vskip2mm
\noindent
{\bf 2010 Mathematics Subject Classification.} Primary: 62F10, 62F12, 60G10; secondary: 62M15, 60G12.

\tableofcontents

\section{Introduction}

\subsection{The problem}

Consider the following stochastic model:
\begin{equation}
	\label{1.0}
	X(t)=m+Y(t), \q t\in \mathbb{Z}:=\{0,\pm 1,\pm 2,\ldots\},
\end{equation}
where $m$ is the constant unknown mean of $X(t)$, and the noise $Y(t)$
is assumed to be a zero-mean, wide-sense stationary process with a spectral
density function $f(\lambda)$, $\lambda \in {\Lambda}:=[-\pi, \pi]$
and a covariance function $r(t)$:
\begin{equation}
	\label{1.1}
	r(t) =  \int_{-\pi}^{\pi}e^{it \lambda}f(\lambda)d\lambda, \q  t\in \mathbb{Z}.
\end{equation}

The problem of interest is estimation of the unknown mean $m$ by unbiased
linear estimators $\widehat{m}_n$, based on a random sample $\{X(t), t=0,1,\ldots,n\}$:
\begin{equation}
	\label{1.2}
	\widehat{m}_n =\sum_{k = 0}^n c_kX(k), \quad \sum_{k = 0}^n c_k=1,
\end{equation}
where the condition $\sum_{k = 0}^n c_k=1$ is needed for unbiasedness of $\widehat{m}_n$.

Of particular interest is the {\it best linear unbiased estimator (BLUE)}, 
denoted by
$\widehat{m}_{BLU}:=\widehat{m}_{n,BLU}$, i.e., the estimator of the form (\ref{1.2}),
where the weights $c_k$, $k=0,1,\ldots,n$, are chosen so that the variance
\begin{equation}
	\label{1.3}
	\Var \left(\widehat{m}_n\right)
	= E|\widehat{m}_n-m|^2 = \sum_{j,k=0}^n c_j\bar{c}_kr(j-k)
\end{equation}
is minimal under the condition $\sum_{k = 0}^n c_k=1$.

\sn{Existence and uniqueness of the BLUE}
We assume that the process $Y(t)$ is non-degenerate, that is,
$$E|Y(t)|^2=r(0) = \int_{-\pi}^{\pi}f(\lambda)d\lambda >0,$$ 
which, in view of (\ref{1.1}),  implies that the covariance matrix
\begin{equation}
	\label{2.1cm}
R_n=\{r(j-k); \,  j, k=1,2,\cdots,n\}
\end{equation} 
of $\{Y(t), t=0,1,\ldots,n\}$
is positive definite. Then, as is well known (see, e.g., Grenander and Szeg\H{o} \cite{GS}, Section 11.1), the BLUE $\widehat{m}_{BLU}$ of the unknown mean $m$ exists and has a unique representation of the form (\ref{1.2}). 
Specifically, the row vector c of coefficients in $\widehat{m}_{BLU}$ is given by 
\begin{equation}
	\label{2.1}
	c=(c_0 \cdots c_n) = (\mathbf{1}^TR_n^{-1}\mathbf{1})^{-1}\mathbf{1}^TR_n^{-1}, \quad \mathbf{1}^T=(1,\ldots, 1),
\end{equation}
and the estimator $\widehat{m}_{BLU}$ is given by 
\begin{equation}
	\label{2.1e}
	\widehat{m}_{BLU} = (\mathbf{1}^TR_n^{-1}\mathbf{1})^{-1}\mathbf{1}^TR_n^{-1}X^T, \quad X^T=(X(0), \cdots, X(n)),
\end{equation}
with
\begin{equation}
	\label{2.2}
	\Var( \widehat{m}_{BLU}) = (\mathbf{1}^TR_n^{-1}\mathbf{1})^{-1} > 0.
\end{equation}

Although formulas \eqref{2.1} -- \eqref{2.2} provide explicit expressions for $c$, $\widehat{m}_{BLU}$,
and $\Var( \widehat{m}_{BLU})$, their practical value is moderate. This is primarily because they involve the inverse of the covariance matrix $R_n$, which may not be known in practice. Moreover, even when it is known, inverting it for high dimensions can lead to cumbersome computational challenges. Therefore, adequate approximations are necessary.

%From (\ref{2.1}) and (\ref{2.2}) we note that $cR_n=\mathbf{1}^T \textrm{Var}(\widehat{m}_{BLU})$. Since $R_n$ is positive definite, this relation together with $cu=1$ uniquely determines the coefficient vector $c$ of $\widehat{m}_{BLU}$, as well as the variance $\Var(\widehat{m}_{BLU})$.

\sn{Efficiency of estimators}
Let $\widehat m$ be an unbiased linear estimator of the unknown mean $m$ of 
a process $X(t)$ possessing a spectral density $f$. 
The efficiency of the estimator $\widehat m$ is defined as
\beq
\label{eff1}
e(n,\widehat{m}, f): = \frac{\Var_f(\widehat{m}_{BLU})}{Var_f(\widehat{m})},
\eeq
where $\widehat{m}_{BLU}$ is the BLUE for the unknown $m$. 
The asymptotic efficiency of $\widehat m$ is defined as
\beq
\label{eff2}
e(\infty, \widehat{m}, f): = \displaystyle{\lim_{n \to \infty}} e(n, f).
\eeq

\begin{den}
An estimator $\widehat m$ of $m$ is said to be efficient (or optimal) if $e(n, \widehat{m}, f)=1$.
It is termed asymptotically efficient if $e(\f, \widehat{m}, f)=1$. 
\end{den}

%Clearly, if the observed process is not Gaussian, BLUE may not be the best estimator, and $e(n, s)$ represents the efficiency only relative to the class of unbiased linear estimators.

In this paper, we provide a survey of both established and recent findings related to the following two problems:
\begin{itemize}
	\item[(a)]  Describe the asymptotic behavior of the variance
	$\Var\left(\widehat{m}_{n,BLU}\right)$ as $n\to\f$ for stationary models with various regularity and memory structures.
	\item[(b)]  Analyze the asymptotic efficiency of various unbiased linear estimators relative to the BLUE.
\end{itemize}
The asymptotic behavior of the variance of BLUE is influenced by the
regularity and memory structure of the observed models. 
The results indicate that this asymptotic behavior 
is determined solely by the behavior of the spectral density near the origin. 
For nondeterministic models, the variance of the BLUE exhibits 
hyperbolic behavior (like a power). In contrast, for purely deterministic models, the variance decreases at an exponential rate. 
Specifically, a necessary condition for the variance of the BLUE to approach zero
exponentially is that the spectral density of the model vanishes on a set of positive
Lebesgue measure in any neighborhood of zero.

Analyzing the asymptotic efficiency of various unbiased linear estimators relative to the BLUE for nondeterministic models, we observe that when the model has short memory
and a piecewise continuous spectral density without discontinuities at $\lambda = 0$, the least squares estimator is asymptotically efficient.
However, if the spectral densities do not meet these conditions, the loss of efficiency can be significant.

\sn{A brief history}
The question of the asymptotic behavior of the variance of BLUE and the asymptotic efficiency
of the least squares estimator $\widehat m_{LS}$, relative to the best linear unbiased
estimator $\widehat m_{BLU}$ for short-memory nondeterministic models, traces back to the papers
by Grenander \cite{Gr-1,Gr-2,Gr-3} (see also Grenander and Rosenblatt \cite{GR1}, Grenander and Rosenblatt \cite{GRo}, Sections 7.0 and 7.1, and Grenander and Szeg\H{o} \cite{GS}, Sections 11.1-11.3). 
Grenander demonstrated that if the model exhibits short memory
with a piecewise continuous spectral density that has no discontinuities at 
$\lambda = 0$, then the least squares estimator is asymptotically efficient.
Since then, various authors have explored these questions for models with different memory structures. Vitale \cite{Vit} and Grenander \cite{Gr-4}
analyzed certain classes of anti-persistent models. 
In particular, it was shown that if the spectral density  $f(\lambda)$ is positive
except for a second-order zero at $\lambda = 0$, then the asymptotic efficiency of 
the least squares estimator is zero.  

The asymptotic behavior of the variance of BLUE and the asymptotic efficiency
of the least squares estimator $\widehat m_{LS}$, relative to the best linear unbiased
estimator $\widehat m_{BLU}$ for long-memory nondeterministic models, have been 
considered in the papers by Adenstedt \cite{Ad}, Adenstedt and Eisenberg \cite{AdE},
Beran \cite{Ber}, Beran and K\"unsch \cite{BK}, Samarov and Taqqu \cite{ST}, and 
Yajima \cite{Ya}.
In these papers, for certain classes of spectral densities satisfying the condition
$f(\la) \sim\lambda^\nu L(\la)$ ($\nu> -1 $) as $\lambda \rightarrow 0$, where $L(\lambda)$ is a slowly varying function at the origin with $0 < L(0) < \infty$,
it was shown that the variance of BLUE  decreases  hyperbolically. Specifically,
$$\Var( \widehat{m}_{BLU}) \sim n^{-\nu -1 } \q {\rm as} \q n\to\f.$$
These classes include  ARFIMA$(p,d,q)$ models with parameter $d < {1}/{2}$ and  fractional Gaussian noise with parameter $0 < H < 1$. 

Estimating the mean for processes characterized by the spectral function, 
applicable to both discrete and continuous time models, has been discussed by Adenstedt and Eisenberg \cite{AdE} and Grenander \cite{Gr-4}.
Parzen \cite{P-4} provides a general Hilbert space approach to linear estimation problems (see also Adenstedt and Eisenberg \cite{AdE}).

The concept of pseudo-best estimators for general regression models
was introduced by Yu. Rozanov (see Ibragimov and Rozanov \cite{IR}, Section 7.3). Pseudo-best estimators of the mean for anti-persistent and long memory deterministic models were discussed in the papers by
Kholevo \cite{Kh}, Rasulov \cite{Ras}, and Rasulov and Kholevo \cite{RKh}.

The asymptotic behavior of the variance of the BLUE for deterministic (singular) models 
has been studied by Babayan and Ginovyan \cite{BG2019a,BG2019}. They established the necessary and sufficient conditions for the variance of the BLUE to decrease exponentially. Specifically, it was demonstrated that a key condition for the variance of the BLUE to approach zero exponentially is that the spectral density of the model must vanish on a set of positive Lebesgue measure in any neighborhood of zero.

\sn{Notation and conventions}
Throughout the paper, we will use the following notation and conventions.\\
The symbol '$:=$' denotes 'by definition'; 
d.t.:= discrete-time; c.t.:= continuous-time;
s.d.:= spectral density; c.f.:= covariance function;
BLUE:= best linear unbiased estimator; LSE:= least squares estimator;
ARMA:= autoregressive moving average;
ARIMA: autoregressive integrated moving average;
ARFIMA:= autoregressive fractionally integrated moving average;
OPUC: = orthogonal polynomials on the unit circle.
WN$(0,1)$: = standard white noise.
By $\mathbb{E}_f[\xi]$ and $\Var_f(\xi)$, we denote the expected value and variance 
of a random variable $\xi$ with respect to the measure
corresponding to the spectral density $f$.

The standard symbols $\mathbb{N}$, $\mathbb{Z}$, $\mathbb{R}$, 
and $\mathbb{C}$ denote the sets of natural numbers, integers, real numbers, and complex numbers, respectively.
We also define the following sets:
$\mathbb{Z}_+: = \{0,1,2,\ldots\}$, $\Lambda: = [-\pi, \pi],$
$\mathbb{D}:=\{z\in\mathbb{C}: |z|<1\}$, and $\mathbb{T}:=\{z\in \mathbb{C}: \, |z|=1\}$.
We denote the weighted Lebesgue space with respect to the measure $\mu$ by $L^p(\mu) := L^p(\mathbb{T}, \mu)$ for $p \geq 1$, and 
we denote the norm in $L^p(\mu)$ by $|\cdot|_{p, \mu}$.
In the special case where $\mu$ is the Lebesgue measure,
we will use the notation $L^p$ and $||\cdot||_{p}$, respectively.

For a set $E$, we denote its closure by $\ol E$.
By $E_f$ we denote the spectrum of the process $X(t)$,
i.e., $E_f:=\{e^{i\la}, \,\, f(\la)>0\}$.
For a fixed complex number $\xi\in\mathbb{C}$, we define $\mathcal{Q}_n(\xi)$ as
the set of polynomials $q_n(z)$, $z\in\mathbb{C}$, of degree at most $n$, that satisfy the condition $q_n(\xi) = 1$. In particular,
$\mathcal{Q}_n(1)=\{q_n(z)=c_0z^n+c_1z^{n-1}+\cdots+c_{n-1}z+c_n:\,\, q_n(1)=\sum_{k=0}^nc_k=1\}.$

For a compact set $F$ in the complex plane $\mathbb{C}$, 
by $T_n(z,F)$ and  $\tau(F)$ we denote the Chebyshev polynomial and the Chebyshev constant of $F$, respectively.
For a function $h\geq0$, by $G(h)$ we denote the geometric mean of $h$.
For nonnegative functions $f(\la)$ and $g(\la)$, 
we write $f(\lambda){\sim}g(\lambda)$ as ${\lambda\to\lambda_0}$
if $\lim_{\lambda\to\lambda_0}{f(\lambda)}/{g(\lambda)}=1$.
For sequences
$\{a_n>0, n\in\mathbb{N}\}$ and $\{b_n>0, n\in\mathbb{N}\}$,
we write $a_n\sim b_n$ if $\lim_{n\to\f}{a_n}/{b_n}=1$, and $a_n\simeq b_n$ if $c_1a_n\leq b_n \leq c_2a_n$, where $c_1, c_2$ are positive constants.

The letters $C$, $c$, $M$ and $m$ with or without 
indices, denote positive constants that may vary from line to line.

All considered functions are assumed to be $2\pi$-periodic
and periodically extended to $\mathbb{R}$.
We assume that all relevant objects are defined in terms
of Lebesgue integrals,
making them invariant under changes to the integrand on a null set.

\sn{The structure of the paper}
The remainder of the paper is structured as follows.
In Section \ref{model}, we specify the model of interest—a second-order stationary 
process—and recall some key concepts and results from the theory of stationary processes.
In Section \ref{SzW} we present the Szeg\H{o} extremum problem
and its solution by utilizing results from the theory of orthogonal
polynomials on the unit circle.
Section \ref{formulas} provides formulas and properties of the variance of BLUE.
In Section \ref{ND-BLU} we present results on asymptotic behavior of the variance of the BLUE for nondeterministic models.
In Section \ref{EFF} we examine efficiency of the ordinary least squares estimator compared with the BLUE.
In Section \ref{Ext}, we explore extensions of the results discussed in the previous sections, focusing on continuous time
models, estimation in a Hilbert space setting, and pseudo-best estimators.
Section \ref{Det} examines the asymptotic behavior of the variance of the BLUE for deterministic (singular) models.
Finally, in Section \ref{Tl}, we present some known results and tools that are used to state and prove the results 
presented in Sections \ref{model} - \ref{Det}.

\section{The model. Key concepts and some basic results}
\label{model}
In this section, we introduce the model of interest: a second-order
stationary process. 
We also revisit some key concepts and results from the
theory of stationary processes.

\sn{Second-order (wide-sense) stationary processes}
Let $\{X(t), \ t\in \mathbb{Z}\}$ be a centered real-valued second-order
stationary process defined on a probability space
$(\Omega, \mathcal{F}, \mathbb{P})$ with covariance function $r(t)$. This means that
\[
{\E}[X(t)]=0, \q r(t)={\E}[X(t+s)X(s)], \q s, t\in\mathbb{Z},
\]
where ${\E}[\cdot]$ denotes for the expectation operator with respect
to the measure $\mathbb{P}$.

By the Herglotz theorem (see, e.g., Brockwell and Davis \cite{BD},
p. 117-118),
there exists a finite measure $\mu$ on $\Lambda$ such that
the covariance function $r(t)$
has the following {\it spectral representation}:
\beq
\label{i1}
r(t)=\inl e^{it\la}d\mu(\la), \q t\in \mathbb{Z}.
\eeq
\n
The measure $\mu$ in (\ref{i1}) is called the {\it spectral measure} of the
process $X(t)$.
The function $F(\la):=\mu\{(-\f,\la]\}$ is referred to as the {\it spectral function} 
of the process $X(t)$.
If $\mu$ is absolutely continuous with respect to the Lebesgue measure,
then the function $f(\la):=d\mu(\la)/d\la$ is called the {\it spectral density} of $X(t)$.
We assume that $X(t)$ is a {\it non-degenerate} process, meaning that
${\rm Var}[X(0)]:={\E}|X(0)|^2=r(0)>0$.
Without loss of generality, we can take $r(0)=1$.
To avoid trivial cases, we also assume that the spectral measure
$\mu$ is {\it non-trivial}, meaning that $\mu$ has infinite support.

Notice that if the spectral density $f(\la)$ exists, then
$f(\la)\geq 0$, $f(\la)\in L^1(\Lambda)$, and equation \eqref{i1} becomes
\begin{equation}
	\label{mo1}
	r(t)=\inl e^{it\la}f(\la)d\la, \q t\in \mathbb{Z}.
\end{equation}
Thus, the covariance function $r(t)$ and the spectral
function $F(\la)$ (or the spectral density function $f(\la)$) 
provide equivalent specifications of the second-order properties for a stationary process
$\{X(t), \ t\in\mathbb{Z}\}$.

\begin{rem}
	{\rm The parametrization of the unit circle $\mathbb{T}$ using the formula $z=e^{i\la}$ establishes a bijection between
	 $\mathbb{T}$ and the interval $[-\pi,\pi)$. 
	 Through this bijection, the measure $\mu$ on $\Lambda$ generates a corresponding measure on the unit circle
	 $\mathbb{T}$, which we also denote by $\mu$. 
	 Therefore, depending on the context, the measure $\mu$ may be supported either on $\Lambda$ or on $\mathbb{T}$, so we will identify integrals over both $\Lambda$ and $\mathbb{T}$.
		
	We utilize the standard Lebesgue decomposition of the measure $\mu$:
		\beq
		\label{di1}
		d\mu(\la)=d\mu_{a}(\la) +d\mu_s(\la) =f(\la)d\la +d\mu_s(\la),
		\eeq
		where $\mu_a$ is the absolutely continuous part of $\mu$
		(with respect to the Lebesgue measure)
		and $\mu_s$ is the singular part of $\mu$, which is the sum
		of the discrete (pure point) and continuous singular components of $\mu$.
		A similar decomposition holds for the spectral function $F(\la)$:
		\beq
		\label{di1SF}
		dF(\la)= f(\la)d\la +dF_s(\la).
		\eeq}
\end{rem}

By the well-known Cram\'er theorem (see, e.g., Cram\'er and Leadbetter \cite{CL}), for any stationary process $\{X(t), \, t\in \mathbb{Z}\}$ with spectral
measure $\mu$ there exists an orthogonal stochastic measure $Z=Z(B)$,
$B\in\mathfrak{B}(\Lambda)$, such that for every $t\in \mathbb{Z}$
the process $X(t)$ has the following {\sl spectral representation}:
\begin{equation}
	\label{i18}
	X(t)=\int_\Lambda e^{it\la}dZ(\la), \q  t\in\mathbb{Z}.
\end{equation}
Moreover, ${\E}\left[|Z(B)|^2\right]=\mu(B)$ for every $B\in\mathfrak{B}(\Lambda)$.
Here, $\mathfrak{B}(\Lambda)$ denotes the Borel $\si$-algebra of the sets of $\Lambda$.
For the definition and properties of orthogonal stochastic measures and the stochastic integral in (\ref{i18}), we refer to Cram\'er and Leadbetter \cite{CL}, Ginovyan \cite{G2025}, and Ibragimov and Linnik \cite{IL}.

\sn{Linear processes. Existence of spectral density functions}

We will consider stationary processes that possess spectral density
functions. For the following results, we refer to
Cram\'er and Leadbetter \cite{CL}, Doob \cite{Doob}, and Ibragimov and Linnik \cite{IL}.
\begin{thm} The following assertions hold.
	\begin{itemize}
		\item[(a)]
		The spectral function $F(\la)$ of a stationary process $\{X(t), \,t \in \mathbb{Z}\}$
		is absolutely continuous (with respect to the Lebesgue measure), meaning that $F(\la)=\int_{-\pi}^\la f(x)dx$,
		if and only if it can be represented as an infinite moving average:
		\begin{equation}
			\label{dlp}
			X(t) = \sum_{k=-\f}^{\f}a(t-k)\vs(k), \quad
			\sum_{k=-\f}^{\f}|a(k)|^2 < \f,
		\end{equation}
		where $\{\vs(k), k\in \mathbb{Z}\}\sim$ WN(0,1) is standard white noise, which consists of a sequence of orthonormal random variables.
		\item[(b)]
		The covariance function $r(t)$ and the spectral density $f(\la)$ of $X(t)$ are defined by the following formulas:
		\begin{equation}
			\label{dcv}
			r(t)= {\E} X(t)X(0)=\sum_{k=-\f}^{\f}a(t+k) a(k),
		\end{equation}
		and
		\beq
		\label{dsd}
		f(\la) =
		\frac{1}{2\pi}\left|\sum_{k=-\f}^{\f}a(k)e^{-ik\la}\right|^2
		= \frac{1}{2\pi} |\widehat a(\la)|^2,
		\quad \la\in\Lambda.
		\eeq
		\item[(c)]
		In the case where the innovation $\{\vs(k), k\in \mathbb{Z}\}$ is a sequence of Gaussian random variables,
		the process $\{X(t), \,t \in \mathbb{Z}\}$ is also Gaussian.
	\end{itemize}
\end{thm}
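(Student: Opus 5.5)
The plan is to pass through the Cramér spectral representation \eqref{i18} and the Kolmogorov isometry between the Hilbert space $H(X)=\overline{\mathrm{span}}\{X(t)\}$ and $L^2(\mu)$, which sends $X(t)\mapsto e^{it\la}$.

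\emph{Sufficiency in (a), and (b).} Suppose $X(t)$ has the form \eqref{dlp} with $a\in\ell^2$ and $\{\vs(k)\}$ orthonormal. The series converges in $L^2(\Ph)$. By orthonormality,
\[
\E X(t)X(0)=\sum_k a(t-k)a(-k)=\sum_k a(t+k)a(k),
\]
which is \eqref{dcv}. Set $\widehat a(\la)=\sum_k a(k)e^{-ik\la}$; by Riesz--Fischer it lies in $L^2(\Lambda)$. Parseval for the shifted sequence gives
\[
\sum_k a(t+k)a(k)=\frac1{2\pi}\inl e^{it\la}|\widehat a(\la)|^2\,d\la .
\]
So $r$ has the form \eqref{mo1} with $f=|\widehat a|^2/(2\pi)\in L^1$, and this is \eqref{dsd}. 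Uniqueness of the measure in the Herglotz representation \eqref{i1} then shows $\mu$ is absolutely continuous, so $F(\la)=\int_{-\pi}^\la f$.

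\emph{Necessity in (a).} Let $dF=f\,d\la$. Choose any measurable square root $\varphi\ge0$ with $\varphi^2=2\pi f$; then $\varphi\in L^2$ and $\varphi$ has Fourier coefficients $a(k)\in\ell^2$. The main obstacle is producing a white noise from $X$ alone, because $f$ may vanish on a set of positive measure. To handle this, I would enlarge the probability space if necessary by an independent orthogonal stochastic measure $Z_1$ with Lebesgue control measure. Then define
\[
dW(\la)=\varphi(\la)^{-1}\mathbf 1_{\{f>0\}}\,dZ(\la)+\mathbf 1_{\{f=0\}}\,dZ_1(\la),
\]
which is an orthogonal measure with $\E|W(B)|^2=|B|$. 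Put $\vs(k)=(2\pi)^{-1/2}\inl e^{ik\la}\,dW(\la)$; these are orthonormal by the isometry. Since $Z(\{f=0\})=0$ in $L^2$, we have $dZ=\varphi\,dW$. Expanding $e^{it\la}\varphi(\la)=\sum_k a(k)e^{i(t-k)\la}$ in $L^2(d\la)$ and applying the isometry for $W$ gives
\[
X(t)=\int e^{it\la}\varphi\,dW=\sum_k a(t-k)\vs(k),
\]
up to the $2\pi$ normalisation convention, which is absorbed into $a$.

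\emph{Part (c).} If the $\vs(k)$ are jointly Gaussian, every finite linear combination of partial sums of \eqref{dlp} is Gaussian. $L^2$-limits of jointly Gaussian vectors remain jointly Gaussian, since characteristic functions converge. Hence all finite-dimensional distributions of $X$ are Gaussian.
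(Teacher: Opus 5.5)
The paper gives no proof of this theorem; it only refers to Cram\'er--Leadbetter, Doob and Ibragimov--Linnik. Your route is the classical one found there. For sufficiency and (b) you use Parseval plus uniqueness of the Herglotz measure. For necessity you use the Cram\'er representation, divide $dZ$ by a square root of $f$ on $\{f>0\}$, and patch in an independent orthogonal measure on $\{f=0\}$ after enlarging the probability space. The structure is sound, but one step fails as written.

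With $\varphi^2=2\pi f$ and $Z_1$ having Lebesgue control measure, the control measure of $W$ is
\[
\mathbb{E}|W(B)|^2=\int_B \frac{f(\lambda)}{\varphi^2(\lambda)}\,\mathbf{1}_{\{f>0\}}\,d\lambda+|B\cap\{f=0\}|=\frac{1}{2\pi}\,|B\cap\{f>0\}|+|B\cap\{f=0\}|,
\]
which is not $|B|$. Hence
$\mathbb{E}\,\varepsilon(k)\overline{\varepsilon(j)}=\frac{\delta_{kj}}{2\pi}+\bigl(\frac{1}{2\pi}-\frac{1}{4\pi^2}\bigr)\int_{\{f=0\}}e^{i(k-j)\lambda}\,d\lambda$.
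This is a multiple of $\delta_{kj}$ only when $\{f=0\}$ is null, which is exactly the case where no enlargement is needed.

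When $\{f=0\}$ has positive measure, the two pieces of $W$ carry different densities relative to each other. The defect is therefore not a global constant and cannot be ``absorbed into $a$.'' The repair is to match the control measures:
\begin{itemize}
\item keep $\varphi=\sqrt{2\pi f}$, give $Z_1$ control measure $d\lambda/(2\pi)$, and set $\varepsilon(k)=\int e^{ik\lambda}\,dW(\lambda)$;
\item then $\mathbb{E}\,\varepsilon(k)\overline{\varepsilon(j)}=\frac{1}{2\pi}\int_{-\pi}^{\pi}e^{i(k-j)\lambda}\,d\lambda=\delta_{kj}$;
\item and $X(t)=\int e^{it\lambda}\varphi\,dW=\sum_k a(k)\varepsilon(t-k)$ holds exactly with $\widehat a=\varphi$, so $|\widehat a|^2/(2\pi)=f$, in agreement with (b).
\end{itemize}

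Two minor points. First, since the paper works with real-valued processes, take $Z_1$ conjugate-symmetric, $Z_1(-B)=\overline{Z_1(B)}$. A natural choice is the random spectral measure of an independent real standard white noise, which has exactly the control measure $d\lambda/(2\pi)$ needed above. Because $f$ is even, $\varphi\ge 0$ is even, $W$ inherits the symmetry of $Z$, and both the $\varepsilon(k)$ and the $a(k)$ come out real. Second, your reading of (c) as ``jointly Gaussian'' is the correct one: uncorrelated innovations that are only marginally Gaussian need not make $X$ Gaussian.
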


\sn{Dependence (memory) structure of the model}
\label{mem}

Depending on the memory (dependence) structure, we will distinguish
the following types of stationary models:

(a) short memory (or short-range dependent),

(b) long memory (or long-range dependent),

(c) intermediate memory (or anti-persistent).

\noindent
The memory structure of a stationary process is essentially a
measure of the dependence among all the variables in the process,
considering the effects of all correlations simultaneously.
Traditionally, memory structure has been defined in the time domain
in terms of the decay rates of autocorrelations or, in the
frequency domain, in terms of the rates of explosion of low-frequency spectra
(see, e.g., Beran et al. \cite{BFGK}, Ginovyan \cite{G2025}, and references therein).

Note that the definitions of memory in both the time and frequency domains are generally not equivalent (see Proposition \ref{memp1} for details).
	
It is convenient to characterize the memory structure in terms
of the spectral density function.

\ssn{Short memory models}
A stationary process $\{X(t), \,t \in \mathbb{Z}\}$ with a spectral
density function $f(\la)$ is referred to as a \textit{short memory} process if the spectral density $f(\lambda)$ is bounded away from both zero and infinity. Specifically, there exist constants  $C_1$ and $C_2$ such that
\begin{equation*}
	0< C_1 \le f(\la) \le C_2 <\f.
\end{equation*}

A typical example of a short memory model is the stationary
Autoregressive Moving Average (ARMA)$(p,q)$ process $X(t)$, 
defined as a stationary solution to the following difference equation:
\begin{equation*}
	\psi_p(B)X(t)=\theta_q(B)\varepsilon(t), \q t\in\mathbb{Z},
\end{equation*}
where $\psi_p$ and $\theta_q$ are polynomials of degrees $p$ and $q$, respectively, with no zeros on the unit circle $\mathbb{T}$,
$B$ is the backshift operator defined by $BX(t)=X(t-1)$,
and  $\{\vs(t), t\in \mathbb{Z}\}\sim$ WN(0,1) is standard white noise.
The spectral density $f(\la)$ is a rational function
(see, e.g., Brockwell and Davis \cite{BD}, Section 3.1):
\begin{equation}
	\label{arma}
	f(\la) = \frac{1}{2\pi}\cd\frac
	{|\theta_q(e^{i\la})|^2}{|\psi_p(e^{i\la})|^2}.
\end{equation}

\ssn{Long-memory and anti-persistent models}

A {\it long-memory} model is defined as a
stationary process with {\it unbounded} spectral density,
while an {\it anti-persistent} model is a stationary
process with {\it vanishing} spectral density at some fixed 
points (see, e.g., Beran et al. \cite{BFGK}, Brockwell and Davis \cite{BD}, Ginovyan \cite{G2025}, and references therein).

A typical model that exhibits long memory and intermediate
memory (i.e., is anti-persistent) is the Autoregressive Fractionally Integrated Moving Average (ARFIMA)$(p,d,q)$ process $X(t)$.
This process is defined as a stationary solution to the following
difference equation (see, e.g., Brockwell and Davis \cite{BD}, Section 13.2):
\begin{equation*}
	\psi_p(B)(1-B)^dX(t)=\theta_q(B)\varepsilon(t), \q d<1/2,
\end{equation*}
where $B$ is the backshift operator, $\varepsilon(t)$ 
is standard white noise, and $\psi_p$ and $\theta_q$ are
polynomials of
degrees $p$ and $q$, respectively.
The spectral density $f_X(\la)$ of $X(t)$ is given by
\begin{equation}
	\label{AA}
	f_X(\la)=|1-e^{-i\la}|^{-2d}f(\la)=(2\sin(\la/2))^{-2d}f_{ARMA}(\la), \q d<1/2,
\end{equation}
where $f_{ARMA}(\la)$ is the spectral density of an ARMA$(p,q)$ process, as given by equation (\ref{arma}). The condition $d<1/2$ ensures that
$\int_{-\pi}^\pi f(\la)d\la<\f$,
implying that the process $X(t)$ is well-defined because
${\E}|X(t)|^2=\int_{-\pi}^\pi f(\la)d\la.$
By \eqref{AA}, we have
\begin{equation}
	\label{MT0L}
	f(\la)\thicksim (2\pi)^{-1} |\la|^{-2d}\q {\rm as}\q \la\to0.
\end{equation}

Observe that for $ 0<d<1/2$ the model $X(t)$ specified by the spectral density (\ref{AA}) exhibits long memory, indicating that $f(\la)$ blows up at $\la=0$ like a power function, which is characteristic of long memory models.  
For $d<0$, the model $X(t)$ exhibits intermediate memory, where the spectral density in \eqref{AA}
vanishes at $\la=0$. For $d=0$, the model $X(t)$ displays short memory.
For $d \ge1/2$, the function $f_X(\la)$ in (\ref{AA}) is not integrable,
and therefore cannot represent the spectral density of a stationary process.

Another important discrete time long memory model is the fractional
Gaussian noise (fGn),  characterized by the {\it Hurst index} $H$, where $0<H\leq1$.
The fGn is a discrete time centered Gaussian stationary process $X(k)$, $k \in \mathbb{Z}$, and it has a
spectral density function:
\begin{equation}
	\label{MT2}
	f(\la)=c\, |1-e^{-i\la}|^{2}\sum_{k=-\f}^\f|\la+2\pi k|^{-(2H+1)},
	\q-\pi\le\la\le\pi,
\end{equation}
where $c$ is a positive constant.

It follows from (\ref{MT2}) that
\begin{equation}
	\label{MT2L}
	f(\la)\thicksim  |\la|^{1-2H}\q {\rm as}\q \la\to0,
\end{equation}
which indicates that $f(\la)$ blows up if $H>1/2$ and approaches zero if $H<1/2$.
Furthermore, by comparing (\ref{MT0L}) and (\ref{MT2L}), we observe that
the spectral density of fGn exhibits the same
behavior at the origin as ARFIMA$(0,d,0)$, with the memory parameter $d=H-1/2.$

Thus, the fGn $\{X(k), k\in\mathbb{Z}\}$  exhibits long memory when $1/2 < H < 1$ and is anti-persistent when $0 < H < 1/2$. The variables $X(k)$, $k \in \mathbb{Z}$, are independent when $H = 1/2$. For more details, we refer to Ginovyan \cite{G2025}, and Samorodnisky and Taqqu \cite{ST}.

Another notable example of a process that exhibits long memory or anti-persistence is the Fisher-Hartwig model, also referred to as the Jacobian model.
We define a Fisher--Hartwig model as a stationary process $X(t)$ whose spectral density $f(\la)$ has the following form:
\beq
\label{FH1}
f(\la) = f_1(\la)\prod_{k=1}^m|e^{i\la}-
e^{i\la_k}|^{2\al_k}, 	\q-\pi\le\la\le\pi,
\eeq
where $f_1(\la)$ is the spectral density of a short memory process,
the points $\la_k\in [-\pi, \pi]$ are distinct, and the exponents $\al_k>-1/2$, $k=1,\ldots,m$. 
%The numbers $\al_k$ are the Fisher--Hartwig exponents.

Note that for $k=1, \al_1=-d$, and $f_1(\la)=f_{ARMA}(\la)$,
the Fisher--Hartwig model becomes ARFIMA$(p,d,q)$ model.

\sn {Deterministic and nondeterministic processes}
\label{dnd}
In this section, we present Kolmogorov's isometric isomorphism theorem and provide frequency-domain characterizations of deterministic (singular), nondeterministic, and purely nondeterministic (regular) processes.

\ssn{Kolmogorov's isometric isomorphism theorem}
Given a probability space $(\Om, \mathcal{F}, \mathbb{P})$, we define the $L^2$-space 
$L^2(\mathbb{P}):=L^2(\mathbb{P},\Lambda)$ of real-valued random variables $\xi= \xi(\om)$ with
$\E[\xi]=0$:
\begin{equation}
	\label{i19}
	L^2(\mathbb{P}):=\left\{\xi: \, ||\xi||^2:=
	\int_\Om |\xi(\om)|^2dP(\om)<\f\right\}.
\end{equation}
Then $L^2(\mathbb{P})$ becomes a Hilbert space with the following
inner product: for $\xi, \eta\in L^2(\mathbb{P})$,
\begin{equation}
	\label{i20}
	(\xi,\eta)= \E[\xi \eta]=\int_\Om \xi(\om){\eta(\om)}dP(\om).
\end{equation}

For $a, b\in \mathbb{Z}$,  $-\f\le a\le b\le \f,$
we define the space $H_a^b(X)$ as the closed linear
subspace of the space $L^2(\mathbb{P})$ spanned by the random
variables $X(t)=X(t,\om)$, $t\in [a,b]$, $\om\in\Omega$:
\begin{equation}
	\label{i21}
	H_{a}^b(X): = \ol{sp}\{X(t), \,\, a \le t\le b\}_{L^2(\mathbb{P})}.
\end{equation}

Note that the space $H_{a}^b(X)$ consists of
all finite linear combinations of the form $\sum_{k=a}^b c_k X(k)$,
as well as, their $L^2(\mathbb{P})$-limits.

The space $H(X):=H_{-\f}^\f(X)$ is referred to as the \textit{Hilbert space generated by the process} $X(t)$, or the {\it time-domain} of $X(t)$.

Let $\mu$ be the spectral measure of the process $\{X(t), \, t\in \mathbb{Z}\}$.
Consider the weighted $L^2$-space $L^2(\mu):=L^2(\mu,\Lambda)$ of complex-valued functions
$\I(\la), \, \la\in\Lambda$, defined by
\beq
\label{i22}
L^2(\mu):=\left\{\I(\la): \,||\I||^2_\mu:= \int_\Lambda|\I(\la)|^2d\mu(\la)<\f\right\}.
\eeq
Then $L^2(\mu)$ becomes a Hilbert space with the following
inner product: for $\I, \psi\in L^2(\mu)$,
\begin{equation}
	\label{i23}
	(\I, \psi)_\mu= \int_\Lambda\I(\la)\ol{\psi}(\la)d\mu(\la).
\end{equation}

The Hilbert space $L^2(\mu,\Lambda)$ is referred to as the \textit{frequency-domain} of the process $X(t)$.

\begin{thm}[Kolmogorov's isometric isomorphism theorem]
	\label{Kolm}
	For any stationary process $X(t)$, $t\in \mathbb{Z}$, with spectral
	measure $\mu$, there exists a unique isometric isomorphism $V$ between
	the time-domain $H(X)$ and the  frequency-domain $L^2(\mu)$
	such that $V[X(t)]=e^{it\la}$ for any $t\in \mathbb{Z}.$
\end{thm}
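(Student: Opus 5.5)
The plan is to define $V$ on finite linear combinations of the $X(t)$, check that it preserves inner products, and then extend it by continuity. On the dense linear subspace $H_0\subset H(X)$ of finite sums $\xi=\sum_{k}c_kX(t_k)$, I will set
\[
V\Big[\sum_k c_kX(t_k)\Big]:=\sum_k c_ke^{it_k\la}.
\]
The spectral representation \eqref{i1} gives $(X(t),X(s))=\E[X(t)X(s)]=r(t-s)=\inl e^{it\la}\ol{e^{is\la}}\,d\mu(\la)=(e^{it\la},e^{is\la})_\mu$. By bilinearity, $(V\xi,V\eta)_\mu=(\xi,\eta)$ for all $\xi,\eta\in H_0$. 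In particular $\|V\xi\|_\mu=\|\xi\|$.

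Next I check that $V$ is well defined. Suppose a given element has two representations. Their difference is a combination $\xi$ with $\|\xi\|=0$, so $\|V\xi\|_\mu=0$, and $V\xi=0$ in $L^2(\mu)$. Hence $V$ is a linear isometry from $H_0$ into $L^2(\mu)$.

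Then I extend $V$ by continuity. Take $\xi\in H(X)$ and $\xi_n\in H_0$ with $\xi_n\to\xi$. The sequence $V\xi_n$ is Cauchy in $L^2(\mu)$, so I define $V\xi:=\lim V\xi_n$. This limit is independent of the approximating sequence, and the extension remains a linear isometry. Since $V$ is an isometry on the complete space $H(X)$, its range is closed in $L^2(\mu)$.

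The main obstacle is surjectivity. The range contains all trigonometric polynomials $\sum c_ke^{ik\la}$, so it suffices to show these are dense in $L^2(\mu)$ for an arbitrary finite Borel measure $\mu$ on $\Lambda$, identified with $\mathbb{T}$. I would argue this in three steps.
\begin{enumerate}[(1)]
\item Continuous functions on $\mathbb{T}$ are dense in $L^2(\mu)$. This holds because $\mu$ is a finite regular Borel measure on a compact metric space (Lusin's theorem, or density of simple functions plus regularity).
\item Trigonometric polynomials approximate any continuous function on $\mathbb{T}$ uniformly, by Fej\'er's theorem or Stone--Weierstrass.
\item Uniform convergence implies $L^2(\mu)$ convergence because $\mu(\mathbb{T})<\f$.
\end{enumerate}
One point needs care: a possible atom of $\mu$ at $\pm\pi$. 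Working on $\mathbb{T}$, where $\pm\pi$ correspond to the single point $-1$, removes this issue, in line with the identification in the Remark above.

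For the real-valued setting, the complex span is needed on the frequency side. I would either work with complex $L^2(\mathbb{P})$, or note that $\mu$ is symmetric and restrict to the appropriate real subspace. I would adopt the complex convention.

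Finally, uniqueness: any isometric isomorphism with $V[X(t)]=e^{it\la}$ is linear and continuous. It is therefore determined on $H_0$, and hence on its closure $H(X)$.
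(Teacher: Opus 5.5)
Your proof is correct and is the standard one. The paper does not prove this theorem itself and cites it as classical; your argument is exactly the Basic Congruence Theorem (Proposition \ref{BCT}) applied to $\varphi(t)=X(t)$ and $\psi(t)=e^{it\la}$, with your density argument supplying the one nontrivial input, namely that the exponentials span $L^2(\mu)$. Your two caveats are genuinely needed for the statement to hold as written: identifying $\Lambda$ with $\mathbb{T}$, so that separate atoms at $\pm\pi$ cannot occur, and passing to the complex span of the real process.
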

According to Theorem \ref{Kolm}, any linear problem in the time-domain $H(X)$ can be translated into a problem in the frequency-domain $L^2(\mu)$, and vice versa. This fact allows for the study of stationary processes using analytic methods.

For more information about Hilbert spaces related to stationary processes, see Section \ref{HSR}.

\ssn {Deterministic (singular), nondeterministic and purely nondeterministic (regular) processes. Characterizations}

We start by stating the linear prediction problem. 
Let $X(t),$ $t\in\mathbb{Z}: = \{0,\pm1,\ldots\}$, be a centered discrete-time
second-order stationary process with spectral measure $\mu$.
Suppose we observe a finite realization of the process $X(t)$:
$\{X(t), \,\, -n\le t\le-1\},$ $n\in\mathbb{N}: = \{1,2, \ldots\}.$
We aim to make a one-step ahead prediction, specifically to predict the
unobserved random variable $X(0)$ using the {\it linear predictor}
defined as $Y=\sum_{k=1}^{n}c_kX(-k).$

The coefficients
$c_k$, $k=1,2,\ldots,n$, are chosen so as to minimize
{\it the mean-squared error}: $\E\left|X(0) - Y\right|^2,$
where $\E[\cd]$ denotes the expectation operator.
If such minimizing constants $\widehat c_k:=\widehat c_{k,n}$
can be found, then the random variable
$$\widehat X_n(0):=\sum_{k=1}^{n}\widehat c_kX(-k)$$
is referred to as  {\it the best linear one-step ahead predictor} of $X(0)$
based on the observed finite past: $X(-n), \ldots, X(-1)$.
The minimum mean-squared error:
\beq
\label{si-pred}
\si_{n}^2(0,\mu): =\E\left|X(0) - \widehat X_n(0)\right|^2\geq0
\eeq
is called {\it the best linear one-step ahead prediction error}.

Observe that $\sigma_{n+1}^2(0,\mu)\leq \sigma_n^2(0,\mu)$, $n\in\mathbb{N}$,
and hence the limit of $\sigma_n^2(0,\mu)$ as $n\to\f$ exists.
We denote $\sigma^2(0,\mu): = \sigma_\infty^2(0,\mu)$ 
as the prediction error based on the entire infinite past: $\{X(t)$, $t\le-1\}$.

From a prediction perspective, it is natural to distinguish the class of
processes for which we have {\it error-free prediction} based on the entire infinite past, i.e.,
$\si^2(0,\mu)=0$. Such processes are termed {\it deterministic} or {\it singular}.
Processes for which $\si^2(0,\mu)>0$ are referred to as {\it nondeterministic}.

Observe that the time-domain $H(X)$ of any non-degenerate
stationary process $\{X(t)$, $t\in\mathbb{Z}\}$ can be
represented as the orthogonal sum: 
$$H(X)= H_1(X)\oplus H_{-\f}(X),$$
where $H_{-\f}(X)$ is the remote past of $X(t)$, defined by $H_{-\f}(X): = \cap _t H_{-\f}^t (X)$,
and $H_1(X)$ is the orthogonal complement of $H_{-\f}(X)$.

Thus, we can provide the following geometric definitions for deterministic (singular), nondeterministic, and purely nondeterministic (regular) processes.
\begin{den}A stationary process $\{X(t)$, $t\in\mathbb{Z}\}$ is called
	\begin{itemize}
		\item
		deterministic or singular if $H_{-\f}(X)= H(X)$, that is,
			$H_{-\f}^t(X)=H_{-\f}^s(X)$ for all $t, s \in\mathbb{Z}$;
		\item
		nondeterministic if the remote past $H_{-\f}(X)$ is a proper subspace of $H(X)$, that is, $H_{-\f}(X)\subset H(X)$;
		\item
		purely nondeterministic (PND) or regular if $H_{-\f}(X)=\{0\}$,
		that is, the remote past $H_{-\f}(X)$ is the trivial subspace consisting of the singleton zero.
	\end{itemize}
\end{den}
\begin{rem}{\rm Every purely nondeterministic process $X(t)$
		is nondeterministic, but the converse is generally not true.
		An example of such a process is given by $X(t)=\vs(t)+\xi$,
		where $\{\vs(t), \,t\in \mathbb{Z}\}$ is a ${\rm WN}(0,\si^2)$
		(a centered white noise with variance $\si^2$), and
		$\xi$ is a random variable such that $Var(\xi)=\si^2_0$ and
		$(\vs(t), \xi)=0$ for all $t\in \mathbb{Z}$
		(see Pourahmadi \cite{Po}, p. 163).
	Note that the only process $X(t)$ that is both
		deterministic and purely nondeterministic is the
		degenerate process. Assuming that $X(t)$ is a non-degenerate
		process, we exclude this trivial case.}
\end{rem}

The following result provides spectral characterizations of deterministic, nondeterministic, and purely nondeterministic
processes (see, e.g., Grenander and Szeg\H{o} \cite{GS}, 
p. 44, and Ibragimov and Rozanov \cite{IR}, pp. 35--36).
%\cite{R}, p. 58, 64,).
\begin{thm}
	\label{th34}
	Let $X(t)$ be a non-degenerate stationary process with spectral
	measure $\mu$ of the form \eqref{di1}. The following assertions hold.
	\begin{itemize}
		\item[(a)]{\rm (Kolmogorov-Szeg\H{o}'s Theorem)}.
		The following relations hold.
		\bea
		\label{c013}
		\lim_{n\to\f}\si_n^2(0,\mu)=\lim_{n\to\f}\si_n^2(0,f)=\si^2(0,f)=2\pi G(f),
		\eea
		where $G(f):=G(0,f)$ is the {\it geometric mean} of $f$, namely
		\beq
		\label{a2}
		G(f): = \left \{
		\begin{array}{ll}
			\exp\left\{\frac1{2\pi}\inl\ln f(\la)\,d\la \right\} &
			\mbox{if \, $\ln f \in {L}^1(\mathbb{T})$}\\
			0, & \mbox { otherwise,} \q
		\end{array}
		\right.
		\eeq
		\item[(b)]
		$
		H_{-\f}^0(\mu_s)=H(\mu_s)
		\Leftrightarrow  \sigma^2(0,\mu)=0 \Leftrightarrow
		X(t) \,\,  is \,\, deterministic,
		$
		\item[(c)]
		{\rm (Kolmogorov-Szeg\H{o}'s alternative)}.
		Either
		\begin{align*}
		H_{-\f}^0(\mu_a)=H(\mu_a)& \Leftrightarrow \inl\ln f(\la)\,d\la = -\f
		\Leftrightarrow  \sigma^2(0,f)=0 \\
		&\Leftrightarrow
		X(t) \,\,  is \,\, deterministic,
		\end{align*}
		or else
		\begin{align*}
		H_{-\f}^0(\mu_a) \neq  H(\mu_a)& \Leftrightarrow\inl\ln f(\la)\,d\la > -\f \Leftrightarrow
		\sigma^2(0,f) >0 \\
		&\Leftrightarrow X(t) \,\, is \,\, nondeterministic.
	\end{align*}
		\item[(d)] The process $X(t)$ is regular (PND) if and only if
		it is nondeterministic and $\mu_s\equiv0$.
	\end{itemize}
\end{thm}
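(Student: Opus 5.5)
The plan is to pass to the frequency domain with Kolmogorov's isometry (Theorem \ref{Kolm}) and then reduce everything to Szeg\H{o}'s extremal problem. The map $V$ sends $X(0)-\sum_{k=1}^n c_kX(-k)$ to $1-\sum_{k=1}^n c_k e^{-ik\la}$. Hence
\[
\si_n^2(0,\mu)=\min_{c}\inl\Bigl|1-\sum_{k=1}^n c_ke^{-ik\la}\Bigr|^2d\mu(\la),
\]
which is the minimum of $\int|p|^2d\mu$ over polynomials $p$ of degree at most $n$ with $p(0)=1$.

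\emph{Part (a).} First I reduce from $\mu$ to $f$. The Kolmogorov/Szeg\H{o}--Krein theorem gives $\lim_n\si_n^2(0,\mu)=2\pi G(f)$: the singular part $\mu_s$ contributes nothing in the limit. To justify this I would use the two standard bounds.
\begin{itemize}
\item Upper bound: approximate $\mathbf{1}_{\mathrm{supp}\,\mu_s}$ by polynomials vanishing on most of the $\mu_s$-mass. Because $\mu_s$ lives on a Lebesgue-null set, such polynomials can be chosen with $p(0)=1$ and small integral against $\mu_s$, while their integral against $f\,d\la$ is barely increased.
\item Lower bound: $\int|p|^2d\mu\ge\int|p|^2f\,d\la$.
\end{itemize}
For the absolutely continuous part I then apply Jensen's inequality. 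For $p$ with $p(0)=1$, subharmonicity gives $\frac1{2\pi}\int\ln|p|^2\ge 0$, so
\[
\int|p|^2f\ge 2\pi\exp\Bigl(\frac1{2\pi}\int\ln(|p|^2f)\Bigr)\ge 2\pi G(f).
\]
For the reverse inequality, when $\ln f\in L^1$ I take the outer function $D$ with $|D|^2=f$ and choose $p$ approximating $D(0)/D$ in $H^2$. When $\ln f\notin L^1$ I replace $f$ by $\max(f,\vs)$ and let $\vs\to0$.

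\emph{Parts (b) and (c).} Under $V$, the statement $H^0_{-\f}(\cdot)=H(\cdot)$ says that $e^{i\la}$, equivalently $1$ after a shift, lies in the closed span of $\{e^{-ik\la},k\ge1\}$. That is exactly $\si^2=0$, and by definition this is the deterministic case. This gives (b) directly for $\mu_s$. For (c), part (a) gives $\si^2(0,f)=2\pi G(f)$, which vanishes if and only if $\int\ln f=-\f$. This yields the alternative once I combine it with $\si^2(0,\mu)=\si^2(0,f)$, which itself comes from (a).

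\emph{Part (d).} I would use the Wold decomposition. The regular component has an absolutely continuous spectrum with $\ln f\in L^1$. The remote-past component $H_{-\f}$ carries $\mu_s$ together with nothing else in the nondeterministic case. Therefore $H_{-\f}=\{0\}$ if and only if $\mu_s=0$ and $G(f)>0$.

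The main obstacle is the step that neglects the singular part in (a), i.e.\ showing that $\mu_s$ can be annihilated by polynomials with $p(0)=1$ at negligible cost. I would handle it through the duality and $H^1$ argument: $\si^2(0,\mu)$ equals the distance from $1$ to $\overline{\mathrm{sp}}\{z^k,k\ge1\}$ in $L^2(\mu)$. Then I would invoke the F.\ and M.\ Riesz theorem, which says any annihilating measure of the form $\ol h\,d\mu$ is absolutely continuous. This forces the orthogonal complement to live on the absolutely continuous part.
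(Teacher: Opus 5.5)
The paper does not prove this theorem. It cites Grenander--Szeg\H{o} and Ibragimov--Rozanov, and Hoffman for the first equality in (a).

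Your route for (a) is essentially the Helson--Lowdenslager argument in Hoffman, and it is sound after one repair. $D(0)/D$ lies in $H^2$ only when $1/f\in L^1$. What you need is polynomials $p$ with $pD\to D(0)$ in $H^2$, equivalently $p\to D(0)/D$ in $L^2(f\,d\la)$; this is Beurling's theorem that outer functions are cyclic. Your first ``upper bound'' bullet is too vague to count, but the F.\ and M.\ Riesz paragraph supersedes it. It gives $\chi_S\in\overline{\mathrm{sp}}\{e^{-ik\la},\,k\ge1\}$ in $L^2(\mu)$ for a Lebesgue-null carrier $S$ of $\mu_s$, hence $\si^2(0,\mu)=\si^2(0,\mu_a)$. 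Part (c) then follows as you describe.

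\textbf{Gap in (d).} The sentence ``the remote-past component carries $\mu_s$ together with nothing else'' is the whole content of (d), and you only assert it. Orthogonality of the Wold components gives $\mu=f_r\,d\la+\mu_d$. Here $f_r=\frac1{2\pi}|\sum_{k\ge0}a_ke^{-ik\la}|^2$ is the density of the regular part $X_r$, and $\mu_d$ is the spectral measure of the deterministic part. A deterministic process can have an absolutely continuous spectral component, so nothing you wrote rules out $\mu_d$ having a density $f_d\neq0$. The missing step uses (a). The regular part $X_r$ has the same past spaces $H^t_{-\f}(X_r)=H^t_{-\f}(\vs)$ and the same innovations as $X$, so
$2\pi G(f_r)=\si^2(0,f_r)=\si^2(0,\mu)=2\pi G(f_r+f_d)>0$.
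Now $\ln(f_r+f_d)\ge\ln f_r$, and both are integrable (the latter because $G(f_r)>0$). This forces $f_d=0$ a.e. Hence $\mu_d=\mu_s$, and $H_{-\f}(X)=\{0\}$ if and only if $\mu_s=0$.

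\textbf{Error in (b).} Your shift argument proves $H^0_{-\f}(\nu)=H(\nu)\iff\si^2(0,\nu)=0$ for any measure $\nu$. Applied to $\nu=\mu_s$, it involves $\si^2(0,\mu_s)$, not $\si^2(0,\mu)$, so ``this gives (b) directly for $\mu_s$'' does not hold. In fact, the first equivalence in (b) as printed is false. By (a) applied to $\mu_s$ (zero density, so $G=0$), or by your own F.--M.\ Riesz step, $H^0_{-\f}(\mu_s)=H(\mu_s)$ holds for every process. Yet white noise has $\si^2(0,\mu)=1$. What your argument actually establishes, and what is presumably intended, is $H^0_{-\f}(\mu)=H(\mu)\iff\si^2(0,\mu)=0\iff$ $X$ is deterministic. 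The separate fact is that the singular component is always deterministic. You should have flagged this rather than claim the literal statement.
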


\begin{rem}
	\label{r33}
	{\rm The second equality in (\ref{c013}) was proved by Szeg\H{o} \cite{Sz1921}, while the first equality was established by Kolmogorov \cite{Kol1, Kol2} 
		(see also Hoffman \cite{Hof}, p. 49).
	It is remarkable that the relation in \eqref{c013} is independent of the singular part $\mu_s$.}
\end{rem}

\ssn{Non-determinism. The Szeg\H{o} Condition}

\begin{den}[Szeg\H{o}'s Condition]
The spectral measure $\mu$ of the form \eqref{di1} is said to satisfy the Szeg\H{o} condition if
\beq
\label{S}
\inl\ln f(\la)\,d\la > -\f
\eeq
where $f$ is the density of the absolutely continuous part of $\mu$. By convention, if $f(\lambda)=0$ on a set of positive Lebesgue measure, the integral in \eqref{S} is considered to equal $-\infty$.
\end{den}

Note that the condition $\ln f \in {L}^1(\Lambda)$ in \eqref{a2} is equivalent to the Szeg\H{o} condition \eqref{S}.
This equivalence follows because $-\f\leq\ln f(\la)\le f(\la)$.
The Szeg\H{o} condition \eqref{S} is also referred to as the {\it non-determinism condition}.

If the spectral measure $\mu$ of the process $X(t)$ is absolutely continuous with spectral density $f(\la)$, the singularity and regularity of $X(t)$ depend on whether $f(\la)$ approaches zero. For example, a short memory process $X(t)$ is regular. 
If $f(\la)$ vanishes on some interval
$I\subset [-\pi,\pi]$, then the Szeg\H{o} condition \eqref{S} is violated, resulting in $X(t)$ being singular. In this case, we will describe the process as purely deterministic.
Singularity can also occur if $f(\la)$ vanishes at a single point; the spectral density can produce either nondeterministic or deterministic behavior depending on the flatness at zero. 
Furthermore, note that the Szeg\H{o} condition relates 
to the nature of the singularities (zeros and poles) of the spectral density $f$, and does not depend on the differential properties of $f$.
The following example illustrates this point (see also Pourahmadi \cite{Po}, p. 68).
\begin{exa}[Flat-zero spectral model]
\label{Ex-FZM}
{\rm Consider the stationary process $X(t)$ with spectral density
\beq
\label{FZM}
f_a(\lambda): = \left \{
\begin{array}{ll}
	\exp\left\{-|\lambda|^{-a}\right\}&
	\mbox{if \, $\lambda\neq0$}\\
	0 & \mbox{if \, $\lambda=0$,}
\end{array} \quad a>0.
\right.
\eeq
We first show that the function $f_a(\la)$ is infinitely differentiable ($f_a(\lambda) \in C^\infty([-\pi,\pi])$)
with all derivatives vanishing at $\lambda=0$:
\[
f_a^{(k)}(0) = 0, \quad \forall k\ge 0.
\]
This follows using the formula for derivative of the function $|\lambda|$:
\begin{equation}
	\label{da2}
	|\lambda|' = \left(\sqrt{\lambda^2}\right)'= \frac{\lambda}{|\lambda|}=
	\left \{
	\begin{array}{lll}
		-1 & \mbox{if \, $\lambda<0$}\\
		0 & \mbox{is not defined if \, $\lambda=0$}\\
		1 & \mbox{if \, $\lambda>0$},
	\end{array}
	\right.
\end{equation}
and the following relation:
\begin{equation}
	\label{da3}
	\lim_{t\to\infty}t^be^{-ct}=0 \quad (b\in \mathbb{R},  \,\, c>0).
\end{equation}
Using \eqref{da2} and \eqref{da3}, a direct computation shows that
\begin{equation}
	\nonumber
	f_a^{(k)}(\lambda) = \left \{
	\begin{array}{ll}
	P_k\big(|\lambda|^{-a}\big)\,|\lambda|^{-k}\,e^{-|\lambda|^{-a}},
		&\mbox{if \, $\lambda\neq0$},\\
		0, & \mbox{if \, $\lambda=0$,}
	\end{array}
	\right.
\end{equation}
where $P_k$ is a polynomial depending on $a$ and $k$.
In addition, we have 
	\[
\lim_{\lambda\to 0} f_a^{(k)}(\lambda) = 0.
\]
Thus, $f_a\in C^\infty([-\pi,\pi])$.
Next, the Szeg\H{o} integral is:
\[
\int_{-\pi}^{\pi} \log f_a(\lambda) \, d\lambda = -2 \int_0^\pi \lambda^{-a} \, d\lambda.
\]
\begin{itemize}
	\item For $0<a<1$, $\int_0^\pi \lambda^{-a} d\lambda < \infty$, so Szeg\H{o}'s condition \eqref{S} is satisfied. The asymptotic one-step prediction error $\si_n^2(0,f)>0$, indicating that the process $X(t)$ is \emph{purely nondeterministic}.
	\item For $a \ge 1$, $\int_0^\pi \lambda^{-a} d\lambda = \infty$, which means Szeg\H{o}'s condition \eqref{S} fails, $\si_n^2(0,f) = 0$, and the process $X(t)$ is \emph{deterministic}.
\end{itemize}}
\end{exa}
\begin{rem}
	\label{PSM}
	{\rm In the case where the measure \( \mu \) is absolutely continuous, represented as \( d\mu = f(\lambda) d\lambda \) (with \( \mu_s = 0 \)), we can differentiate between light and strong determinism. We will refer to the process \( X(t) \) as light deterministic if the spectral density  $f(\la)$ is positive everywhere except for a finite or countable number of points, which results in a violation of the Szeg\H{o} condition \eqref{S}. 
	In contrast, if $f(\la)$ vanishes over a set of positive Lebesgue measure, the corresponding process \( X(t) \) is termed strong deterministic (or purely deterministic). An example of a light deterministic process is the flat-zero spectral model discussed in Example \ref{Ex-FZM} for $a\geq1$. Another important example 
	is the process characterized by the Pollaczek-Szeg\H{o} spectral density:
	\beq
	\label{nd4}
	f_a(\la):=\frac{e^{(2\la-\pi)\varphi(\la)}}{\cosh\left(\pi\varphi(\la)\right)},
	%\ |\sin(\la)|,
	\q f_a(-\la)=f_a(\la),\q 0\leq\la\leq\pi,
	\eeq
	where $\varphi(\la)=(a/2)\cot\la$ and $a$ is a positive parameter. 
	Observe that for the function $f_a$ in \eqref{nd4}, we have the following asymptotic
	relation (for details, see Babayan and Ginovyan \cite{BG2023},
	and Szeg\H{o} \cite{S1}:
	\beq \label{tt2}
	f_a(\la)\sim
	\left \{
	\begin{array}{ll}
		2e^a\exp\left\{-{a\pi}/{|\la|}\right\} & \mbox{as $\la\to0$},\\
		2\exp\left\{-{a\pi}/{(\pi-|\la|)}\right\} & \mbox{as $\la\to\pm\pi$}.
	\end{array}
	\right.
	\eeq
Thus, the function $f_a$ in \eqref{nd4} has a very high order of contact
with zero at the points $\la=0,\pm\pi$, due to which the Szeg\H{o} condition \eqref{S}
is violated, indicating that the process with spectral
density $f_a$ is light deterministic.}		
\end{rem}
\begin{rem}[Pure point spectrum]
	\label{Ppoint}
	{\rm The spectral measure $\mu$ is \emph{pure point} if
		\[
		\mu = \sum_{k=1}^{\infty} w_k\,\delta_{\lambda_k},
		\quad
		w_k>0,\quad \sum_{k=1}^{\infty} w_k<\infty.
		\]
From the spectral representation theorem applied to atomic
measures we have (see \eqref{i18}):
\beq
\label{Ppoint1}
X(t) = \sum_{k=1}^{\infty} Z_k e^{i t\lambda_k},
\eeq
where $\{Z_k\}$ are uncorrelated random variables with
$\mathbb{E}|Z_k|^2 = w_k$.
Since there is no absolutely continuous part, Kolmogorov's criterion gives
$\si_n^2(0,\mu)=0$, implying that if the spectral measure is purely atomic, then the process $X(t)$ is deterministic.} %Alternatively, the representation \eqref{Ppoint1}
%shows that the infinite past $\{X(-t), \, t\ge1\}$ determines all coefficients $\{Z_k\}$, and hence determines $X(0)$ exactly.}		
\end{rem}
\begin{rem}[Pure singular continuous spectrum]
\label{PSC}
{\rm The spectral measure $\mu$ is \emph{pure singular continuous} if it is continuous, assigns zero mass to points, and is singular with respect to the Lebesgue measure. If $\mu$ is purely singular continuous, then the process $X(t)$ is deterministic.}		
	\end{rem}
\begin{rem}
	\label{NSR}
	{\rm If the Szeg\H{o} condition \eqref{S} is satisfied, but the spectral measure $\mu$ is either discontinuous or continuous with a non-vanishing singular part ($\mu_s \neq 0$), then the process $X(t)$ is neither singular nor regular.}		
\end{rem}

\s{Szeg\H{o}'s extremum problem on the unit circle}
\label{SzW}

\sn{Setting of the problem} 
\label{SzPr}

We denote by $\mathcal{Q}_n$ the set of monic polynomials $q_n(z)=c_0z^n+c_1z^{n-1}+\cdots+c_{n-1}z+c_n$, $z\in\mathbb{C}$ of degree
$n\in\mathbb{N}$ with leading coefficient $c_0 = 1$.
For a fixed $\xi\in\mathbb{C}$, we denote by $\mathcal{Q}_n(\xi)$ the set of polynomials $q_n(z)$, $z\in\mathbb{C}$, that have a degree of at most $n $ and satisfy the condition $q_n(\xi) = 1$. Specifically,
\beq
\label{L5Q}
\mathcal{Q}_n(\xi):=\{q_n(z)=c_0z^n+\cdots+c_{n-1}z+c_n;
\,\, {\rm deg}\ q_n\leq n, \, q_n(\xi) = 1\}.
\eeq

Let $\mu$ be a nontrivial measure on the unit circle $\mathbb{T}$ with the standard Lebesgue decomposition \eqref{di1}.
Define the $n^{th}$ {\it Christoffel function} associated with a measure $\mu$ by
\beq
\label{L5s}
\la_n(\xi,\mu):=\min_{q_n(z)\in \mathcal{Q}_n(\xi)}
||q_n||^2_\mu,
\eeq
where $||q_n||_\mu$ denotes the norm of the polynomial $q_n$ in the space $L^2(\mu)$:
\beq
\label{L5n}
||q_n||^2_\mu:=\int_{-\pi}^{\pi}|q_n(e^{i\lambda})|^2 d\mu(\lambda).
\eeq
Evidently the sequence $\{\la_n(\xi,\mu), \, n\in\mathbb{N}\}$ is non-increasing. 

The generalized Szeg\H{o} extremum problem can be stated as follows: 
\begin{itemize}
	\item[(a)] Find the polynomial $q_n(e^{i\la})$ of degree at most $n$
	in $e^{i\la}=z$ that solves the minimum problem in \eqref{L5s}.
	\item[(b)] Describe the asymptotic behavior of $\la_n(\xi,\mu)$ as $n\to\f$.
\end{itemize}
\begin{den}
	\label{def3.1}
	The polynomial $p_n(z):=p_n(z,\mu)$ that solves the minimum problem in \eqref{L5s}
	is referred to as the optimal polynomial (of degree $n$) for the measure $\mu$. Therefore, 
		\begin{equation}
		\label{Szop}
		\la_n(\xi,\mu)=\min_{q_n(z)\in \mathcal{Q}_n(\xi)}
		||q_n||^2_\mu=||p_n||^2_\mu, \q \xi\in\mathbb{C}.
	\end{equation}
\end{den}

The extremum problem mentioned above was solved by G. Szeg\H{o}, who demonstrated that the optimal polynomial 
$p_n(z)$ exists, is unique, and can be expressed in terms of the
{\it orthogonal polynomials on the unit circle $\mathbb{T}$} associated with the measure $\mu$.
He also described the asymptotic behavior of the function $\la_n(\xi,\mu)$ as $n\to\f$ for some classes.

To present Szeg\H{o}'s solution,
we will first review some key concepts and results from the theory of orthogonal polynomials on the unit circle  $\mathbb{T}$.

\sn{Orthogonal polynomials on the unit circle (OPUC) and Szeg\H{o}'s generalized theorem} 

Let $\mu$ be a nontrivial measure on the unit circle $\mathbb{T}$ with the standard Lebesgue decomposition \eqref{di1}.
The system of orthogonal polynomials on the unit circle $\mathbb{T}$ associated with the measure $\mu$:
%$\I_n(z): = \I_n(z;\mu)$ ($z=e^{i\la}, \, n\in \mathbb{Z}_+$)
$$\{\I_n(z) = \I_n(z;\mu), \q z=e^{i\la}, \q n\in \mathbb{Z}_+\}$$
is uniquely determined by the following two conditions:
\begin{itemize}
	\item[(i)] \, $\I_n(z) = \kappa_nz^n + \cdots +l_n$
	is a polynomial of degree $n$, in which the coefficient
	$\kappa_n$ is positive;
	\item[(ii)] $(\I_k,\I_j)_\mu = \de_{kj}$ for arbitrary $k,j\in \mathbb{Z}_+$, where
	$\de_{kj}$ is the Kronecker delta.
\end{itemize}
Define the monic ($\Phi_n(z)$) and the reversed ($\Phi^*_n(z)$) polynomials:
%(see, e.g., Simon \cite{Sim-1}, p. 2):
\bea
\label{oppp}
&&\Phi_n(z):=\Phi_n(z,\mu)=\kappa^{-1}_n\I_n(z)=z^n + \cdots +l_n\kappa_n^{-1}, \\
\label{opp*}
&&\Phi^*_n(z):= \Phi^*_n(z,\mu)=z^{n}\ol {p_n(1/\ol z)}=\ol l_n\kappa_n^{-1}z^n + \cdots +1.
\eea
Then, we have
\beq
\label{opp}
||\Phi_n||_{\mu}^2=||\Phi^*_n||_{\mu}^2=\kappa_n^{-2}= \si_{n}^2(0,\mu),
\eeq
where \(\si_{n}^2(0,\mu)\) represents the prediction error, as defined in equation \eqref{si-pred}.

For a nontrivial measure $\mu$ on the unit circle $\mathbb{T}$ 
of the form \eqref{di1} and for a fixed $\xi\in\mathbb{C}$, we consider the {\sl Szeg\H{o} kernel} 
$S_n(z,\xi):= S_n(\xi,z,\mu)$ defined by (see, e.g., Grenander and Szeg\H{o} \cite{GS}, p. 39):
\begin{equation}
	\label{op8}
	S_n(\xi,z) =\sum_{k=0}^{n}\ol{\I_k(\xi)}\I_k(z).
\end{equation}

Observe that $S_n(z,\xi)$ is the reproducing kernel of the space $H_n(\mu)$, which consists of polynomials of order $n$
viewed as a subspace of the space $L^2(\mu)$. 
Specifically, the kernel $S_n(z,\xi)$ satisfies the following conditions:
\begin{itemize}
	\item[(a)] \,
	 For every $\xi\in\mathbb{C}$, $S_n(\cd,\xi)$ is a polynomials of order $n$.
	\item[(b)] \,
	The reproducing property holds: for every $\xi\in\mathbb{C}$ and every
	$h\in H_n(\mu)$,
	\beq\label{rk01}
	(h(\cdot),S_n(\cd,\xi))_\mu=h(\xi).
	\eeq
\end{itemize}

For the definition and properties of reproducing kernel Hilbert spaces, see Section \ref{HSR}.

\begin{exa}
{\rm Let $d\mu=d\la$. Then $\I_n(z)=z^n$. In this case, we have %$S_n(e^{i\la},e^{i\theta})$ becomes the Dirichlet kernel, and
$$|S_n(e^{i\la},e^{it)}|^2
=\frac{\sin^2((n+1)(\la-t)/2)}{\sin^2((\la-t)/2)},$$ 
and hence $[2\pi(n+1)]^{-1}|S_n(e^{i\la},e^{it)}|^2$ is the Fej\'er kernel (see Section \ref{FSI}).}
\end{exa}

For a nontrivial measure $\mu$ on the unit circle $\mathbb{T}$ 
of the form \eqref{di1} with $f(\la)$ satisfying Szeg\H{o}'s condition \eqref{S}, we define the Szeg\H{o} function
(see, e.g., Grenander and Szeg\H{o} \cite{GS}, Section 1.14, and Simon \cite{Sim1}, p. 93).
\begin{equation}
	\label{Szf}
D(z)=D(f,z)=\exp \left \{ \frac 1 {4\pi} \inl
\frac {e^{it}+z}{e^{it}-z} \log f(\la)\,d\la \right \},\q z = e^{i\la}.
\end{equation}

The next proposition outlines important properties of the Szeg\H{o} function $D(f,z)$
(see, e.g., Grenander and Szeg\H{o} \cite{GS}, p. 51, and Simon \cite{Sim1}, p. 93).
\begin{pp}
	\label{pp77}
	Let $D(f,z)$ be the Szeg\H{o} function defined by (\ref{Szf}). 
	We assume that the Szeg\H{o} condition \eqref{S} is satisfied.
	The following assertions hold.
	\begin{itemize}
		\item[(a)]
		$D(f,z)$ is an outer function from the Hardy space ${H}^{2+}(\mathbb{D}).$
		\item[(b)]
		For almost all $\la \in [-\pi, \pi)$
		\beq
		\label{b2}
		|D(f,e^{i\la})|^2 = f(\la).
		\eeq
		\item[(c)]
		$D(f,z)$ is different from zero inside the unit disk
		$\{z; \, |z| <1\}$, the value $D(f;0)$ is real and positive, and
		\beq
		\label{b3}
		\lim_{n\to\f} \kappa_n = [G(f)]^{-1/2}= \kappa =\frac1{D(f,0)}.
		\eeq
		\item[(d)]
		$D(f,z)$ and  the orthogonal polynomials
	$\I_n(z)$ are connected by the following formula: 		%(see  Grenander and Szeg\H{o} \cite{GS}, p. 51):
		\beq
		\label{b4}
		\I_n(z) =\frac{z^n}{\ol D(f,1/z)} + o(1), \q n\to\f; \,\, |z|>1,
		\eeq
		uniformly on compact sets of the domain $|z|>1$.
		\item[(e)]
		$D(f,z)$ and  the kernel function $S_n(w,z)$ are connected by the formula
		\beq
		\label{b5}
	S_n(\xi,z) = 	\frac 1{1-\bar wz}\frac 1{\ol{D(f,\xi)}}\frac 1{D(f,z)}
	+ o(1), \q
		n\to\f; \,\, |\xi|<1, |z|<1
		\eeq
		uniformly for $|\xi|$ and $|z|\leq r<1$.
		\item[(f)]
		$D(f,z)$ and the Fourier coefficients $d_k$ of the function $\log f(\la)$
		are connected by the formula
		\beq
		\label{b6}
		\log D(f,z) = \frac {d_0}2 + \sum_{k=1}^\f d_kz^k, \q |z|<1.
		%d_k =  \frac 1 {2\pi} \inl e^{-ik\la}\log f(\la)\,d\la.
		\eeq
	\end{itemize}
\end{pp}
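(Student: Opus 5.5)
The plan is to derive all six assertions from the analytic structure of the exponent in \eqref{Szf}, and then to bring in the orthogonal polynomials $\I_n$ through the Christoffel function $\la_n(\xi,\mu)$. The starting point is
$$h(z):=\frac{1}{4\pi}\inl \frac{e^{it}+z}{e^{it}-z}\log f(t)\,dt .$$
Under \eqref{S}, $\log f\in L^1$, so $h$ is analytic in $\mathbb{D}$ and $D=e^{h}$. Expanding $\frac{e^{it}+z}{e^{it}-z}=1+2\sum_{k\ge1}e^{-ikt}z^k$ and integrating termwise gives (f) at once, with $d_k=\frac1{2\pi}\inl e^{-ikt}\log f(t)\,dt$.

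\textbf{Step 1: assertions (a), (b), (c) except the limit of $\kappa_n$.}
\begin{itemize}
\item $\operatorname{Re}h(re^{i\la})$ is the Poisson integral of $\frac12\log f$. By Fatou's theorem it tends to $\frac12\log f(\la)$ a.e., which gives (b).
\item Since $e^{h}$ never vanishes, $D\neq0$ in $\mathbb{D}$. Also $D(0)=\exp\{d_0/2\}=G(f)^{1/2}>0$, so $D(0)$ is real and positive.
\item By Jensen's inequality $|D(re^{i\la})|^2\le P_r*f$, hence $\sup_r\|D(r\cdot)\|_2^2\le\|f\|_1$ and $D\in H^{2}$. It is outer by construction, since $\log|D|$ is the Poisson integral of $\log|D|$ on the boundary.
\end{itemize}

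\textbf{Step 2: (c) limit and (e), via the Christoffel function.} The reproducing property \eqref{rk01} gives
$$\la_n(\xi,\mu)=S_n(\xi,\xi)^{-1},$$
with optimal polynomial $S_n(z,\xi)/S_n(\xi,\xi)$. For $\xi=0$, Definition \ref{def3.1} together with \eqref{opp} gives $\la_n(0,\mu)=\kappa_n^{-2}=\si_n^2(0,\mu)$. The Kolmogorov--Szeg\H{o} theorem (Theorem \ref{th34}(a)) then gives $\kappa_n^{-2}\to 2\pi G(f)$; note the normalization convention for the $2\pi$ factor. This yields \eqref{b3} with $\kappa=1/D(f,0)$.

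For general $|\xi|<1$ I would prove
$$\lim_n \la_n(\xi,\mu)=(1-|\xi|^2)|D(\xi)|^2 .$$
\begin{itemize}
\item \emph{Upper bound.} Test with polynomial approximants of $D(z)^{-1}(1-\bar\xi z)^{-1}$, normalized at $\xi$. Since $1/D$ need not be in $H^2$, approximate $D_r(z)=D(rz)$ first and let $r\uparrow1$.
\item \emph{Lower bound.} For $q\in\mathcal{Q}_n(\xi)$ with $\mu_s$ dropped, $qD\in H^2$ and $(qD)(\xi)=D(\xi)$. The minimal $H^2$ norm of a function taking a prescribed value at $\xi$ is $|D(\xi)|^2(1-|\xi|^2)$, by the reproducing kernel of $H^2$.
\end{itemize}
Together these give the diagonal case of \eqref{b5}, namely $S_n(\xi,\xi)\to\frac{1}{(1-|\xi|^2)|D(\xi)|^2}$. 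The off-diagonal case follows from normal-family arguments plus polarization: $S_n(\xi,z)$ is analytic in $z$ and antianalytic in $\xi$, and by Cauchy--Schwarz it is locally bounded by the diagonal. So limit points are sesquianalytic and are determined by their diagonal, which yields uniform convergence on $|\xi|,|z|\le r$.

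\textbf{Step 3: (d).} Use the Szeg\H{o} recursion and Christoffel--Darboux to write
$$S_n(0,z)=\kappa_n\,\I_n^*(z).$$
Step 2 with $\xi=0$ then gives $\I_n^*(z)\to 1/D(z)$ uniformly on compacta of $\mathbb{D}$. Since $\I_n(z)=z^n\overline{\I_n^*(1/\bar z)}$, substituting $z\mapsto 1/\bar z$ gives \eqref{b4} uniformly on compact subsets of $|z|>1$.

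The main obstacle is the upper bound in Step 2 when $1/D\notin H^2$, together with showing that $\mu_s$ contributes nothing in the limit. For $\mu_s$, my first attempt would be the standard device: take polynomials $\pi$ with $\pi(\xi)=1$ and $\int|\pi|^2d\mu_s$ small, which exist because $\mu_s$ is singular and the F.~and M.~Riesz/peak-function argument applies. Multiply them against the approximants of $1/(D_r(1-\bar\xi z))$.
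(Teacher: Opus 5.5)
You have located the main obstacle correctly, but the dilation remedy you propose for it fails, so Step 2, and with it (e) and your derivation of (d), is not established. The rest of your outline is sound: Step 1, the lower bound via the reproducing kernel of $H^2$, the normal-family and polarization argument for the off-diagonal case, and the identity $S_n(0,z)=\kappa_n\varphi_n^*(z)$ are all correct. The paper itself gives no proof of this proposition; it only cites Grenander--Szeg\H{o} and Simon.

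\textbf{Why the dilation fails.} Your test functions built from $1/D_r$ give at best (normalized convention)
\[
\limsup_{n\to\infty}\lambda_n(\xi,f)\le |D(r\xi)|^2(1-|\xi|^2)^2\,\frac1{2\pi}\int_{-\pi}^{\pi}\frac{f(t)}{|D(re^{it})|^2}\,\frac{dt}{|1-\bar\xi e^{it}|^2}.
\]
On the circle, $f/|D_r|^2=\exp(\log f-P_r*\log f)$, where $P_r*$ is the normalized Poisson integral. This ratio tends to $1$ a.e., but Fatou's lemma only gives $\liminf_{r\uparrow1}\ge$, which is the wrong direction. The integrals need not converge to the right value, because $\log f-P_r*\log f$ is very large on narrow windows where $\log f$ is high relative to its local average.

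Here is a concrete weight. Let $\log f=-M_j$ on disjoint arcs $I_j$ of length $\epsilon_j=10^{-j}$, except on a central sub-arc of length $\eta_j=2\pi e^{-M_j/4}$ where $\log f=0$. Take $M_j=5^j$, and set $\log f=0$ off $\bigcup_j I_j$. Then:
\begin{itemize}
\item $f\le1$ and $\log f\in L^1$.
\item For every $r$ near $1$, some $j$ satisfies $\eta_j\ll 1-r\le\epsilon_j$, so $P_r*\log f\le -M_j/4$ on the $j$th window.
\item $\log f-P_r*\log f\ge0$ wherever $\log f=0$.
\end{itemize}
Hence $\frac1{2\pi}\int f|D_r|^{-2}\,dt\ge 3/2$ for all $r$ close to $1$. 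So $\|D/D_r\|_{H^2}\not\to1$, and your bound is off by a fixed factor already at $\xi=0$.

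\textbf{How to close the gap.} Replace dilation by monotonicity in the weight. By Proposition \ref{R2}(b) (or \ref{L0-M}), $\lambda_n(\xi,\mu)\le\lambda_n(\xi,\mu+\varepsilon\,d\lambda)$. For $g=f+\varepsilon$ you have $|1/D_g|\le\varepsilon^{-1/2}$ in $\mathbb{D}$. So the Fej\'er means of $\psi=D_g(\xi)(1-|\xi|^2)/(D_g(z)(1-\bar\xi z))$, renormalized at $\xi$, are uniformly bounded polynomials converging to $\psi$ a.e.\ on $\mathbb{T}$ and at $\xi$. Dominated convergence then gives the sharp bound $(1-|\xi|^2)|D_g(\xi)|^2$ for $g\,d\lambda$.

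Your device for $\mu_s$ then applies, but only if the multipliers are uniformly bounded and tend to $1$ Lebesgue-a.e. Requiring merely $\pi(\xi)=1$ and $\int|\pi|^2d\mu_s$ small is not enough, since the product could spoil the absolutely continuous part. Finally, $|D_{f+\varepsilon}(\xi)|^2\downarrow|D_f(\xi)|^2$ as $\varepsilon\downarrow0$ by monotone convergence.

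Within this paper you could instead simply quote Proposition \ref{SGT}. Taking real parts in \eqref{Szf} gives $G(\xi,f)=|D(f,\xi)|^2$, so that proposition is exactly the diagonal limit you need.

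\textbf{Two smaller remarks.}
\begin{itemize}
\item Part (d) needs only the case $\xi=0$, which you already have from Theorem \ref{th34}(a): $\frac1{2\pi}\int|\varphi_n^*D-1|^2\,dt\le 2-2\kappa_nD(0)\to0$.
\item Your substitution $z\mapsto1/\bar z$ yields $z^{-n}\varphi_n(z)\to1/\overline{D(1/\bar z)}$, i.e.\ an error $o(|z|^n)$. It does not yield the $o(1)$ written in \eqref{b4}, and that form is false in general for $|z|>1$.
\end{itemize}
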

The relations \eqref{b3} -- \eqref{b5}, known as Szeg\H{o}'s asymptotic formulas, are valid either inside or outside the unit circle \(\mathbb{T}\). However, when estimating the unknown mean of a stationary process, we set \(\xi = 1\). Therefore, we require asymptotic formulas that are specifically applicable to the unit circle \(\mathbb{T}\).

A useful tool for this purpose is presented in the next two propositions, known as Szeg\H{o}'s 
asymptotic formulas on the unit circle (see Szeg\H{o} \cite{Sz}, pp. 280, 297, and 298).

\begin{pp}
	\label{SzL}
	Let $f(\la)$ be a positive weight function on the unit circle $\mathbb{T}$,
	which satisfies the Lipschitz-Dini condition:
	\beq
	\label{b4-loc-1}
	|f(\la+\de)-f(\la)|<C|\ln\de|^{-1-\al},
	\eeq
	where $C$ and $\al$ are fixed positive numbers. Then we have, for $|z|=1$
	\beq
	\label{b4-loc}
	\I_n(z) =\frac{z^n}{\ol D(f,1/z)} + \vs_n(z),
	\eeq
	where $\vs_n(z)=o(1)$ as $n\to\f$ uniformly for $|z|=1$.
\end{pp}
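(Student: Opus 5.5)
The plan is to reduce the statement to the exact Bernstein--Szeg\H{o} situation, where the formula holds with no error term, and then pass to general $f$ by a uniform comparison. It suffices to prove the reversed form $\I_n^*(z)=1/D(f,z)+o(1)$ uniformly on $|z|=1$. Indeed, for $|z|=1$ we have $1/\bar z=z$, so $\I_n(z)=z^n\,\ol{\I_n^*(z)}$ and $\ol D(f,1/z)=\ol{D(f,z)}$; the stated formula \eqref{b4-loc} then follows by conjugating and multiplying by $z^n$.

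\emph{Step 1 (regularity of the Szeg\H{o} function).} Since $f$ is positive and continuous on $\mathbb{T}$, it is bounded between two positive constants. Hence $\log f$ satisfies the same Lipschitz--Dini bound \eqref{b4-loc-1}, with a different constant. The harmonic conjugate loses one logarithm, so I would show, via the classical Privalov--Zygmund estimate for the conjugate function, that $\widetilde{\log f}$ is continuous with modulus $O(|\ln\de|^{-\al})$. By \eqref{Szf}, $D(f,\cdot)$ and $1/D(f,\cdot)$ then extend continuously to $\ol{\mathbb{D}}$, are nonvanishing there, and have modulus of continuity $\om(\de)=O(|\ln\de|^{-\al})$. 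Two consequences follow:
\begin{itemize}
\item by the Dini--Lipschitz test, the Taylor partial sums $s_m(z)$ of $1/D(f,z)$ converge to $1/D$ uniformly on $\mathbb{T}$;
\item by Jackson's theorem, $\|1/D-s_m\|_\f$ and the best uniform approximation errors are $O((\ln m)^{-\al})$.
\end{itemize}

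\emph{Step 2 (Bernstein--Szeg\H{o} comparison).} For large $m$, the polynomial $s_m$ has no zeros in $\ol{\mathbb{D}}$, by uniform closeness to $1/D$. Define $g_m:=|s_m|^{-2}$. For the weight $g_m$, the Bernstein--Szeg\H{o} formula gives exactly $\I_n^*(z,g_m)=s_m(z)/s_m(0)\cdot(\ldots)$, namely $\I_n^*(\cdot,g_m)=1/D(g_m,\cdot)$ for $n\ge m$. Moreover $\|f/g_m-1\|_\f=\|f|s_m|^2-1\|_\f=\vs_m\to0$. So it remains to compare $\I_n^*(\cdot,f)$ with $\I_n^*(\cdot,g_m)$ when the two weights are uniformly close in ratio. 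The extremal characterization, namely $\I_n^*/\kappa_n$ minimizing $\|q\|_\mu$ over $q(0)=1$ together with the relation \eqref{opp}, gives $L^2(g_m)$-closeness of the two normalized polynomials of order $\sqrt{\vs_m}$. The constant terms $\kappa_n(f)$ and $\kappa_n(g_m)$ are controlled by \eqref{b3}.

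\emph{Step 3 (from $L^2$ to uniform; main obstacle).} Converting the $L^2$ bound of Step 2 into a sup bound on $\mathbb{T}$ is the crux. The naive Nikolskii inequality $\|P\|_\f\le\sqrt{n+1}\,\|P\|_2$ loses a factor $\sqrt n$, while the Lipschitz--Dini approximation rate $(\ln n)^{-\al}$ is far too weak to absorb it. I would instead work with the reproducing kernel $S_n$ from \eqref{op8}. Writing
\[
\I_n^*(z,f)\ol{\I_n^*(0,f)}=S_n(0,z;f),
\]
I would express the difference $S_n(0,z;f)-S_n(0,z;g_m)$ through the reproducing property \eqref{rk01} as $\int S_n(\cdot,z;f)\,\ol{S_n(\cdot,0;g_m)}\,(g_m-f)\,d\la$. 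Here $S_n(\cdot,0;g_m)$ is uniformly bounded, being essentially $1/D$. The weight $g_m-f$ must then be handled not by its sup norm alone but by splitting it into its projection onto low frequencies plus a remainder with small Fourier tail; this is where the extra logarithmic room $1+\al>1$ in \eqref{b4-loc-1} is spent. This is also the step where I would follow Szeg\H{o}'s original argument in \cite{Sz} most closely. Finally, I would choose $m=m(n)\to\f$ suitably slowly, so that all the errors tend to $0$ uniformly in $z\in\mathbb{T}$.
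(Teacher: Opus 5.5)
The paper gives no proof of this proposition; it only cites Szeg\H{o}. Your argument therefore has to stand on its own, and it does not yet. The reduction to $\I_n^*$, the Bernstein--Szeg\H{o} comparison and the kernel identity are all sound, and the $L^2$ estimate of Step 2 is correct. But Step 3, which you rightly call the crux, is not carried out, and in the form you set it up it cannot close.

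\textbf{The gap in Step 3.} In your identity the \emph{unknown} kernel $S_n(\cdot,z;f)$ is the factor that must be integrated in $L^1(d\la)$ against $g_m-f$. The only a priori bound is $\int|S_n(e^{i\la},z;f)|\,d\la\le C\sqrt{S_n(z,z;f)}\le C\sqrt n$. Improving this to $O(\ln n)$ via Christoffel--Darboux requires $\sup_n\max_{\mathbb{T}}|\I_n(\cdot,f)|<\f$, which is essentially the conclusion you are trying to prove. Splitting $g_m-f$ by frequency does not address this. The obstruction lies in $S_n(\cdot,z;f)$, about which you only know that its $L^2$ norm is of order $\sqrt n$, while every approximation rate available under a Lipschitz--Dini hypothesis is only logarithmic.

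\textbf{The comparison weight is also too weak.} Approximating $1/D$ spends the extra logarithm on the conjugate function. Since $1/D$ only has modulus of continuity $O(|\ln\de|^{-\al})$, your construction only gives $\|f/g_m-1\|_\f=O((\ln m)^{-\al})$, which is not $o(1/\ln m)$ when $\al\le1$. For the same reason, the Dini--Lipschitz test you invoke in Step 1 needs $\om(\de)\ln(1/\de)\to0$. It applies to the Taylor sums of $1/D$ only when $\al>1$; for $\al\le1$ you would need de la Vall\'ee Poussin means instead.

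\textbf{The missing idea: swap the kernels.} Put the \emph{explicit} kernel in the $L^1$ slot. Write $K_n^w(\zeta,\xi)=\sum_{k\le n}\I_k(\zeta;w)\ol{\I_k(\xi;w)}$, and recall $K_n^w(\zeta,0)=\kappa_n(w)\I_n^*(\zeta;w)$. The same reproducing argument gives
\[
\kappa_n(f)\I_n^*(z;f)-\kappa_n(g)\I_n^*(z;g)=\kappa_n(f)\int_{-\pi}^{\pi}\I_n^*(e^{i\la};f)\,\ol{K_n^{g}(e^{i\la},z)}\,\big(g(\la)-f(\la)\big)\,d\la .
\]
Take a Bernstein--Szeg\H{o} weight $g=|s|^{-2}$, with $s$ zero-free on $|z|\le1$ and $\deg s\le n$. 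Christoffel--Darboux, together with $K_n^g(z,z)\le (n+1)/(2\pi\min g)$, gives $|K_n^g(\zeta,z)|\le\min\{Cn,\,2M^2/|\zeta-z|\}$ on $\mathbb{T}$. Here $M$ is an explicit bound for $|\I_{n+1}(\cdot;g)|$. Hence $\int|K_n^g(e^{i\la},z)|\,d\la=O(\ln n)$ uniformly in $z$.

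Setting $a_n=\max_{\mathbb{T}}|\I_n^*(\cdot;f)|$, you then get the \emph{linear} inequality $a_n\le C_1+C_2\|f-g\|_\f(\ln n)\,a_n$. This yields uniform boundedness of the $\I_n$ for free, and then a uniform error $O(\|f-g\|_\f\ln n)$.

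\textbf{Choosing the comparison weight.} To make this error tend to zero, approximate $1/f$ itself, not $1/D$. Take a positive trigonometric polynomial $t_n$ of degree $n$ with $\|1/f-t_n\|_\f=O((\ln n)^{-1-\al})$, for instance the de la Vall\'ee Poussin mean. Factor $t_n=|s_n|^2$ by Fej\'er--Riesz and set $g_n=t_n^{-1}$. This is exactly where the exponent $1+\al$ is spent.

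The price is that $D(g_n)\to D(f)$ uniformly on $\mathbb{T}$ is no longer automatic, as it was in your construction. It follows from the conjugate-function bound $\|\tilde u\|_\f\le C\big(\int_0^{\de}\om(u,t)\,t^{-1}dt+\|u\|_\f\ln(1/\de)\big)$. Apply it to $u=\ln(ft_n)$, whose modulus is $O(|\ln t|^{-1-\al})$ uniformly in $n$, with $\de=1/n$. This gives $O((\ln n)^{-\al})$.
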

The next proposition provides local sufficient conditions for \eqref{b4-loc}.
\begin{pp}
	\label{SzLC}
	Let $f(\la)$ be a Riemann integrable function, and let it have the form:
	\beq
	\label{b4-loc-2}
	f(\la)= h(\la)\left|\prod_{k=1}^m(z-z_k)^{\be_k}\right|,\q z=e^{i\la},
	\eeq
	where $0<c\leq h(\la)\leq C$ and $z_k=e^{i\la_k}$ are distinct points on the unit circle, $\be_k>0$, $k=1,\ldots,m$. 
	Assume that $f(\la)$ is differentiable at the fixed point $\la=a$, 
	$\xi=e^{ia}\neq z_k$, $k=1,\ldots,m$, and let the following be bounded near $\la=a$:
	$$
	\frac{f(\la)-f(a)-f'(a)(\la-a)}{(\la-a)^2}.
	$$
Then the relation \eqref{b4-loc} holds for $z=\xi=e^{ia}$ in the less informative form:
\beq
\label{b4-loc-3}
\I_n(\xi) =\frac{\xi^n}{\ol D(f,1/\xi)} + \vs_n,
\eeq
where  $\vs_n(z)=o(1)$ as $n\to\f$.
\end{pp}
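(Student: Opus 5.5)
The plan is to derive the pointwise statement at the single point $\xi=e^{ia}$ from Szeg\H{o}'s global $L^2$ asymptotics, together with a localization argument that uses only the regularity of $f$ near $\la=a$. Throughout I work with the reversed polynomials $\I_n^*(z)=z^n\ol{\I_n(1/\ol z)}$. Since $|\xi|=1$, the relation \eqref{b4-loc-3} is equivalent to $\I_n^*(\xi)\to 1/D(f,\xi)$, where $D(f,\xi)$ is the boundary value of the Szeg\H{o} function \eqref{Szf}. This boundary value exists and is finite and nonzero. Indeed, $f$ is bounded away from $0$ and $\f$ near $a$ (since $\xi\neq z_k$ and $c\le h\le C$), and $f$ is differentiable at $a$. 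The conjugate-function part of $\log D$ therefore converges at $\la=a$ by a Dini-type argument.

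\emph{Step 1 (global input).} Because $f$ is Riemann integrable and has only finitely many algebraic zeros with $\be_k>0$, the Szeg\H{o} condition \eqref{S} holds. By the standard Szeg\H{o} theory underlying Proposition \ref{pp77}, I will use
$$\lim_{n\to\f}\inl\left|\I_n^*(e^{i\la})D(f,e^{i\la})-1\right|^2d\la=0.$$
Via \eqref{b2}, this says that $\I_n^*$ is close to $1/D$ in $L^2(f\,d\la)$. This alone does not give convergence at a point.

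\emph{Step 2 (localization at $\xi$).} To pass to the point $\xi$, I will use orthogonality. The polynomial $\I_n^*$ is orthogonal in $L^2(f)$ to $z,z^2,\dots,z^n$. Hence for any polynomial $q$ of degree $\le n$ with $q(0)=0$ we have $\int \I_n^*\,\ol q\, f\,d\la=0$. Split $f(\la)=f(a)+(f(\la)-f(a))$, and write the second term as $(e^{i\la}-\xi)g(\la)$. By differentiability at $a$, $g$ is bounded near $a$. Elsewhere $g$ is integrable, because $|e^{i\la}-\xi|$ is bounded below away from $a$ and $f$ is integrable. The hypothesis that $[f(\la)-f(a)-f'(a)(\la-a)]/(\la-a)^2$ is bounded gives more: $g(\la)-g(a)=O(|\la-a|)$ near $a$, so $g$ is Lipschitz at $a$. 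I will then test orthogonality against a Fej\'er-type kernel peaked at $\xi$, namely suitable polynomials $q$ built from the Dirichlet/Fej\'er kernel centred at $\xi$ (see Section \ref{FSI}). In the resulting identity, the $f(a)$ part reproduces essentially $\I_n^*(\xi)$. The $g$ part is a Fourier-coefficient-type integral. The Lipschitz behaviour of $g$ at $a$ and the Riemann--Lebesgue lemma show that this part is $o(1)$ plus a term controlled by the $L^2$ error of Step 1. The same computation applied to $1/D$ in place of $\I_n^*$ yields the limit $1/D(f,\xi)$.

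\emph{Step 3 (conclusion).} Subtracting the two identities gives $|\I_n^*(\xi)-1/D(f,\xi)|\le C\|\I_n^*D-1\|_{2}+o(1)$. Step 1 then gives $\I_n^*(\xi)\to 1/D(f,\xi)$. Taking conjugates and multiplying by $\xi^n$ yields \eqref{b4-loc-3}.

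The main obstacle is Step 2. The global $L^2$ convergence must be converted into convergence at one point. The difficulty is that the Fej\'er-type test kernels have mass spread at scale $1/n$, and the error terms must be shown to vanish rather than merely stay bounded. This is where both regularity hypotheses on $f$ at $a$ (differentiability and the bounded second-order difference quotient) are used. The separation $\xi\neq z_k$ is also needed, so that the zeros of $f$ contribute only to the globally integrable part of $g$. If the kernel argument turns out to be too lossy, my fallback is Szeg\H{o}'s original route: compare $f$ with a smooth weight $\tilde f$ that agrees with $f$ to second order at $a$ and to which Proposition \ref{SzL} applies. I would then estimate $\I_n^*(\xi;f)-\I_n^*(\xi;\tilde f)$ via the extremal property \eqref{L5s} of Christoffel functions.
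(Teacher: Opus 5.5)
The paper gives no proof of this proposition; it quotes it from Szeg\H{o}'s book. Your argument therefore has to stand on its own, and it has a genuine gap exactly where the zeros $z_k$ enter.

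\emph{The identity itself is fine.} Use the Dirichlet polynomial $q(z)=\sum_{j=1}^{n}\bar\xi^{j}z^{j}$ rather than a Fej\'er-type one. Since $\overline{q(e^{i\lambda})}\,(e^{i\lambda}-\xi)=\xi-\xi^{n+1}e^{-in\lambda}$ and $\varphi_n^*(0)=\kappa_n$, orthogonality gives the exact identity
\[
2\pi f(a)\bigl(\varphi_n^*(\xi)-\kappa_n\bigr)=-\xi\int_{-\pi}^{\pi}\varphi_n^*\,g\,d\lambda+\xi^{n+1}\int_{-\pi}^{\pi}\varphi_n^*\,e^{-in\lambda}g\,d\lambda .
\]
So the ``$1/n$-scale'' difficulty you single out does not arise.

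\emph{The failing step is your claim that the $g$-terms are controlled by the $L^2$ error of Step 1.} Step 1 controls $\varphi_n^*-1/D$ only in $L^2(f\,d\lambda)$. Hence the bound $\bigl|\int(\varphi_n^*-1/D)g\,d\lambda\bigr|\le\|\varphi_n^*D-1\|_2\,\|g/D\|_2$ is useful only if $\int|g|^2f^{-1}\,d\lambda<\infty$. Near $z_k$ you have $g\to -f(a)/(z_k-\xi)\neq0$, while $f\le C'|\lambda-\lambda_k|^{\beta_k}$. So this integral diverges as soon as some $\beta_k\ge1$, and the constant $C$ in your Step 3 is $+\infty$. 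Integrability of $g$, which is what you check, is not the relevant property. Similarly, if some $\beta_k\ge2$ then $|1/D|=f^{-1/2}\notin L^1$, so ``the same computation applied to $1/D$'' is not even defined. As written, the argument covers only the case $1/f\in L^1$, i.e.\ all $\beta_k<1$.

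\emph{The missing idea is to make the test polynomial vanish at the zeros.} This is where $\xi\neq z_k$ is really used.
\begin{itemize}
\item Take $P(z)=\prod_k(z-z_k)^{N_k}$ with integers $2N_k\ge\beta_k$, and set $N=\sum_kN_k$. Then $|P|^2\le Cf$ and $P(\xi)\neq0$. Test against $q=P\sum_{j=1}^{n-N}\bar\xi^{j}z^{j}$.
\item Now $\bar P g/D\in L^2$: near $a$, $g$ is bounded and $f$ is bounded below; elsewhere $|g|\lesssim f+f(a)$ and $|P|^2/f$ is bounded. So the $g$-terms converge by Step 1 and the Riemann--Lebesgue lemma.
\item The $f(a)$-term equals $2\pi f(a)$ times $\overline{P(\xi)}\varphi_n^*(\xi)$, minus the contribution of the $2N+1$ Fourier coefficients of $\varphi_n^*\bar P$ with frequencies in $[-N,0]\cup[n-N+1,n]$. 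These converge. The lowest Taylor coefficients of $\varphi_n^*$ tend to those of $1/D$. The top ones are conjugates of the lowest ones of $\varphi_n$, which tend to $0$ because $\sum_n|\varphi_n(z)|^2<\infty$ locally uniformly in $\mathbb{D}$.
\item For the identification, use that $1/D$ is orthogonal in $L^2(f)$ to every bounded analytic function vanishing at $0$. Test $1/D$ against $P\,\rho\bar\xi z/(1-\rho\bar\xi z)$ and let $\rho\uparrow1$ by dominated convergence, noting $\bar P/D\in L^\infty$. This also shows that the radial limit of $1/D$ at $\xi$ exists.
\end{itemize}
In this route only boundedness of $(f(\lambda)-f(a))/(\lambda-a)$ near $a$ is used; the second-order condition plays no role.

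\emph{Your fallback would not repair the gap.} The extremal property \eqref{L5s} controls $\sum_k|\varphi_k(\xi)|^2$, not $\varphi_n^*(\xi)$ with its phase. Moreover, $D(f,\xi)$ depends on $f$ globally, so a weight that agrees with $f$ only near $a$ has a different limit.
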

The next proposition provides an estimate of the orthogonal polynomials corresponding 
to the Fisher--Hartwig weight, which has a singularity at a single point
(see Golinskii \cite{Gol-3}).
\begin{pp}
	\label{Gol-1}
Let $f(\la)$ be a weight function on the unit circle $\mathbb{T}$ defined as:
	\beq
	\label{Bgol-1}
	f(\la)=|e^{i\la}-e^{i\la_0}|^{2\al}g(\la), \q \al>0,
	\eeq
where $g(\la)$ is  positive and satisfies the conditions: $g, g^{-1}\in L^1(\mathbb{T})$, and  $|g(\la)-g(\la_0)|\leq C|\la-\la_0|$
for $|\la-\la_0|\leq\de$. 
Then, for the orthogonal polynomials $\I_n$ corresponding to $f$, the following estimate holds:
	\beq
	\label{Bgol-2}
	|\I_n(e^{i\la},f)| \simeq n^\al\q n\in \mathbb{N}.
	\eeq
\end{pp}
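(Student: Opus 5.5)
The plan is to reduce the pointwise estimate at $\xi=e^{i\la_0}$ to two-sided bounds for the Christoffel function $\la_n(\xi,\mu)$ of \eqref{L5s}, where $d\mu=f\,d\la$, and then to pass from the averaged quantity $S_n(\xi,\xi)$ to the individual $|\I_n(\xi)|$. (I read \eqref{Bgol-2} as an estimate at the singular point $\la=\la_0$; away from $\la_0$ Proposition \ref{SzLC} already gives boundedness.) By the reproducing property \eqref{rk01} and Cauchy--Schwarz, the minimiser in \eqref{L5s} is $S_n(\cdot,\xi)/S_n(\xi,\xi)$. Hence
\[
\la_n(\xi,\mu)=\frac1{S_n(\xi,\xi)}=\Big(\sum_{k=0}^n|\I_k(\xi)|^2\Big)^{-1}.
\]
The first goal is therefore $\la_n(\xi,\mu)\simeq n^{-2\al-1}$.

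For the upper bound on $\la_n$ I will insert a test polynomial into \eqref{L5s}. Take $q_n(z)=K_m(z\bar\xi)^p/K_m(1)^p$, where $K_m$ is the Fej\'er kernel written as a polynomial of degree $m\approx n/(2p)$ (shifted by $z^m$ to be analytic), and $p>\al+1$ is a fixed integer. Then $q_n(\xi)=1$ and $|q_n(e^{i\la})|\lesssim\min\{1,(n|\la-\la_0|)^{-2p}\}$. On $|\la-\la_0|\le\de$ the Lipschitz condition makes $g$ bounded, so this part of $\int|q_n|^2f$ is $\lesssim\int\min\{1,(n|t|)^{-4p}\}|t|^{2\al}dt\simeq n^{-2\al-1}$. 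On the complement, $|q_n|\lesssim n^{-2p}$ and $g\in L^1$, so that part is $O(n^{-4p})$, which is negligible.

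For the lower bound, take any $q\in\mathcal Q_n(\xi)$. On the arc $|\la-\la_0|\le c/n$ we have $f\simeq|\la-\la_0|^{2\al}$, since $g(\la_0)>0$ and $g$ is Lipschitz there. I will bound $1=|q(\xi)|^2$ by a Nikolskii-type inequality for polynomials with respect to the doubling weight $|t|^{2\al}$ on the arc (or, alternatively, by comparison with the pure Jacobi weight $|z-\xi|^{2\al}$, whose Christoffel function is explicitly computable via Gegenbauer-type formulas), giving $\int_{|t|\le c/n}|q|^2f\gtrsim n^{-2\al-1}$. If the local Nikolskii argument is awkward, a fallback is to compare $\mu$ with $\mu'=|z-\xi|^{2\al}\,h\,d\la$, where $h\ge c>0$ is a bounded modification of $g$. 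The hypothesis $g^{-1}\in L^1$, which gives the Szeg\H{o} condition via $\log g\in L^1$, would then be used through Proposition \ref{pp77} to control the contribution away from $\la_0$.

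The two bounds together give $S_n(\xi,\xi)\simeq n^{2\al+1}$. The main obstacle is converting this into $|\I_n(\xi)|\simeq n^{\al}$, since a sum bound alone does not forbid oscillation of the individual terms. My first attempt is to use the Szeg\H{o} recursion $\Phi_{n+1}^*(z)=\Phi_n^*(z)-\bar a_n z\Phi_n(z)$, where $a_n$ are the Verblunsky coefficients (notation chosen to avoid clashing with the exponent $\al$). Together with $|\Phi_n(\xi)|=|\Phi_n^*(\xi)|$ on $\mathbb{T}$ and $\kappa_n\to\kappa$ from \eqref{b3}, this reduces the problem to showing
\[
a_k=-\frac{\al\,\bar\xi^{\,k+1}}{k+1}+\eta_k,\qquad \sum_k|\eta_k|<\f .
\]
That expansion yields $\prod_{k\le n}|1-\bar a_k\xi\Phi_k(\xi)/\Phi_k^*(\xi)|\simeq\prod_{k\le n}(1+\al/k)\simeq n^\al$, provided the phase $\xi\Phi_k(\xi)/\Phi_k^*(\xi)$ aligns with $\xi^{k+1}$ up to summable errors.

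If the Verblunsky asymptotics prove too delicate under only a one-point Lipschitz hypothesis, the fallback is a regularity argument at the level of sums. Using the Christoffel--Darboux identity on $\mathbb{T}$ in its differentiated form, I would show that $|\I_n(\xi)|^2$ is comparable to $S_n(\xi,\xi)-S_{m}(\xi,\xi)$ for $m=\lfloor n/2\rfloor$ up to a factor $1/n$. The two-sided sum bounds with different constants then give $|\I_n(\xi)|^2\simeq n^{2\al}$ once the constants are sharpened to true asymptotics $S_n(\xi,\xi)\sim Cn^{2\al+1}$ by the comparison with the Jacobi weight.
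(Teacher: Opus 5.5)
The paper gives no proof of this proposition; it is quoted from Golinskii. Your argument therefore has to stand on its own. Your test-polynomial bound $\la_n(\xi,\mu)\lesssim n^{-2\al-1}$ is fine, but the rest has two genuine gaps.

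\emph{The lower bound $\la_n(\xi,\mu)\gtrsim n^{-2\al-1}$.} The claimed inequality $\int_{|t|\le c/n}|q(e^{i(\la_0+t)})|^2f(\la_0+t)\,dt\gtrsim n^{-2\al-1}$ for all $q\in\mathcal{Q}_n(\xi)$ is false.
\begin{itemize}
\item Fix $N$, and let $P_N$ minimize $\int_{-c}^{c}|P(s)|^2|s|^{2\al}ds$ over polynomials of degree $N$ with $P(0)=1$. This minimum tends to $0$ as $N\to\f$.
\item Put $q(z)=P_N(-in(\bar\xi z-1))$. Then $q\in\mathcal{Q}_n(\xi)$ for $n\ge N$, and $q(e^{i(\la_0+s/n)})=P_N(s+O(s^2/n))\to P_N(s)$ uniformly on $|s|\le c$.
\item Hence for large $n$ the local integral is $\simeq n^{-2\al-1}\int_{-c}^c|P_N|^2|s|^{2\al}ds$, an arbitrarily small multiple of $n^{-2\al-1}$.
\end{itemize}
A degree-$n$ polynomial can resolve scales much finer than $1/n$ on a short arc, at the price of being huge off it, so the global norm must enter.

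Your alternatives do not close this. Comparison with the pure Jacobi weight, or with $|z-\xi|^{2\al}h$ for some $h\ge c>0$, needs $g\ge c'h$ on all of $\mathbb{T}$, or else a localization theorem for Christoffel functions; neither is available. Proposition~\ref{pp77} concerns asymptotics off the circle and gives no control here.

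The step can be repaired by duality, which is where $g^{-1}\in L^1$ really enters.
\begin{itemize}
\item For $\al<1/2$, write $q(\xi)=\frac1{2\pi}\int q(e^{i(\la_0+t)})\overline{V(t)}\,dt$ with a de la Vall\'ee-Poussin kernel $V$ reproducing degree $\le n$, so that $|V(t)|\lesssim\min(n,n^{-1}t^{-2})$.
\item Apply Cauchy--Schwarz in $L^2(f)\times L^2(1/f)$. One checks $\int|V(t)|^2/f(\la_0+t)\,dt\lesssim n^{2\al+1}$; the part $|t|\ge\de$ is $O(n^{-2})$ precisely because $g^{-1}\in L^1$.
\item For $\al\ge 1/2$, do the same with the $k$th $t$-derivative at $0$ of $Q(\xi e^{it})$, where $Q=(z-\xi)^kq$ and $\al-k<1/2$.
\end{itemize}

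\emph{The passage from $S_n(\xi,\xi)\simeq n^{2\al+1}$ to $|\I_n(\xi)|\simeq n^{\al}$.} This is the actual content of the proposition, and neither mechanism you propose works under the stated hypotheses.

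First, the Verblunsky route. Off a neighbourhood of $\la_0$ only $g^{\pm1}\in L^1$ is assumed, so in general one knows little beyond $\sum|a_k|^2<\f$. The expansion $a_k=-\al\bar\xi^{k+1}/(k+1)+\eta_k$ with $\sum|\eta_k|<\f$ already fails for $g(\la)=|e^{i\la}+e^{i\la_0}|^{2\be}$, $0<|\be|<1/2$: the second singularity adds an oscillating term of exact order $1/k$. Moreover, the required alignment of the phases $\xi\Phi_k(\xi)/\Phi_k^*(\xi)$ is an unproved claim of the same depth as the proposition.

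Second, the Christoffel--Darboux fallback is circular. Differentiating the CD formula on $\mathbb{T}$ gives exactly
\[
S_n(\xi,\xi)=|\I_{n+1}(\xi)|^2\sum_j\frac{1-|z_j|^2}{|\xi-z_j|^2},
\]
where the sum runs over the zeros $z_j$ of $\I_{n+1}$; it is the derivative of the argument of the Blaschke product $\I_{n+1}/\I_{n+1}^*$. So your comparability claim is equivalent to this sum being $\simeq n$. That is quantitative control of the zeros of $\I_{n+1}$ within $O(1/n)$ of $\xi$, which you do not provide.

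Sharpening to $S_n(\xi,\xi)\sim Cn^{2\al+1}$ would not help either. Asymptotics of partial sums say nothing about the increments $|\I_n(\xi)|^2=S_n(\xi,\xi)-S_{n-1}(\xi,\xi)$ without extra regularity in $n$. For example, you would need $|\I_{k+1}(\xi)|/|\I_k(\xi)|$ uniformly close to $1$ for $n/K\le k\le n$. That would follow from $|a_k|\lesssim 1/k$, which is again unavailable here.
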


\sn{Solution of Szeg\H{o}'s extremum problem}

The following  proposition presents the solution to part (a) of Szeg\H{o}'s extremum problem (see, e.g., 
Grenander and Szeg\H{o} \cite{GS}, Section 2.2, Nikishin and Sorokin \cite{NS}, Section 3.6, and Simon \cite{Sim1}, p. 124).
\begin{pp}
	\label{L0}
	Let $\mu$ be a nontrivial measure on the unit circle $\mathbb{T}$ 
	of the form \eqref{di1}. Let the set of polynomials $\mathcal{Q}_n(\xi)$ and the kernel $S_n(z,\xi)$
	be defined as in \eqref{L5Q} and \eqref{op8}, respectively. The polynomial
	\beq
	\label{op16}
	p_n(z)=\frac{S_n(z,\xi)}{S_n(\xi,\xi)}
	\eeq
	is the unique solution to the extremum problem in \eqref{L5s}, and the minimum is 
	equal to $S_n^{-1}(\xi,\xi)$.
	Specifically, the following relation holds:
	\beq
	\label{op17}
		\la_n(\xi,\mu):=\min_{q_n\in \mathcal{Q}_n(\xi)}||q_n||^2_\mu=||p_n||^2_\mu
	=\frac{1}{S_n(\xi,\xi)}.
	\eeq
\end{pp}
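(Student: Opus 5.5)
The plan is to expand an arbitrary competitor polynomial in the orthonormal basis, turn the minimization into a finite-dimensional least-norm problem, and settle it with the Cauchy--Schwarz inequality. Let $H_n(\mu)$ be the space of polynomials of degree at most $n$, viewed inside $L^2(\mu)$. Since $\mu$ is nontrivial (infinite support), no nonzero polynomial has zero $L^2(\mu)$-norm. Hence the orthonormal polynomials $\I_0,\ldots,\I_n$ exist and form an orthonormal basis of the $(n+1)$-dimensional space $H_n(\mu)$.

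First, any $q_n\in\mathcal{Q}_n(\xi)$ can be written as $q_n=\sum_{k=0}^n a_k\I_k$ with $a_k=(q_n,\I_k)_\mu$. Then
\[
\|q_n\|_\mu^2=\sum_{k=0}^n|a_k|^2 \quad\text{and}\quad 1=q_n(\xi)=\sum_{k=0}^n a_k\I_k(\xi).
\]
Applying Cauchy--Schwarz to the constraint gives
\[
1\le \Big(\sum_{k=0}^n|a_k|^2\Big)\Big(\sum_{k=0}^n|\I_k(\xi)|^2\Big)=\|q_n\|_\mu^2\, S_n(\xi,\xi).
\]
So every admissible polynomial satisfies $\|q_n\|_\mu^2\ge S_n(\xi,\xi)^{-1}$. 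Here I note that $S_n(\xi,\xi)\ge|\I_0(\xi)|^2>0$, because $\I_0$ is a positive constant.

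Second, equality in Cauchy--Schwarz holds exactly when $a_k=c\,\ol{\I_k(\xi)}$ for some constant $c$. The constraint then forces $c=1/S_n(\xi,\xi)$. This identifies the unique minimizer as
\[
q_n(z)=\frac{\sum_{k=0}^n\ol{\I_k(\xi)}\I_k(z)}{S_n(\xi,\xi)}=\frac{S_n(\xi,z)}{S_n(\xi,\xi)},
\]
which is the polynomial in \eqref{op16}. It is admissible, since it has degree at most $n$ and value $1$ at $\xi$. Its squared norm is $\sum_k|\I_k(\xi)|^2/S_n(\xi,\xi)^2=1/S_n(\xi,\xi)$, which gives \eqref{op17}.

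An equivalent route goes through the reproducing property \eqref{rk01}: one writes $1=q_n(\xi)=(q_n,S_n(\cdot,\xi))_\mu$ and applies Cauchy--Schwarz in $L^2(\mu)$, using $\|S_n(\cdot,\xi)\|_\mu^2=S_n(\xi,\xi)$. I would present the coordinate version and mention this one as a remark.

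The only real care points are minor. The first is the ordering and conjugation convention of the kernel's arguments, since the paper writes both $S_n(z,\xi)$ and $S_n(\xi,z)$; the normalized minimizer is the function $z\mapsto\sum_k\ol{\I_k(\xi)}\I_k(z)$ divided by its value at $\xi$. The second is that uniqueness needs strict positive definiteness of the norm on $H_n(\mu)$, which is exactly where the nontriviality of $\mu$ is used.
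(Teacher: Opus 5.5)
Your proof is correct and follows essentially the paper's argument: expand in the orthonormal basis $\varphi_0,\dots,\varphi_n$, apply the Schwarz inequality to the constraint $\sum_k c_k\varphi_k(\xi)=1$, and exhibit the extremal choice $c_k=\overline{\varphi_k(\xi)}/S_n(\xi,\xi)$. Your version also makes explicit what the paper leaves implicit. You derive uniqueness from the equality case of Cauchy--Schwarz, and you use the nontriviality of $\mu$ to guarantee a genuine orthonormal basis and $S_n(\xi,\xi)>0$.
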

\begin{proof}
Expand any trial polynomial $q_n(z)$ by formula:
\begin{equation}
	\label{Sz3}
	q_n(z) = \sum\limits_{k = 0}^{n}c_k\varphi_k(z).
\end{equation}
Then the normalization condition $q_n(\xi)=1$ says
\begin{equation}
	\label{Sz4}
\sum\limits_{k = 0}^{n}c_k\varphi_k(\xi)=1,
\end{equation}
while
\begin{equation}
	\label{Sz5}
||q_n||^2_\mu=\sum\limits_{k = 0}^{n}|c_k|^2.
\end{equation}

By \eqref{Sz5} and the Schwarz inequality we obtain 
\begin{equation}
	\label{Sz6}
S_n(\xi,\xi)\sum\limits_{k = 0}^{n}|c_k|^2\geq 1.
\end{equation}
On the other hand, for the the choice
\begin{equation}
	\label{Sz7}
c_k=\frac{\ol{\varphi_k(\xi)}}{S_n(\xi,\xi)}
\end{equation}
that is, for $q_n$ given by the right side of \eqref{op16}, we have
\begin{equation}
	\label{Sz8}
||q_n||^2_\mu=\frac{S_n(\xi,\xi)}{S^2_n(\xi,\xi)}=\frac1{S_n(\xi,\xi)},
\end{equation}
and the result follows.
\end{proof}
One immediate useful consequence of Proposition \ref{L0} is the following:
\begin{pp}
	\label{L0-M}
	Let $\mu_1$ and $\mu_2$ be nontrivial measures on the unit circle $\mathbb{T}$ 
	such that $\mu_1\leq\mu_2$, which means that $\mu_2-\mu_1$ is a nonnegative measure. Then
	\beq
	\label{op16-M}
	S_n(z,z,\mu_2)\leq S_n(z,z,\mu_1).
	\eeq
	\end{pp}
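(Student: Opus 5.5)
The plan is to deduce the inequality directly from the variational characterization in Proposition \ref{L0}, specialized to $\xi=z$. Fix $n$ and a point $z\in\mathbb{C}$. Proposition \ref{L0}, applied to each of the nontrivial measures $\mu_1$ and $\mu_2$, gives
\[
\frac{1}{S_n(z,z,\mu_j)}=\la_n(z,\mu_j)=\min_{q_n\in\mathcal{Q}_n(z)}\|q_n\|^2_{\mu_j},\qquad j=1,2.
\]
The admissible class $\mathcal{Q}_n(z)$ is the same for both measures, so I only need to compare the two minima and then take reciprocals.

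First I compare the norms pointwise on the admissible class. For any polynomial $q_n$, the hypothesis that $\mu_2-\mu_1$ is a nonnegative measure gives
\[
\|q_n\|^2_{\mu_2}-\|q_n\|^2_{\mu_1}=\int_{-\pi}^{\pi}|q_n(e^{i\la})|^2\,d(\mu_2-\mu_1)(\la)\ge 0 .
\]
Next I take $q_n=p_n(\cdot,\mu_2)$, the optimal polynomial for $\mu_2$. Since it lies in $\mathcal{Q}_n(z)$, we get
\[
\la_n(z,\mu_1)\le\|p_n(\cdot,\mu_2)\|^2_{\mu_1}\le\|p_n(\cdot,\mu_2)\|^2_{\mu_2}=\la_n(z,\mu_2).
\]

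Finally I invert. Both quantities are strictly positive: $S_n(z,z,\mu)\ge|\I_0(z)|^2=\mu(\mathbb{T})^{-1}>0$, and it is finite. Taking reciprocals therefore reverses the inequality, which yields $S_n(z,z,\mu_2)\le S_n(z,z,\mu_1)$.

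The argument is essentially routine. The only point needing care is that nontriviality of both measures is what allows Proposition \ref{L0} to be applied to each of them: it guarantees that the orthonormal polynomials $\I_0,\dots,\I_n$ exist for each measure, so both kernels are well defined.
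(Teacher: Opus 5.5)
Your proof is correct and matches the argument the paper intends. The paper states Proposition~\ref{L0-M} without proof as an immediate consequence of Proposition~\ref{L0}: combine $1/S_n(z,z,\mu)=\la_n(z,\mu)$ with the monotonicity of $\|q_n\|^2_\mu$ in $\mu$, via the same chain $\|p_n(\mu_2)\|^2_{\mu_1}\le\|p_n(\mu_2)\|^2_{\mu_2}$ used in the proof of Proposition~\ref{R2}(b). Your bound $S_n(z,z,\mu)\ge|\I_0(z)|^2=\mu(\mathbb{T})^{-1}>0$, which justifies taking reciprocals, is a detail the paper leaves implicit.
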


For $\theta,\la\in \Lambda$ and $r\in[0,1)$, define the real Poisson kernel:
\beq
P_r(\theta,\la): = \frac{1-r^2}{1-2r\cos(\la-\theta)+r^2}
\eeq

In the interior of the unit circle in the  $\xi$-plane there is defined a
function $G (\xi,f)$ (cf. \eqref{a2}):
\beq
\label{SGT-1}
G(\xi,f): = \left \{
\begin{array}{ll}
	\exp\left\{\frac1{2\pi}\inl\ln f(\la)P_r(\theta,\la)\,d\la \right\} &
	\mbox{if \, $\ln f \in {L}^1(\mathbb{T})$}\\
	0, & \mbox { otherwise,} \q
\end{array}
\right.
\eeq
where $\xi=r e^{i\theta}$; $|r|<1$.

The next result, known as Szeg\H{o}'s generalized theorem,
provides the solution to part (b) of Szeg\H{o}'s extremum problem
in the unit disk $\mathbb{D}$.
The proof can be found in Grenander and Szeg\H{o} \cite{GS}, 
Section 3.2 (see also Simon \cite{Sim1}, Section 2.3).

\begin{pp}[Szeg\H{o}'s generalized theorem]
	\label{SGT}
	Let $\mu$ be a nontrivial measure on the unit circle $\mathbb{T}$ of the form \eqref{di1}. Then for any fixed $\xi\in \mathbb{D}:=\{z: |z|<1\}$ with $\xi=re^{i\theta}$ the following asymptotic relation holds:
	\begin{equation}
		\label{Sz1}
		\lim_{n\to\f}\la_n(\xi,\mu) =\la(\xi,\mu)=2\pi G(\xi,f)(1-|\xi|^2),
	\end{equation}
where $\la_n(\xi,\mu)$ is the $n^{th}$ Christoffel function defined by \eqref{L5s}.
	In the case when $\ln f \notin {L}^1(\mathbb{T})$, the limit in \eqref{Sz1} must be replaced by zero.	
\end{pp}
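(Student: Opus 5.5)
The plan is to prove the two inequalities
$$\liminf_{n\to\f}\la_n(\xi,\mu)\ \ge\ 2\pi G(\xi,f)(1-|\xi|^2)\quad\text{and}\quad \limsup_{n\to\f}\la_n(\xi,\mu)\ \le\ 2\pi G(\xi,f)(1-|\xi|^2)$$
separately. Since $\{\la_n(\xi,\mu)\}$ is non-increasing, the limit exists and equals $\inf_{q}\|q\|^2_\mu$ over all polynomials $q$ of any degree with $q(\xi)=1$. So both bounds can be formulated for this infimum.

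\emph{Lower bound.} If $\ln f\notin L^1$ there is nothing to prove, so assume the Szeg\H{o} condition \eqref{S}. For $q\in\mathcal{Q}_n(\xi)$ we have $\|q\|^2_\mu\ge\inl|q(e^{i\la})|^2f(\la)\,d\la$, since the singular part only adds mass. Put $F(z)=q(z)D(f,z)$. By Proposition \ref{pp77}(a),(b), $F\in H^2(\mathbb{D})$ with
$$\|F\|^2_{H^2}=\frac1{2\pi}\inl|q|^2f\,d\la .$$
The reproducing kernel of $H^2$ is $(1-\bar\xi z)^{-1}$, so the Cauchy--Schwarz inequality gives $|F(\xi)|^2\le\|F\|^2_{H^2}(1-|\xi|^2)^{-1}$. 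Because $q(\xi)=1$, we have $|F(\xi)|^2=|D(f,\xi)|^2$. Taking real parts in \eqref{Szf} turns the kernel into the Poisson kernel, so $|D(f,\xi)|^2=G(\xi,f)$ by \eqref{SGT-1}. Hence
$$\inl|q|^2f\,d\la\ \ge\ 2\pi G(\xi,f)(1-|\xi|^2)$$
for every admissible $q$, which is the lower bound.

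\emph{Upper bound.} This is the main obstacle, and I would split it into two steps.

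(i) First treat the absolutely continuous part. Equality in the Cauchy--Schwarz step above holds for
$$F^*(z)=\frac{D(f,\xi)\,(1-|\xi|^2)}{1-\bar\xi z},\qquad\text{i.e.}\qquad q^*=F^*/D .$$
The function $q^*$ is not a polynomial, and $1/D$ may fail to be in $H^2$. I would first prove the result for $f$ bounded above and below (truncate with $f_\vs=\max(f,\vs)$, then let $\vs\downarrow0$). In that case $1/D$ is bounded and analytic, so $q^*$ can be approximated uniformly on $\ol{\mathbb{D}}$ by polynomials, for instance by Fej\'er means of a slight dilation, and then renormalised so that the value at $\xi$ is $1$. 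Monotonicity (Proposition \ref{L0-M}) lets us pass to a general $f$: replacing $f$ by $\max(f,\vs)$ only increases $\la_n$. On the other side, $G(\xi,\max(f,\vs))\downarrow G(\xi,f)$ by monotone convergence of $\int\ln\max(f,\vs)\,P_r\,d\la$, including the case where this limit is $-\f$, i.e.\ when \eqref{S} fails.

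(ii) Then remove the singular part $\mu_s$, which is the delicate point. I would take a compact set $K$ of Lebesgue measure zero carrying most of $\mu_s$. Then I would construct polynomials $h$ that satisfy $h(\xi)\approx1$, are uniformly bounded on $\mathbb{T}$, are small on $K$, and are close to $1$ in $L^2(f\,d\la)$. A natural candidate is $h=\exp(-N u)$ polynomially approximated, where $u$ is an outer-type function with large real part on $K$ and small $L^1$ norm. This is the same mechanism that makes \eqref{c013} independent of $\mu_s$ in Theorem \ref{th34}(a).

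Multiplying the near-extremal polynomial from (i) by such an $h$ and renormalising at $\xi$ yields admissible polynomials with
$$\|q\|^2_\mu\le 2\pi G(\xi,f)(1-|\xi|^2)+\vs ,$$
which finishes the upper bound. If constructing $h$ directly proves awkward, I would instead try to transfer the case $\xi=0$ (Theorem \ref{th34}(a)) via the factor $|1-\bar\xi e^{i\la}|^{-2}$.
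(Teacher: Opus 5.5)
You have the right strategy, with two repairs needed in the upper bound. It is not the route behind the citation. The paper gives no proof of its own, and the argument its Remark~\ref{SGTr-2} points to runs differently.

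\emph{Comparison.} In the cited route the lower bound is Jensen's inequality for the probability measure $P_r(\theta,\la)\,d\la/2\pi$: $\frac1{2\pi}\inl|q|^2f\,d\la\ge\exp\{\frac1{2\pi}\inl P_r\ln(|q|^2f/P_r)\,d\la\}\ge G(\xi,f)(1-r^2)$. This uses only $\frac1{2\pi}\inl P_r\ln|q|^2\,d\la\ge\ln|q(\xi)|^2=0$ (subharmonicity) and the identity \eqref{SGT-100}. The upper bound, singular part included, comes from moving $\xi$ to $0$:
\begin{itemize}
\item the infimum over polynomials with $q(\xi)=1$ equals the infimum over the disc algebra, because Fej\'er means converge uniformly on $\ol{\mathbb{D}}$ and $\mu$ is finite;
\item the disc algebra is invariant under $z\mapsto(z-\xi)/(1-\bar\xi z)$, whose Jacobian on $\mathbb{T}$ is $P_r$;
\item so the limit is the Kolmogorov--Szeg\H{o} value of the transported measure, namely $2\pi G(\xi,f/P_r)=2\pi G(\xi,f)(1-|\xi|^2)$, by Theorem~\ref{th34}(a) and \eqref{SGT-100}.
\end{itemize}
That is your fallback made precise, and it needs no construction for $\mu_s$. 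Your main route is longer but self-contained: it does not presuppose Theorem~\ref{th34}(a), since at $\xi=0$ it reproves it. It also exhibits the extremal function $F^*/D$, which makes the equality case of Remark~\ref{SGTr-3} transparent. The price is the Hardy-space input of Proposition~\ref{pp77}.

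\emph{First repair, step (i).} Two claims fail as written.
\begin{itemize}
\item $\max(f,\vs)$ is bounded below but not above, so truncation does not give you a weight bounded above.
\item $q^*=F^*/D$ is bounded but in general not continuous on $\ol{\mathbb{D}}$ (for instance when $f$ has a jump), so it cannot be approximated uniformly there. Fej\'er means of $q^*(\rho\,\cdot)$ converge to the dilation, not to $q^*$.
\end{itemize}
What you need, and what holds, is approximation in $L^2(\max(f,\vs)\,d\la)$. Since $|1/D|\le\vs^{-1/2}$, we have $q^*\in H^\infty$. Hence $q^*(\rho\,\cdot)\to q^*$ boundedly a.e.\ on $\mathbb{T}$, and dominated convergence with $\max(f,\vs)\in L^1$ finishes the step. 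Alternatively, use density of $\{pD\}$ in $H^2$ for the outer function $D$.

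\emph{Second repair, step (ii).} Fix $p$ before choosing $K$. The $\mu_s$-term is at most $\sup_{\mathbb{T}}|p|^2(\sup_K|h|^2\mu_s(K)+\sup_{\mathbb{T}}|h|^2\mu_s(\mathbb{T}\setminus K))$. So $K$ must be chosen after $p$, and the bound on $h$ must not depend on $K$.

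An explicit $h$ works as follows. Take a smooth $g\ge0$ with $g=M$ on $K$, supported in an open $U\supset K$, with $M\to\f$ and $M^2|U|\to0$. Let $u(z)=\frac1{2\pi}\inl\frac{e^{it}+z}{e^{it}-z}g(t)\,dt$, write $\tilde g$ for the conjugate function, and set $h=e^{-u}$. Then:
\begin{itemize}
\item $|h|\le1$ on $\ol{\mathbb{D}}$, and $|h|=e^{-M}$ on $K$;
\item on $\mathbb{T}$, $|h-1|\le g+|\tilde g|$, and both $g$ and $\tilde g$ tend to $0$ in $L^2(d\la)$; with $|h|\le1$ this gives $\inl|h-1|^2f\,d\la\to0$;
\item $|u(\xi)|\le\frac{1+r}{1-r}\cdot\frac{1}{2\pi}\inl g\,d\la\to0$, so $h(\xi)\to1$;
\item $h$ lies in the disc algebra, so its Fej\'er means are the polynomials you want.
\end{itemize}
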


\begin{rem}
	\label{SGTr-1}
	\rm {The quantity $G(\xi,f)$ in equation \eqref{SGT-1} is a mean value of $f(\la)$ that depends on $\xi$ and represents a generalization of the geometric mean $G(f)$, defined as in \eqref{a2}. It reduces to this geometric mean when $\xi=0$.  
		In this special case, Proposition \ref{SGT} reduces to the Kolmogorov-Szeg\H{o} Theorem
		(see Theorem \ref{th34}, part (a)).}
\end{rem}
\begin{rem}
	\label{SGTr-2}
	\rm {Taking into account the equality (see Simon \cite{Sim-1}, p. 118):
		\beq
		\label{SGT-100}
		-\frac1{2\pi}\inl P_r(\theta,\la)\ln P_r(\theta,\la)\,d\la =\ln(1-r^2)<0,
		\eeq		
		the relation \eqref{Sz1} can be written in the following form (see Simon \cite{Sim-1}, p. 139):
		\begin{equation}
			\label{Sz1P}
			\lim_{n\to\f}\la_n(\xi,\mu) =\la(\xi,\mu)=2\pi G(\xi,f/P_r).
		\end{equation}
	}
\end{rem}
\begin{rem}
	\label{SGTr-3}
	\rm {Note that the relation \eqref{Sz1} holds already for a finite value of $n$
		\begin{equation}
			\label{Sz2}
			\la_n(\xi,\mu) =2\pi G(\xi,f)(1-|\xi|^2)
		\end{equation}
		if and only if
		\begin{equation}
			\label{Sz02}
			f(\la)=\frac{1}{|1-\ol\xi z|^2}\frac1{\phi(\la)}, \q z=e^{i\la},
		\end{equation}
		where $\phi(\la)$ denotes a positive trigonometric polynomial of order $n$.}
\end{rem}	

Theorem \ref{SGT} provides the asymptotic behavior of the Christoffel function $\la_n(\xi,\mu)$ inside the unit disk $\mathbb{D}$.
However, when estimating the unknown mean of a stationary process, we set $|\xi|=1$, which requires us to examine the asymptotic behavior of $\la_n(\xi,\mu)$ on the boundary of the unit disk.

Note that on the boundary of the unit disk, Proposition \ref{SGT} only gives the trivial result $\lambda(\xi, \mu) =0$ for almost every $\xi$ with $|\xi|=1$. More precisely, it was proven in Máté et al. \cite{MNT} that
\begin{equation}
	\label{Sz21}
	\la_n(e^{i\la},\mu) = \mu(\{\la\})\q \text{for every} \q \la\in[-\pi,\pi).
\end{equation}

Therefore, in view of \eqref{Sz21}, it is more natural to examine a weighted limit of $\la_n(\xi,\mu)$ rather than $\lim_{n\to\f}\la_n(\xi,\mu)$. 
The following result was established by M\'at\'e et al. \cite{MNT}.
\begin{pp}[M\'at\'e's theorem]
	\label{MNT}
Let $\mu$ be a nontrivial measure on the unit circle $\mathbb{T}$ of the form \eqref{di1}. Assume that $\mu$ satisfies the Szeg\H{o} condition \eqref{S}. Then 
	\begin{equation}
		\label{M-Sz22}
		\lim_{n\to\f}(n+1)\la_n(e^{i\la},\mu) =2\pi f(\la)
	\end{equation}
holds for almost every $\la\in[-\pi,\pi)$.
\end{pp}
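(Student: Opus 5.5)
The plan is to prove the two inequalities
$$\limsup_{n\to\f}(n+1)\la_n(e^{i\la},\mu)\le 2\pi f(\la), \qquad \liminf_{n\to\f}(n+1)\la_n(e^{i\la},\mu)\ge 2\pi f(\la)$$
separately, at every $\la$ in a full-measure set. I take that set to be the points which are Lebesgue points of $f$ and at which the singular part has zero symmetric derivative, $\mu_s([\la-h,\la+h])=o(h)$; both properties hold a.e. by the Lebesgue differentiation theorem. Write $\xi=e^{i\la}$. The model case is Lebesgue measure $d\mu=d\la$. There $\I_k(z)=z^k/\sqrt{2\pi}$, so $S_n(\xi,\xi)=(n+1)/(2\pi)$, and Proposition \ref{L0} gives exactly $(n+1)\la_n(\xi,d\la)=2\pi$. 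The general case will be reduced to this one, up to $(1+\vs)$ factors.

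\emph{Upper bound.} By \eqref{op17} it suffices to exhibit, for each $\vs>0$, polynomials $q_n\in\mathcal{Q}_n(\xi)$ with $(n+1)\|q_n\|_\mu^2\le 2\pi f(\la)+\vs$ for large $n$. I would fix a small $\de>0$, split $n+1=(n_1+1)+n_2$ with $n_2=\lfloor \de n\rfloor$, and take $q_n=p^{(1)}_{n_1}\cdot K_{n_2}$. Here $p^{(1)}_{n_1}(z)=\frac1{n_1+1}\sum_{k=0}^{n_1}(\bar\xi z)^k$ is the Lebesgue-optimal polynomial, and $K_{n_2}$ is a normalized Fej\'er-type kernel in $\bar\xi z$ with $K_{n_2}(\xi)=1$, $|K_{n_2}|\le1$, and $|K_{n_2}(e^{it})|\le C(n_2|t-\la|)^{-2}$. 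Then $|q_n|^2$ concentrates on $|t-\la|\lesssim 1/n$ with total Lebesgue mass $\approx 2\pi/(n+1)$, and it decays like $(n|t-\la|)^{-6}$ away from $\la$. Splitting $\int|q_n|^2d\mu$ into the window $|t-\la|<A/n$ and its complement, the Lebesgue-point property together with $\mu_s$-density zero makes the window contribute $(2\pi f(\la)+o(1))/(n+1)$. The tail contributes $O(A^{-1})/n$ because of the fast decay of $|q_n|^2$ and the finiteness of $\mu$. Letting $n\to\f$, then $A\to\f$, then $\de\to0$ gives the bound.

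\emph{Lower bound.} This is where the Szeg\H{o} condition \eqref{S} enters, and I expect it to be the main obstacle: the constant $2\pi$ must come out sharp, and $f$ may have zeros or poles away from $\la$. The idea is to compare $\mu$ from below with $|h|^2d\la$ for a fixed-degree polynomial $h$, and then invoke the Lebesgue model case. By Proposition \ref{L0-M} we may drop $\mu_s$ and work with $f\,d\la$. The inequality then follows once, for every $\vs>0$, we have a polynomial $h$ of degree $m$ (independent of $n$) with
$$|h(e^{it})|^2\le f(t)\ \text{a.e.}, \qquad |h(\xi)|^2\ge f(\la)-\vs .$$
Indeed, for any $q\in\mathcal{Q}_n(\xi)$ the polynomial $qh/h(\xi)$ lies in $\mathcal{Q}_{n+m}(\xi)$, so
$$\int|q|^2f\,dt\ \ge\ \int|qh|^2dt\ \ge\ |h(\xi)|^2\la_{n+m}(\xi,d\la)=\frac{2\pi|h(\xi)|^2}{n+m+1}.$$

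To build $h$, I would use that $\ln f\in L^1$. Take a truncation $u=\min(\ln f,M)$, which is still integrable, and subtract a small positive bump away from $\la$ to gain room. Replace $u$ by a continuous minorant $v\le u$ that agrees with $u$ near $\la$ up to $\vs$; this uses the Lebesgue-point property, and where $f$ is small we simply let $v$ go very negative, keeping $\int v>-\f$. Approximate $e^{v}$ from below by a positive trigonometric polynomial $T$. Finally, Fej\'er--Riesz factorisation $T=|h|^2$ produces $h$.

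The delicate point is making $v\le\ln f$ everywhere while $v(\la)\approx\ln f(\la)$ when $f$ is only measurable near $\la$. If the pointwise minorant route fails, I would fall back to the outer-function version: replace $h$ by $D(f,rz)$ from Proposition \ref{pp77}, use $H^2$ point evaluation at $r\xi$ with $r=1-c/n$, and send $c\to\f$ to recover the sharp constant.
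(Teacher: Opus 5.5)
Your upper bound is correct. It is the standard localization argument, it needs no Szeg\H{o} condition, and it works at every Lebesgue point of $f$ where $\mu_s([\la-h,\la+h])=o(h)$. One small correction: the tail estimate needs a dyadic decomposition using $\mu([\la-h,\la+h])=O(h)$ for small $h$, because finiteness of $\mu$ alone gives only $O(\de^{-4}A^{-6})$, not $O(1/n)$.

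The gap is in the lower bound. A polynomial $h$ of fixed degree with $|h(e^{it})|^2\le f(t)$ a.e.\ and $|h(\xi)|^2\ge f(\la)-\vs$ does not exist for general weights satisfying \eqref{S}. So this step is not merely delicate; it is false. Two examples show this.
\begin{itemize}
\item Take the flat-zero density $f_a$ of Example~\ref{Ex-FZM} with $0<a<1$, which satisfies \eqref{S}. A nonzero nonnegative trigonometric polynomial $T$ satisfies $T(t)\ge c|t|^{2k}$ near $t=0$ for some $c>0$, $k\ge0$, and this eventually exceeds $e^{-|t|^{-a}}$. Hence every trigonometric polynomial with $0\le T\le f_a$ a.e.\ vanishes identically, and your construction fails at \emph{every} $\la$.
\item Any continuous $T\le f$ a.e.\ satisfies $T(\la)\le\lim_{\eta\to0}\operatorname{ess\,inf}_{|t-\la|<\eta}f(t)$, and the Lebesgue-point property does not control this quantity. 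If $E$ meets every interval in a set of positive but not full measure, and $f=1/2$ on $E$, $f=1$ off $E$, then every continuous minorant is $\le 1/2$ everywhere. Yet $f(\la)=1$ on a set of positive measure, and this $f$ is bounded above and below.
\end{itemize}
Your lower bound is in effect Lemma~\ref{lem4.1} combined with Lemma~\ref{lem4.5}. Lemma~\ref{lem4.5} is proved only for the class $ZL_0^+$ precisely because of these obstructions. The integrability of $\ln f$ (or of your $v$) plays no role in whether a nonzero polynomial minorant exists.

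The fallback does not close the gap either.
\begin{itemize}
\item $D(f,rz)$ is neither a polynomial nor a minorant of $f$. By Jensen's inequality, $|D(f,re^{it})|^2$ is dominated by the Poisson integral of $f$, not by $f(t)$.
\item $H^2$ point evaluation at $r\xi$ bounds $|q(r\xi)D(f,r\xi)|^2$ by $\|qD\|_{H^2}^2/(1-r^2)$. But $q(\xi)=1$ gives no lower bound on $|q(r\xi)|$.
\item Even for $f\equiv1$ and the exact extremal polynomial $q(z)=\frac1{n+1}\sum_{k=0}^n(\bar\xi z)^k$, with $r=1-c/n$, the inequality $\int|q|^2\,dt\ge2\pi(1-r^2)|q(r\xi)|^2$ certifies only $\frac{2\pi}{n}\cdot\frac{2(1-e^{-c})^2}{c}(1+o(1))$. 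This is below $0.82\cdot\frac{2\pi}{n}$ for every $c$ and tends to $0$ as $c\to\f$.
\end{itemize}

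The paper gives no proof of this proposition; it quotes M\'at\'e, Nevai and Totik \cite{MNT}, and the lower bound is the hard half of their theorem. Your upper bound already gets you part of the way. Combined with the identity $\int_{-\pi}^{\pi}S_n(e^{it},e^{it})\,d\mu(t)=n+1$ (where $S_n(\xi,\xi)=1/\la_n(\xi,\mu)$) and Fatou's lemma, it gives $f/((n+1)\la_n)\to1/(2\pi)$ in $L^1$ as soon as $f>0$ a.e. That is, you get the lower bound in measure. What is missing is the upgrade to almost every $\la$. That step must use $\int\ln f>-\f$ globally, for instance through $\int|q|^2f\,dt=\int|qD(f,\cdot)|^2\,dt$ with $qD(f,\cdot)\in H^2$, rather than through a pointwise minorant of fixed degree.
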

It is important to note that the limiting relation \eqref{M-Sz22} was originally derived by Szeg\H{o} under more restrictive conditions. Specifically, the measure $\mu$ must be absolutely continuous, and $f=\mu'>0$ needs to be twice continuously differentiable
(see Szeg\H{o} \cite{SzP}, Theorem I', p. 461).

The proof of the following result can be found in Golinskii \cite{Gol-1}.
\begin{pp}%[M\'at\'e's theorem]
	\label{Gol-2}
	Let $\mu$ be a nontrivial measure on the unit circle $\mathbb{T}$ that satisfies the Szeg\H{o} condition \eqref{S}. Assume that $\mu$ is absolutely continuous and the weight $f(\la)$ is continuous and $f(\la)\geq c>0$. Then the following relation holds
	\begin{equation}
		\label{Sz-Gol}
		%\lim_{n\to\f}\frac{n+1}{S_n(z_0,z_0)} =
		\lim_{n\to\f}(n+1)\la_n(e^{i\la_0},\mu) =2\pi f(\la_0), \q z_0=e^{i\la_0}.
	\end{equation}
\end{pp}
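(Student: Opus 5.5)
The plan is to prove Proposition \ref{Gol-2} by bounding $S_n(z_0,z_0)$ from both sides, using two facts: the comparison principle of Proposition \ref{L0-M} and the exact formula $\la_n = 1/S_n(z_0,z_0)$ from Proposition \ref{L0}. The idea is to localize: near $\la_0$ the continuous weight $f$ is squeezed between constant multiples $f(\la_0)\pm\vs$. A constant weight $c\,d\la$ has $\I_k(z)=z^k/\sqrt{2\pi c}$, so it gives exactly $S_n(z_0,z_0)=(n+1)/(2\pi c)$. The work is to show that what happens away from $\la_0$ contributes only $o(n)$ to $S_n(z_0,z_0)$.

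\emph{Upper bound on $\la_n$.} Fix $\vs>0$ and choose $\de>0$ with $|f(\la)-f(\la_0)|<\vs$ for $|\la-\la_0|<\de$. Test $\la_n(e^{i\la_0},\mu)$ against a concentrated polynomial, namely the normalized Fejér-type kernel
\[
q_n(z)=\frac{1}{n+1}\sum_{k=0}^n (z\bar z_0)^k ,
\]
for which $q_n(z_0)=1$. On $\mathbb{T}$ we have $|q_n(e^{i\la})|^2=\frac{1}{(n+1)^2}\frac{\sin^2((n+1)(\la-\la_0)/2)}{\sin^2((\la-\la_0)/2)}$. Then
\[
(n+1)\int|q_n|^2f\,d\la \le (f(\la_0)+\vs)\,(n+1)\int|q_n|^2d\la \;+\; (n+1)\|f\|_\infty\int_{|\la-\la_0|\ge\de}|q_n|^2d\la .
\]
The first term equals $2\pi(f(\la_0)+\vs)$. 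The second is $O(1/(n\de^2))\to0$, since $|q_n|^2\le C/((n+1)^2\de^2)$ off the $\de$-neighborhood and $f$ is bounded by continuity. Hence $\limsup (n+1)\la_n\le 2\pi f(\la_0)$.

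\emph{Lower bound on $\la_n$.} This is the main obstacle: an arbitrary competing polynomial might exploit the region where $f$ is small, so a single test polynomial is no longer enough. I would use Proposition \ref{L0-M} with the minorant $\mu_1=g\,d\la$, where $g$ is continuous and positive, $g\le f$, and $g=f(\la_0)-\vs$ near $\la_0$; this is possible since $f\ge c>0$. Monotonicity gives $\la_n(\cdot,f)\ge\la_n(\cdot,g)$, so it suffices to prove the lower bound for the nicer weight $g$. I would choose $g$ smooth, say a positive trigonometric polynomial approximating $\min(f,\,f(\la_0)-\vs)$ from below, adjusted to lie below $f$ and to be close to $f(\la_0)-\vs$ at $\la_0$. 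For a smooth positive weight, the Lipschitz--Dini condition of Proposition \ref{SzL} holds. That proposition gives $\I_k(z_0)=z_0^k/\ol D(g,1/z_0)+o(1)$ uniformly on $\mathbb{T}$, and $|D(g,z_0)|^2=g(\la_0)$ by \eqref{b2}. Therefore $|\I_k(z_0)|^2\to 1/g(\la_0)$, and Cesàro averaging gives $S_n(z_0,z_0)/(n+1)\to 1/g(\la_0)$, where the $2\pi$ normalization has to be tracked to match \eqref{Szf}. This yields $\liminf(n+1)\la_n\ge 2\pi g(\la_0)\ge 2\pi(f(\la_0)-2\vs)$. Letting $\vs\to0$ completes the proof.

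The delicate points are all in the construction of $g$. It must be simultaneously continuous, below $f$ globally, and nearly equal to $f(\la_0)$ at $\la_0$, and it must also satisfy \eqref{b4-loc-1}. A trigonometric polynomial lying in $[f-2\vs,f-\vs]$ is available by Weierstrass approximation, since $f$ is continuous and $f\ge c$, so $\vs<c/2$ keeps $g$ positive. With that choice the lower bound follows without any further work.
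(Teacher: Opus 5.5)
Your argument is correct. Note that the paper gives no proof of Proposition \ref{Gol-2}; it only refers to Golinskii \cite{Gol-1}.

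The closest argument in the paper is the proof of Theorem \ref{Gr01}, which is this statement at $\la_0=0$. There $f$ is sandwiched between two Bernstein--Szeg\H{o} (AR) weights $|p_1|^{-2}\le f\le |p_2|^{-2}$ whose orthogonal polynomials are explicit, and monotonicity (Proposition \ref{R2}(b)) is applied on both sides.

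You treat the two bounds asymmetrically instead:
\begin{itemize}
\item Your upper bound comes from a single explicit competitor, $\frac{1}{n+1}\sum_{k=0}^n (z\bar z_0)^k$, whose squared modulus is a Fej\'er kernel. This is more elementary than a majorant and uses only continuity of $f$ at $\la_0$ plus integrability.
\item The global hypothesis $f\ge c>0$ enters only in your lower bound. There you take a positive trigonometric-polynomial minorant and apply Szeg\H{o}'s boundary asymptotics (Proposition \ref{SzL}) followed by Ces\`aro averaging.
\end{itemize}
The price is reliance on a deep asymptotic result, where the Bernstein--Szeg\H{o} route is self-contained.

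You can in fact avoid all OPUC asymptotics with the device of Lemma \ref{lem4.1}. Let $t=|s|^2\le f$ be a nonnegative trigonometric polynomial of degree $\nu$ with $t(\la_0)>0$, and let $q_n$ be optimal for $t$ at $z_0$. Then $s\,q_n/s(z_0)$ is admissible for Lebesgue measure at degree $n+\nu$. This gives $\la_n(z_0,f)\ge\la_n(z_0,t)\ge 2\pi t(\la_0)/(n+\nu+1)$, which together with your test polynomial proves the proposition directly.

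Two bookkeeping remarks:
\begin{itemize}
\item With orthonormality in $L^2(g\,d\la)$, as in the paper, the correct limit is $|\I_k(z_0)|^2\to 1/(2\pi g(\la_0))$, not $1/g(\la_0)$. The constant-weight computation confirms this, and it is what produces $2\pi g(\la_0)$.
\item A nonconstant trigonometric polynomial cannot equal $f(\la_0)-\vs$ on a neighborhood of $\la_0$. Your final choice $f-2\vs\le g\le f-\vs$ is the one to keep.
\end{itemize}
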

\begin{rem}
	\label{Ger-r}
	\rm {In Geronimus \cite{Ger-4}, it was proved that the asymptotic formula \eqref{M-Sz22} holds for almost every $\lambda \in [-\pi, \pi]$ under the weaker condition that $f = \mu' >0$ almost everywhere on $[-\pi, \pi]$. Additionally, it was observed that this condition is satisfied if the measure is from the Szeg\H{o} class. Furthermore, it is also satisfied for the Pollaczek-Szeg\H{o} weight function (see equation \eqref{nd4}), 
	for which the Szeg\H{o} condition \eqref{S} is violated.
	Unfortunately, as noted by Nevai \cite{Nev}, the Geronimus proof of the asymptotic formula \eqref{M-Sz22} under this weaker condition, contains an error. For details, see Nevai \cite{Nev}, pp. 27--28.}
\end{rem}

\s{Formulas and properties of the variance of BLUE}
\label{formulas}

In this section, we present formulas and properties of the variance of the BLUE.
We first utilize Kolmogorov's isometric isomorphism between the time and frequency domains, $V: X(t)\leftrightarrow e^{it\lambda}$, to reformulate the problem of finding the BLUE in the frequency domain. Then, we apply Szeg\H{o}'s results to derive a convenient formula for the variance $\Var\left(\widehat{m}_{n,BLU}\right)$
in terms of orthogonal polynomials on the unit circle with respect to the
spectral density $f(\lambda)$.

%We will use the following notation.
%By $\mathcal{Q}_n:=\mathcal{Q}_n(1)$ we denote the set of monic polynomials $q_n(z)$, $z\in\mathbb{C}$, of degree $n\in\mathbb{N}$ satisfying the condition $q_n(1) = 1$ (see \eqref{L5Q}).

To begin, using equations \eqref{1.1} - \eqref{1.3}, we can write
\begin{align}\nonumber\label{Formula}
	\textrm{Var} \left(\widehat{m}_n\right) &= \sum_{j,k=0}^n c_j\bar{c}_kr(j-k)
	=\int_{-\pi}^\pi\left|\sum_{\nu = 0}^n c_\nu e^{i\nu\lambda}\right|^2f(\lambda) d\lambda\\
	&=\int_{-\pi}^\pi\left|\sum_{\nu = 0}^n c_\nu e^{i(n-\nu)\lambda}\right|^2f(\lambda) d\lambda
	=\int_{-\pi}^{\pi}|q_n(e^{i\lambda}|^2 f(\lambda)d\lambda,
\end{align}
where $q_n(z)=c_0z^n+c_1z^{n-1}+\cdots+c_{n-1}z+c_n$. 

Thus, the problem of finding $\widehat{m}_{n,BLU}$ reduces to the
special case of Szeg\H{o}'s extremum problem on the unit circle where 
$\xi=1$. Specifically (see Section \ref{SzW}), 
\beq
\label{L5}
||q_n||^2_f:=\int_{-\pi}^{\pi}|q_n(e^{i\lambda})|^2 f(\lambda)d\lambda={\rm min},
\q q_n(z)\in \mathcal{Q}_n(1).
\eeq

The polynomial $p_n(z):=p_n(z,f)$, which solves the minimum problem in \eqref{L5}
is referred to as the optimal polynomial (of degree $n$) for $f(\la)$ within the class $\mathcal{Q}_n(1)$. The minimum itself, which represents 
the variance of $\widehat{m}_{n,BLU}$, is denoted by $\sigma_n^2(1,f)$. 
Therefore, for the variance $\sigma_n^2(1,f)$ of the BLUE we have the following formula in terms of the optimal polynomial $p_n(z)$: %(see Definition \ref{def3.1}):
\bea
\label{OSm2}
\sigma_n^2(1,f):=\Var_f(\widehat{m}_{BLU})=\min_{q_n\in \mathcal{Q}_n(1)}||q_n||^2_f=||p_n||^2_f.
\eea

Note that since the optimal $p_n(z)$ is determined by $\widehat{m}_{BLU}$ (and vice versa), it is unique.

For simplicity, we will use the following notation: 
$\widehat{m}_f$ and $\sigma_n^2(f):=\sigma_n^2(1,f)$ will denote the BLUE 
and the variance of the BLUE calculated with respect to the spectral density $f(\lambda)$, respectively.
The optimal polynomials for $f(\lambda)$ will be denoted by $p_n(z)$.

Applying Proposition \ref{L0} with $\xi=1$, we find that 
the optimal polynomial $p_n(z,f)$ for determining the BLUE $\widehat{m}_{n,BLU}$ is given by the following formula:
\begin{equation}
	\label{op}
	\nonumber
	p_n(z, f) = \frac{S_n(z,1)}{S_n(1,1)}
	= \frac{\sum\limits_{v = 0}^{n}\overline{\varphi_v(1)}\varphi_v(z)}{
		\sum\limits_{v =0}^{n}|\varphi_v(1)|^2}.
\end{equation}

Thus, for the variance $\sigma_n^2(f)$ of the BLUE, we have the following formula
in terms of Szeg\H{o}'s kernel and the orthogonal polynomials:
\bea
\label{OSm23}
\sigma_n^2(f)%=\min_{q_n\in \mathcal{Q}_n(1)}||q_n||^2_f=||p_n||^2_f
=\frac{1}{S_n(1,1)}= \left(\sum\limits_{v =0}^{n}|\varphi_v(1)|^2\right)^{-1}.
\eea

Next, by comparing equations \eqref{L5s} and \eqref{OSm2}, we observe that
$\sigma_n^2(f)$ coincides with the $n^{th}$ Christoffel function at the point $\xi=1$. Specifically, we have the following formula for the variance $\sigma_n^2(f)$ in terms of the Christoffel function:
\bea
\label{si-la}
\sigma_n^2(f)=\la_n(1,f).
\eea

Denote by $B_n(f)$ the truncated Toeplitz matrix generated by the spectral density $f(\la)$:
\begin{equation}
	\label{T-Ap0}
	B_n(f)=\|\hat f(k-j)\|, \, \,k,j=1,\ldots, n, 
\end{equation}
where
\begin{equation}
	\label{T-9ss4}
	\hat f(k-j)=\int_\mathbb{T}e^{i(k-j)\la} f(\la)\,d\la, 
\end{equation}
are the Fourier coefficients of the function $f(\la)$.

By comparing equations \eqref{1.1} and \eqref{T-9ss4}, we observe that $\hat f(k-j)=r(k-j)$
for $k,j=1,\ldots, n$, which implies that $B_n(f)=R_n$, where $R_n$ is the covariance matrix of the observed process.
Thus, in view of equation (\ref{2.2}), the variance $\Var(\widehat{m}_{BLU})$ of $\widehat{m}_{BLU}$ can be expressed in terms of Toeplitz matrices as follows:
\begin{equation}
	\label{T-2.2a}
	\sigma_n^2(f)=\Var( \widehat{m}_{BLU}) = (\mathbf{1}^TB_n^{-1}(f)\mathbf{1})^{-1},
\end{equation}
where  $\mathbf{1}^T=(1,\ldots, 1)$.

The next proposition presents two important properties of the variance $\sigma_{n}^2(f)$.
\begin{pp}
	\label{R2}
	For the sequence $\{\sigma_n^2(f), \, n\in\mathbb{N}\}$ the following assertions hold:
	\begin{itemize}
		\item[(a)] $\sigma_n^2(f)$ is non-increasing
		in $n$: $\sigma_{n+1}^2(f)\leq\sigma_{n}^2(f)$.
		\item[(b)] $\sigma_n^2(f)$  is a non-decreasing functional of $f(\lambda)$, that is,
		$\sigma_{n}^2(f)\leq\sigma_n^2(g)$ when
		$f(\lambda)\leq g(\lambda)$ for all $\lambda\in \Lambda$.
	\end{itemize} 
\end{pp}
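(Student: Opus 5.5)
Both assertions follow directly from the variational characterization \eqref{OSm2}, namely
\[
\sigma_n^2(f)=\min_{q_n\in \mathcal{Q}_n(1)}\int_{-\pi}^{\pi}|q_n(e^{i\la})|^2f(\la)\,d\la .
\]
For (a), the plan is a nesting argument. Every polynomial of degree at most $n$ with $q(1)=1$ is also a polynomial of degree at most $n+1$ with $q(1)=1$, so $\mathcal{Q}_n(1)\subset\mathcal{Q}_{n+1}(1)$ by the definition \eqref{L5Q}. A minimum over a larger set can only be smaller or equal, hence $\sigma_{n+1}^2(f)\le\sigma_n^2(f)$.

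Statistically, the same inclusion says the following. Any unbiased estimator built from $X(0),\dots,X(n)$ is also admissible with $n+1$ observations, since one can put weight $c_{n+1}=0$ on $X(n+1)$. So the BLUE based on more observations cannot have larger variance.

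Equivalently, one could use the formula \eqref{OSm23}. Passing from $n$ to $n+1$ adds the nonnegative term $|\I_{n+1}(1)|^2$ to $S_n(1,1)$, which can only increase $S_n(1,1)$ and therefore decrease its reciprocal.

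For (b), fix $n$ and let $p_n(z,g)$ be the optimal polynomial for $g$. It lies in $\mathcal{Q}_n(1)$, so it is admissible for $f$ as well. Using $f\le g$ pointwise and $|p_n|^2\ge0$, we get
\[
\sigma_n^2(f)\le\int_{-\pi}^{\pi}|p_n(e^{i\la},g)|^2f(\la)\,d\la\le\int_{-\pi}^{\pi}|p_n(e^{i\la},g)|^2g(\la)\,d\la=\sigma_n^2(g).
\]
This is the content of Proposition \ref{L0-M} with $\mu_1=f\,d\la$ and $\mu_2=g\,d\la$, translated through \eqref{OSm23} (equivalently, \eqref{si-la}).

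Neither step presents a real obstacle. The only point needing care is that the minima exist, so that $\sigma_n^2$ is genuinely attained. This holds by Proposition \ref{L0}, since the measures are nontrivial. Even without attainment, the argument works with infima in place of minima.
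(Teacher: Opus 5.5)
Your proof is correct and follows the paper's own argument: (a) comes from the inclusion $\mathcal{Q}_n(1)\subset\mathcal{Q}_{n+1}(1)$. (b) comes from using the optimal polynomial for $g$ as a trial polynomial for $f$, giving the chain $\sigma_n^2(f)\le\|p_n(g)\|_f^2\le\|p_n(g)\|_g^2=\sigma_n^2(g)$; the paper likewise notes that (b) also follows from Proposition \ref{L0-M} together with \eqref{OSm23}.
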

\begin{proof}
	Assertion (a) follows from the obvious embedding $\mathcal{Q}_n(1)\subset\mathcal{Q}_{n+1}(1)$. Assertion (b) is derived from the definition of an optimal polynomial and the fact that the square norm $\| \psi \|_f^2$ is non-decreasing in $f$ for a fixed function $\psi$. Indeed, let $p_n(z,f)$ and $p_n(z,g)$ denote the optimal polynomials corresponding to the spectral densities $f$ and $g$, respectively (see equation \eqref{OSm2}). We have
	\beaa
	\sigma_n^2(f)=||p_n(f)||^2_f \leq ||p_n(g)||^2_f \leq ||p_n(g)||^2_g =\sigma_n^2(g),
		\eeaa
and the result follows. Note that assertion (b) can also be deduced from Proposition \ref{L0-M} and equation \eqref{OSm23}.
\end{proof}
\begin{rem}
	\label{RMT4}
	{\rm Formula \eqref{OSm2} highlights a similarity between
		the estimation problem discussed here and the best linear mean-squared prediction problem for stationary processes.
	Specifically, the best linear mean-squared one-step ahead prediction error variance $\sigma_n^2(0,f)$, based on a history of length $n$ (see equation \eqref{si-pred}), 
	has representations similar to \eqref{OSm2} and \eqref{OSm23}. 
	The key difference is that, in this case, the minimum is taken over the set 
	$\mathcal{Q}_n$ of monic polynomials of degree $n$, rather than $\mathcal{Q}_n(1)$. 
	For the definitions of the sets $\mathcal{Q}_n$ and $\mathcal{Q}_n(1)$, see Section \ref{SzPr}. For more information regarding the prediction problem, we refer to the papers by Babayan and Ginovyan \cite{BG2023} and Babayan et al. \cite{BabGT}}.
\end{rem}

\s{Asymptotic behavior of the variance of the BLUE for nondeterministic models}
\label{ND-BLU}

\sn{Asymptotic behavior of  $\Var_f(\widehat m_{BLU})$ for short-memory models}

The first results regarding the asymptotic behavior of the variance of the BLUE for short-memory models can be traced back to the  classical works of U. Grenander \cite{Gr-1,Gr-2,Gr-3} (see also Grenander and Rosenblatt \cite{GR1}, and Grenander and Szeg\H{o} \cite{GS}, Sections 11.1-11.3). He demonstrated that the BLUE $\widehat{m}_{BLU}$ has an asymptotic variance of
$2\pi f(0)/n$, provided that $f(\lambda)$ is positive and continuous. Specifically, the following results were established.
\begin{thm}
	\label{Gr1}
Assume that the conditions of Proposition \ref{SzL} (or Proposition \ref{SzLC}) are satisfied. Then, for the variance of $\widehat m_{BLU}$, we have:
	\begin{equation}
	\label{Gr-1f}
			\Var_f(\widehat m_{BLU})\sim 2\pi f(0)/n  \q {\rm as}\q n \rightarrow \infty.
	\end{equation}
\end{thm}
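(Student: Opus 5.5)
We prove Theorem \ref{Gr1} through the kernel formula \eqref{OSm23}, $\sigma_n^2(f)=\big(\sum_{v=0}^n|\I_v(1)|^2\big)^{-1}$. The aim is to show that $|\I_v(1)|^2\to 1/(2\pi f(0))$, and then average over $v$.

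\emph{Step 1 (pointwise limit of $|\I_v(1)|$).} Under the hypotheses of Proposition \ref{SzL}, take $z=1$ in \eqref{b4-loc}:
\[
\I_v(1)=\frac{1}{\ol{D(f,1)}}+\vs_v, \qquad \vs_v=o(1).
\]
Under the hypotheses of Proposition \ref{SzLC}, take $a=0$, $\xi=1$ in \eqref{b4-loc-3}; we need $1\neq z_k$, which is implicit in the assumption that $f$ is differentiable and positive at $0$. In both cases we then use \eqref{b2}, $|D(f,1)|^2=f(0)$. The boundary value $D(f,1)$ is meant in the sense of Proposition \ref{pp77}(b), and $\lambda=0$ is a genuine point of validity because in each proposition the formula holds at that specific point, uniformly or locally. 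This gives
\[
|\I_v(1)|^2=\frac{1}{f(0)}+o(1)\quad\text{as } v\to\infty .
\]

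\emph{Step 2 (normalization).} Account for the factor $2\pi$. The paper's inner product is $\int|q|^2 f\,d\lambda$ without the $1/(2\pi)$, so for $f\equiv c$ the orthonormal polynomials are $z^v/\sqrt{2\pi c}$. Hence $|\I_v(1)|^2\to 1/(2\pi f(0))$. Equivalently, the Szeg\H{o} function in Propositions \ref{SzL} and \ref{SzLC} refers to the normalized measure, and one must check that \eqref{b4-loc} is read with $f/(2\pi)$ or with the matching convention. I would verify the constant on the white-noise case $f\equiv 1/(2\pi)$, where $\sigma_n^2=1/(n+1)$ exactly.

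\emph{Step 3 (Ces\`aro averaging).} Summing gives $S_n(1,1)=\sum_{v=0}^n|\I_v(1)|^2$, and by the Ces\`aro mean theorem
\[
S_n(1,1)=\frac{n+1}{2\pi f(0)}\,(1+o(1)).
\]
Therefore $\sigma_n^2(f)=1/S_n(1,1)\sim 2\pi f(0)/(n+1)\sim 2\pi f(0)/n$, which is \eqref{Gr-1f}.

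The main obstacle is Step 1: justifying that Szeg\H{o}'s asymptotics hold exactly at the point $z=1$ on the circle, together with the constant bookkeeping in Step 2. Both are delegated to Propositions \ref{SzL} and \ref{SzLC}. In the continuous, bounded-below case one could instead cite Proposition \ref{Gol-2} directly, using \eqref{si-la} with $\lambda_0=0$, which gives $(n+1)\sigma_n^2(f)\to 2\pi f(0)$ and serves as a consistency check.
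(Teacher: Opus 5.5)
Your proposal is correct and follows essentially the same route as the paper's proof: the kernel formula \eqref{OSm23}, Szeg\H{o}'s boundary asymptotics \eqref{b4-loc} (or \eqref{b4-loc-3}) at $z=1$, the identity $|D(f,1)|^2=f(0)$, and a Ces\`aro average, which the paper leaves implicit. Your explicit $2\pi$ bookkeeping in Step 2 is a real improvement: the quoted Szeg\H{o} formulas refer to polynomials orthonormal with respect to $f(\lambda)\,d\lambda/(2\pi)$, and read literally with the paper's unnormalized inner product they would give $f(0)/n$ rather than $2\pi f(0)/n$.
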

\begin{proof}
By equation \eqref{OSm23}, we have 
\bea
\label{Grf1}
	\Var_f(\widehat m_{BLU})= \left(\sum\limits_{v =0}^{n}|\varphi_v(1)|^2\right)^{-1}.
\eea	
Under the assumptions of the theorem, we have (see equations \eqref{b4-loc} and \eqref{b4-loc-3}):
	\beq
	\label{Grf2}
	\I_n(1) =\frac{1}{\ol D(f,1)} + \vs_n,\q  \vs_n=o(1)\q {\rm as}\q n \rightarrow \infty.
	\eeq
From equations \eqref{Grf1} and \eqref{Grf2}, and Proposition \ref{pp77} (b), the result follows.	
\end{proof}

\begin{thm}
	\label{Gr01}
Assume that the spectral density $f(\la)$ is positive and continuous. 
Then, for the variance of $\widehat m_{BLU}$, we have:
	\begin{equation}
		\label{Gr-2f}
	\sigma_n^2(f):=	\Var_f(\widehat m_{BLU})\sim 2\pi f(0)/n  \q {\rm as}\q n \rightarrow \infty.
	\end{equation}
\end{thm}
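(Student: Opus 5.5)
The plan is to combine Golinskii's result (Proposition \ref{Gol-2}) with the Christoffel function representation \eqref{si-la}. The hypotheses of Theorem \ref{Gr01} are that $f$ is continuous and positive on the compact set $\Lambda$ (with periodic extension). Hence $f$ attains a positive minimum, and $f(\la)\geq c>0$ for some constant $c$. In particular $\ln f$ is bounded, so the Szeg\H{o} condition \eqref{S} holds. The spectral measure $d\mu=f(\la)d\la$ is absolutely continuous and nontrivial, because $f>0$ everywhere and so its support is the whole circle. Thus every hypothesis of Proposition \ref{Gol-2} is met.

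By \eqref{si-la} we have $\sigma_n^2(f)=\la_n(1,f)$, the $n$-th Christoffel function at $\xi=1=e^{i\cdot 0}$. Applying \eqref{Sz-Gol} with $\la_0=0$ gives
\[
\lim_{n\to\f}(n+1)\,\sigma_n^2(f)=2\pi f(0).
\]
Since $(n+1)/n\to1$ and $f(0)>0$, this is exactly $\sigma_n^2(f)\sim 2\pi f(0)/n$. Here $\sim$ means the ratio tends to one, so the positivity of $f(0)$ is what makes the statement meaningful.

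If one prefers not to rely on Golinskii's theorem, a self-contained route is a sandwich argument using monotonicity, Proposition \ref{R2}(b). Given $\vs>0$, continuity at $0$ gives $\de>0$ with $|f(\la)-f(0)|<\vs$ for $|\la|<\de$. I would build continuous densities $g_\pm$ that are smooth (for example trigonometric polynomials, or $C^2$ positive functions), equal $f(0)\pm\vs$ near $0$, and satisfy $g_-\le f\le g_+$ globally. This is possible because $f$ is bounded above and bounded away from zero. Proposition \ref{R2}(b) then squeezes $\sigma_n^2(f)$ between $\sigma_n^2(g_-)$ and $\sigma_n^2(g_+)$. For smooth positive $g_\pm$, the Lipschitz--Dini condition \eqref{b4-loc-1} holds, so Theorem \ref{Gr1} gives $n\sigma_n^2(g_\pm)\to2\pi g_\pm(0)=2\pi(f(0)\pm\vs)$. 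Letting $\vs\to0$ finishes the argument.

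The main obstacle in this alternative route is constructing $g_\pm$. They must be regular enough for Proposition \ref{SzL} while staying globally on the correct side of a merely continuous $f$. I would handle this by Weierstrass approximation: take a trigonometric polynomial $T$ with $\|T-f\|_\f<\vs$, and set $g_\pm=T\pm\vs$ (for $g_-$, using $\vs<c/2$ so that it stays positive). Then $|g_\pm(0)-f(0)|\le2\vs$, which is enough.
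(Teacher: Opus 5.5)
Your proposal is correct. Your main argument, however, takes a different and much shorter route than the paper's proof.

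\textbf{Your main route.} You reduce the theorem to Golinskii's result, Proposition~\ref{Gol-2}, via $\sigma_n^2(f)=\la_n(1,f)$. Your hypothesis check is right: compactness gives $f\ge c>0$, hence the Szeg\H{o} condition holds. This is exactly how the paper obtains Theorem~\ref{Gol-T}. So on this route Theorem~\ref{Gr01} is just a special case of a deeper cited result, and nothing is proved beyond verifying hypotheses.

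\textbf{The paper's route.} The paper's proof is self-contained. It sandwiches $f$ between autoregressive densities
\[
f_1=|p_1(e^{i\la})|^{-2}\le f\le |p_2(e^{i\la})|^{-2}=f_2, \qquad f_2-f_1<\vs .
\]
For these densities the orthonormal polynomials are explicit (see \eqref{ar2}), so $|\I_k(1)|^2$ is eventually constant and the comparison asymptotics are exact. Monotonicity (Proposition~\ref{R2}(b)) then squeezes $\sigma_n^2(f)$.

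\textbf{Your fallback route.} It has the same sandwich-plus-monotonicity structure and is also correct as written. Indeed $g_-<f<g_+$, $g_->0$ once $\vs<c/2$, trigonometric polynomials are Lipschitz and hence Lipschitz--Dini, and $|g_\pm(0)-f(0)|<2\vs$. But because you approximate $f$ itself by $T\pm\vs$, you must invoke Szeg\H{o}'s boundary asymptotics on the unit circle (Proposition~\ref{SzL}, through Theorem~\ref{Gr1}) for the comparison densities. That is a considerably heavier tool than needed. If you instead approximate $1/f$ from above and below by positive trigonometric polynomials and factor them with the Fej\'er--Riesz theorem (Proposition~\ref{p4.2}(c)), the comparison densities become AR spectral densities and that step disappears. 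This is precisely the paper's choice.

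\textbf{Comparison.} The Golinskii route buys brevity by outsourcing all the analysis. The paper's route uses only Weierstrass approximation, Fej\'er--Riesz factorization, the explicit AR orthogonal polynomials, and monotonicity of $\sigma_n^2$ in $f$.
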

\begin{proof}
Consider a $p$-order autoregressive (AR$(p)$) stationary process $X(t)$ with the spectral density $f(\la)$ (see equation \eqref{arma}): 
 \begin{equation}
	\label{ar1}
	f(\la) = \frac{1}{2\pi}\cd |\psi_p(z)|^{-2}
	= \frac{1}{2\pi}\cd \left|\sum_{k=0}^{p}b_kz^k)\right|^{-2}, \q z=e^{i\la},
\end{equation} 
where the polynomial $\psi_p(z)$ has all its zeros in $|z|<1$ and $b_p>0$.
In this case, for the orthogonal polynomials $\I_{\nu}(z)$, we have (see, e.g., Grenander and Szeg\H{o} \cite{GS}, pp. 43 and 210)
 \begin{equation}
 	\label{ar2}
 	\I_{m+p}(z) = \sum_{k=0}^{p}b_kz^{k+m}, \q m=0,1,\ldots.
 \end{equation} 
Therefore,
 \begin{equation}
	\label{ar3}
\frac1n\sum\limits_{k =0}^{n}|\varphi_k(1)|^2 = f^{-1}(0) + o(1) \q {\rm as}\q n \rightarrow \infty.
\end{equation} 
Now let $f(\la)$ be an arbitrary positive and continuous spectral density.
Given an arbitrary $\vs>0$, we can find two polynomials $p_1(z)$ and $p_2(z)$  that satisfy
\begin{align*}
&f_1(\la):=|p_1(z)|^{-2}\leq f(\la)\leq |p_2(z)|^{-2}:=f_2(\la),\\
&f_2(\la)-f_1(\la)<\vs, \q z=e^{i\la}.
\end{align*}
Since, by Proposition \ref{R2} (b), $\sigma_n^2(f)$ is a non-decreasing functional of $f(\lambda)$, the variance $\sigma_n^2(f)$ lies within the interval 
$(\alpha_n, \beta_n)$, where
\begin{equation}
	\label{ar4}
	\al_n\sim 2\pi f_1(0)/n, \,\, \be_n\sim 2\pi f_2(0)/n  \q {\rm as}\q n \rightarrow \infty. 
\end{equation} 
Since $f_1(0)-f(0)$	and $f_2(0)-f(0)$ can be made arbitrarily small by choosing
$\varepsilon>0$ sufficiently small, we find that
\begin{equation}
	\label{ar44}
	\sigma_n^2(f)\sim 2\pi f(0)/n  \q {\rm as}\q n \rightarrow \infty, 
\end{equation} 
and the result follows.	
\end{proof}
As an immediate consequence of Proposition \ref{MNT} and equation
\eqref{si-la} we obtain the following result.
\begin{thm}
	\label{MNT-C}
Let $\mu$ be a nontrivial measure on the unit circle $\mathbb{T}$ of the form \eqref{di1}. Assume that $\mu$ satisfies the Szeg\H{o} condition \eqref{S}. Then 
	\begin{equation}
		\label{Sz22}
		\lim_{n\to\f}n\si_n^2(\mu) =2\pi f(0).
	\end{equation}
\end{thm}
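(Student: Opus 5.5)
The plan is to deduce the statement from M\'at\'e's theorem (Proposition~\ref{MNT}) through the identification of the BLUE variance with a Christoffel function. The proof has three short steps.

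\emph{Step 1: identify the variance with a Christoffel function.} For a general spectral measure $\mu$ of the form \eqref{di1}, the computation \eqref{Formula} goes through verbatim with $f(\lambda)d\lambda$ replaced by $d\mu(\lambda)$. This gives
\[
\si_n^2(\mu):=\min_{q_n\in\mathcal{Q}_n(1)}\|q_n\|_\mu^2 .
\]
By \eqref{L5s}, this minimum is exactly the Christoffel function $\la_n(1,\mu)$. This is the measure version of \eqref{si-la}, and by Proposition~\ref{L0} it also equals $S_n^{-1}(1,1)$. The point $\xi=1$ corresponds to $\la=0$ under $z=e^{i\la}$.

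\emph{Step 2: apply M\'at\'e's theorem.} Since $\mu$ satisfies the Szeg\H{o} condition \eqref{S}, Proposition~\ref{MNT} gives $(n+1)\la_n(e^{i\la},\mu)\to 2\pi f(\la)$ for almost every $\la$. Taking $\la=0$ yields $(n+1)\si_n^2(\mu)\to 2\pi f(0)$.

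\emph{Step 3: conclude.} Write
\[
n\si_n^2(\mu)=\frac{n}{n+1}\,(n+1)\si_n^2(\mu).
\]
Since $n/(n+1)\to1$, this gives \eqref{Sz22}.

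The main obstacle is Step 2. M\'at\'e's theorem is an almost-everywhere statement, so it does not by itself apply at the specific point $\la=0$. The statement should therefore be read with $0$ belonging to the full-measure set where \eqref{M-Sz22} holds. In M\'at\'e--Nevai--Totik this set includes points that are Lebesgue points of $f$ and at which the singular part satisfies $\mu_s((-h,h))=o(h)$; at such points $f(0)$ is well defined as the Lebesgue value. I would state this hypothesis explicitly rather than attempt a pointwise proof.

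If a genuinely pointwise version is wanted under extra regularity, I would use comparison instead. Suppose $f$ is continuous and positive near $0$. Proposition~\ref{L0-M} and the monotonicity in Proposition~\ref{R2}(b) let one sandwich $\mu$ between measures with continuous densities that equal $f$ near $0$, then apply Proposition~\ref{Gol-2}. The difficulty in this route is the localization principle: one must show that modifying $\mu$ away from $0$ does not change the limit of $n\la_n(1,\cdot)$. This is the part of M\'at\'e--Nevai--Totik I would cite rather than reprove.
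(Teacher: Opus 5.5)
Your argument is the paper's own: there the theorem is stated as an immediate consequence of the identification $\sigma_n^2=\lambda_n(1,\mu)$ in \eqref{si-la} and M\'at\'e's theorem (Proposition~\ref{MNT}) evaluated at $\lambda=0$, and you add the harmless step $n/(n+1)\to1$. Your caveat is correct and the paper does not address it. Proposition~\ref{MNT} is an almost-everywhere statement, so the one-line deduction implicitly assumes that $\lambda=0$ lies in the full-measure set where \eqref{M-Sz22} holds, or else pointwise hypotheses such as those of Proposition~\ref{Gol-2} (this is Theorem~\ref{Gol-T}). The explicit Lebesgue-point description of that set that you attribute to M\'at\'e--Nevai--Totik is not part of Proposition~\ref{MNT} as quoted, so it would need its own reference.
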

The next result follows from Proposition \ref{Gol-2} and equation \eqref{OSm23}.
\begin{thm}
\label{Gol-T}
Assume that the conditions of Proposition \ref{Gol-2} are satisfied at the point $\la_0=0$. Then, the asymptotic relation \eqref{Sz22} holds.
\end{thm}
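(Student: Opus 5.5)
The plan is to reduce Theorem \ref{Gol-T} directly to Proposition \ref{Gol-2}, specialized to the point $\la_0=0$. We will use the identification \eqref{si-la} of the variance of the BLUE with the Christoffel function at $\xi=1$, namely $\sigma_n^2(f)=\la_n(1,f)$. Equivalently, we can use \eqref{OSm23}, which gives $\sigma_n^2(f)=1/S_n(1,1)$.

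\textbf{Step 1: verify the hypotheses.} Under the assumptions of Proposition \ref{Gol-2}, the spectral measure is absolutely continuous, $d\mu=f(\la)d\la$, so $\mu_s=0$. Moreover, $f$ is continuous with $f\ge c>0$. This implies $\ln f$ is bounded, so the Szeg\H{o} condition \eqref{S} holds automatically; nondeterminism of the model is therefore not an extra assumption.

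\textbf{Step 2: apply Proposition \ref{Gol-2}.} With $\la_0=0$ and $z_0=e^{i\cdot 0}=1$, Proposition \ref{Gol-2} gives
\[
\lim_{n\to\f}(n+1)\la_n(1,\mu)=2\pi f(0).
\]
Substituting \eqref{si-la} yields $\lim_{n\to\f}(n+1)\sigma_n^2(f)=2\pi f(0)$.

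\textbf{Step 3: pass from $n+1$ to $n$.} Since $(n+1)/n\to1$, we have $n\sigma_n^2(f)=\frac{n}{n+1}(n+1)\sigma_n^2(f)\to 2\pi f(0)$. This is exactly \eqref{Sz22}, with $\si_n^2(\mu)=\sigma_n^2(f)$ because $\mu$ is absolutely continuous.

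\textbf{Main obstacle.} There is no analytic difficulty, since the heavy lifting is contained in Golinskii's result (Proposition \ref{Gol-2}). The only care needed is bookkeeping. First, the Christoffel function at $\xi=1$ must match the variance with exactly $n+1$ observations $X(0),\dots,X(n)$, i.e.\ polynomials of degree at most $n$. Second, $\si_n^2(\mu)$ in \eqref{Sz22} must be read as $\sigma_n^2(f)$. As a sanity check, the result is consistent with Theorem \ref{Gr01}, which gives the same limit under positivity and continuity by the sandwiching argument.
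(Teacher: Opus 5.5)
Your proposal is correct and follows the paper's argument: the paper also just combines Proposition \ref{Gol-2} at $\la_0=0$ (so $z_0=1$) with the identification $\sigma_n^2(f)=1/S_n(1,1)=\la_n(1,f)$ from \eqref{OSm23}/\eqref{si-la}. Your replacement of the factor $n+1$ by $n$ is the only additional step, and it is harmless.
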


\begin{rem}
{\rm While Theorems \ref{Gr1} -- \ref{Gol-T} %{MNT-C} %and \ref{Gr01} 
provide asymptotic expressions for 
the variance of the BLUE of the unknown mean $m$, they do not allow us to compute
$\widehat{m}_{BLU}$ in practice, except in a few special cases.
Therefore, approximations are often necessary.}
\end{rem}

Now let us deal with the {\it least squares estimator} (LSE) $\widehat m_{LS}$ of the mean $m$:
\begin{equation}
	\label{V4.1pls}
	\widehat{m}_{LS}: =\bar X=\frac{1}{n+1}\sum_{k = 0}^n X(k).
\end{equation}
\begin{thm}
\label{Gr2}
Assume that the spectral density $f(\la)$ is continuous at $\la=0$. Then, for the variance of $\widehat m_{LS}$, we have:
	\begin{equation}
		\label{Gr-2}
		\Var_f(\widehat m_{LS})\sim 2\pi f(0)/n  \q {\rm as}\q n \rightarrow \infty.
	\end{equation}
\end{thm}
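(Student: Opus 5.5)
Write the variance of the sample mean in the frequency domain via \eqref{Formula}, with the weights $c_k=1/(n+1)$:
\[
\Var_f(\widehat m_{LS})=\frac{1}{(n+1)^2}\int_{-\pi}^{\pi}\Big|\sum_{k=0}^n e^{ik\la}\Big|^2 f(\la)\,d\la
=\frac{2\pi}{n+1}\int_{-\pi}^{\pi}\Phi_{n}(\la)f(\la)\,d\la,
\]
where
\[
\Phi_n(\la)=\frac{1}{2\pi(n+1)}\frac{\sin^2((n+1)\la/2)}{\sin^2(\la/2)}
\]
is the Fej\'er kernel (cf. the Example following \eqref{rk01}). The statement then reduces to
\[
\int_{-\pi}^{\pi}\Phi_n(\la)f(\la)\,d\la\to f(0).
\]
The factor $n+1$ versus $n$ is irrelevant for the asymptotic equivalence. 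We use the standard facts about $\Phi_n$:
\begin{itemize}
\item $\Phi_n\ge0$;
\item $\int_{-\pi}^{\pi}\Phi_n\,d\la=1$;
\item for each $\de>0$, $\sup_{\de\le|\la|\le\pi}\Phi_n(\la)\le \big[2\pi(n+1)\sin^2(\de/2)\big]^{-1}\to0$.
\end{itemize}

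Fix $\vs>0$ and choose $\de$ with $|f(\la)-f(0)|<\vs$ for $|\la|<\de$, which is possible by continuity at $0$. Split
\[
\int\Phi_n f-f(0)=\int_{|\la|<\de}\Phi_n\,(f-f(0))+\int_{\de\le|\la|\le\pi}\Phi_n\,(f-f(0)).
\]
The first term is bounded by $\vs$, since $\Phi_n\ge0$ and $\int\Phi_n=1$. The second term is bounded by
\[
\sup_{\de\le|\la|\le\pi}\Phi_n\cdot\big(\|f\|_{1}+2\pi f(0)\big)\to0,
\]
using only $f\in L^1$. Hence $\limsup_n\big|\int\Phi_nf-f(0)\big|\le\vs$ for every $\vs>0$, which gives $\int\Phi_nf\to f(0)$.

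The one point needing care is the tail estimate. Only integrability of $f$ is available away from $0$, with no boundedness, so the argument needs the uniform decay of $\Phi_n$ on $\{\de\le|\la|\le\pi\}$ rather than any $L^\infty$ bound on $f$; this decay holds and closes the argument.

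The last step is the conversion to the stated form. When $f(0)>0$, $\int\Phi_nf\to f(0)$ gives $\Var_f(\widehat m_{LS})\sim 2\pi f(0)/(n+1)\sim2\pi f(0)/n$. When $f(0)=0$, the relation $\sim$ is to be read as $\Var_f(\widehat m_{LS})=o(1/n)$, which the same estimate yields.
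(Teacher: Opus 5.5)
Your proof is correct and follows the paper's route: you write $\Var_f(\widehat m_{LS})$ as $\frac{2\pi}{n+1}$ times a Fej\'er-kernel average of $f$ and then use the approximate-identity property of the Fej\'er kernel at $\lambda=0$. The paper simply cites Proposition~\ref{F-Ker}, which is stated for $\psi\in C(\Lambda)$, and its display \eqref{Grf3} drops the factor $2\pi$; your direct splitting argument, using the uniform bound on the kernel away from the origin, covers the stated hypothesis of continuity only at $0$ with $f\in L^1$.
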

\begin{proof}
From equation \eqref{Formula} with $c_k=1/(n+1)$, $k=0,1,\ldots,n$, for the variance of  $\widehat m_{LS}$, we have 
\begin{equation}
	\label{Grf3}
\Var_f(\widehat m_{LS})= \frac1{n+1}\inl F_{n+1}(\la) f(\la)d\la,
\end{equation}
where $F_{n+1}(\la)$ is the Fej\'er kernel (see equation \eqref{F14-1}).
By applying Proposition \ref{F-Ker} from \eqref{Grf3}, we can infer \eqref{Gr-2}, leading to the conclusion.
%\begin{equation}
%	\label{s6}
%	F_T(\la):= 
%		\frac1{2\pi (n+1)^2}
%		\cdot\left[\frac{\sin((n+1)\la/2)}{\sin(\la/2)}\right] 
%		\mbox { for  $\la \in \mathbb{T}, \, n \in \mathbb{N}$},
%\end{equation}
\end{proof}

As an immediate consequence of Theorems \ref{Gr01} and \ref{Gr2}, we obtain the following result.
\begin{thm}
\label{Gr3}
Assume that the spectral density $f(\la)$ of the process $X(t)$ is positive and continuous. Then, 
the least squares estimator $\widehat m_{LS}$ is asymptotically as effective as the best unbiased estimator $\widehat m_{BLU}$ for the unknown mean $m$. Specifically, the asymptotic efficiency  
$e(\infty, \widehat m_{LS}, f)$ of $\widehat m_{LS}$ is equal to 1 (see equation \eqref{eff2}).
\end{thm}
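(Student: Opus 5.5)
The plan is to combine the two asymptotic relations already established and pass to the limit in the efficiency ratio \eqref{eff1}. Since $f$ is positive and continuous on $\Lambda$, it is in particular continuous at $\la=0$ with $f(0)>0$. This means the hypotheses of both Theorem \ref{Gr01} and Theorem \ref{Gr2} are satisfied.

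First, Theorem \ref{Gr01} gives $\Var_f(\widehat m_{BLU})=\sigma_n^2(f)\sim 2\pi f(0)/n$. Second, Theorem \ref{Gr2} gives $\Var_f(\widehat m_{LS})\sim 2\pi f(0)/n$ as $n\to\f$. The efficiency is then
$$e(n,\widehat m_{LS},f)=\frac{\Var_f(\widehat m_{BLU})}{\Var_f(\widehat m_{LS})}
=\frac{n\,\Var_f(\widehat m_{BLU})}{n\,\Var_f(\widehat m_{LS})}.$$
The numerator and the denominator both converge to the same limit $2\pi f(0)$. Because $f(0)>0$, this common limit is finite and nonzero, so the ratio tends to $1$. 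By \eqref{eff2}, this says exactly that $e(\f,\widehat m_{LS},f)=1$.

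The only point needing care is that $f(0)>0$, so that $\sim$ is meaningful and we are not dividing by zero. This is guaranteed by the positivity and continuity hypothesis: a continuous, positive function on the compact set $\Lambda$ is bounded below by a positive constant. For additional safety, one can note that the bound $\sigma_n^2(f)\le \Var_f(\widehat m_{LS})$ (the BLUE minimizes variance) keeps the efficiency in $(0,1]$ for every $n$. The limit computation above then pins the limit at $1$.

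The argument has no real obstacle, since all the analytic work is contained in Theorems \ref{Gr01} and \ref{Gr2}.
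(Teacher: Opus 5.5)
Your proposal is correct and matches the paper, which states the theorem as an immediate consequence of Theorems \ref{Gr01} and \ref{Gr2}. Both variances are asymptotic to $2\pi f(0)/n$ with $f(0)>0$, so the efficiency ratio tends to $1$.
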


The next example demonstrates that the asymptotic behavior of the variance of 
an estimator for the mean $m$ essentially depends on the
dependence (memory) structure of the underlying observed model $X(t)$
(see, e.g., Grenander and Szeg\H{o} \cite{GS}, p. 211).

\begin{exa}
\label{Ex5.1}	
{\rm Consider a first-order moving average MA$(1)$ process $X(t)$:
		$$
		X(t)=\vs (t)-b \cd \vs (t-1),\q \vs (t)\sim WN(0,1),
		\q 0\le b\le1.
		$$
		\n The spectral density is
		$f(\la)=\frac{1}{2\pi}\cd |1-be^{i\la}|^2$
		(see formula \eqref{arma}).
		
		a) First, assume that $X(t)$ has short memory, meaning that $0\le b<1$.
		It is easy to verify that the assumptions of Proposition \ref{SzL} are satisfied. Therefore, according to Theorems \ref{Gr1} and \ref{Gr2}, we have 
			\begin{equation}
			\label{Gr-3}
			\Var_f(\widehat m_{LS})\sim \Var_f(\widehat m_{BLU})\sim 2\pi f(0)/n  \q {\rm as}\q n \rightarrow \infty.
		\end{equation}
		Therefore, in this case, $\widehat m_{LS}$ is asymptotically efficient. 
		
		b) Now, let $b=1$. We have $f(\la)= \frac{1}{2\pi}\cd|1-e^{i\la}|^2,$ and 
		hence $f(0)=0$, indicating that the process $X(t)$ is antipersistent. 
		What is the efficiency of $\widehat{m}_{LS}$ in this case?
		We will analyze this scenario in detail in the next section and will show that $\widehat{m}_{LS}$ is far from being asymptotically efficient.}
\end{exa}

\sn{Asymptotic behavior of $\Var_f(\widehat m_{BLU})$ for a class of antipersistent models}
\label{Vitale}

Denote by $\mathcal{F}_r$ ($r\in\mathbb{N}$) the class of spectral densities 
$f(\la)$ that are even, continuous, and positive,
except for a $2r$th order zero at the origin. 
Specifically, we define
\beq
\label{Fg}
\mathcal{F}_r(g):=\left\{f(\lambda) = (2\pi)^{-1}|1 - e^{i\lambda}|^{2r}g(\lambda),\q r\in N\right\},
\eeq
where $g(\la)$ is the spectral density of a short memory process.
An example of a time series model from the class $\mathcal{F}_r(g)$ is the ARIMA($p,2r,q$)
model, in which case $g(\la)$ is given by equation \eqref{arma}.
\begin{thm}
\label{Vit1}
Let $f(\lambda)\in \mathcal{F}_1(1)$ (with $g(\la)\equiv1$). 
The coefficients $c_k$ and the variance $\Var_f(\widehat m_{BLU})$ of $\widehat m_{BLU}$ are given by the following formulas:
\begin{equation}
		\label{V4.1c}
c_k=\frac{6}{n+2}\left[\frac{k}{n}\left(1-\frac{k}{n+1}\right)\right],\q k=0,1,\ldots,n
	\end{equation}
and 
\begin{equation}
	\label{V4.1}
	\Var_f(\widehat m_{BLU})=\frac{12}{n(n+1)(n+2)}.
\end{equation}
\end{thm}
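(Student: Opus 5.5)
The plan is to compute the BLUE directly from the explicit covariance structure, using the closed form \eqref{2.1}--\eqref{2.2}. No asymptotic machinery is needed. For $f(\la)=(2\pi)^{-1}|1-e^{i\la}|^2$, formula \eqref{mo1} gives
\[
r(0)=2,\qquad r(\pm1)=-1,\qquad r(t)=0 \ \text{ for } |t|\ge2 .
\]
This is the MA$(1)$ model of Example \ref{Ex5.1} with $b=1$, i.e. $X(t)-m=\vs(t)-\vs(t-1)$. Hence the covariance matrix $R_n=B_n(f)$ in \eqref{T-2.2a} is the tridiagonal matrix with $2$ on the diagonal and $-1$ on the off-diagonals.

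\textbf{Indexing.} I will work with $n$ observations indexed $k=1,\dots,n$, as in \eqref{2.1cm}. This is the convention under which \eqref{V4.1} comes out exactly; the displayed form of \eqref{V4.1c} has to be read in the same convention, and I will make that correspondence explicit at the end.

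\textbf{Step 1: reduce to a linear system.} By \eqref{2.1}--\eqref{2.2}, the BLUE weight vector $c$ is characterized by
\[
R_n c^T=\sigma_n^2(f)\,\mathbf 1,\qquad \mathbf 1^T c^T=1 ,
\]
and the minimal variance is $\sigma_n^2(f)$ itself. I introduce the boundary values $c_0=c_{n+1}=0$. The $k$-th row of $R_nc^T=\sigma^2\mathbf 1$ then becomes the second-order difference equation
\[
-c_{k-1}+2c_k-c_{k+1}=\sigma^2,\qquad k=1,\dots,n .
\]
Equivalently, one can write the variance as $\sum_{j=0}^{n}(c_j-c_{j+1})^2$ via the MA representation and apply Lagrange multipliers; this gives the same equation.

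\textbf{Step 2: solve the difference equation.} A constant negative second difference forces $c_k$ to be a quadratic in $k$ with leading coefficient $-\sigma^2/2$. Imposing the two zero boundary conditions gives
\[
c_k=\frac{\sigma^2}{2}\,k\,(n+1-k),\qquad k=1,\dots,n .
\]
The normalization $\sum_k c_k=1$ together with the identity
\[
\sum_{k=1}^n k(n+1-k)=\frac{n(n+1)(n+2)}{6}
\]
yields $\sigma^2=12/\bigl(n(n+1)(n+2)\bigr)$, which is \eqref{V4.1}. Substituting back gives
\[
c_k=\frac{6\,k(n+1-k)}{n(n+1)(n+2)} .
\]
This is the parabolic weight profile of \eqref{V4.1c}, written as $\frac{6}{n+2}\cdot\frac{k}{n}\bigl(1-\frac{k}{n+1}\bigr)$.

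\textbf{Main obstacle.} The only real difficulty is bookkeeping. I must match the index range and the boundary conventions so that the stated formulas are reproduced exactly; with $n+1$ observations $k=0,\dots,n$, the same argument gives $12/\bigl((n+1)(n+2)(n+3)\bigr)$ instead. I must also confirm that the stationarity solution really is the minimizer. That follows because $R_n$ is positive definite (Section 1.2), so the solution of the linear system in Step 1 is unique and coincides with \eqref{2.1}.
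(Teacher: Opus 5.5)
Your proof is correct, but it is not the argument the paper gives. The paper, following Grenander and Szeg\H{o}, gets \eqref{V4.1} from the Christoffel-function formula \eqref{OSm23}. It uses the explicit orthonormal polynomials $\varphi_\nu(z)=[(\nu+1)(\nu+2)]^{-1/2}[(\nu+1)z^\nu+\nu z^{\nu-1}+\cdots+1]$ for the weight $(2\pi)^{-1}|1-e^{i\lambda}|^2$ and sums $|\varphi_\nu(1)|^2=(\nu+1)(\nu+2)/4$. The weights \eqref{V4.1c} are only attributed to Vitale's direct computation, not derived.

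You instead solve the normal equations $R_nc^T=\sigma^2\mathbf{1}$. For the tridiagonal covariance these reduce to a constant second-difference equation with zero ghost boundary values. Weights and variance then come out together from elementary algebra, with uniqueness from the positive definiteness of $R_n$.

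Your route is fully self-contained, but it depends on the banded structure of $R_n$. It would extend to integer orders of the zero via higher-order difference equations, not to general $\alpha$. The OPUC route requires the orthogonal polynomials to be found and verified, but it sits inside the Szeg\H{o}-kernel framework the paper uses throughout. It can also produce the weights through $p(z)=S(z,1)/S(1,1)$, since here $\varphi_\nu(1)\varphi_\nu(z)=\frac12\sum_{j=0}^{\nu}(j+1)z^j$.

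Your bookkeeping remark is not a formality; it catches a slip in the paper's computation. With the range $\nu=0,\ldots,n$ written in \eqref{Grf100}, one gets $\sum_{\nu=0}^{n}(\nu+1)(\nu+2)/4=(n+1)(n+2)(n+3)/12$. That is exactly the value $12/((n+1)(n+2)(n+3))$ you predict for the $n+1$ observations $X(0),\ldots,X(n)$. The stated formula \eqref{V4.1} corresponds to summing $\nu=0,\ldots,n-1$, i.e.\ to $n$ observations, consistent with the $n\times n$ matrix in \eqref{2.1cm}. Under that reading the displayed range $k=0,\ldots,n$ in \eqref{V4.1c} is harmless, since the formula gives $c_0=0$, so the estimator simply ignores $X(0)$.
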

Formulas \eqref{V4.1c} and \eqref{V4.1} were derived by Vitale \cite{Vit} through direct calculations.
The variance expression in \eqref{V4.1} was previously calculated
in Grenander and Szeg\H{o} \cite{GS}, p. 211), using the following explicit form of the orthogonal polynomials $\varphi_v(z)$
corresponding to the spectral density 
$f(\lambda) = \frac{1}{2\pi} |1 - e^{i\lambda}|^2$ (see also Simon \cite{Sim-1}, p. 76):
$$
\varphi_v(z)= [(\nu+1)(\nu+2)]^{-1/2}[(\nu+1)z^\nu+\nu z^{\nu-1}+\cdots+1].
$$
Then $|\varphi_v(1)|^2= (\nu+1)(\nu+2)/4$, and by equation \eqref{OSm23}, we have
\bea
\label{Grf100}
\Var_f(\widehat m_{BLU})= \left(\sum\limits_{v =0}^{n}|\varphi_v(1)|^2\right)^{-1}
=\frac{12}{n(n+1)(n+2)}.
\eea

Consider the least squares estimator $\widehat m_{LS}$ for the 
model $f(\lambda)\in \mathcal{F}_1(1)$ (see Example \ref{Ex5.1} (b)). We have
\begin{equation}
	\label{V4.1a1}
	\widehat{m}_{LS}: =\frac{1}{n+1}\sum_{k = 0}^n X(k)=\frac{1}{n+1}(\vs_n-\vs_{-1}).
\end{equation}
For the variance $\Var_f(\widehat m_{LS})$, we have  
\begin{align}
	\label{V4.1a2}
	\nonumber
	\Var_f(\widehat m_{LS})&=\frac1{(n+1)^2}\int_{-\pi}^{\pi}
\left| \sum_{k=0}^{n} e^{ikt}\right|^2\frac{1}{2\pi} |1 - e^{i\lambda}|^2	d\la\\
&=\frac1{\pi (n+1)^2}\int_{-\pi}^{\pi}[(1-\cos((n+1)\la)]d\la	=\frac2{(n+1)^{2}}.
\end{align}

From equations \eqref{V4.1} and \eqref{V4.1a2}, we conclude that in this case, 
$\widehat m_{LS}$ is far from being asymptotically efficient. Specifically, we have
$e(\f,\widehat m_{LS}, f) = 0$. 

The next result, obtained by applying Proposition \ref{F-Ker}, describes the asymptotic behavior of $\Var_f(\widehat m_{LS})$ for a general short memory factor $g(\lambda)$.
\begin{thm}
	\label{Vit5}
Assume that $f\in \mathcal{F}_1(g)$. Then, for the variance of $\widehat m_{LS}$, we have
	\begin{equation}
		\label{V4.2LS}
		n^2\Var_f(\widehat m_{LS})\to \frac1\pi\inl g(\la)d\la \q {\rm as}\q n \rightarrow \infty.
	\end{equation}
\end{thm}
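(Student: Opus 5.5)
We start from formula \eqref{Formula} with $c_k=1/(n+1)$, exactly as in the proof of Theorem \ref{Gr2}. Since
\[
\Big|\sum_{k=0}^n e^{ik\la}\Big|^2=\frac{\sin^2((n+1)\la/2)}{\sin^2(\la/2)}
\q\text{and}\q
|1-e^{i\la}|^2=4\sin^2(\la/2),
\]
the factor $|1-e^{i\la}|^2$ in $f\in\mathcal{F}_1(g)$ exactly cancels the denominator of the Dirichlet-type kernel. This gives the exact identity
\[
\Var_f(\widehat m_{LS})=\frac{1}{(n+1)^2}\cdot\frac{1}{2\pi}\inl 4\sin^2\big((n+1)\la/2\big)\,g(\la)\,d\la
=\frac{1}{\pi(n+1)^2}\inl\big[1-\cos((n+1)\la)\big]g(\la)\,d\la .
\]
For $g\equiv1$ this reproduces the computation \eqref{V4.1a2}.

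Multiplying by $n^2$, and using $n^2/(n+1)^2\to1$, it remains to show
\[
\frac1\pi\inl g(\la)\,d\la-\frac1\pi\inl \cos((n+1)\la)\,g(\la)\,d\la\;\longrightarrow\;\frac1\pi\inl g(\la)\,d\la .
\]
Since $g$ is the spectral density of a short memory process, it is bounded, in particular $g\in L^1(\Lambda)$. The second integral is then (the real part of) the Fourier coefficient $\hat g(n+1)$. By the Riemann--Lebesgue lemma it tends to $0$ as $n\to\f$, and \eqref{V4.2LS} follows.

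There is no real obstacle here; the key step is noticing the exact cancellation, which turns the problem into a Riemann--Lebesgue statement. The paper says the result is obtained ``by applying Proposition \ref{F-Ker}''. So if one prefers to stay within the Fej\'er-kernel framework, one can write $\sin^2((n+1)\la/2)$ via $F_{n+1}$ times $\sin^2(\la/2)$ and invoke that proposition instead. The cleanest route, however, is the direct identity above, which needs only $g\in L^1$.
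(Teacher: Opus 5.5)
Your proof is correct, but it takes a different route from the one the paper points to. The paper gives no argument for this theorem. It only says the result is ``obtained by applying Proposition \ref{F-Ker}'', that is, by the approximate-identity property of the Fej\'er kernel, in parallel with Theorem \ref{Gr2}. You instead cancel $|1-e^{i\la}|^2=4\sin^2(\la/2)$ exactly against the denominator of $|\sum_{k=0}^n e^{ik\la}|^2$. This gives the closed formula $\Var_f(\widehat m_{LS})=\pi^{-1}(n+1)^{-2}\inl[1-\cos((n+1)\la)]\,g(\la)\,d\la$ for every $n$, and you finish with the Riemann--Lebesgue lemma. Your route gives an exact finite-$n$ identity, with \eqref{V4.1a2} as the case $g\equiv 1$. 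It also needs only $g\in L^1$, with no continuity of $g$ anywhere.

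Your closing suggestion does not work as stated. You propose writing $\sin^2((n+1)\la/2)$ as $F_{n+1}$ times $\sin^2(\la/2)$ and then invoking Proposition \ref{F-Ker}. With the paper's normalization this gives $\Var_f(\widehat m_{LS})=\frac{4}{n+1}\inl F_{n+1}(\la)\sin^2(\la/2)\,g(\la)\,d\la$. Proposition \ref{F-Ker} then only says the integral tends to $\sin^2(0)g(0)=0$, i.e.\ $\Var_f(\widehat m_{LS})=o(1/n)$. That is just Theorem \ref{Gr2} with $f(0)=0$, and it loses both the $n^{-2}$ rate and the constant.

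This failure is structural. An approximate identity localizes at $\la=0$, whereas the limit $\pi^{-1}\inl g$ depends on $g$ over the whole circle. If you want to tie the argument to Fej\'er kernels, you can only do so indirectly. For continuous $g$, Fej\'er's theorem (the uniform version of Proposition \ref{F-Ker}) approximates $g$ uniformly by trigonometric polynomials $t$. For such $t$, $\inl\cos((n+1)\la)\,t(\la)\,d\la=0$ once $n$ exceeds the degree of $t$, which reproves Riemann--Lebesgue in that case. Either drop the remark or replace it with this, and keep your direct proof, which is both cleaner and more general.
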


\n
{\bf The parabolic estimator for unknown mean $m$}. For the model $\mathcal{F}_1(1)$, consider the so-called {\it parabolic estimator $\widehat m_{p}$}, which is a slight
modification of the BLUE and offers computational 
convenience (see Vitale \cite{Vit} and Grenander \cite{Gr-4}, p. 238).
The coefficients $c_k$ of $\widehat{m}_{p}$ are given by the following formula (cf. equation \eqref{V4.1c}):
\begin{equation}
	\label{V4.1cp}
	c_k=\frac{6n}{n^2-1}\left[\frac{k}{n}\left(1-\frac{k}{n}\right)\right],\q k=0,1,\ldots,n.
\end{equation}
Straightforward calculations yield the following result (see Vitale
\cite{Vit}):
\begin{thm}
	\label{Vit2}
	With the assumptions of Theorem \ref{Vit1}, the variance of $\widehat m_{p}$ is given by formula:
	\begin{equation}
		\label{V4.1p}
		\Var_f(\widehat m_{p})=\frac{12}{n(n^2-1)}.
	\end{equation}
\end{thm}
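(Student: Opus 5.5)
The plan is a direct computation in the time domain. For $f\in\mathcal{F}_1(1)$, i.e. $f(\lambda)=(2\pi)^{-1}|1-e^{i\lambda}|^2$, the noise is the MA$(1)$ process with $b=1$ from Example \ref{Ex5.1}(b). I will write $Y(t)=\vs(t)-\vs(t-1)$ with $\vs\sim$ WN$(0,1)$. Equivalently, by \eqref{mo1} the covariances are $r(0)=2$, $r(\pm1)=-1$, and $r(t)=0$ for $|t|\ge 2$.

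\textbf{Step 1: check unbiasedness.} Writing $c_k=\frac{6k(n-k)}{n(n^2-1)}$, I use
\[
\sum_{k=0}^n k(n-k)=n\cdot\frac{n(n+1)}{2}-\frac{n(n+1)(2n+1)}{6}=\frac{n(n^2-1)}{6}.
\]
Hence $\sum_{k=0}^n c_k=1$, so $\widehat m_p$ is of the form \eqref{1.2}.

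\textbf{Step 2: summation by parts in the white-noise representation.} Since $c_0=c_n=0$, I can set $c_{-1}=c_{n+1}=0$ without changing anything. Then
\[
\widehat m_p-m=\sum_{k=0}^n c_k\big(\vs(k)-\vs(k-1)\big)=\sum_{j=0}^{n-1}(c_j-c_{j+1})\,\vs(j).
\]
By orthonormality of $\vs$,
\[
\Var_f(\widehat m_p)=\sum_{j=0}^{n-1}(c_{j+1}-c_j)^2 .
\]
As a cross-check, the same identity follows from \eqref{1.3}: the quadratic form $2\sum c_k^2-2\sum c_kc_{k+1}$ is the sum of squared successive differences once the boundary terms $c_0^2+c_n^2$ vanish.

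\textbf{Step 3: compute the differences and sum.} A direct calculation gives
\[
c_{j+1}-c_j=\frac{6(n-1-2j)}{n(n^2-1)} .
\]
For the sum of squares, the values $n-1-2j$, $j=0,\dots,n-1$, run symmetrically through $\pm(n-1),\pm(n-3),\dots$. Therefore
\[
\sum_{j=0}^{n-1}(n-1-2j)^2=\frac{n(n^2-1)}{3}.
\]
This is the standard sum of squares of an arithmetic progression; it can be checked either by expanding $\sum(n-1)^2-4(n-1)\sum j+4\sum j^2$ or by parity cases. Substituting,
\[
\Var_f(\widehat m_p)=\frac{36}{n^2(n^2-1)^2}\cdot\frac{n(n^2-1)}{3}=\frac{12}{n(n^2-1)},
\]
which is \eqref{V4.1p}.

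There is no real obstacle here. The only points needing care are the boundary terms, which vanish precisely because the parabolic weights satisfy $c_0=c_n=0, and the closed form of $\sum(n-1-2j)^2$. I will verify the latter for small $n$ (for example $n=2$ gives $2$, matching $2\cdot3/3$).
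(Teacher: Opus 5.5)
Your computation is correct and complete for $n\ge 2$, which is where the parabolic weights are defined. The weights sum to one, and because $c_0=c_n=0$ the MA$(1)$ summation by parts reduces the variance exactly to $\sum_{j=0}^{n-1}(c_{j+1}-c_j)^2$; the arithmetic-progression sum then gives $12/(n(n^2-1))$. The paper gives no argument beyond ``straightforward calculations'' with a pointer to Vitale, and your direct computation is that calculation; it is equivalent to applying Parseval to the coefficients of $(1-z)q_n(z)$ in the frequency-domain formula.
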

From equations \eqref{V4.1} and \eqref{V4.1p}, we infer that
the parabolic estimator $\widehat m_{p}$ is asymptotically efficient in the class $\mathcal{F}_1(1)$. Specifically, 
we have $e(\f,\widehat m_{p}, f) = 1$. 

The next result extends Theorems \ref{Vit1} and \ref{Vit2} to a general short memory factor $g(\la$) (see Vitale \cite{Vit} and Grenander \cite{Gr-4}, p. 239).
\begin{thm}
\label{Vit4}
For the model $\mathcal{F}_1(g)$, the following relations hold:
\begin{align}
	\label{V4.2BLU}
	&n^3\Var_f(\widehat m_{BLU})\to 12g(0) \q {\rm as}\q n \rightarrow \infty\\
\label{V4.2LS1}
&n^3\Var_f(\widehat m_{p})\to 12g(0) \q {\rm as}\q n \rightarrow \infty.
\end{align}
\end{thm}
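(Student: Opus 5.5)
The plan is a sandwich argument, modelled on the proof of Theorem \ref{Gr01}. I would combine the monotonicity of $\sigma_n^2(f)$ in $f$ (Proposition \ref{R2}(b)) with the exact formulas of Theorems \ref{Vit1} and \ref{Vit2} for $g\equiv1$. Write $f=(2\pi)^{-1}|1-e^{i\la}|^2 g(\la)$ with $g$ continuous and positive, and let $f_0(\la)=(2\pi)^{-1}|1-e^{i\la}|^2$.

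\textbf{Step 1 (BLUE, crude bounds).} Since $0<m\le g\le M$, Proposition \ref{R2}(b) and \eqref{V4.1} give
\[
\frac{12\,m}{n(n+1)(n+2)}\le \sigma_n^2(f)\le \frac{12\,M}{n(n+1)(n+2)} .
\]
So $n^3\sigma_n^2(f)$ is bounded above and below. To obtain the constant $g(0)$, the crude bounds have to be localized near $\la=0$.

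\textbf{Step 2 (localization).} Fix $\varepsilon>0$ and $\de>0$ such that $|g(\la)-g(0)|<\varepsilon$ for $|\la|<\de$.
\begin{itemize}
\item[(i)] \emph{Upper bound.} For the parabolic weights \eqref{V4.1cp}, with polynomial $q_n$, I will show that the mass of $|q_n(e^{i\la})|^2 f_0(\la)$ outside $(-\de,\de)$ is $o(n^{-3})$. The coefficients are a discretized parabola vanishing at both ends, so summation by parts twice gives $|q_n(e^{i\la})|\le C/(n^2|\sin(\la/2)|^2)$ away from $0$. Hence the contribution of $|\la|\ge\de$ is $O(n^{-4}\de^{-3})$. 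Therefore
\[
\Var_f(\widehat m_p)\le (g(0)+\varepsilon)\,\Var_{f_0}(\widehat m_p)+ M\,O(n^{-4}\de^{-3}).
\]
By \eqref{V4.1p} this gives $\limsup n^3\Var_f(\widehat m_p)\le 12(g(0)+\varepsilon)$, and hence the same bound for the BLUE, since $\sigma_n^2(f)\le\Var_f(\widehat m_p)$.
\item[(ii)] \emph{Lower bound.} I will compare $f$ with $f_1:=f_0\,h$, where $h$ is a positive trigonometric-polynomial-type majorant/minorant. Concretely, take $h$ continuous with $h\le g$, $h(0)\ge g(0)-\varepsilon$, and $h$ of the special form $|P(e^{i\la})|^{-2}$ (an AR factor), exactly as in the proof of Theorem \ref{Gr01}. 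Then $\sigma_n^2(f)\ge\sigma_n^2(f_1)$. For $f_1$, the orthogonal polynomials can be computed essentially explicitly: the weight $|1-z|^2/|P(z)|^2$ with $\deg P=p$ has, for large index, $\varphi_{\nu}$ given by a Bernstein--Szeg\H{o}-type modification of the $g\equiv1$ polynomials. From this I expect $|\varphi_\nu(1)|^2\sim \nu^2/(4h(0))$. Summing in \eqref{OSm23} then gives $n^3\sigma_n^2(f_1)\to 12h(0)$.
\end{itemize}
Letting $\varepsilon\to0$ yields \eqref{V4.2BLU}.

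\textbf{Step 3 (parabolic estimator).} Step 2(i) gives $\limsup n^3\Var_f(\widehat m_p)\le 12g(0)$. In the other direction, $\Var_f(\widehat m_p)\ge\sigma_n^2(f)$, so \eqref{V4.2BLU} gives the matching $\liminf$, and \eqref{V4.2LS1} follows.

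\textbf{Main obstacle.} The lower bound in Step 2(ii) is the hard part: identifying $\varphi_\nu(1)$ for the weight $|1-z|^2|P(z)|^{-2}$. If the explicit computation is messy, the first fallback is to avoid it by a duality argument. In the frequency domain, the BLUE variance equals the reciprocal of the maximal value $|\psi(1)|^2/\|\psi\|^2_{1/f}$ over a suitable class, which would allow a direct localization. The second fallback is to invoke Proposition \ref{Gol-1}, which only yields $|\varphi_\nu(1)|\simeq\nu$ and hence the correct order $n^{-3}$, and then refine it to the sharp constant by the Step 2 localization.
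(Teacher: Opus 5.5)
\paragraph{The gap: the lower bound.} Your Step 2(i) is correct. Indeed $(1-z)^2q_n(z)=A(n-1)(z+z^{n+1})-2A\sum_{k=2}^n z^k$ with $A=6/(n(n^2-1))$, which gives $|q_n(e^{i\la})|\le 6/(n(n+1)\sin^2(\la/2))$ uniformly. So $\limsup n^3\sigma_n^2(f)\le\limsup n^3\Var_f(\widehat m_p)\le 12g(0)$ is established.

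What is missing is the matching bound $\liminf n^3\sigma_n^2(f)\ge 12g(0)$, and Step 3 also depends on it. Step 2(ii) reduces it to $|\varphi_\nu(1)|^2\sim\nu^2/(4h(0))$ for the weight $f_0|P|^{-2}$, but you only say you ``expect'' this. The weight is not of Bernstein--Szeg\H{o} type: the factor $|1-z|^2$ sits in the numerator and vanishes exactly at the evaluation point $z=1$. Its orthogonal polynomials are therefore a Christoffel transform of the Bernstein--Szeg\H{o} ones, evaluated at that zero, and none of this is computed. The fallbacks do not close the gap either:
\begin{itemize}
\item Proposition \ref{Gol-1} only gives $|\varphi_\nu(1)|\simeq\nu$, i.e.\ the order $n^{-3}$, which Step 1 already gives.
\item The localization of Step 2(i) bounds the variance of one fixed estimator from above. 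It cannot bound the minimum over $\mathcal{Q}_n(1)$ from below.
\item The duality route is not specified.
\end{itemize}

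\paragraph{The missing idea: put the minorant in the numerator.} There the product trick of Lemma \ref{lem4.1} runs in the right direction. With an AR factor $|P|^{-2}$, multiplying optimal polynomials by $P$ only produces upper bounds, which is why you were pushed into explicit orthogonal polynomials.

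Take a nonnegative trigonometric polynomial $t\le g$ with $t(0)=g(0)-\varepsilon$, for example $t(\la)=(g(0)-\varepsilon)[(1+\cos\la)/2]^N$ with $N$ large (Lemma \ref{lem4.5}). Factor $t=|s(e^{i\la})|^2$ with $\deg s=\nu$ (Fej\'er--Riesz), and let $q_n$ be the optimal polynomial for $f_0t$. Since $s(z)q_n(z)/s(1)\in\mathcal{Q}_{n+\nu}(1)$ and $|s(1)|^2=t(0)$,
\[
t(0)\,\sigma^2_{n+\nu}(f_0)\le\|q_n\|^2_{f_0t}=\sigma_n^2(f_0t)\le\sigma_n^2(f).
\]
The exact formula gives $\sigma^2_{n+\nu}(f_0)\sim 12n^{-3}$, hence $\liminf n^3\sigma_n^2(f)\ge 12(g(0)-\varepsilon)$.

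With this replacement, your Steps 2(i) and 3 complete the proof. The argument is then essentially the paper's: there the theorem is the case $\alpha=1$ of Theorem \ref{thm5.2}, i.e.\ Theorem \ref{thm4.1} with $f=f_0$. The paper's lower bound uses Lemmas \ref{lem4.1} and \ref{lem4.5}, and its upper bound uses the approximate-identity kernel. Your explicit tail estimate is a concrete substitute for that kernel argument, since $\widehat m_p$ is exactly the $f_0$-BLUE based on $X(1),\dots,X(n-1)$.
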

Theorem \ref{Vit4} shows that the parabolic estimator $\widehat m_{p}$ is asymptotically efficient in the class $\mathcal{F}_1(g)$.

Based on Theorems \ref{Vit5} and \ref{Vit4}, we conclude that for the class $\mathcal{F}_1(g)$ of spectral densities, the parabolic estimator $\widehat m_{p}$ outperforms the least squares estimator $\widehat m_{LS}$. Therefore, in practice, the parabolic estimator should be used when $f(0)$ is known to be small or potentially equal to zero,  even though it requires slightly more computation than the least squares estimator. 

What happens when we use the parabolic estimator \(\widehat{m}_p\) in a situation where \(f(0) > 0\), while the least squares estimate \(\widehat{m}_{LS}\) is asymptotically efficient? How much efficiency do we lose in this case? The following theorem demonstrates that the loss in efficiency is not significant, indicating that the result remains quite satisfactory (see Vitale \cite{Vit} and Grenander \cite{Gr-4}, p. 243).
\begin{thm}
	\label{Vit7}
Let the spectral density $f(\lambda)$ be positive and continuous on $[-\pi,\pi]$. Then,
the asymptotic efficiency of the parabolic estimator $\widehat m_{p}$  is 5/6.	
\end{thm}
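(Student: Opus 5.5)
By Theorem \ref{Gr01}, when $f$ is positive and continuous we have $\Var_f(\widehat m_{BLU})\sim 2\pi f(0)/n$. It therefore suffices to prove
\[
\Var_f(\widehat m_p)\sim \frac{6}{5}\cdot\frac{2\pi f(0)}{n},\qquad n\to\f .
\]
The efficiency ratio \eqref{eff1} then tends to $5/6$.

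\emph{Step 1 (frequency representation).} By \eqref{Formula},
\[
\Var_f(\widehat m_p)=\inl |A_n(\la)|^2 f(\la)\,d\la,\qquad A_n(\la):=\sum_{k=0}^n c_k e^{ik\la},
\]
with $c_k$ given by \eqref{V4.1cp}. I will show that the normalized kernel $K_n(\la):=|A_n(\la)|^2/\inl |A_n|^2 d\la$ is an approximate identity at $\la=0$. It is nonnegative with total mass $1$, so it only remains to show that its mass concentrates near the origin. Granting this, continuity of $f$ at $0$ gives
\[
\Var_f(\widehat m_p)\sim f(0)\inl |A_n(\la)|^2\,d\la .
\]
This is the same scheme as in the proof of Theorem \ref{Gr2} via Proposition \ref{F-Ker}.

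\emph{Step 2 (the $L^2$ norm).} By Parseval,
\[
\inl|A_n|^2\,d\la=2\pi\sum_{k=0}^n c_k^2 .
\]
Write $c_k=\frac{6n}{n^2-1}\,w(k/n)$ with $w(x)=x(1-x)$. A Riemann-sum approximation gives
\[
\sum_{k=0}^n c_k^2\sim \frac{36}{n^2}\cdot n\int_0^1 x^2(1-x)^2\,dx=\frac{36}{n}\cdot\frac1{30}=\frac{6}{5n}.
\]
Hence $\inl|A_n|^2\,d\la\sim 2\pi\cdot\frac{6}{5n}$, which is exactly the required constant. As a sanity check, $\sum_k c_k\to 6\int_0^1 w=1$, consistent with unbiasedness.

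\emph{Step 3 (concentration; the main obstacle).} I must show that for every $\de>0$,
\[
\int_{\de\le|\la|\le\pi}|A_n(\la)|^2\,d\la=o(1/n).
\]
I would get this by summation by parts twice. Since $w(0)=w(1)=0$ and the second differences of $c_k$ are constant, of size $O(n^{-2})$, with boundary first differences of size $O(n^{-1})$, we obtain for $\de\le|\la|\le\pi$
\[
|A_n(\la)|\le \frac{C}{n\,|\sin(\la/2)|^2}.
\]
Indeed, one Abel summation gives terms $|\De c_k|\cdot|\sum e^{ij\la}|\le O(1/n)/|\sin(\la/2)|$ at the boundary, and the second summation handles the bulk sum with total variation $O(1/n)$. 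Consequently the tail integral is $O_\de(n^{-2})=o(1/n)$.

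Combining the three steps, and using uniform continuity of $f$ near $0$ (split $|\la|<\de$ and $|\la|\ge\de$, with $f$ bounded), we get $n\Var_f(\widehat m_p)\to \frac{6}{5}\,2\pi f(0)$. Dividing by the BLUE asymptotics from Theorem \ref{Gr01}, where $f(0)>0$ is essential, yields $e(\f,\widehat m_p,f)=5/6$.
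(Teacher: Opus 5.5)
Your proof is correct, but it follows a different route from the paper's. The paper does not prove Theorem \ref{Vit7} directly; it cites Vitale and Grenander. Its only internal justification is the later remark that $e(0,1)=5/6$, i.e.\ the theorem is read off from Adenstedt's Theorems \ref{thm7.1}--\ref{thm7.2} with $\alpha=0$, $\beta=1$.

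For that identification one has to notice two things. First, $\widehat m_p$, whose weights are proportional to $k(n-k)$, is exactly the estimator $\widehat m_1$ of \eqref{5.7est}--\eqref{5.7} applied to the interior observations $X(1),\dots,X(n-1)$. This index shift is asymptotically harmless. Second, one then evaluates the Gamma-function formula \eqref{7.8}. The passage from white noise to general $f$ is Adenstedt's kernel argument, which rests on the Gegenbauer representation and bounds like Lemma \ref{lem6.2}.

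You instead do the case $\alpha=0$, $\beta=1$ by hand:
\begin{itemize}
\item the white-noise constant comes from $\int_0^1x^2(1-x)^2\,dx=1/30$ (exactly, $\sum_k c_k^2=6(n^2+1)/(5n(n^2-1))$);
\item the passage to general $f$ is an explicit summation-by-parts bound on $|A_n|$ away from the origin;
\item the BLUE asymptotics come from Theorem \ref{Gr01}.
\end{itemize}
Your route is elementary and self-contained, and it never needs the identification with $\widehat m_1$. Moreover, your bound on $|A_n|$ is uniform on $\de\le|\la|\le\pi$, which is the strong condition \eqref{6.7}. So the same argument covers any $f\in ZL_0^+$ with $f(0)>0$, once you take the BLUE asymptotics from Theorem \ref{thm6.1} with $\alpha=0$. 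The paper's route, in exchange, gives the efficiency of every overestimated-order estimator $\widehat m_{\alpha+\beta}$ at once, for all $\alpha>-1/2$, integer $\beta\ge 0$ and $g\in ZL_0^+$.

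One correction in Step 3: the orders you quote are each too large by a factor $n$. With $c_k=6k(n-k)/(n(n^2-1))$ one has $c_{k+1}-c_k=6(n-1-2k)/(n(n^2-1))=O(n^{-2})$, and the constant second difference is $-12/(n(n^2-1))=O(n^{-3})$. Since you use these only as upper bounds, your conclusion survives. The true orders give $|A_n(\la)|\le C n^{-2}\sin^{-2}(\la/2)$ on $\de\le|\la|\le\pi$.

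In fact a single Abel summation already suffices. Because $c_0=c_n=0$ and $c_k$ is unimodal, $\sum_k|c_{k+1}-c_k|=2\max_k c_k=O(1/n)$. Hence $|A_n(\la)|\le C/(n|\sin(\la/2)|)$, and the tail integral is $O(n^{-2})=o(1/n)$.
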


In Vitale \cite{Vit}, it was conjectured that the
BLUE associated with the spectral density
$f(\la)=(2\pi)^{-1}|1 - e^{i\lambda}|^{2r}$ is
asymptotically efficient over the class $\mathcal{F}_r(g)$
for all $r\in\mathbb{N}$ (see \eqref{Fg}).
This conjecture was confirmed by Adenstedt \cite{Ad}. 
The details are presented in the next section (see Theorem \ref{thm5.2}).

\sn{Asymptotic behavior of $\Var_f(\widehat m_{BLU})$ for long-memory and antipersistent models}
\label{AD}
In the following, we use the notation $\sigma^2_n(f):=\Var_f(\widehat m_{BLU})$. Additionally, we denote $\widehat{m}_f$ as the BLUE of $m$ calculated with respect to the spectral density $f(\lambda)$, and $p_n(z)$ will represent the optimal polynomials for $f(\lambda)$.
We will focus on spectral densities for which the variance $\sigma^2_n(f)$ does not decrease too rapidly.
Specifically, we assume that the sequence $\{\sigma_n^2(f), n\in\mathbb{N}\}$ is weakly varying (see Section \ref{ww}).
%\begin{equation}
%		\label{3.5a}
%		\lim_{n \rightarrow \infty}\sigma^2_{n+1}(f)/\sigma_n^2(f)=1.
%	\end{equation}
Note that if $\sigma_n^2(f)$ is weakly varying, then for any $\nu\in\mathbb{N}$, we have (see Proposition \ref{p4.1} (a)): 
\begin{equation}
	\label{3.5}
\lim_{n \rightarrow \infty}\sigma^2_{n+\nu}(f)/\sigma_n^2(f)=1.
\end{equation}

We also define several classes of functions that will be considered below. All these functions are assumed to be Lebesgue measurable on
 $[-\pi,\pi].$
\begin{den}
	\label{def3.3}
	Let a function $g(\lambda)$ be defined on $[-\pi,\pi]$. We say that this function belongs to the class:
	
	$L_0$: if $g(\lambda)$ is nonnegative, integrable over $[-\pi,\pi]$, and continuous at $\lambda=0$;
	
	$L_0^+$: if $g(\lambda) \in L_0$ and has a positive lower bound;
	
	$B_0$: if $g(\lambda)$ is nonnegative, bounded, and continuous at $\lambda=0$;
	
	$B_0^+$: if $g(\lambda) \in B_0$ and has a positive lower bound.
\end{den}
\noindent Furthermore, suppose that $g(\lambda)$ has the form: 
\begin{equation}
	\label{3.6g}
	g(\lambda)= h(\lambda)|\lambda-\lambda_1|^{\alpha(1)}|\lambda-\lambda_2|^{\alpha(2)} \cdots |\lambda-\lambda_r|^{\alpha(r)},
\end{equation}
where $r$ is a natural number, $\lambda_1,\lambda_2,\cdots,\lambda_r$ are constants in $[-\pi,\pi]$, and $\alpha(1),$ $\alpha(2),$ $\cdots,$ $\alpha(r)$ are non-negative constants. 
\begin{den}
	\label{def3.3+}
Let a function $g(\lambda)$ be defined as in equation \eqref{3.6g}. 
We say that this function belongs to the class $ZL_0^+$  or $ZB_0^+$
depending on whether $h(\lambda)\in L_0^+$ or $h(\lambda)\in B_0^+$, respectively.
\end{den}

To connect the large-sample behavior of the variance  $\sigma^2_n(f)$ with the behavior of the spectral density near the origin, 
we examine the ratio $\sigma_n^2(fg)/\sigma_n^2(f)$, where $\sigma_n^2(f)$ satisfies condition (\ref{3.5}) and $g(\lambda)$ 
is a suitably "nice" function. 
Essentially, we are now considering the spectral density 
of the process $Y(t)$ to be $f(\lambda)g(\lambda)$ instead of just $f(\lambda)$. 
\begin{thm}
	\label{thm4.1}
	Let $f(\lambda)$ be a spectral density such that the sequence $\{\sigma_n^2(f), n\in\mathbb{N}\}$ is weakly varying, and let $g(\lambda)$ belong to the class $ZB_0^+$. Then
	\begin{equation}
		\label{4.1}
		\lim_{n \rightarrow \infty}\sigma_n^2(fg)/\sigma_n^2(f)=g(0).
	\end{equation}
	Moreover, if $g(0) > 0$, then the BLUE  $\widehat{m}_f$, calculated under the assumption that the spectral density is $f(\lambda)$, is asymptotically efficient with respect to $f(\lambda)g(\lambda)$ in the sense that 
	\begin{equation}
		\label{4.2}
		\lim_{n \rightarrow \infty}\sigma_n^2(fg)/\Var_{fg}(\widehat{m}_f)=1.
	\end{equation}
\end{thm}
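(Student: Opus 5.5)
We plan to follow Adenstedt's approach: sandwich $\sigma_n^2(fg)$ between $g(0)\sigma_n^2(f)$ plus small errors. We treat first the case where $g=h\in B_0^+$ (no zeros), and then the general case $g\in ZB_0^+$ with zeros at $\lambda_1,\dots,\lambda_r$.

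\textbf{Lower bound for $g=h\in B_0^+$.} Fix $\varepsilon>0$ and choose $\delta>0$ with $|h(\lambda)-h(0)|<\varepsilon$ for $|\lambda|<\delta$. Let $p_n$ be the optimal polynomial for $fh$. Then
$$\sigma_n^2(fh)=\|p_n\|_{fh}^2\ \ge\ (h(0)-\varepsilon)\int_{|\lambda|<\delta}|p_n|^2f\,d\lambda + m\int_{|\lambda|\ge\delta}|p_n|^2 f\,d\lambda ,$$
where $m>0$ is the lower bound of $h$. Up to a factor $\min(1,m/(h(0)-\varepsilon))$ on the far part, this is at least $(h(0)-\varepsilon)\|p_n\|_f^2 \ge (h(0)-\varepsilon)\sigma_n^2(f)$. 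That crude bound is not sharp, so we use a localization device instead. Fix $\nu\in\mathbb{N}$ and multiply a competitor polynomial $q\in\mathcal{Q}_n(1)$ by the normalized Fejér-type polynomial $K_\nu(z)=\big((1+z+\cdots+z^{\nu})/(\nu+1)\big)^{2}$. This lies in $\mathcal{Q}_{2\nu}(1)$ and satisfies $|K_\nu(e^{i\lambda})|\le C/(\nu\lambda)^{4}$ away from $0$, while $|K_\nu|\le 1$ everywhere.

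\textbf{Upper bound for $g=h$.} Let $p_n$ be optimal for $f$ and consider $q=p_{n-2\nu}K_\nu\in\mathcal{Q}_n(1)$. Then
$$\sigma_n^2(fh)\le \|q\|^2_{fh}\le (h(0)+\varepsilon)\sigma^2_{n-2\nu}(f)+\|h\|_\infty\sup_{|\lambda|\ge\delta}|K_\nu|^2\,\sigma^2_{n-2\nu}(f).$$
Choosing $\nu$ large makes the last factor at most $\varepsilon$, and weak variation \eqref{3.5} gives $\sigma^2_{n-2\nu}(f)\sim\sigma_n^2(f)$. Hence $\limsup_n \sigma_n^2(fh)/\sigma_n^2(f)\le h(0)$. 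The same computation bounds $\Var_{fh}(\widehat m_f)$, if we use the structure of $\widehat m_f$ rather than a modified one. This is a genuine point: for \eqref{4.2} we must control $\|p_n(f)\|^2_{fh}$ itself. We will do so by first proving the localization lemma that $\int_{|\lambda|\ge\delta}|p_n(f)|^2f\,d\lambda=o(\sigma_n^2(f))$. It follows from optimality: if the far mass were a fixed fraction $\eta$ of $\sigma_n^2(f)$, then $p_{n-2\nu}K_\nu$ would beat $p_n$ by a factor near $1-\eta$, contradicting \eqref{3.5}. To make this comparison rigorous, we compare the norms of $p_nK_\nu$ and $p_n$ using $|K_\nu|\le1$, with strict smallness away from $0$. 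Given localization, $\|p_n(f)\|^2_{fh}=(h(0)+O(\varepsilon))\sigma_n^2(f)+o(\sigma_n^2(f))$.

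\textbf{Lower bound.} Symmetrically, localization applied to $fh$ is needed, but $\sigma_n^2(fh)$ is not yet known to be weakly varying. Instead we argue directly. Let $P=p_n(fh)$; then $P K_\nu\in\mathcal{Q}_{n+2\nu}(1)$, so
$$\sigma_{n+2\nu}^2(f)\le\|PK_\nu\|_f^2\le \frac{1}{h(0)-\varepsilon}\int_{|\lambda|<\delta}|P|^2fh\,d\lambda+\frac{\sup_{|\lambda|\ge\delta}|K_\nu|^2}{m}\,\|P\|^2_{fh}.$$
Taking $\nu$ large, this is at most $\big((h(0)-\varepsilon)^{-1}+\varepsilon\big)\sigma_n^2(fh)$, and \eqref{3.5} gives $\liminf\ge h(0)$. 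Then \eqref{4.2} follows from $\sigma_n^2(fh)\le\Var_{fh}(\widehat m_f)=\|p_n(f)\|^2_{fh}\le(h(0)+o(1))\sigma_n^2(f)$.

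\textbf{Zeros (main obstacle).} For the factors $|\lambda-\lambda_j|^{\alpha(j)}$ with $\lambda_j\neq0$, the upper bound is unaffected, since $g$ is bounded and continuous near $0$. The lower bound fails because $g$ has no positive lower bound. Near each $\lambda_j\neq0$ we will insert a further polynomial factor $(1-e^{-i\lambda_j}z)^{k}/(1-e^{-i\lambda_j})^{k}$, of fixed degree, into the competitor. Dividing $|P|^2$ by it then removes the zero: we compare $\sigma_n^2(fg)\ge c\,\sigma_{n-k'}^2(f\tilde h)$ after factoring, so that the density becomes bounded below away from $0$, and we use that the factor equals $1$ at $z=1$ and is $\approx1$ near $\lambda=0$ after choosing $\delta$ small. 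When a zero sits at $\lambda_j=0$, $g(0)=0$ and only the upper bound $\limsup\le0$ is needed, which the upper-bound argument already gives. The hardest step, which we will handle first, is this zero-removal in the lower bound for $\lambda_j\neq0$, including noninteger $\alpha(j)$. For those we will bound $|\lambda-\lambda_j|^{\alpha}\ge c|1-e^{-i\lambda_j}e^{i\lambda}|^{2\lceil\alpha/2\rceil}$ and apply the polynomial argument.
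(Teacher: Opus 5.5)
Your argument is correct and is essentially the paper's proof with the ingredients made explicit. Bounding $\|p_n(fg)K_\nu Q\|_f^2$ by your split estimate is Lemma~\ref{lem4.1} applied to the trigonometric polynomial $|K_\nu Q|^2$, and a suitable multiple of that polynomial is exactly a minorant of $g$ as in Lemma~\ref{lem4.5}; note that $|1-e^{i(\lambda-\lambda_j)}|^2=2(1-\cos(\lambda-\lambda_j))$ is the paper's zero-killing factor. Your comparison of $p_n(f)K_\nu$ with $p_n(f)$ is Lemma~\ref{lem4.2}, and \eqref{4.2} then follows as in Lemma~\ref{lem4.3}, which also makes your separate upper bound via $p_{n-2\nu}(f)K_\nu$ valid but redundant.

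A few small slips to fix. It is $|K_\nu(e^{i\lambda})|^2$, not $|K_\nu(e^{i\lambda})|$, that is $O((\nu\lambda)^{-4})$. Inserting $Q$ raises the degree, so you compare with $\sigma^2_{n+k}$ rather than $\sigma^2_{n-k'}$, and the constant is exactly $1$. Finally, the crude bound $|\lambda-\lambda_j|^{\alpha}\ge c\,|1-e^{i(\lambda-\lambda_j)}|^{2\lceil\alpha/2\rceil}$ may be used only on $|\lambda|\ge\delta$; using it near the origin would replace $g(0)$ by a smaller constant.
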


Before proceeding with the proof, we first establish some preliminary results. 
\begin{lem}
	\label{lem4.1}
	If $t(\lambda)$ is a nonnegative trigonometric polynomial, then
	\begin{equation}
		\label{4.3}
		\liminf_{n \rightarrow \infty}\sigma_n^2(ft)/\sigma_n^2(f) \geq t(0).
	\end{equation}
\end{lem}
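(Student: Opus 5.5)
We prove Lemma \ref{lem4.1} by comparing $\sigma_n^2(ft)$ with $\sigma_n^2(f)$ through the optimal polynomials, in the spirit of Adenstedt. Write $t(\lambda)=\sum_{|k|\le m} a_k e^{ik\lambda}$, a nonnegative trigonometric polynomial of degree $m$. By the Fej\'er--Riesz theorem, $t(\lambda)=|h(e^{i\lambda})|^2$ with $h(z)=\sum_{k=0}^m h_k z^k$ an algebraic polynomial of degree $m$, and $|h(1)|^2=t(0)$. If $t(0)=0$ the claim is trivial, since both variances are positive. So assume $t(0)>0$, i.e. $h(1)\neq0$.

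Let $p_n(z)=p_n(z,ft)\in\mathcal{Q}_n(1)$ be the optimal polynomial for $ft$. Then $q(z):=p_n(z)h(z)/h(1)$ has degree at most $n+m$ and satisfies $q(1)=1$, so $q\in\mathcal{Q}_{n+m}(1)$. Hence, by \eqref{OSm2},
\[
\sigma_{n+m}^2(f)\le \|q\|_f^2=\frac{1}{|h(1)|^2}\int_{-\pi}^{\pi}|p_n(e^{i\lambda})|^2|h(e^{i\lambda})|^2f(\lambda)\,d\lambda=\frac{\sigma_n^2(ft)}{t(0)}.
\]
Therefore
\[
\frac{\sigma_n^2(ft)}{\sigma_n^2(f)}\ge t(0)\,\frac{\sigma_{n+m}^2(f)}{\sigma_n^2(f)}.
\]
Since $\{\sigma_n^2(f)\}$ is weakly varying, \eqref{3.5} with $\nu=m$ gives $\sigma_{n+m}^2(f)/\sigma_n^2(f)\to1$. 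Taking $\liminf$ yields \eqref{4.3}.

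The only step that needs care is the factorization. The Fej\'er--Riesz theorem requires $t\ge0$ on the circle, which is given. The polynomial $h$ may be taken of exact degree $m$ after multiplying $t$ by a unimodular exponential, which does not affect $|h|^2$. One should also check that $ft$ is a legitimate spectral density: it is integrable because $t$ is bounded, and nondegenerate because $t$ vanishes only at finitely many points, so $\sigma_n^2(ft)$ is well defined and positive. The weak variation hypothesis is used exactly once, to absorb the degree shift $n\to n+m$.
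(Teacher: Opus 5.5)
Your proof is correct and is essentially the paper's argument: the Fej\'er--Riesz factorization $t(\lambda)=|h(e^{i\lambda})|^2$, the test polynomial $h\,p_n(\cdot,ft)/h(1)\in\mathcal{Q}_{n+m}(1)$ giving $t(0)\,\sigma^2_{n+m}(f)\le\sigma_n^2(ft)$, and weak variation \eqref{3.5} to absorb the degree shift. Your remark about forcing $h$ to have exact degree $m$ is unnecessary, since membership in $\mathcal{Q}_{n+m}(1)$ only requires degree at most $n+m$.
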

\begin{proof}
	The result is trivial if $t(0)=0;$ therefore we assume that $t(0)>0$. By the F\'ejer-Riesz representation theorem (see Proposition \ref{p4.2}(c)), there is a polynomial $p(z)$ of the same degree, say $\nu$, as $t(\lambda)$ for which $t(\lambda) = |p(e^{i\lambda})|^2$. Letting $q_0(z), q_1(z), \cdots$ be the optimal polynomials for $f(\lambda)t(\lambda)$, we define the polynomial $r(z):=p(z)q_n(z)/p(1)$ of degree $n+\nu$, and note that $r(1)=1$. Therefore, we have
	\begin{equation}
		\label{4.4}
		t(0)\sigma_{n+\nu}^2(f) \leq t(0)\|r\|_f^2 = \|q_n\|_{ft}^2 = \sigma_n^2(ft).
	\end{equation}
The proof is completed by dividing the extreme inequality by $\sigma_n^2(f)$, taking the $\liminf$, and employing equation (\ref{3.5}).
\end{proof}
Define the kernel
\beq\label{K-n}
K_n(\lambda) = |p_n(e^{i\lambda})|^2f(\lambda)/\sigma_n^2(f). 
\eeq

In the next lemma we show that the kernel $K_n(\lambda)$ is an approximate identity (see Section \ref{FSI}).
\begin{lem}
	\label{lem4.2}
The kernel $K_n(\lambda)$ defined by \eqref{K-n}  is an  approximate identity. 
\end{lem}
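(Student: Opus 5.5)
The plan is to check the defining properties of an approximate identity (Section \ref{FSI}) directly: nonnegativity, unit mass, and concentration of mass at the origin, in the sense that for every fixed $\delta\in(0,\pi)$
$$\lim_{n\to\infty}\int_{\delta\le|\lambda|\le\pi}K_n(\lambda)\,d\lambda=0 .$$

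The first two properties are immediate. Since $f\ge 0$ and $\sigma_n^2(f)>0$, we have $K_n\ge 0$. By \eqref{OSm2},
$$\int_{-\pi}^{\pi}K_n(\lambda)\,d\lambda=\frac{\|p_n\|_f^2}{\sigma_n^2(f)}=1 .$$

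For the concentration property I will feed Lemma \ref{lem4.1} a suitable nonnegative trigonometric polynomial, namely $t(\lambda)=(1+\cos\lambda)/2$. It satisfies $0\le t\le 1$ and $t(0)=1$. Moreover, for $\delta\le|\lambda|\le\pi$,
$$1-t(\lambda)=\frac{1-\cos\lambda}{2}\ge c_\delta:=\frac{1-\cos\delta}{2}>0 .$$
The key observation is that $p_n\in\mathcal{Q}_n(1)$ is an admissible competitor in the extremal problem for the weight $ft$. Hence
$$\int_{-\pi}^{\pi}K_n(\lambda)t(\lambda)\,d\lambda=\frac{\|p_n\|_{ft}^2}{\sigma_n^2(f)}\ge\frac{\sigma_n^2(ft)}{\sigma_n^2(f)} .$$
Lemma \ref{lem4.1}, which is where the weak variation of $\{\sigma_n^2(f)\}$ enters, then gives
$$\liminf_{n\to\infty}\int_{-\pi}^{\pi}K_n t\,d\lambda\ge t(0)=1 .$$

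Combining this with unit mass and $1-t\ge 0$ yields
$$c_\delta\int_{\delta\le|\lambda|\le\pi}K_n\,d\lambda\le\int_{-\pi}^{\pi}K_n(1-t)\,d\lambda=1-\int_{-\pi}^{\pi}K_n t\,d\lambda .$$
Taking $\limsup$, the right-hand side is at most $1-1=0$. This gives the required concentration.

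The only real content is Lemma \ref{lem4.1}, which is already established. The one point to watch is that the definition of approximate identity in Section \ref{FSI} may be formulated as convergence $\int K_n\phi\,d\lambda\to\phi(0)$ for functions $\phi$ bounded and continuous at $0$, rather than as the three properties above. In that case I would add the standard step: split the integral into $|\lambda|<\delta$ and $|\lambda|\ge\delta$, use continuity of $\phi$ at $0$ on the first piece, and use boundedness of $\phi$ together with the tail estimate above on the second.
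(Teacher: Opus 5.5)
Your proof is correct and is essentially the paper's argument: both get the tail estimate by applying Lemma \ref{lem4.1} to a nonnegative trigonometric polynomial, using that $p_n$ is admissible for the perturbed weight, and closing with the unit-mass identity $\int K_n\,d\lambda=1$. The only difference is cosmetic. The paper introduces the step function $g$ (equal to $2$ on $|\lambda|<\delta$ and $1$ elsewhere) and a trigonometric minorant $t\le g$ with $t(0)=2$, whereas you use the explicit $t(\lambda)=(1+\cos\lambda)/2$ with the bound $1-t\ge c_\delta$ on the tail, which spares you from exhibiting such a minorant.
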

\begin{proof}
Observe first that $K_n(\lambda)$ is nonnegative and satisfies $\int_{-\pi}^\pi K_n(\lambda)d\lambda=1$.
Now we show that for any $0<\de \leq \pi$ the kernel $K_n(\lambda)$ satisfies the relation
\begin{equation}
		\label{4.5}
		\lim_{n \rightarrow \infty}\int_{\de \leq |\lambda| \leq \pi}K_n(\lambda)d\lambda=0.
	\end{equation}
Define a function $g(\lambda)$ as follows: $g(\lambda)=2$ for $0 \leq |\lambda| < \de$ and $g(\lambda)=1$ for $\de \leq |\lambda| \leq \pi$. With this definition, we have
\begin{equation}
	\label{4.5a}
	\int_{\de \leq |\lambda| \leq \pi}K_n(\lambda)d\lambda
	=2-\|p_n\|^2_{fg}/\sigma_n^2(f).
\end{equation}
Therefore, to prove (\ref{4.5}), it suffices to show that 
	\begin{equation}
		\label{4.6}
		\lim_{n \rightarrow \infty}\|p_n\|^2_{fg}/\sigma_n^2(f) = 2.
	\end{equation}
	
	To this end, we select a nonnegative trigonometric polynomial $t(\lambda)$ that satisfies the conditions: $t(\lambda) \leq g(\lambda)$ and $t(0) = 2$. 
	Then, by Proposition \ref{R2} (b), we have 
	$$\|p_n\|^2_{fg} \geq \sigma^2_n(fg) \geq \sigma_n^2(ft).$$ 
	Next, by dividing by $\sigma_n^2(f)$, taking the $\liminf$, and using Lemma \ref{lem4.1}, we obtain 
	$$\liminf_{n \rightarrow \infty}\|p_n\|^2_{fg}/\sigma_n^2(f) \geq t(0) = 2.$$ 
	On the other hand, according to the definition of function $g$, we have $$\|p_n\|^2_{fg}/\sigma_n^2(f) \leq 2.$$ Thus, the relation \eqref{4.6} is established, and the proof is complete.
\end{proof}
\begin{lem}
	\label{lem4.3}
	If $g(\lambda)$ is in the class $B_0$, then
	\begin{equation}
		\label{4.7}
		\lim_{n \rightarrow \infty}\Var_{fg}(\widehat{m}_f)/\sigma_n^2(f) = g(0).
	\end{equation}
\end{lem}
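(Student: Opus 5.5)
The plan is to write the ratio as an integral of $g$ against the kernel $K_n$ from \eqref{K-n}, and then let Lemma \ref{lem4.2} (which says $K_n$ is an approximate identity) do the work.

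\emph{Step 1: rewrite the ratio.} The estimator $\widehat m_f$ corresponds, under the correspondence \eqref{Formula}, to the optimal polynomial $p_n(z)=p_n(z,f)$. Its variance under the spectral density $fg$ is therefore
\[
\Var_{fg}(\widehat m_f)=\int_{-\pi}^{\pi}|p_n(e^{i\la})|^2 f(\la)g(\la)\,d\la .
\]
Dividing by $\sigma_n^2(f)=\|p_n\|_f^2$ gives
\[
\frac{\Var_{fg}(\widehat m_f)}{\sigma_n^2(f)}=\int_{-\pi}^{\pi}K_n(\la)g(\la)\,d\la .
\]
The claim thus reduces to showing $\int K_n g\,d\la\to g(0)$.

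\emph{Step 2: split the integral.} Because $K_n\ge0$ and $\int K_n\,d\la=1$, we have
\[
\int K_n g\,d\la-g(0)=\int K_n(\la)\,[g(\la)-g(0)]\,d\la .
\]
Fix $\vs>0$. By continuity of $g$ at $0$, choose $\de>0$ with $|g(\la)-g(0)|<\vs$ for $|\la|<\de$. On $\{|\la|<\de\}$ the contribution is at most $\vs\int K_n\,d\la=\vs$.

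\emph{Step 3: control the tail.} On $\{\de\le|\la|\le\pi\}$, the boundedness of $g$ (say $0\le g\le M$) bounds the contribution by
\[
(M+g(0))\int_{\de\le|\la|\le\pi}K_n(\la)\,d\la ,
\]
and this tends to $0$ by \eqref{4.5} of Lemma \ref{lem4.2}. Hence $\limsup_n\bigl|\int K_n g\,d\la-g(0)\bigr|\le\vs$. Since $\vs$ was arbitrary, the result follows.

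The only substantive ingredient is the concentration property \eqref{4.5}, which has already been established. The main care point is that boundedness of $g$ (the $B_0$ hypothesis) is genuinely needed for the tail estimate. For $g$ merely integrable, the product $K_ng$ could carry mass away from $0$, because $K_n$ is not known to be uniformly small pointwise off the origin. Continuity at $0$ is used only in Step 2.
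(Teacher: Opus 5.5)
Your proof is correct and follows the paper's route. Both reduce the ratio to $\int_{-\pi}^{\pi}K_n(\lambda)g(\lambda)\,d\lambda$ and conclude from Lemma~\ref{lem4.2}. The paper cites Proposition~\ref{F-Ker} for the last step, whereas your explicit $\varepsilon$--$\delta$ split uses only what $B_0$ provides, namely boundedness of $g$ and continuity at $0$; Proposition~\ref{F-Ker} as stated asks for a continuous function.
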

\begin{proof}
	With $K_n(\lambda)$ defined as in \eqref{K-n}, the relation 
	\eqref{4.7} is equivalent to the following: 
	$$\int_{-\pi}^\pi K_n(\lambda)g(\lambda)d\lambda \rightarrow g(0)\q {\rm as}\q n \rightarrow \infty.$$ 
	This relation follows from Lemma \ref{lem4.2} and Proposition \ref{F-Ker}.
\end{proof}
\begin{lem}
	\label{lem4.4}
	If $g(\lambda)$ is in the class $B_0$, then 
	\begin{equation}
		\label{4.8}
		\limsup_{n \rightarrow \infty}\sigma_n^2(fg)/\sigma_n^2(f) \leq g(0).
	\end{equation}
\end{lem}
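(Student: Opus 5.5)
Since $\widehat m_f$ is an unbiased linear estimator of $m$, the BLUE computed under the spectral density $fg$ has variance no larger than that of $\widehat m_f$. In the notation of \eqref{OSm2}, the optimal polynomial $p_n=p_n(\cdot,f)$ belongs to $\mathcal{Q}_n(1)$. Hence
\[
\sigma_n^2(fg)=\min_{q_n\in\mathcal{Q}_n(1)}\|q_n\|^2_{fg}\le \|p_n\|^2_{fg}=\Var_{fg}(\widehat m_f).
\]

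Dividing by $\sigma_n^2(f)$ gives
\[
\frac{\sigma_n^2(fg)}{\sigma_n^2(f)}\le \frac{\Var_{fg}(\widehat m_f)}{\sigma_n^2(f)}=\int_{-\pi}^{\pi}K_n(\lambda)g(\lambda)\,d\lambda ,
\]
where $K_n$ is the kernel \eqref{K-n}. Since $g\in B_0$, Lemma \ref{lem4.3} applies, so the right-hand side tends to $g(0)$ as $n\to\infty$. Taking $\limsup$ on both sides yields \eqref{4.8}.

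There is no real obstacle in this argument. All the analytic content sits in Lemma \ref{lem4.3}, which itself rests on the approximate-identity property of Lemma \ref{lem4.2}; that lemma uses the weak variation of $\sigma_n^2(f)$, which is assumed throughout this subsection. The only point to check is that $\|p_n\|^2_{fg}$ is finite, and this holds because $g$ is bounded and $f$ is integrable.
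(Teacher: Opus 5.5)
Your proposal is correct and follows the paper's own proof: use $\sigma_n^2(fg)\le \Var_{fg}(\widehat m_f)=\|p_n(\cdot,f)\|^2_{fg}$, divide by $\sigma_n^2(f)$ to get $\int_{-\pi}^{\pi}K_n g\,d\lambda$, and apply Lemma \ref{lem4.3}. You also correctly note the standing assumption that $\sigma_n^2(f)$ is weakly varying, which Lemma \ref{lem4.3} needs through Lemma \ref{lem4.2}.
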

\begin{proof}
	The result is immediate from Lemma \ref{lem4.3}, since $\sigma_n^2(fg) \leq \Var_{fg}(\widehat{m}_f).$
\end{proof}
\begin{lem}
	\label{lem4.5}
	Let $g(\lambda)$ be in the class $ZL_0^+$. Then, for any $\epsilon >0$ there exists a trigonometric polynomial $t(\lambda)$ with the following properties: $0 \leq t(\lambda) \leq g(\lambda)$ and $t(0) \geq g(0)-\epsilon$.
\end{lem}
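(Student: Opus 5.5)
The strategy is to build $t$ as a product of simple nonnegative trigonometric polynomials, one for the factor $h$ and one for each power factor $|\lambda-\lambda_j|^{\alpha(j)}$.

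\emph{Reduction.} Write $g(\lambda)=h(\lambda)\prod_{j=1}^r|\lambda-\lambda_j|^{\alpha(j)}$ with $h\in L_0^+$, so that $h\ge m>0$ and $h$ is continuous at $0$. If $g(0)=0$, we take $t\equiv0$. Otherwise every $\lambda_j\neq0$ with $\alpha(j)>0$, since a factor with $\alpha(j)=0$ is identically $1$ and can be dropped. Given $\epsilon>0$, it suffices to find nonnegative trigonometric polynomials $t_0\le h$ and $t_j\le|\lambda-\lambda_j|^{\alpha(j)}$ on $[-\pi,\pi]$ such that $t_0(0)$ and each $t_j(0)$ are within a prescribed $\eta$ of the corresponding values at $0$. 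Then $t:=t_0t_1\cdots t_r$ is a nonnegative trigonometric polynomial with $0\le t\le g$, and by continuity of the product in finitely many variables, $t(0)\ge g(0)-\epsilon$ once $\eta$ is small.

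\emph{The factor $h$.} Since $h\ge m$ everywhere and $h$ is continuous at $0$, choose $\delta>0$ with $h(\lambda)\ge h(0)-\eta/2$ for $|\lambda|<\delta$. Let $u$ be the continuous even $2\pi$-periodic function that equals $\min(h(0)-\eta/2,\,m)$ outside $(-\delta,\delta)$, rises piecewise linearly to $h(0)-\eta/2$ at $0$, and satisfies $m/2\le u\le h$. Fejér's theorem (uniform Fejér approximation of continuous periodic functions) gives a trigonometric polynomial $s$ with $\|s-u\|_\infty<\eta'$. The polynomial $t_0:=s-\eta'$ then satisfies $t_0\le u\le h$, is nonnegative once $2\eta'<m/2$, and has $t_0(0)\ge h(0)-\eta/2-2\eta'$. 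Only $h\ge m$ and continuity at $0$ are used, so integrability plays no role.

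\emph{The power factors.} This is the main obstacle, because $|\lambda-\lambda_j|^{\alpha(j)}$ vanishes at $\lambda_j$, so we cannot subtract a constant. First, the function $|\lambda-\lambda_j|$ on $[-\pi,\pi]$ dominates $c\,|\sin((\lambda-\lambda_j)/2)|$ for some constant $c>0$ when distances are measured periodically, and in fact $|\lambda-\lambda_j|\ge 2|\sin((\lambda-\lambda_j)/2)|$. So it suffices to bound $w_j(\lambda):=|2\sin((\lambda-\lambda_j)/2)|^{\alpha(j)}=|e^{i\lambda}-e^{i\lambda_j}|^{\alpha(j)}$ from below near $0$ and to handle the discrepancy at $0$.

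To do this, fix a small $\rho$ with $0<\rho<|\lambda_j|/2$ and replace $w_j$ by a continuous minorant $v_j$ that equals $w_j$ near $0$ (or any value $\ge|\lambda_j|^{\alpha(j)}-\eta/2$ near $0$) and vanishes to even integer order $2k\ge\alpha(j)$, that is, $v_j=\kappa\,|e^{i\lambda}-e^{i\lambda_j}|^{2k}\le w_j$ near $\lambda_j$. More concretely, I would take
\[
t_j(\lambda)=|e^{i\lambda}-e^{i\lambda_j}|^{2k}\,\tau_j(\lambda),
\]
where $\tau_j$ is a nonnegative trigonometric polynomial with $\tau_j\le w_j\,|e^{i\lambda}-e^{i\lambda_j}|^{-2k}$. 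The right-hand side is continuous away from $\lambda_j$ and tends to $+\infty$ as $\lambda\to\lambda_j$. Its minimum is positive, so truncating it to a continuous function that is bounded below by a positive constant and then applying the argument of the previous step produces $\tau_j$ with $\tau_j(0)$ as close as desired to $w_j(0)\,|1-e^{i\lambda_j}|^{-2k}$. Hence $t_j(0)$ is close to $w_j(0)$.

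Finally, at $\lambda=0$ we have $w_j(0)=|2\sin(\lambda_j/2)|^{\alpha(j)}$, which may be smaller than $|\lambda_j|^{\alpha(j)}$. To close this gap, I would handle $0$ directly instead: apply the construction to the function $|\lambda-\lambda_j|^{\alpha(j)}$ itself, working on $[-\pi,\pi]$, which is continuous there but not periodic if $\lambda_j\ne\pm\pi$. I would therefore take the minorant $\min\bigl(|\lambda-\lambda_j|^{\alpha(j)},\,\kappa|e^{i\lambda}-e^{i\lambda_j}|^{2k}\bigr)$ modified near $\pm\pi$ to be periodic while staying below the target, and verify that it equals $|\lambda_j|^{\alpha(j)}$ at $0$ for small $\kappa$. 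This bookkeeping is the step I expect to need the most care.
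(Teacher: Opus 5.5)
Your reduction to a product of factor-wise approximants is sound, and so is your step for $h$. That step is a Fejér-approximation variant of the paper's explicit polynomial $[h(0)-\epsilon][(1+\cos\lambda)/2]^N$, and, as you note, it uses only a positive lower bound and continuity at $0$.

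\textbf{The gap is in the power factors.} Routing through $w_j=|e^{i\lambda}-e^{i\lambda_j}|^{\alpha(j)}$ loses a fixed amount at the origin. Any $t_j\le w_j$ has $t_j(0)\le|2\sin(\lambda_j/2)|^{\alpha(j)}$, which is strictly less than $|\lambda_j|^{\alpha(j)}$ when $\lambda_j\neq0$ and $\alpha(j)>0$. So the product only gives $t(0)\ge h(0)\prod_j|2\sin(\lambda_j/2)|^{\alpha(j)}-\epsilon$, not $g(0)-\epsilon$. For the same reason, your parenthetical ``any value $\ge|\lambda_j|^{\alpha(j)}-\eta/2$ near $0$'' is impossible for a minorant of $w_j$.

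Your proposed repair is wrong as stated. For small $\kappa$, the minorant $\min(|\lambda-\lambda_j|^{\alpha(j)},\kappa|e^{i\lambda}-e^{i\lambda_j}|^{2k})$ takes the value $\kappa|1-e^{i\lambda_j}|^{2k}$ at $0$, not $|\lambda_j|^{\alpha(j)}$. You would need $\kappa\ge|\lambda_j|^{\alpha(j)}|1-e^{i\lambda_j}|^{-2k}$. Even then you would still have to produce a trigonometric polynomial below this vanishing minorant, and you do not say how.

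\textbf{The fix is already within reach.} Drop $w_j$ and apply your factorization directly to the true factor. Take an integer $k$ with $2k\ge\alpha(j)$ and set $q_j(\lambda)=|\lambda-\lambda_j|^{\alpha(j)}|e^{i\lambda}-e^{i\lambda_j}|^{-2k}$ on $[-\pi,\pi]$.
\begin{itemize}
\item $q_j$ has a positive lower bound. At $\lambda_j$ it tends to $+\infty$, or to $1$ if $2k=\alpha(j)$. When $\lambda_j=\pm\pi$ it tends to $+\infty$ at the opposite endpoint. Elsewhere it is positive and continuous.
\item $q_j$ is continuous at $0$, because $\lambda_j\neq0$.
\end{itemize}
Your $h$-step therefore applies verbatim. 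It yields a nonnegative trigonometric polynomial $\tau_j\le q_j$ with $\tau_j(0)\ge q_j(0)-\eta$. Then $t_j=|e^{i\lambda}-e^{i\lambda_j}|^{2k}\tau_j=[2-2\cos(\lambda-\lambda_j)]^k\tau_j$ satisfies $0\le t_j\le|\lambda-\lambda_j|^{\alpha(j)}$ and $t_j(0)\ge|\lambda_j|^{\alpha(j)}-\eta|1-e^{i\lambda_j}|^{2k}$. No truncation or periodization is needed, since the function $u$ built in your $h$-step is automatically periodic.

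This corrected route is essentially the paper's parts (b)--(c). There the factor is first replaced by the minorant $[\max\{\delta,|\lambda-\lambda_0|\}]^{\alpha-2\nu}|\lambda-\lambda_0|^{2\nu}$, which agrees with it at $0$. That minorant is then written as a positive function continuous at $0$ times the trigonometric polynomial $[1-\cos(\lambda-\lambda_0)]^\nu$.
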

\begin{proof}
It is sufficient to prove the result for each factor on the right side of equation (\ref{3.6g}), since the product of the "approximants" will provide an approximant for  $g(\lambda)$ in the required sense. We consider the individual factors.
	\begin{itemize}
		\item [(a)]
	Let $g(\lambda) \in L_0^+$. For $\epsilon < g(0)$, we choose $0<\delta \leq \pi$ such that $g(\lambda) \geq g(0)-\epsilon$ for $|\lambda| \leq \delta$. Then we can use 
	$$t(\lambda): = [g(0)-\epsilon][(1+\cos \lambda)/2]^N,$$ 
	where the integer $N$ is chosen so large that 
	$$[(1+ \cos \delta)/2]^N \leq \inf g(\lambda)/[g(0)-\epsilon].$$ 
	For $\epsilon \geq g(0)$, we use $t(\lambda) \equiv 0.$
	\item [(b)]
	Let $g(\lambda)=|\lambda-\lambda_0|^{2 \nu}$, where $\nu$ is a positive integer. The result follows from part (a) by expressing $g(\lambda)$ as the product of a positive and continuous function and the trigonometric polynomial $[1-\cos (\lambda-\lambda_0)]^\nu$.
	\item [(c)]
	Let $g(\lambda) = |\lambda-\lambda_0|^\alpha,$ where $\alpha>0$ is not an even integer. 
	If $\lambda_0 = 0$, we can use $t(\lambda) \equiv 0$. Therefore, we assume that 
	$\lambda_0 \neq 0$. For any integer $\nu > \alpha/2$ and constant $0<\delta <|\lambda_0|$, the function
	$$h(\lambda) = [\max \{\delta,|\lambda-\lambda_0|\}]^{-2\nu+\alpha}$$ 
	is positive and continuous. Thus, the function $g_1(\lambda)=h(\lambda)|\lambda-\lambda_0|^{2\nu}$ can be approximated according to parts (a) and (b). Since $g_1(\lambda) \leq g(\lambda)$ and $g_1(0) = g(0)$, the approximant for $g_1(\lambda)$ also serves for $g(\lambda)$.
\end{itemize}
\end{proof}

%With these preliminary results, the proof of the theorem stated at the beginning of this section is quite short.

\begin{proof}[Proof of Theorem \ref{thm4.1}]
	Any function $g(\lambda)$ in $ZB_0^+$ is also in $ZL_0^+$ and in $B_0$, and thus satisfies the inequality (\ref{4.8}). Therefore, to prove the relation (\ref{4.1}), it suffices to show that 
	\begin{equation}
		\label{4.9}
		\liminf_{n \rightarrow \infty}\sigma_n^2(fg)/\sigma_n^2(f) \geq g(0).
	\end{equation}
	For any $\epsilon > 0,$ choosing a trigonometric polynomial $t(\lambda)$ with the properties as in Lemma \ref{lem4.5}, and using Proposition \ref{R2} (b), we have  
	$$\sigma_n^2(fg) \geq \sigma_n^2(ft).$$ 
	Hence, with use of Lemma \ref{lem4.1}, we conclude that the left side of (\ref{4.9}) is bounded below by 
	$$\liminf_{n \rightarrow \infty}\sigma_n^2(ft)/\sigma_n^2(f) \geq t(0) \geq g(0)-\epsilon.$$ 
	Letting $\epsilon \rightarrow 0,$ we obtain the desired relation (\ref{4.9}). 
	
	The second assertion of the theorem, specifically the asymptotic relation in (\ref{4.2}), follows directly from relation (\ref{4.1}) and Lemma \ref{lem4.3}. 
	
	Theorem \ref{thm4.1} is proved.
\end{proof}

\sn{Asymptotic behavior of $\Var_f(\widehat m_{BLU})$ for the class $\mathcal{F}_\al(g)$}
\label{AD-1}
Theorem \ref{thm4.1} allows us to determine the rate at which $\Var(\widehat{m}_{BLU})$ decreases and to identify asymptotically efficient estimators for a broad range of spectral densities, particularly those characterized by a zero (or infinity) of fixed finite order at the origin.
As representatives of such spectral densities consider the following class (cf. \eqref{Fg}): 
\beq
\label{Fal-1}
\mathcal{F}_\al(g):=\left\{f(\lambda):=f_\al(\lambda)g(\lambda),\q \al>-1/2, \,\, \la\in\Lambda\right\},
\eeq
where $g(\la)$ is the spectral density of a short-memory process, and
\beq
\label{Fal}
f_\al(\lambda) = (2\pi)^{-1}|1 - e^{i\lambda}|^{2\al},
\eeq
 Here $\alpha$ (not necessarily an integer) is a constant, with $\alpha > -{1}/{2}$ for integrability. 

Note that this class is highly important from an applications standpoint.
For instance, the ARFIMA($p,-\alpha,q$) model, defined by equation \eqref{AA}, 
and the fractional Gaussian noise with Hurst index $H$, defined by equation \eqref{MT2}, both belong to this class.

The problem is in obtaining the BLUE $\widehat{m}_{f}$ for the unknown mean
$m$ and in determining the asymptotic behavior of the variance 
$\sigma_n^2(f): = \Var_{f} (\widehat{m}_{f})$,
calculated with respect to the spectral density 
$f(\lambda)\in \mathcal{F}_\al(g)$. 

We begin by analyzing the special case where $g(\la)\equiv1$, 
specifically the ARFIMA($0,-\al,0$) model. 
In this case, by \eqref{Fal}, we have 
\begin{equation}
	\label{5.1}
f(\lambda) =	f_\alpha(\lambda)  %(2\pi)^{-1}|1-e^{i\lambda}|^{2\alpha} 
= 2^{2\alpha-1}\pi^{-1}(\sin^2 \lambda/2)^\alpha, \q \la\in[-\pi,\pi].
\end{equation}
Note that as $\lambda \rightarrow 0,$
$$f_\alpha(\lambda) \sim {2\pi}^{-1}|\lambda|^{2\alpha},$$
which diverges when $-{1}/{2} < \alpha < 0$. This indicates  
that the ARFIMA($0,-\al,0$) model exhibits long-range dependence
(see Section \ref{mem}).

The covariance function $r_\alpha(t)$ corresponding to the spectral density (\ref{5.1}) is 
given by the following formula (see Adenstedt \cite{Ad}):
	\begin{equation}
		\label{5.2}
		r_\alpha(k) = (-1)^k \frac{\Gamma (2\alpha+1)}{\Gamma(\alpha+k+1)\Gamma(\alpha-k+1)}, \quad k = 0, \pm 1, \cdots,
	\end{equation}
with the convention $1/\Gamma(\alpha) = 0$ when $\alpha$ is a non-positive integer. Here $\G(\cdot)$ denotes the Gamma function.

Note that as k $\rightarrow$ $\infty$,
\begin{equation}
	\label{2-r}
r_\alpha(k) \sim C_\alpha k^{-2\alpha -1},
\end{equation}
where
\begin{equation}
	\label{2}
	C_\alpha = \frac{1}{\pi}\Gamma(2\alpha+1)(-\sin \pi \alpha).
\end{equation}

Theorem 5.1 of Adenstedt \cite{Ad} shows that the BLUE $\widehat{m}_{\alpha}:=\widehat{m}_{f_\alpha}$ of the
unknown mean $m$, based on the sample $\{X(t), t=0,1,\ldots,n\}$, calculated with respect to the spectral density (\ref{5.1}), is given by
	\begin{equation}
	\label{5.7est}
	\widehat{m}_{\alpha} = \sum_{k=0}^{n}c_k(n, \alpha)X(k),
\end{equation}
where
	\begin{equation}
	\label{5.7}
			c_k(n,\alpha)\\
			= {n \choose k}\frac{B(\alpha+k+1,\alpha+n-k+1)}{B(\alpha+1,\alpha+1)}, \quad k=0,1,\cdots,n,
	\end{equation}
	and the variance $\sigma_n^2(f_\alpha)$ is given by
	\begin{equation}
		\label{5.8}
		\sigma_n^2(f_\alpha) = \Var_{f_\alpha} (\widehat{m}_{\alpha}) = B(n+1, 2\alpha+1)/B(\alpha+1,\alpha+1),
	\end{equation}
where $B(\cdot, \cdot)$ denotes the Beta function.

A straightforward application of Stirling's formula for the gamma function yields, from (\ref{5.8}), that
	\begin{equation}
		\label{5.11}
		\sigma_n^2(f_\alpha) \sim n^{-2\alpha-1}\Gamma(2\alpha+1)/B(\alpha+1,\alpha+1)
		\q {\rm as}\q n \rightarrow \infty.
	\end{equation}

We are now ready to describe the asymptotic behavior of \(\Var_f(\widehat{m}_{BLU})\) within the class \(\mathcal{F}_\alpha(g)\) for the general short memory factor \(g(\lambda)\). From equation (\ref{5.11}), we conclude that the sequence \(\sigma_n^2(f_\alpha)\) is weakly varying (see Proposition \ref{p4.1}). This allows us to apply Theorem \ref{thm4.1} to establish the desired result.

\begin{thm}
	\label{thm5.2}
Let the model $X(t)$ be specified by equation \eqref{1.0}, and let the noise $\{Y_t\}$ have spectral density function $f(\lambda)=f_\alpha(\lambda)g(\lambda)$, where $\alpha > -{1}/{2}$ and $g(\lambda) \in ZB_0^+$ (see Definition \ref{def3.3+}) with $g(0)>0$. Then
	\begin{equation}
		\label{5.12}
		\Var_f(\widehat{m}_{BLU}) \sim n^{-2\alpha-1}\Gamma(2\alpha+1)g(0)/B(\alpha+1,\alpha+1) \q {\rm as}\q n \rightarrow \infty.
	\end{equation}
Moreover, the estimator	$\widehat{m}_{f_\alpha}$ given by \eqref{5.7est} and \eqref{5.7} is asymptotically efficient
in the class $\mathcal{F}_\al(g)$. Specifically, we have
	\begin{equation}
	\label{5.12ef}
	\frac{\Var_f(\widehat{m}_{BLU})}{\Var_{f_\alpha} (\widehat{m}_{\alpha})} \rightarrow 1 \q {\rm as}\q n \rightarrow \infty.
\end{equation}

\end{thm}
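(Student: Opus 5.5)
The plan is to apply Theorem \ref{thm4.1} with $f_\alpha$ in the role of $f$ and $g$ in the role of $g$. Everything then reduces to checking its hypotheses and combining the result with the explicit formulas \eqref{5.8} and \eqref{5.11}.

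\emph{Step 1: weak variation of $\sigma_n^2(f_\alpha)$.} By \eqref{5.11}, $\sigma_n^2(f_\alpha)\sim c_\alpha n^{-2\alpha-1}$ with
\[
c_\alpha=\Gamma(2\alpha+1)/B(\alpha+1,\alpha+1)>0 .
\]
Hence $\sigma_{n+1}^2(f_\alpha)/\sigma_n^2(f_\alpha)\to 1$, since $((n+1)/n)^{-2\alpha-1}\to1$. So the sequence is weakly varying in the sense of Section \ref{ww}, via Proposition \ref{p4.1}.

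For safety I would verify \eqref{5.11} directly from \eqref{5.8}. Writing
\[
B(n+1,2\alpha+1)=\Gamma(2\alpha+1)\,\Gamma(n+1)/\Gamma(n+2\alpha+2),
\]
the standard ratio asymptotic $\Gamma(n+a)/\Gamma(n+b)\sim n^{a-b}$ (a consequence of Stirling) gives the power $n^{-2\alpha-1}$. This uses $\alpha>-1/2$, so that $2\alpha+1>0$ and the Beta functions are finite and positive.

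\emph{Step 2: the hypotheses on $g$.} We are given $g\in ZB_0^+$ with $g(0)>0$, so Theorem \ref{thm4.1} applies directly. It yields
\[
\sigma_n^2(f_\alpha g)/\sigma_n^2(f_\alpha)\to g(0).
\]
Multiplying by \eqref{5.11} gives \eqref{5.12}, because $\Var_f(\widehat m_{BLU})=\sigma_n^2(f)$ with $f=f_\alpha g$.

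\emph{Step 3: efficiency.} Since $g(0)>0$, the second assertion \eqref{4.2} of Theorem \ref{thm4.1} gives
\[
\sigma_n^2(f)/\Var_{f}(\widehat m_{\alpha})\to1 .
\]
That is, the estimator $\widehat m_{f_\alpha}$ defined by \eqref{5.7est}--\eqref{5.7} is asymptotically efficient for every $f\in\mathcal F_\alpha(g)$.

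I read the ratio in \eqref{5.12ef} as this efficiency statement. Its denominator should be understood as $\Var_f(\widehat m_\alpha)$, the variance of $\widehat m_\alpha$ under the true density $f$. If it is read literally with $\Var_{f_\alpha}$, then by \eqref{5.12} and \eqref{5.8} the limit would be $g(0)$ rather than $1$. So I would state the result in the $\Var_f$ form and note that the two readings agree exactly when $g(0)=1$.

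\emph{Main obstacle.} The only genuinely delicate point is upstream of this proof. It is the validity of Adenstedt's closed-form variance \eqref{5.8}, which I take as given from the paper, together with the fact that $ZB_0^+$ covers the short-memory factors intended. Once \eqref{5.8} is granted, the argument is a direct corollary of Theorem \ref{thm4.1}.
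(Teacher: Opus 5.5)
Your proposal is correct and is essentially the paper's own argument. The paper likewise obtains weak variation of $\sigma_n^2(f_\alpha)$ from \eqref{5.11} via Proposition~\ref{p4.1}, then applies Theorem~\ref{thm4.1} with $f_\alpha$ in the role of $f$, reading \eqref{5.12} off \eqref{4.1} and the efficiency claim off \eqref{4.2}. Your remark on \eqref{5.12ef} is also right: since $\Var_{f_\alpha}(\widehat m_\alpha)=\sigma_n^2(f_\alpha)$ by \eqref{5.8}, the ratio as literally printed tends to $g(0)$, and what actually follows is $\sigma_n^2(f)/\Var_f(\widehat m_\alpha)\to 1$, which is the intended meaning of asymptotic efficiency.
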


For $\alpha=0$ and $\alpha=1$ the theorem gives slightly strengthened versions of Theorems \ref{Gr1} and \ref{Vit4},
respectively (see also Grenander \cite{Gr-3} and Vitale \cite{Vit}). Additionally, the theorem established a conjecture of Vitale \cite{Vit} for integral $\alpha$ (see also Section \ref{Vitale}).

\n {\bf The optimal polynomials for $f_\alpha(\lambda)$.}
We denote by $C_n^{(a)}(x)$ the Gegenbauer (or ultraspherical) polynomials, i.e., the polynomials orthogonal on $[-1,1]$ with respect to the weight function $(1-x^2)^{a-1/2}$ and with the standardization $C_n^{(a)}(1) = \Gamma(n+2a)/[n!\Gamma(2a)]$ for $a \neq 0$. For the properties of the polynomials $C_n^{(a)}(x)$
we refer to Erd\'elyi et al. \cite{Erd}, Vol. 2., Sect. 10.9.

The next result shows that the optimal polynomials for $f_\alpha(\lambda)$ can be represented in terms of $C_n^{(a)}(x)$
(see Adenstedt \cite{Ad}).
\begin{lem}
	\label{lem6.1}
	The optimal polynomials
	\begin{equation}
		\label{6.1}
		p_{n,\alpha}(z) = \sum_{k=0}^n c_k(n,\alpha)z^k
	\end{equation}
	 for $f_\alpha(\lambda)$ are given by the following formula:
	\begin{equation}
		\label{6.2}
		p_{n,\alpha}(e^{2i\theta})=e^{in\theta}C_n^{(\alpha+1)}(\cos \theta)/C_n^{(\alpha+1)}(1).
	\end{equation}
\end{lem}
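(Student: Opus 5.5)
The plan is to verify directly that the right-hand side of \eqref{6.2} satisfies the variational characterization of the optimal polynomial, and then to invoke uniqueness (Proposition \ref{L0}). By Proposition \ref{L0} with $\xi=1$, the optimal polynomial for $f_\alpha$ is $p_n=S_n(\cdot,1)/S_n(1,1)$. By the reproducing property \eqref{rk01}, it satisfies $(q,p_n)_{f_\alpha}=\sigma_n^2(f_\alpha)\,q(1)$ for every polynomial $q$ of degree at most $n$. Conversely, suppose a polynomial $P$ of degree at most $n$ has $P(1)=1$ and $(z^k,P)_{f_\alpha}$ independent of $k=0,\ldots,n$. Then $(z^k-1,P)_{f_\alpha}=0$ for all $k$, which gives $\|P+h\|_{f_\alpha}^2=\|P\|_{f_\alpha}^2+\|h\|_{f_\alpha}^2$ for every admissible perturbation $h$ (degree at most $n$, $h(1)=0$). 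Hence $P$ is the unique minimizer. So it suffices to check three things for
$$P(e^{2i\theta}):=e^{in\theta}C_n^{(\alpha+1)}(\cos\theta)/C_n^{(\alpha+1)}(1):$$
(i) $P$ is a polynomial of degree at most $n$ in $z=e^{2i\theta}$; (ii) $P(1)=1$; (iii) the moments $\int_{-\pi}^{\pi}P(e^{i\lambda})e^{-ik\lambda}f_\alpha(\lambda)\,d\lambda$ do not depend on $k\in\{0,\ldots,n\}$.

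For (i), I will use the generating function $(1-2xt+t^2)^{-a}=(1-te^{i\theta})^{-a}(1-te^{-i\theta})^{-a}$ with $x=\cos\theta$ and $a=\alpha+1$. Expanding both binomial series gives
$$C_n^{(a)}(\cos\theta)=\sum_{j=0}^n\frac{(a)_j(a)_{n-j}}{j!\,(n-j)!}\,e^{i(n-2j)\theta}.$$
Hence $e^{in\theta}C_n^{(a)}(\cos\theta)=\sum_j \frac{(a)_j(a)_{n-j}}{j!(n-j)!}z^{n-j}$ is a polynomial of degree $n$ in $z$. As a by-product, rewriting the Pochhammer symbols via Beta functions should reproduce the coefficients \eqref{5.7}, which gives a consistency check with Adenstedt's formula. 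Item (ii) is immediate, since $\theta=0$ gives $z=1$.

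For (iii), I substitute $\lambda=2\theta$ with $\theta\in[-\pi/2,\pi/2]$ and use $f_\alpha(2\theta)=c\,|\sin\theta|^{2\alpha}$. The moment becomes, up to a constant factor,
$$\int_{-\pi/2}^{\pi/2}e^{i(n-2k)\theta}\,C_n^{(a)}(\cos\theta)\,|\sin\theta|^{2\alpha}\,d\theta.$$
The integrand is invariant under $\theta\mapsto-\theta$ up to conjugation, so only the cosine part survives. It is also invariant under $\theta\mapsto\theta+\pi$, because $m:=n-2k$ has the same parity as $n$ and $C_n^{(a)}(-x)=(-1)^nC_n^{(a)}(x)$. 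Therefore the integral equals a constant times $\int_0^\pi\cos(m\theta)\,C_n^{(a)}(\cos\theta)\sin^{2\alpha}\theta\,d\theta$.

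The key step is to show that this quantity is independent of $m$. Let $T_m$ denote the Chebyshev polynomial of degree $|m|$. Since $|m|\le n$ and $m\equiv n \pmod 2$, the difference $T_m(x)-T_n(x)$ vanishes at $x=\pm1$, so $T_m(x)-T_n(x)=(1-x^2)Q(x)$ with $\deg Q\le n-2$. After the change of variables $x=\cos\theta$, the difference of the integrals for $m$ and for $n$ becomes
$$\int_{-1}^1 Q(x)\,C_n^{(a)}(x)\,(1-x^2)^{\alpha+1/2}\,dx.$$
This vanishes by the orthogonality of $C_n^{(a)}$ with respect to the weight $(1-x^2)^{a-1/2}=(1-x^2)^{\alpha+1/2}$. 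The condition $\alpha>-1/2$ guarantees that all integrals converge.

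I expect the main obstacle to be bookkeeping rather than substance. One must make sure the $\theta$-substitution and the periodicity argument are exact, so that no boundary terms or factors of $2$ are lost, and one must verify the explicit Fourier expansion of $C_n^{(a)}(\cos\theta)$ used in (i). The orthogonality step itself is short once the factorization of $T_m-T_n$ is in hand.
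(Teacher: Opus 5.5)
Your argument is correct, but it takes a genuinely different route from the paper's.

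The paper does not establish optimality in its proof. It takes as given, from Adenstedt, that $\sum_k c_k(n,\alpha)z^k$ with the coefficients \eqref{5.7} is the optimal polynomial for $f_\alpha$. It then cites the known expansion \eqref{6.3}, which writes $C_n^{(\alpha+1)}(\cos\theta)/C_n^{(\alpha+1)}(1)$ as the cosine sum $\sum_k c_k(n,\alpha)\cos(n-2k)\theta$. Finally, it uses that the $c_k(n,\alpha)$ are real with $c_k=c_{n-k}$ to rewrite that sum as $e^{-in\theta}p_{n,\alpha}(e^{2i\theta})$. The lemma is thus only a re-expression of an already known minimizer.

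You instead prove that the Gegenbauer expression is the minimizer. You use the first-order characterization (constant moments against $z^k$ for $0\le k\le n$, together with $P(1)=1$) and uniqueness.
\begin{itemize}
\item The decisive step is your factorization $T_{|m|}-T_n=(1-x^2)Q$ with $\deg Q\le n-2$. It turns the weight $(1-x^2)^{\alpha-1/2}$ produced by $x=\cos\theta$ into the Gegenbauer weight $(1-x^2)^{\alpha+1/2}$, so orthogonality of $C_n^{(\alpha+1)}$ applies.
\item Your generating-function expansion supplies \eqref{6.3} itself, which the paper only cites. With $a=\alpha+1$, it matches \eqref{5.7} through $B(a+k,a+n-k)/B(a,a)=(a)_k(a)_{n-k}/(2a)_n$ and $C_n^{(a)}(1)=(2a)_n/n!$.
\end{itemize}

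What your route buys is self-containment. It reproves the coefficient formula of Adenstedt's BLUE rather than importing it. It also opens a path to \eqref{5.8}: because $P(1)=1$, the common moment value equals $(P,P)_{f_\alpha}=\sigma_n^2(f_\alpha)$.

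The paper's route is a two-line identification. However, it is only as strong as the cited result that \eqref{5.7} gives the BLUE.
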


\begin{proof}
	In terms of the coefficients (\ref{5.7}), it is known that 
	\begin{equation}
		\label{6.3}
		C_n^{(\alpha+1)}(\cos \theta)/C_n^{(\alpha+1)}(1) = \sum_{k=0}^n c_k(n, \alpha) \cos (n-2k)\theta.
	\end{equation}
	The right side can be clearly expressed as $e^{-in\theta}p_{n,\alpha}(e^{2i\theta})$, because the coefficients $c_k(n,\alpha)$ are real and satisfy the relation $c_k(n,\alpha) = c_{n-k}(n,\alpha)$.
\end{proof}

The next property of the optimal polynomials
$p_{n,\alpha}(z)$ allows us to strengthen Theorem \ref{thm5.2} 
(see Adenstedt \cite{Ad}).
\begin{lem}
	\label{lem6.2}
	For $\delta >0,n^{\alpha+1}\max_{\delta \leq |\lambda| \leq \pi}|p_{n,\alpha}(e^{i\lambda})|$ is bounded uniformly in n.
\end{lem}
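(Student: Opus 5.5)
The plan is to reduce the estimate to a classical bound for Gegenbauer polynomials away from the endpoints $\pm1$, using the representation in Lemma \ref{lem6.1}. Put $\lambda=2\theta$ and $a:=\alpha+1>1/2$. By \eqref{6.2},
$$|p_{n,\alpha}(e^{i\lambda})|=\frac{|C_n^{(a)}(\cos\theta)|}{C_n^{(a)}(1)}.$$
The range $\delta\le|\lambda|\le\pi$ corresponds to $\delta/2\le|\theta|\le\pi/2$. Since the coefficients $c_k(n,\alpha)$ are real, $|p_{n,\alpha}(e^{-i\lambda})|=|p_{n,\alpha}(e^{i\lambda})|$. So it suffices to consider $\theta\in[\delta/2,\pi/2]$, where $\cos\theta\in[0,\cos(\delta/2)]$ stays a fixed distance from $1$ (and from $-1$). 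Without loss of generality take $\delta\le\pi$.

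First I handle the denominator. By the stated standardization, $C_n^{(a)}(1)=\Gamma(n+2a)/[n!\,\Gamma(2a)]$. Stirling's formula gives $C_n^{(a)}(1)\sim n^{2a-1}/\Gamma(2a)$, so $C_n^{(a)}(1)\ge c\,n^{2a-1}=c\,n^{2\alpha+1}$ for all $n\ge1$ and some $c>0$.

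Next I bound the numerator on the interior interval. I will invoke the classical Darboux-type asymptotic for Gegenbauer polynomials (Szeg\H{o}'s \emph{Orthogonal Polynomials}, Thm.~8.21.8; Erd\'elyi et al.\ \cite{Erd}, Vol.~2). For fixed $a>0$ and uniformly for $\theta\in[\varepsilon,\pi-\varepsilon]$,
$$C_n^{(a)}(\cos\theta)=\frac{2\,\Gamma(n+a)}{\Gamma(a)\,n!}\,(2\sin\theta)^{-a}\cos\bigl((n+a)\theta-a\pi/2\bigr)+O(n^{a-2}).$$
Since $\Gamma(n+a)/n!\sim n^{a-1}$ and $\sin\theta\ge\sin(\delta/2)>0$ on our range, this yields $|C_n^{(a)}(\cos\theta)|\le M n^{a-1}$ uniformly for $\theta\in[\delta/2,\pi/2]$. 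Here $M$ depends only on $\delta$ and $\alpha$.

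Combining the two bounds,
$$\max_{\delta\le|\lambda|\le\pi}|p_{n,\alpha}(e^{i\lambda})|\le \frac{M n^{a-1}}{c\,n^{2a-1}}=C\,n^{-a}=C\,n^{-\alpha-1}.$$
Hence $n^{\alpha+1}\max_{\delta\le|\lambda|\le\pi}|p_{n,\alpha}(e^{i\lambda})|\le C$ for all large $n$. The finitely many small $n$ are trivially bounded, which proves the lemma.

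The main obstacle is the uniform interior bound $|C_n^{(a)}(\cos\theta)|=O(n^{a-1})$ for \emph{every} $a>1/2$, which is not proved in the paper. For $0<a<1$ one could instead quote the explicit inequality $|C_n^{(a)}(\cos\theta)|\le 2^{1-a}n^{a-1}/(\Gamma(a)\sin^a\theta)$. For general $a$, if a self-contained argument is wanted, I would derive the bound from the generating function $(1-2xz+z^2)^{-a}=(1-e^{i\theta}z)^{-a}(1-e^{-i\theta}z)^{-a}$. Write $C_n^{(a)}(\cos\theta)=\sum_{k}b_k(a)b_{n-k}(a)e^{i(n-2k)\theta}$ with $b_k(a)=\Gamma(k+a)/(\Gamma(a)k!)$, and apply Darboux's method to the two algebraic singularities at $z=e^{\pm i\theta}$; these are separated by at least $2\sin(\delta/2)$, which gives the required uniformity in $\theta$.
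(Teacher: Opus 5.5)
The paper states this lemma without proof, citing Adenstedt. Your argument is correct and follows the route that Lemma~\ref{lem6.1} sets up. You write $|p_{n,\alpha}(e^{i\lambda})|=|C_n^{(\alpha+1)}(\cos(\lambda/2))|/C_n^{(\alpha+1)}(1)$, bound the denominator below by $c\,n^{2\alpha+1}$ via Stirling, and bound the numerator above by $M n^{\alpha}$ uniformly for $\delta/2\le\theta\le\pi/2$. The only outside input is the classical interior estimate $C_n^{(a)}(\cos\theta)=O(n^{a-1})$ on $[\varepsilon,\pi-\varepsilon]$, which holds for every $a>0$ (Szeg\H{o}'s Darboux-type asymptotics for Jacobi/ultraspherical polynomials). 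So your proof is complete as written.
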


\begin{thm}
	\label{thm6.1}
	The conclusions of Theorem \ref{thm5.2} remain valid if the condition therein that $g(\lambda) \in ZB_0^+$ is replaced by the weaker condition that $g(\lambda)\in ZL_0^+$.
\end{thm}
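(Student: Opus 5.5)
We argue with the same two-sided scheme as in the proof of Theorem \ref{thm4.1}, taking $f=f_\alpha$. The sequence $\sigma_n^2(f_\alpha)$ is weakly varying by (\ref{5.11}). The lower bound does not use boundedness of $g$: Lemma \ref{lem4.5} is stated for the class $ZL_0^+$. So for any $\epsilon>0$ we pick a trigonometric polynomial $t$ with $0\le t\le g$ and $t(0)\ge g(0)-\epsilon$. Proposition \ref{R2}(b) and Lemma \ref{lem4.1} then give
\[
\liminf_{n\to\infty}\sigma_n^2(f_\alpha g)/\sigma_n^2(f_\alpha)\ge g(0)-\epsilon,
\]
and we let $\epsilon\to0$. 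Hence only the upper bound has to be re-established, and it suffices to prove
\beq
\label{prop-ub}
\lim_{n\to\infty}\Var_{f_\alpha g}(\widehat m_{\alpha})/\sigma_n^2(f_\alpha)=g(0).
\eeq
This replaces Lemma \ref{lem4.3} for $g\in ZL_0^+$. Combined with $\sigma_n^2(f_\alpha g)\le \Var_{f_\alpha g}(\widehat m_\alpha)$ and the lower bound, it gives both (\ref{5.12}) and (\ref{5.12ef}).

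To prove (\ref{prop-ub}), we write the ratio as $\int_{-\pi}^{\pi}K_n(\la)g(\la)\,d\la$, with the kernel $K_n=|p_{n,\alpha}(e^{i\la})|^2f_\alpha/\sigma_n^2(f_\alpha)$ from (\ref{K-n}), and split the integral at $|\la|=\de$. On $|\la|<\de$ we write $g=h\prod_j|\la-\la_j|^{\alpha(j)}$ as in (\ref{3.6g}). We choose $\de$ so small that no $\la_j\neq0$ lies in $(-\de,\de)$, except possibly $\la_j=0$. By continuity of $h$ at $0$, $g$ is then bounded on $(-\de,\de)$ and within $\eta$ of $g(0)$ near $0$; if some $\la_j=0$, then $g(0)=0$ and $g\to0$ at the origin. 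The approximate-identity property (Lemma \ref{lem4.2}) then gives
\[
\int_{|\la|<\de}K_ng\,d\la\to g(0)
\]
as $\de\to0$ and $n\to\infty$, exactly as in Proposition \ref{F-Ker}.

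The main obstacle is the tail $\int_{\de\le|\la|\le\pi}K_ng\,d\la$, where $g$ is merely integrable, so Lemma \ref{lem4.2} alone does not suffice. Here we use Lemma \ref{lem6.2}: $\sup_{\de\le|\la|\le\pi}|p_{n,\alpha}(e^{i\la})|^2\le C_\de n^{-2\alpha-2}$. On this range $f_\alpha$ is bounded by a constant depending on $\de$, since $|1-e^{i\la}|$ is bounded away from $0$. Using (\ref{5.11}), $\sigma_n^2(f_\alpha)\asymp n^{-2\alpha-1}$, we obtain
\[
\int_{\de\le|\la|\le\pi}K_ng\,d\la\le C'_\de\, n^{-2\alpha-2}\,n^{2\alpha+1}\int_{-\pi}^{\pi}g\,d\la=O(n^{-1})\to0 .
\]
This uses $g\in L^1$, which holds for $ZL_0^+$ since $h\in L_0^+$ is integrable and the power factors are bounded on $[-\pi,\pi]$ because $\alpha(j)\ge0$.

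Combining the two pieces yields (\ref{prop-ub}). Together with the lower bound this gives $\sigma_n^2(f_\alpha g)\sim g(0)\sigma_n^2(f_\alpha)$, which is (\ref{5.12}) via (\ref{5.11}), and also the efficiency of $\widehat m_\alpha$ asserted in (\ref{5.12ef}).
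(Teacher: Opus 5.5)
Your proposal is correct and follows essentially the same route as the paper. The lower bound via Lemmas \ref{lem4.1} and \ref{lem4.5} never uses boundedness of $g$, and the only new ingredient is re-establishing Lemma \ref{lem4.3} for integrable $g$ by combining Lemma \ref{lem6.2} with \eqref{5.11} to get $\max_{\delta\le|\lambda|\le\pi}K_n(\lambda)=O(1/n)$, which is exactly the paper's condition \eqref{6.7}; you simply write out the splitting argument that the paper calls straightforward.
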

\begin{proof}
In the proof of Theorem \ref{thm4.1}, the boundedness of the function $g(\lambda)$  is relevant only through Lemmas \ref{lem4.3} and \ref{lem4.4}. These lemmas would still hold for an unbounded $g(\lambda)$ (in $L_0$) if condition (\ref{4.5}) could be replaced by the stronger requirement that 
\begin{equation}
	\label{6.7}
	\lim_{n \rightarrow \infty}\max_{\delta \leq |\lambda|\leq\pi}K_n(\lambda)=0, \quad \delta >0,
\end{equation}
where  the kernel $K_n(\lambda)$ is defined by equation \eqref{K-n}.  
It is straightforward to verify that, based on equations \eqref{K-n}, \eqref{5.11}, and Lemma \ref{lem6.2}, 
the kernel $K_n(\lambda)$ satisfies condition (\ref{6.7}), and thus the result follows.
\end{proof}

\n {\bf Estimating the zero-order.} When using the estimator \(\widehat{m}_{\alpha}\) from Theorem \ref{thm5.2} for a spectral density that approaches zero at the origin, there is a risk of inaccurately estimating the true order \(\alpha\) of the zero. 
In some cases, we may either overestimate or underestimate the zero-order. Below, we discuss these scenarios.

{\em Overestimating the zero-order.} Assume that the true spectral density is $f_\alpha(\lambda)$ but we use the estimator $\widehat{m}_{\alpha+\beta}$ having coefficients $c_k(n, \alpha+\beta)$, given by equation \eqref{5.7}, where $\beta$ is restricted to the nonnegative integers (the case of the non-integral $\beta$ remains open). 
We define the {\it asymptotic efficiency} (if exists) of the estimator $\widehat{m}_{\alpha+\beta}$ relative to $\widehat{m}_\alpha$ 
for the spectral density $f_\alpha(\lambda)$ by (cf. \eqref{eff1}
and \eqref{eff2}):
\begin{equation}
	\label{7.1}
	e(\alpha,\beta) = \lim_{n \rightarrow \infty}\sigma_n^2(f_\alpha)/\Var_{f_\alpha}(\widehat{m}_{\alpha+\beta}).
\end{equation}
The next theorem provides an expression for the asymptotic efficiency $e(\alpha,\beta)$.
\begin{thm}
	\label{thm7.1}
For $\alpha > -{1}/{2}$ and $\beta \geq 0$ an integer, the following equation holds:
	\begin{equation}
		\label{7.8}
		e(\alpha, \beta) = \frac{\Gamma(2\alpha+2)\Gamma^2(\alpha+\beta+1)\Gamma(2\alpha+4\beta+2)}{{2\beta \choose \beta}\Gamma(\alpha+1)\Gamma(\alpha+2\beta+1)\Gamma^2(2\alpha+2\beta+2)}.
	\end{equation}
\end{thm}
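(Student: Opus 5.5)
The plan is to compute $\Var_{f_\alpha}(\widehat m_{\alpha+\beta})=\int_{-\pi}^{\pi}|p_{n,\alpha+\beta}(e^{i\la})|^2 f_\alpha(\la)\,d\la$ asymptotically, and then divide by the known asymptotics \eqref{5.11} of $\sigma_n^2(f_\alpha)$. Set $a:=\alpha+\beta$ and substitute $\la=2\theta$. By Lemma \ref{lem6.1},
$$|p_{n,a}(e^{2i\theta})|=\frac{|C_n^{(a+1)}(\cos\theta)|}{C_n^{(a+1)}(1)}, \qquad f_\alpha(2\theta)=(2\pi)^{-1}(2|\sin\theta|)^{2\alpha}.$$
So the variance is an explicit integral of a normalized Gegenbauer polynomial squared against a power of $\sin\theta$. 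I expect this integral to be of order $n^{-2\alpha-1}$, with the main contribution coming from $\theta\asymp 1/n$.

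The first step is the local scaling. Put $\theta=x/n$ and use the Mehler--Heine asymptotics for Gegenbauer polynomials,
$$\frac{C_n^{(a+1)}(\cos(x/n))}{C_n^{(a+1)}(1)}\longrightarrow \Gamma(a+3/2)\,(x/2)^{-a-1/2}J_{a+1/2}(x),$$
which holds uniformly for $x$ in compact sets. With $d\la=2dx/n$ and $f_\alpha\approx(2\pi)^{-1}(2x/n)^{2\alpha}$, this gives formally
$$n^{2\alpha+1}\Var_{f_\alpha}(\widehat m_{\alpha+\beta})\to \frac{2^{2\alpha+1}}{\pi}\,\Gamma^2(a+3/2)\,2^{2a+1}\int_0^\infty J_{a+1/2}^2(x)\,x^{2\alpha-2a-1}\,dx .$$
The integrand behaves like $x^{2\alpha}$ at $0$ and like $x^{-2\beta-2}$ at infinity, so the integral converges because $\alpha>-1/2$ and $\beta\ge0$.

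The second step, which I expect to be the main obstacle, is justifying this passage to the limit.
\begin{itemize}
\item[(i)] On $\delta\le|\la|\le\pi$, Lemma \ref{lem6.2} gives $|p_{n,a}|\le Cn^{-a-1}$. The contribution of this region is therefore $O(n^{-2\alpha-2\beta-2})=o(n^{-2\alpha-1})$.
\item[(ii)] On $|\la|<\delta$ I need a uniform majorant of the form
$$\frac{|C_n^{(a+1)}(\cos\theta)|}{C_n^{(a+1)}(1)}\le C\min\{1,(n\theta)^{-a-1}\}, \qquad 0<\theta\le\delta,$$
which is a standard Szeg\H{o}-type bound for ultraspherical polynomials; I would cite or derive it from Darboux's formula.
\end{itemize}
With the majorant in (ii), dominated convergence applies, because $\min\{1,x^{-2a-2}\}x^{2\alpha}$ is integrable on $(0,\infty)$ when $\beta\ge0$. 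This step is where the integrality of $\beta$ may also enter; alternatively, one can write $f_\alpha=f_{a}\,|1-e^{i\la}|^{-2\beta}$ and use that $(2\pi)^\beta f_\beta$ is a trigonometric polynomial.

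The final step evaluates the limit integral with the Weber--Schafheitlin formula, taking $\nu=a+1/2$ and $\mu=2\beta+1$:
$$\int_0^\infty J_\nu^2(x)x^{-\mu}dx=\frac{\Gamma(\mu)\Gamma(\nu+\frac{1-\mu}{2})}{2^\mu\Gamma^2(\frac{1+\mu}{2})\Gamma(\nu+\frac{1+\mu}{2})}.$$
Here $\Gamma(\mu)=\Gamma(2\beta+1)$ and $\Gamma(\tfrac{1+\mu}2)=\Gamma(\beta+1)$, which produces the factor ${2\beta\choose\beta}$, while $\nu\pm\beta$ produce $\Gamma(\alpha+1)$ and $\Gamma(\alpha+2\beta+1)$. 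Dividing by \eqref{5.11}, i.e. by $\Gamma(2\alpha+1)/B(\alpha+1,\alpha+1)$, and simplifying $\Gamma(a+3/2)^2$ and the powers of $2$ with the duplication formula $\Gamma(a+1)\Gamma(a+3/2)=2^{-2a-1}\sqrt\pi\,\Gamma(2a+2)$, should then yield \eqref{7.8} after routine bookkeeping.
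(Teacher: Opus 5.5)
The survey does not prove Theorem~\ref{thm7.1}. It quotes the formula from Adenstedt and supplies only the ingredients you use: Lemma~\ref{lem6.1}, Lemma~\ref{lem6.2} and \eqref{5.11}. So there is no in-paper argument to compare against, and your asymptotic route stands on its own. That route is: Mehler--Heine scaling at $\theta\asymp 1/n$, Lemma~\ref{lem6.2} on $\delta\le|\lambda|\le\pi$, dominated convergence, then Weber--Schafheitlin. It is correct.

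The majorant in (ii) is exactly Szeg\H{o}'s estimate:
\begin{itemize}
\item the ratio $C_n^{(a+1)}(\cos\theta)/C_n^{(a+1)}(1)$ equals $P_n^{(a+\frac12,a+\frac12)}(\cos\theta)/P_n^{(a+\frac12,a+\frac12)}(1)$;
\item $P_n^{(a+\frac12,a+\frac12)}(\cos\theta)=O(\theta^{-a-1}n^{-1/2})$ on $c/n\le\theta\le\pi/2$;
\item $P_n^{(a+\frac12,a+\frac12)}(1)\asymp n^{a+1/2}$ and $|P_n|\le P_n(1)$.
\end{itemize}
Together these give $C\min\{1,(n\theta)^{-a-1}\}$.

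What your approach buys is that the integrality of $\beta$ is never used. Your hedge about where it ``may enter'' is therefore unnecessary. The same argument gives the formula for every real $\beta\ge 0$, reading $\binom{2\beta}{\beta}$ as $\Gamma(2\beta+1)/\Gamma^2(\beta+1)$, which is the case the survey calls open.

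The one inaccuracy is in your bookkeeping sketch at the end. With $\nu=\alpha+\beta+\tfrac12$ and $\mu=2\beta+1$, Weber--Schafheitlin produces $\Gamma(\nu-\beta)=\Gamma(\alpha+\tfrac12)$ and $\Gamma(\nu+\beta+1)=\Gamma(\alpha+2\beta+\tfrac32)$, not $\Gamma(\alpha+1)$ and $\Gamma(\alpha+2\beta+1)$. Carrying the constants gives
\[
n^{2\alpha+1}\Var_{f_\alpha}(\widehat m_{\alpha+\beta})\to\frac{2^{4\alpha+1}}{\pi}\,\frac{\Gamma^2(\alpha+\beta+\tfrac32)\,\Gamma(2\beta+1)\,\Gamma(\alpha+\tfrac12)}{\Gamma^2(\beta+1)\,\Gamma(\alpha+2\beta+\tfrac32)}.
\]
By duplication, $\Gamma(2\alpha+1)/B(\alpha+1,\alpha+1)=\frac{2^{4\alpha+1}}{\pi}\Gamma(\alpha+\tfrac12)\Gamma(\alpha+\tfrac32)$, so
\[
e(\alpha,\beta)=\frac{\Gamma(\alpha+\tfrac32)\,\Gamma(\alpha+2\beta+\tfrac32)}{\binom{2\beta}{\beta}\,\Gamma^2(\alpha+\beta+\tfrac32)}.
\]
This is \eqref{7.8}. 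Apply the duplication formula to $\Gamma(2\alpha+2)$, $\Gamma(2\alpha+2\beta+2)$ and $\Gamma(2\alpha+4\beta+2)$ there. All powers of $2$ and $\pi$ then cancel, and the factors $\Gamma(\alpha+1)$, $\Gamma^2(\alpha+\beta+1)$ and $\Gamma(\alpha+2\beta+1)$ drop out.

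Two quick checks:
\begin{itemize}
\item $\beta=0$ gives $1$, which confirms your Mehler--Heine constant;
\item $(\alpha,\beta)=(0,1)$ gives $5/6$.
\end{itemize}
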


The efficiency of $\widehat{m}_{\alpha+\beta}$ for a more general spectral density $f_\alpha(\lambda)g(\lambda)$ may be treated by 
using an integral kernel argument presented in Section \ref{FSI}. Moreover, in this general case, the estimator $\widehat{m}_{\alpha+\beta}$ has the asymptotic efficiency $e(\alpha,\beta)$ as in \eqref{7.8}. 
Specifically, we have the following result (see Adenstedt \cite{Ad}). 
\begin{thm}
\label{thm7.2}
Let the noise $\{Y_t\}$ have spectral density $f(\lambda)=f_\alpha(\lambda)g(\lambda)$, where $g(\lambda) \in ZL_0^+$, $g(0) > 0$, and $\alpha > -{1}/{2}$. Then for any integer $\beta \geq 0$,
	\begin{equation}
		\label{7.9}
		\Var_{f}(\widehat{m}_{\alpha+\beta}) \sim g(0)\sigma_n^2(f_\alpha)/e(\alpha,\beta)\q {\rm as}\q n \rightarrow \infty.
	\end{equation}
and the estimator $\widehat{m}_{\alpha+\beta}$ has the asymptotic efficiency $e(\alpha,\beta)$ as in \eqref{7.8}.
\end{thm}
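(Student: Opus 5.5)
The plan is to reduce Theorem \ref{thm7.2} to two facts: the exact value of $\Var_{f_\alpha}(\widehat m_{\alpha+\beta})$ from Theorem \ref{thm7.1}, and a kernel argument that replaces $g$ by the constant $g(0)$. We write $p:=p_{n,\alpha+\beta}$ for the optimal polynomial of $f_{\alpha+\beta}$. Then
$$\Var_f(\widehat m_{\alpha+\beta})=\int_{-\pi}^{\pi}|p(e^{i\la})|^2 f_\alpha(\la)g(\la)\,d\la .$$
We introduce the normalized kernel
$$\widetilde K_n(\la):=|p(e^{i\la})|^2f_\alpha(\la)\big/\Var_{f_\alpha}(\widehat m_{\alpha+\beta}),$$
which is nonnegative and has integral one. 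The claim \eqref{7.9} then reads $\int \widetilde K_n g\,d\la\to g(0)$. By the definition \eqref{7.1}, $\Var_{f_\alpha}(\widehat m_{\alpha+\beta})\sim\sigma_n^2(f_\alpha)/e(\alpha,\beta)$, and Theorem \ref{thm7.1} supplies the value of $e(\alpha,\beta)$. Combining these with Theorem \ref{thm5.2}, which gives $\sigma_n^2(f)\sim g(0)\sigma_n^2(f_\alpha)$, yields both \eqref{7.9} and the efficiency statement.

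The key step is to show that $\widetilde K_n$ concentrates at the origin in the strong uniform sense \eqref{6.7}: for every $\delta>0$, $\max_{\delta\le|\la|\le\pi}\widetilde K_n(\la)\to0$. Lemma \ref{lem6.2}, applied with $\alpha+\beta$ in place of $\alpha$, gives $|p(e^{i\la})|\le Cn^{-(\alpha+\beta+1)}$ uniformly on $\delta\le|\la|\le\pi$. Since $f_\alpha$ is bounded there, the numerator is $O(n^{-2\alpha-2\beta-2})$. The denominator is of exact order $n^{-2\alpha-1}$ by \eqref{5.11} together with $e(\alpha,\beta)>0$. Hence $\widetilde K_n=O(n^{-2\beta-1})\to0$ uniformly off every neighbourhood of $0$.

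With this concentration in hand, we run the argument of Lemma \ref{lem4.3} and Theorem \ref{thm6.1}. Fix $\epsilon>0$ and choose $\delta$ so that $|g(\la)-g(0)|<\epsilon$ for $|\la|<\delta$; this is possible because $g\in ZL_0^+$ with $g(0)>0$, so $g$ is continuous at $0$ and no factor in \eqref{3.6g} vanishes at $0$. The integral over $|\la|<\delta$ is then within $\epsilon$ of $g(0)\int_{|\la|<\delta}\widetilde K_n\,d\la$. The integral over $|\la|\ge\delta$ is bounded by $\max_{|\la|\ge\delta}\widetilde K_n\cdot\|g\|_1\to0$, which works because $g$ is integrable. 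Applied to $g\equiv1$, the same uniform bound also shows $\int_{|\la|<\delta}\widetilde K_n\to1$, and letting $\epsilon\to0$ completes the proof.

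The main obstacle is the uniform bound of Lemma \ref{lem6.2}. It is stated for $p_{n,\alpha}$, and we need it with parameter $\alpha+\beta$ while the weight carries only $\alpha$. If the lemma is unavailable, we would prove it from the Gegenbauer representation \eqref{6.2}. Away from $\theta=0,\pi$ the standard estimate $C_n^{(a)}(\cos\theta)=O(n^{a-1})$ holds with $a=\alpha+\beta+1$, and $C_n^{(a)}(1)\asymp n^{2a-1}$, so the ratio is $O(n^{-a})$. The point $\la=\pm\pi$ corresponds to $\theta=\pm\pi/2$, not to an endpoint, so it causes no difficulty. One also has to check that the denominator has exact order $n^{-2\alpha-1}$ and not smaller; this follows from the finite positive value of $e(\alpha,\beta)$ in \eqref{7.8}, or directly from $\Var_{f_\alpha}(\widehat m_{\alpha+\beta})\ge\sigma_n^2(f_\alpha)$.
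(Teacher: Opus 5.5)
Your proposal is correct and takes the same integral-kernel route the paper indicates, following Adenstedt. You normalize $|p_{n,\alpha+\beta}|^2 f_\alpha$ by $\Var_{f_\alpha}(\widehat m_{\alpha+\beta})$, obtain uniform off-origin decay $O(n^{-2\beta-1})$ from Lemma \ref{lem6.2} together with $\Var_{f_\alpha}(\widehat m_{\alpha+\beta})\ge\sigma_n^2(f_\alpha)\asymp n^{-2\alpha-1}$, and take the constant from Theorem \ref{thm7.1}. The one slip is a citation: $g$ is only assumed to lie in $ZL_0^+$ and may be unbounded, so $\sigma_n^2(f)\sim g(0)\sigma_n^2(f_\alpha)$ (needed for the efficiency claim) should come from Theorem \ref{thm6.1} with \eqref{5.11}, not from Theorem \ref{thm5.2} directly.
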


\begin{rem}
{\rm Since $e(\alpha, \beta)>0$, we see that overestimation still yields an estimator whose variance decreases at the optimal rate. From equation (\ref{7.8}) we observe: (a) $e(\alpha,\beta)$ decreases to $1/{2\beta \choose \beta}$ as $\alpha \rightarrow \infty$, and 
(b) $e(\alpha,\beta) \rightarrow 1$ as $\alpha \rightarrow -{1}/{2}$. Additionally, we note that $e(0,1)={5}/{6}$, which was previously stated in Theorem \ref{Vit7} (see also Vitale \cite{Vit}).}
\end{rem}

\n {\em Underestimating the zero-order.} We now consider the case when the estimator $\widehat{m}_\alpha$, where $\alpha$ is an integer, is used, but the true spectral density has a higher-order zero at the origin. In this scenario, which is the opposite of the one discussed above, we have the following theorem
(see Adenstedt \cite{Ad}).
\begin{thm}
	\label{thm8.1}
	Let $\alpha$ be a nonnegative integer and let the true spectral density be $f(\lambda) = f_{\alpha+1}(\lambda)g(\lambda)$, where $g(\lambda) \in L_0$. Then
	\begin{equation}
		\label{8.1}
		\lim_{n \rightarrow \infty}n^{2\alpha+2} \Var_{f}(\widehat{m}_\alpha)=[(2\alpha+1)!/\alpha!]^2\pi^{-1}\int_{-\pi}^\pi g(\lambda)d\lambda.
	\end{equation}
\end{thm}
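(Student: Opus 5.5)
The plan is to compute $\Var_f(\widehat m_\alpha)$ directly from formula \eqref{Formula}. Write $p_{n,\alpha}(z)=\sum_{k=0}^n c_k(n,\alpha)z^k$ as in Lemma \ref{lem6.1}. Since $f_{\alpha+1}(\lambda)=(2\pi)^{-1}|1-e^{i\lambda}|^{2\alpha+2}$, we have
\[
\Var_f(\widehat m_\alpha)=\frac1{2\pi}\inl |Q_n(e^{i\lambda})|^2 g(\lambda)\,d\lambda,\qquad Q_n(z):=(1-z)^{\alpha+1}p_{n,\alpha}(z).
\]
Here $Q_n$ has degree $N:=n+\alpha+1$, and its coefficients are the $(\alpha+1)$-th backward differences
\[
q_j=\sum_{i=0}^{\alpha+1}(-1)^i\binom{\alpha+1}{i}c_{j-i},\qquad c_k:=0 \text{ for } k\notin[0,n].
\]
The whole proof then reduces to understanding these coefficients.

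For integer $\alpha$, formula \eqref{5.7} simplifies to
\[
c_k=\frac{(2\alpha+1)!}{(\alpha!)^2}\,\frac{n!}{(n+2\alpha+1)!}\,(k+1)\cdots(k+\alpha)\,(n-k+1)\cdots(n-k+\alpha).
\]
So $c_k=P_n(k)$, where $P_n$ is a polynomial of degree $2\alpha$ in $k$. It vanishes at $k=-1,\dots,-\alpha$ and at $k=n+1,\dots,n+\alpha$. Hence the truncated sequence agrees with $P_n$ on the range $-\alpha\le k\le n+\alpha$.

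I would use this to split the coefficients. For $0\le j\le N$, the difference defining $q_j$ involves only the points $j-\alpha-1,\dots,j$. This window leaves the range $[-\alpha,n+\alpha]$ only for $j=0$ (the point $-\alpha-1$) and for $j=N$ (the point $n+\alpha+1$). Therefore
\[
Q_n(z)=b_0+b_N z^N+R_n(z),
\]
where the boundary terms are $b_0=\pm P_n(-\alpha-1)$ and $b_N=\pm P_n(n+\alpha+1)$, and $R_n$ has coefficients $(\Delta^{\alpha+1}P_n)(j)$. Evaluating, $P_n(-\alpha-1)$ contains the factor $(-\alpha)\cdots(-1)=(-1)^\alpha\alpha!$ and the factor $(n+\alpha+2)\cdots(n+2\alpha+1)\sim n^\alpha$. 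This gives
\[
|b_0|,\ |b_N|\sim \frac{(2\alpha+1)!}{\alpha!}\,n^{-\alpha-1}.
\]
Each difference lowers the degree by one and brings out a factor $1/n$ relative to the scale of $P_n$. Since $P_n=O(n^{-1})$ on $[0,n]$, the interior coefficients satisfy $(\Delta^{\alpha+1}P_n)(j)=O(n^{-\alpha-2})$ uniformly in $j$.

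For the main term, expand
\[
|b_0+b_Ne^{iN\lambda}|^2=b_0^2+b_N^2+2b_0b_N\cos(N\lambda).
\]
Multiply by $n^{2\alpha+2}g/(2\pi)$ and integrate. The constant part gives $[(2\alpha+1)!/\alpha!]^2\pi^{-1}\inl g$. The cosine part tends to $0$ by the Riemann--Lebesgue lemma, since $g\in L^1$. This is exactly \eqref{8.1}.

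The main obstacle is showing that $R_n$ and the cross terms $\inl (b_0+b_Ne^{iN\lambda})\overline{R_n}\,g\,d\lambda$ are $o(n^{-2\alpha-2})$ when $g$ is merely integrable and continuous at $0$. A sup bound on $R_n$ is only $O(n^{-\alpha-1})$, which is not enough. I would first use that the coefficients of $R_n$ are samples of the polynomial $\Delta^{\alpha+1}P_n$, of degree $\alpha-1$, on an interval. Repeated Abel summation then gives
\[
|R_n(e^{i\lambda})|\le C\,n^{-\alpha-2}\,|\sin(\lambda/2)|^{-1}\quad\text{for } \delta\le|\lambda|\le\pi,
\]
so $n^{\alpha+1}R_n\to0$ uniformly there. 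This handles both $\int|R_n|^2g$ and the cross terms on $|\lambda|\ge\delta$ using only $g\in L^1$. On $|\lambda|<\delta$, I would use that $g$ is bounded near $0$ (by continuity) together with Parseval. This gives $\|R_n\|_2^2=O(n\cdot n^{-2\alpha-4})=O(n^{-2\alpha-3})$, and the cross terms are bounded by Cauchy--Schwarz as $O(n^{-\alpha-1}\cdot n^{-\alpha-3/2})=o(n^{-2\alpha-2})$. Combining the two regions with the main-term computation yields \eqref{8.1}.
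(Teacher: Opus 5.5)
Your proof is correct. The paper gives no proof of Theorem~\ref{thm8.1}; it only quotes the result from Adenstedt, so there is no in-paper argument to compare against.

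Compared with the tools the paper assembles nearby (the Gegenbauer form of $p_{n,\alpha}$ in Lemma~\ref{lem6.1} and the bound of Lemma~\ref{lem6.2}), your route avoids special-function asymptotics entirely. The key observation is that for integer $\alpha$ the coefficients $c_k(n,\alpha)$ are samples of a degree-$2\alpha$ polynomial $P_n$ vanishing at $-1,\dots,-\alpha$ and at $n+1,\dots,n+\alpha$. Consequently $(1-z)^{\alpha+1}p_{n,\alpha}(z)$ splits into two endpoint spikes of size $\frac{(2\alpha+1)!}{\alpha!}n^{-\alpha-1}$ plus a remainder whose coefficients are $O(n^{-\alpha-2})$. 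This makes the constant in \eqref{8.1} transparent: it is $\lim n^{2\alpha+2}(b_0^2+b_N^2)$ times $\frac{1}{2\pi}\inl g$. It also shows exactly where the integrality of $\alpha$ enters.

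Two small points of rigor. First, justify the uniform bound $(\nabla^{\alpha+1}P_n)(j)=O(n^{-\alpha-2})$ by writing it as $P_n^{(\alpha+1)}(\xi)$ and bounding the $(\alpha+1)$-th derivative of $\prod_{i=1}^{\alpha}(t+i)\prod_{i=1}^{\alpha}(n-t+i)$ by $O(n^{\alpha-1})$ on $[-\alpha-1,n+\alpha+1]$; the prefactor is of order $n^{-2\alpha-1}$. Second, a single Abel summation already suffices, since $\sum_j|r_j-r_{j-1}|=O(n\cdot n^{-\alpha-3})$.

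Your estimates in fact give slightly more than the statement. The Abel bound $|R_n(e^{i\lambda})|\le Cn^{-\alpha-2}/|\sin(\lambda/2)|$ holds for every $\lambda\neq 0$, not just for $|\lambda|\ge\delta$. You also have the global bound $n^{\alpha+1}|R_n|\le n^{\alpha+1}\sum_j|r_j|=O(1)$. Dominated convergence therefore disposes of the remainder and all cross terms for any nonnegative integrable $g$. So the continuity of $g$ at $0$, which you use only for local boundedness, is not actually needed.
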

Thus, if $f(\lambda)=f_{\alpha+\beta+1}(\lambda)g(\lambda)$ with $\beta \geq 0, g(\lambda) \in ZL_0^+$ and $g(0) >0$, we can conclude (using Theorem \ref{thm6.1}) that the efficiency of $\widehat{m}_\alpha$ is $O(n^{-2\beta-1})$. Consequently, the estimator $\widehat{m}_\alpha$ is far from efficient. 
Note that the relation (\ref{8.1}) 
was previously obtained by Vitale \cite{Vit}) for $\alpha = 0$
(see Theorem \ref{Vit5}).
\begin{rem}
{\rm The types of "multipliers" of the function $g(\lambda)$ allowed in Theorems \ref{thm4.1} and \ref{thm5.2}, as defined in equation \eqref{3.6g}, may appear overly complicated. However, we want to emphasize that only the continuity and behavior of the spectral density near $\lambda=0$ are important. Discontinuities or isolated zeros that occur away from the origin are not relevant.}
\end{rem}

\s{Efficiency of the least squares estimator for long-memory models}
\label{EFF}

In this section, we utilize the results from Sections \ref{AD} and \ref{AD-1} to examine the efficiency of the least squares estimator (LSE) $\bar{X}_n$ when the observed process 
$X(t)$ has a spectral density of the form $\lambda ^{2\alpha}L(\lambda)$ as $\lambda \rightarrow 0$, where $L(\lambda)$ is slowly varying function at the origin with $0 < L(0) < \infty$. 
Recall that a function $L(\lambda)$ is slowly varying at 0 if, for all $\alpha > 0$, the limit of $L(\lambda \alpha)/L(\lambda)$ approaches 1 as $\lambda\to0$ (see Section \ref{S2.5}).
 
The efficiency and asymptotic efficiency of the least squares estimator $\bar X_n$ are defined as follows (see equations \eqref{eff1} and \eqref{eff2}):
\beq
\label{eff1X}
e(n, f,\bar X_n): = \frac{Var_f(\widehat{m}_{BLU})}{\Var_f(\bar X_n)},
\eeq
and
\beq
\label{eff2X}
e(\infty, f, \bar X_n): = \displaystyle{\lim_{n \to \infty}} e(n, f,\bar X_n)),
\eeq
where $\widehat{m}_{BLU}$ is the BLUE for $m$. 
The estimator $\bar X_n$ is considered efficient or asymptotically efficient if $e(n, f, \bar X_n) = 1$ or $e(f, f, \bar X_n) = 1$, respectively.

\sn{Efficiency of the least squares estimator for the ARFIMA($0,-\al,0$) model}

Using formulas \eqref{5.2} and \eqref{5.7est} -- \eqref{5.8}, a straightforward algebraic manipulation yielded the following result, which is useful for numerical evaluations. 
For details, see Samarov and Taqqu \cite{ST}.
%and the expression 
%\begin{equation}
%	\label{6}
%	e(n,f_\alpha)=\frac{n^2B(n, 2\alpha+1)}{B(\alpha+1,\alpha+1)\sum_{j=1}^{n}\sum_{k=1}^{n}r_\alpha(j-k)},
%\end{equation}
 
\begin{thm}
Let the spectral density $f_\alpha(\la)$ be as defined in equation \eqref {5.1} with $\alpha > -{1}/{2}$. The efficiency $e(n, f_\alpha,\bar{X}_n)$ of $\bar{X}_n$ is given by the following expression:
	\begin{equation*}
		\begin{split}
			\label{5}
			e(n, f_\alpha,\bar{X}_n) &=
			\bigg\{\bigg(1+\frac{2\alpha}{2}\bigg) \cdots \bigg(1+\frac{2\alpha}{n}\bigg)\\
			&\times \bigg[1-2\alpha \bigg(\frac{1-1/n}{1+\alpha}+\sum_{k=2}^{n-1}\bigg(1-\frac{k}{n}\bigg)\frac{(k-1-\alpha)\cdots(1-\alpha)}{(k+\alpha)\cdots(1+\alpha)}\bigg)\bigg]\bigg\}^{-1}.
		\end{split}
	\end{equation*}
\end{thm}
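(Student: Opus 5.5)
The plan is to compute the numerator and the denominator of the efficiency ratio \eqref{eff1X} explicitly, each as $r_\alpha(0)$ times an elementary factor, and then divide. I will use the convention of Samarov and Taqqu \cite{ST}: the sample has $n$ observations. So $\Var(\widehat m_{BLU})$ is given by \eqref{5.8} with $n$ replaced by $n-1$, and $\bar X_n=n^{-1}\sum_{k=1}^n X(k)$. The common factor is $r_\alpha(0)=\Gamma(2\alpha+1)/\Gamma^2(\alpha+1)$, read off from \eqref{5.2} with $k=0$.

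\emph{Step 1 (BLUE variance).} From \eqref{5.8} with $n-1$ in place of $n$,
$$\sigma^2=\frac{B(n,2\alpha+1)}{B(\alpha+1,\alpha+1)}=\frac{\Gamma(n)\Gamma(2\alpha+1)\Gamma(2\alpha+2)}{\Gamma(n+2\alpha+1)\Gamma^2(\alpha+1)}.$$
Dividing by $r_\alpha(0)$ gives $\sigma^2/r_\alpha(0)=\Gamma(n)\Gamma(2\alpha+2)/\Gamma(n+2\alpha+1)$. Next I use the identity
$$\frac{\Gamma(n+2\alpha+1)}{\Gamma(2\alpha+2)}=\prod_{j=2}^n(j+2\alpha)=n!\prod_{j=2}^n\Big(1+\frac{2\alpha}{j}\Big).$$
This yields
$$\frac{\sigma^2}{r_\alpha(0)}=\Big[n\prod_{j=2}^n\Big(1+\frac{2\alpha}{j}\Big)\Big]^{-1}.$$

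\emph{Step 2 (variance of the sample mean).} Formula \eqref{1.3} with $c_k=1/n$ gives
$$\Var(\bar X_n)=n^{-1}\Big[r_\alpha(0)+2\sum_{k=1}^{n-1}(1-k/n)\,r_\alpha(k)\Big].$$
From \eqref{5.2}, the recursion $\Gamma(x+1)=x\Gamma(x)$ gives the ratio
$$\frac{r_\alpha(k)}{r_\alpha(k-1)}=-\frac{\alpha-k+1}{\alpha+k}=\frac{k-1-\alpha}{k+\alpha}.$$
It follows that
$$\frac{r_\alpha(k)}{r_\alpha(0)}=\frac{-\alpha}{1+\alpha}\cdot\frac{(1-\alpha)\cdots(k-1-\alpha)}{(2+\alpha)\cdots(k+\alpha)}=-\alpha\,\frac{(k-1-\alpha)\cdots(1-\alpha)}{(k+\alpha)\cdots(1+\alpha)}\quad(k\ge2),$$
and $r_\alpha(1)/r_\alpha(0)=-\alpha/(1+\alpha)$. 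I then split off the $k=1$ term and pull out $-2\alpha$. This shows that $n\Var(\bar X_n)/r_\alpha(0)$ equals the bracket in the statement, namely $1-2\alpha\big(\frac{1-1/n}{1+\alpha}+\sum_{k=2}^{n-1}\cdots\big)$.

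\emph{Step 3.} I divide Step 1 by Step 2. The factors $r_\alpha(0)$ and $n$ cancel, and what remains is exactly the reciprocal of the product of $\prod_{j=2}^n(1+2\alpha/j)$ with the bracket.

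The only delicate point is bookkeeping rather than analysis. The sample-size convention must match the displayed formula, since the product running up to $1+2\alpha/n$ forces $n$ observations rather than $n+1$. The sign of the factor $-\alpha$ in $r_\alpha(k)/r_\alpha(0)$ must be tracked correctly. Finally, the cases where $\alpha$ is a nonnegative integer need a check: there the convention $1/\Gamma=0$ makes $r_\alpha(k)=0$ for $k>\alpha$. The product formula reproduces this automatically, because the numerator factor $(\alpha+1-1-\alpha)$, the one with $j=\alpha+1$, vanishes, so no separate treatment should be needed.
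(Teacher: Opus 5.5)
Your proposal is correct and follows the route the paper indicates: a direct algebraic manipulation of the covariance formula \eqref{5.2} and the Beta-function expression \eqref{5.8} for the BLUE variance, with the details deferred to Samarov and Taqqu \cite{ST}. Your explicit switch to $n$ observations, shifting \eqref{5.8} from $n$ to $n-1$, is what makes the factor $\prod_{j=2}^n(1+2\alpha/j)$ come out right, and it matches how $\bar X_n$ is used in the proof of Theorem \ref{thm2}.
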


\sn{Efficiency of the least squares estimator for the class $\mathcal{F}_\al(g)$}

The next theorem provides sufficient conditions for asymptotic efficiency of the LSE $\bar X_n$ in the class $\mathcal{F}_\al(g)$.  
\begin{thm}
	\label{thm2}
Let the noise $\{Y_t\}$ have spectral density $f(\lambda)=f_\alpha(\lambda)g(\lambda)$,
where $g(\lambda) \in ZL_0^+$ (see Definition \ref{def3.3+}) is slowly varying at 0, and $g(0) > 0$. Let $r_\alpha(k)$ and $R_\al(k)$ be the covariance functions corresponding to the spectral densities $f_\alpha(\lambda)$ and $f(\lambda)$,
respectively. Assume that
	\begin{equation}
		\label{3.2}
		R_\al(k) \sim r_\alpha(k)g(0) \q {\rm as}\q k \rightarrow \infty.
	\end{equation}
	The following assertions hold.
	\begin{itemize}
		\item[(i)] If $\alpha\in(-{1}/{2},{1}/{2})\setminus\{0\}$, then
		\begin{equation}
			\label{3.3}
			e(\infty, f,\bar X_n) = \frac{\pi \alpha(1-2\alpha)}{B(\alpha+1, \alpha+1) \sin \pi \alpha}.
		\end{equation}
			\item[(ii)] If  $\alpha \geq {1}/{2}$, then 
			$e(\infty, f,\bar X_n) = 0.$
		\end{itemize}
\end{thm}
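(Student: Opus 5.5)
The efficiency is a ratio, so I will find the asymptotics of numerator and denominator separately. For the numerator, Theorem \ref{thm6.1} (the extension of Theorem \ref{thm5.2} to $g\in ZL_0^+$) gives
\[
\Var_f(\widehat m_{BLU})\sim n^{-2\alpha-1}\,\Gamma(2\alpha+1)\,g(0)/B(\alpha+1,\alpha+1).
\]
It then remains to compute $\Var_f(\bar X_n)=(n+1)^{-2}\sum_{j,k=0}^n R_\alpha(j-k)$. I will do this separately in the two regimes of $\alpha$, using the hypothesis \eqref{3.2} together with \eqref{2-r}, that is, $R_\alpha(k)\sim g(0)C_\alpha k^{-2\alpha-1}$ with $C_\alpha$ given by \eqref{2}.

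\emph{Case (i), $\alpha\in(-1/2,1/2)\setminus\{0\}$.} Write
\[
\sum_{j,k=0}^n R_\alpha(j-k)=(n+1)R_\alpha(0)+2\sum_{k=1}^{n}(n+1-k)R_\alpha(k).
\]
\begin{itemize}
\item For $-1/2<\alpha<0$ (long memory), $-2\alpha-1\in(-1,0)$, so the tail dominates. A Karamata/Riemann-sum argument gives
\[
2\sum_{k=1}^n (n+1-k)R_\alpha(k)\sim 2g(0)C_\alpha n^{1-2\alpha}\int_0^1(1-x)x^{-2\alpha-1}\,dx=\frac{g(0)C_\alpha\, n^{1-2\alpha}}{(-\alpha)(1-2\alpha)}.
\]
\item For $0<\alpha<1/2$, the covariances are summable, and $\sum_{k\in\mathbb Z}R_\alpha(k)=2\pi f(0)=0$ because $f$ vanishes at the origin. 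I will therefore rewrite the sum as
\[
\sum_{j,k}R_\alpha(j-k)=-(n+1)\cdot 2\sum_{k>n}R_\alpha(k)-2\sum_{k=1}^n kR_\alpha(k).
\]
Both pieces are of order $n^{1-2\alpha}$ with explicit constants, and they should combine to the same formula as in the long-memory case: $g(0)C_\alpha n^{1-2\alpha}/(-\alpha(1-2\alpha))$, which is positive since $C_\alpha$ has the sign of $-\alpha$.
\end{itemize}
In both sub-cases I will then divide by $n^2$, use $C_\alpha=\pi^{-1}\Gamma(2\alpha+1)(-\sin\pi\alpha)$, and take the ratio with the BLUE asymptotics. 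The factors $\Gamma(2\alpha+1)$ and $g(0)$ cancel, leaving
\[
\frac{\pi\alpha(1-2\alpha)}{B(\alpha+1,\alpha+1)\sin\pi\alpha},
\]
which is \eqref{3.3}.

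\emph{Case (ii), $\alpha\ge1/2$.} Here I will avoid covariances and work spectrally. By \eqref{Formula} and the Fej\'er kernel,
\[
\Var_f(\bar X_n)=(n+1)^{-2}\int_{-\pi}^{\pi}\frac{\sin^2((n+1)\lambda/2)}{\sin^2(\lambda/2)}\,f(\lambda)\,d\lambda.
\]
Restricting to $|\lambda|\in[\pi/(n+1),\pi]$ gives a lower bound for the integral of order $\int_{1/n}^{\pi}\lambda^{2\alpha-2}\,d\lambda$, which is $\gtrsim 1$ when $\alpha>1/2$ and $\gtrsim\log n$ when $\alpha=1/2$. Here I use that $g(0)>0$, that $g$ is bounded below near $0$, and that $f_\alpha\asymp\lambda^{2\alpha}$ near the origin. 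Hence $\Var_f(\bar X_n)\gtrsim n^{-2}$, or $n^{-2}\log n$ when $\alpha=1/2$. Since the BLUE variance is of order $n^{-2\alpha-1}$, which is at most $n^{-2}$, the ratio tends to $0$ whenever $\alpha>1/2$, and at $\alpha=1/2$ it decays like $1/\log n$. In all cases $e(\infty,f,\bar X_n)=0$.

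The main obstacle is the sub-case $0<\alpha<1/2$ of (i). The naive leading term cancels there, so I must justify the identity $\sum_{k\in\mathbb Z}R_\alpha(k)=0$, which requires $f$ continuous at $0$ with absolutely summable covariances, both available from \eqref{3.2}. I must also control the remainders coming from the merely asymptotic relation \eqref{3.2}; I will handle these by splitting the sums at $\varepsilon n$ and letting $\varepsilon\to0$.
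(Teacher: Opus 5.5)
Your part (i) is the paper's argument. The paper likewise takes the BLUE asymptotics from Theorem \ref{thm6.1} and expands $\Var_f(\bar X_n)$ in covariances. It only organizes the double sum differently, as $n^{-2}\{R_\alpha(0)+\sum_{j=1}^{n-1}A_\alpha(j)\}$ with $A_\alpha(j)=\sum_{|k|\le j}R_\alpha(k)$, which is your weighted sum $\sum_{|k|\le n}(n+1-|k|)R_\alpha(k)$ after summation by parts. For $0<\alpha<1/2$ it uses exactly your identity $\sum_{k\in\mathbb{Z}}R_\alpha(k)=2\pi f(0)=0$ to write $A_\alpha(j)=-\sum_{|k|>j}R_\alpha(k)$. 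Your constant $-g(0)C_\alpha n^{1-2\alpha}/(\alpha(1-2\alpha))$ agrees with the paper's.

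Part (ii) is where you genuinely differ. The paper stays in the time domain and uses \eqref{3.2} to get $\Var_f(\bar X_n)\sim -2C_{1/2}g(0)n^{-2}\ln n$ at $\alpha=1/2$, and $\Var_f(\bar X_n)\sim n^{-2}\{R_\alpha(0)+\sum_{j\ge1}A_\alpha(j)\}$ for $\alpha>1/2$. That route gives exact rates and constants for the LSE. However, it leaves implicit that the bracket is strictly positive. Also, for integer $\alpha\ge1$ one has $r_\alpha(k)=0$ for $|k|>\alpha$, so \eqref{3.2} is degenerate there and the covariance asymptotics carry no information.

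Your Fej\'er-kernel lower bound uses neither \eqref{3.2} nor slow variation of $g$, and it settles positivity automatically. For $\alpha>1/2$, Riemann--Lebesgue even gives $(n+1)^2\Var_f(\bar X_n)\to\frac12\int_{-\pi}^{\pi}f(\lambda)\sin^{-2}(\lambda/2)\,d\lambda>0$.

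Two details still need filling in. First, $\sin^2((n+1)\lambda/2)$ vanishes periodically, so restricting the range of integration does not by itself give your bound. Substitute $u=(n+1)\lambda/2$ to handle the oscillation; at $\alpha=1/2$ this gives a lower bound proportional to $\int_{\pi/2}^{(n+1)\delta/2}u^{-1}\sin^2u\,du\sim\frac12\ln n$. Second, take the upper limit to be some $\delta$ with $g\ge g(0)/2$ on $[-\delta,\delta]$ rather than $\pi$, since $g\in ZL_0^+$ may vanish away from the origin.
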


\begin{rem}
{\rm(1) Note that $lim_{\alpha \rightarrow 0}e(\infty, f,\bar X_n) = 1$. If $\alpha = 0$ and $g(\lambda)$ is piecewise continuous, continuous at 0 and $0 < g(\lambda) < \infty$, then $\bar X_n$ is asymptotically efficient, that is, $e(\infty, f,\bar X_n)= 1$ (see also Theorem \ref{Gr3}).\\
	%Grenander and Rosenblatt, 1957, Chapter 7).\\
	(2) Part (ii) of Theorem \ref{thm2} implies that the LSE has a slower rate of convergence than the BLUE when the spectrum has a zero at the origin of order 1 or greater (see also Section \ref{Vitale}).}
\end{rem}

\begin{proof}[Proof of Theorem \ref{thm2}]
	Since $g(0) > 0$, $g \in {Z}L_0^+$, and $\alpha > -{1}/{2}$, we can apply Theorem \ref{thm6.1} to conclude that (see equation \eqref{5.12}):
	\begin{equation}
		\label{3.5-T}
		\Var(\widehat{m}_{f}) \sim n^{-2\alpha-1}\frac{\Gamma(2\alpha+1)}{B(\alpha+1,\alpha+1)}g(0)
		\q {\rm as}\q n \rightarrow \infty.
	\end{equation}
	On the other hand, we have
	$$\Var_f(\bar{X}_n) = \frac{1}{n^2} \sum_{j=1}^{n} \sum_{k=1}^{n} R_\alpha(j-k)=\frac{1}{n^2} \bigg\{R_\alpha(0)+\sum_{j=1}^{n-1}A_\alpha(j) \bigg\},$$
	where $A_\alpha(j):=\sum_{k=-j}^{j}R_\alpha(k)$.
	
	If $-{1}/{2} < \alpha < 0,$  then based on equations  \eqref{2-r} and  \eqref{3.2}, we have
	$$A_\alpha(j) \sim (2C_\alpha /(-2\alpha))j^{-2\alpha}g(0) \q {\rm as}\q j \rightarrow \infty,$$ 
	where the constant $C_\alpha$ is given in equation (\ref{2}).

 If $0 < \alpha< {1}/{2}$, then we have 
	\begin{align}
		\label{3.6}
		\nonumber
		A_\alpha(j) &= \sum_{k=-j}^{j}R_\alpha(k)-\sum_{k=-\infty}^{+\infty}R_\alpha(k)\\
		& = 
		-\sum_{|k|\geq j+1}R_\alpha(k)\sim \frac{2C_\alpha}{-2\alpha}j^{-2\alpha}g(0)  \q {\rm as}\q j \rightarrow \f.
	\end{align}
	
	Thus for $-{1}/{2} < \alpha < {1}/{2}$, $\alpha \neq 0$,
	$$\Var_f(\bar{X}_n) = n^{-2\alpha-1}\frac{-C_\alpha}{\alpha(1-2\alpha)}g(0)\q {\rm as}\q n \rightarrow \infty$$
	
	If $\alpha = {1}/{2}$, the relation in (\ref{3.6}) still holds, and hence 
	$$\Var_f(\bar{X}_n) \sim -2C_{1/2}g(0)n^{-2}\ln n.$$ 
	
	Finally, if $\alpha > {1}/{2}$, the sum $\sum_{j=1}^{n-1}A_\alpha(j)$ converges as $n \rightarrow \infty$. Hence, we have
	$$\Var_f(\bar{X}_n)\sim n^{-2}\bigg\{R_\alpha(0)+\sum_{j=1}^\infty A_\alpha(j) \bigg\},$$
	where the expression in the braces is finite and does not depend on $n$.
	
	We can now evaluate the asymptotic efficiency $e(\infty, f,\bar X_n) $.
	By using equations (\ref{2}), \eqref{eff1X}, and \eqref{eff2X}, we derive the relation in (\ref{3.3}) when $\alpha\in(-{1}/{2},{1}/{2})\setminus\{0\}$, and $e(\infty, f,\bar X_n) =0$ when $\alpha \geq {1}/{2}$.
\end{proof}

To apply Theorem \ref{thm2}, we need to verify condition (\ref{3.2}). This can be accomplished in various scenarios. For example, consider the class \textbf{\textit{Z}} of functions defined as follows:
\begin{center}
	\textbf{\textit{Z}}:=\{$g$: $g$ is of bounded variation and slowly varying at 0 in the sense of Zygmund\}.
\end{center}
For the definition and properties of slowly varying functions, see
Section \ref{S2.5}.

As an immediate consequence of Theorem \ref{thm2}, we have the following result.
\begin{cor}
	\label{Cor1}
	The conclusion of Theorem \ref{thm2} holds in the following cases.\\
	(1) For $-{1}/{2} <\alpha<0$, if $g(0)>0$, $g(\lambda)$ continuous and $g \in \textit{\textbf{Z}}.$\\
	(2) For $0< \alpha <{1}/{2}$, if $g(0) > 0$, g differentiable and its derivative  $g' \in \textit{\textbf{Z}}$.\\
	(3) For $\alpha \geq {1}/{2}$, $g(\lambda)$ is continuous at 0, and
	$0 < c_1 \leq g(\lambda) \leq c_2 < \infty$.
\end{cor}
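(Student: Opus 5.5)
The task is to verify, in each of the three cases, the hypotheses of Theorem \ref{thm2}: $g\in ZL_0^+$, $g$ slowly varying at $0$, $g(0)>0$, and, above all, the covariance relation \eqref{3.2}, $R_\al(k)\sim r_\al(k)g(0)$. The membership and slow-variation conditions are immediate in each case. Continuity at $0$ together with $g(0)>0$, or the two-sided bounds in case (3), give that $g(\la)\to g(0)\in(0,\f)$. Hence $g$ is slowly varying at $0$ in the elementary sense. After modifying $g$ away from a neighborhood of $0$ if needed, or using the bounds directly, it lies in $ZL_0^+$ with trivial factor in \eqref{3.6g}. The real content is therefore \eqref{3.2}, which I would handle via Abelian/Tauberian-type Fourier asymptotics for $R_\al(k)=\inl e^{ik\la}f_\al(\la)g(\la)d\la$.

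\emph{Case (1), $-1/2<\al<0$.} Here $f_\al(\la)\sim(2\pi)^{-1}|\la|^{2\al}$ is integrable and monotone near $0$ with a nonintegrable-type singularity of order $|\la|^{2\al}$. I would invoke the classical Zygmund theorem (Section \ref{S2.5}). It states that if $L$ is of bounded variation and slowly varying in Zygmund's sense, then $\int_0^\pi \la^{2\al}L(\la)\cos k\la\,d\la\sim L(1/k)k^{-2\al-1}\G(2\al+1)\cos(\pi(2\al+1)/2)$. I would apply it to $L(\la)=g(\la)\,[f_\al(\la)/((2\pi)^{-1}|\la|^{2\al})]$. The bracket is smooth and positive near $0$, so $L\in\mathbf Z$. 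Continuity gives $L(1/k)\to g(0)$. The same theorem applied with $g\equiv1$ gives $r_\al(k)$ with the identical constant, consistent with \eqref{2-r} and \eqref{2}. Dividing the two asymptotics yields \eqref{3.2}.

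\emph{Case (2), $0<\al<1/2$.} The exponent $2\al$ is now positive, and the direct Zygmund statement for $\la^{2\al}L$ requires care. I would integrate by parts once:
\[
\inl e^{ik\la}f(\la)d\la=-\frac1{ik}\inl e^{ik\la}f'(\la)d\la .
\]
The boundary terms cancel by periodicity. The derivative is $f'=f_\al'g+f_\al g'$, and $f_\al'$ behaves like $c|\la|^{2\al-1}\operatorname{sgn}\la$, with exponent in $(-1,0)$. The hypothesis $g'\in\mathbf Z$ makes $g'$ bounded near $0$, so $f_\al g'$ contributes $o(k^{-2\al})$ after the Riemann--Lebesgue/Zygmund estimate. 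The main term $f_\al'g$ is handled by the sine-version of Zygmund's theorem, with $g\in\mathbf Z$ since $g$ is $C^1$. This produces $R_\al(k)\sim g(0)r_\al(k)$, because the same computation with $g\equiv1$ reproduces $r_\al$. This is the step I expect to be the main obstacle: one must check carefully that the $f_\al g'$ term is genuinely of lower order, i.e.\ that $\int |\la|^{2\al}g'(\la)e^{ik\la}d\la=o(k^{-2\al})$. For that I would use the Zygmund property of $g'$ together with $2\al>0$, giving order $k^{-2\al-1}$ times a slowly varying factor.

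\emph{Case (3), $\al\ge1/2$.} Here \eqref{3.2} need not be verified in the form used for the rate. The conclusion $e(\f,f,\bar X_n)=0$ follows by comparison. The bounds $c_1\le g\le c_2$ and Proposition \ref{R2}(b) put $\Var_f(\widehat m_{BLU})$ between the $f_\al$-values times $c_1,c_2$, and Theorem \ref{thm6.1} gives the order $n^{-2\al-1}$. Meanwhile $\Var_f(\bar X_n)\ge c_1\Var_{f_\al}(\bar X_n)$ by positivity of the Fejér-kernel representation \eqref{Grf3}. From the proof of Theorem \ref{thm2}, the latter is of order $n^{-2}\ln n$ or $n^{-2}$, which dominates $n^{-2\al-1}$ when $\al\ge1/2$. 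The efficiency ratio therefore tends to $0$.
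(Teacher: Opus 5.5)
Your route is the one the paper indicates: Proposition~\ref{Zg-1} in case (1), one integration by parts followed by the sine half of Proposition~\ref{Zg-1} in case (2), and comparison with $f_\al$ via $c_1\le g\le c_2$ in case (3). The constants you obtain in (1) and (2) do reproduce $C_\al g(0)$. Two steps, however, are not justified as written.

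\textbf{Case (2), the remainder term.} The estimate you single out as the main obstacle is argued with a tool that does not apply, although it is in fact elementary. Proposition~\ref{Zg-1} only treats $\la^{-\be}L(\la)$ with $0<\be<1$. It therefore says nothing about $\la^{2\al}g'(\la)$ with $2\al>0$, and the order ``$k^{-2\al-1}$ times a slowly varying factor'' is not available in general: away from the origin nothing forces the sine coefficients of $f_\al g'$ below $O(1/k)$. But $O(1/k)$ is all you need. Since $f_\al$ is bounded and monotone on $[0,\pi]$ and $g'$ is of bounded variation, $f_\al g'$ is of bounded variation, and
\[
\int_0^\pi f_\al(\la)g'(\la)\sin k\la\,d\la=O(1/k)=o(k^{-2\al}),
\]
precisely because $2\al<1$. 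Together with your Zygmund asymptotics for $f_\al' g$, which is of exact order $k^{-2\al}$, this gives \eqref{3.2}.

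\textbf{Membership in $ZL_0^+$.} You cannot obtain $g\in ZL_0^+$ by modifying $g$ away from the origin. That changes $f$, hence both $\Var_f(\widehat m_{BLU})$ and $\Var_f(\bar X_n)$, and nothing transfers the asymptotics back. The lower bound in Theorem~\ref{thm4.1}, and so in Theorem~\ref{thm6.1}, rests on Lemma~\ref{lem4.5}. That lemma fails, for example, when a continuous $g$ with $g(0)>0$ vanishes on an interval, because a nonnegative trigonometric polynomial vanishing on an interval is identically zero. So in cases (1) and (2) the standing hypotheses of Theorem~\ref{thm2} ($g\in ZL_0^+$, $g(0)>0$) must be carried over, with the corollary supplying only \eqref{3.2}. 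In case (3) they hold automatically since $g\in B_0^+$.

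\textbf{A smaller point in case (3).} For $\al>1/2$ the proof of Theorem~\ref{thm2} only says the bracket is finite, not that it is positive. Get the lower bound directly from $\sin^2(\la/2)\le 1$ in the Fej\'er representation, with $r$ the covariance function of $f$:
\[
\Var_f(\bar X_n)\ge\frac{1}{(n+1)^2}\inl \sin^2\!\big((n+1)\la/2\big)f(\la)\,d\la=\frac{r(0)-r(n+1)}{2(n+1)^2}\ge\frac{r(0)}{4(n+1)^2}.
\]
The last inequality holds for large $n$ by the Riemann--Lebesgue lemma.
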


%\begin{proof}
%The relation (\ref{3.2}) follows from Proposition \ref{Zg-1} in the case (1), and also in the case (2) if one integrates $R_\alpha(k) = \int_{-\pi}^{\pi}e^{i\lambda k}f(\lambda)d\lambda$ by parts. 
%	As for assertion (3), note that 
%	$$c_1\sum_{j=1}^{n} \sum_{k=1}^n r_\alpha (j-k) \leq \sum_{j=1}^{n} \sum_{k=1}^{n}R_\al(j-k)\leq c_2 \sum_{j=1}^{n} \sum_{k=1}^n r_\alpha (j-k).$$
%\end{proof}
\begin{rem}
	{\rm Samarov and Taqqu \cite{ST} conjectured that Theorem \ref{thm2} holds if the condition \(0 < g(0) < \infty\) is replaced by \(g \not\equiv 0\) in some interval \([0, \delta]\), and if the condition (\ref{3.2}) is changed to 
		
		\[
		R_\alpha(k) \sim r_\alpha(k) g(1/k) \quad \text{as} \quad k \rightarrow \infty.
		\]
		
		This modification would allow for the consideration of functions such as \(g(\lambda) = |\log(\lambda)|\) or \(g(\lambda) = |\log(\lambda)|^{-1}\).}
\end{rem}
\begin{rem}
{\rm Conditions (1) and (2) in Corollary \ref{Cor1} hold for ARFIMA$(p,-\al,q)$ and fractional Gaussian noise models (see Section \ref{mem}). The asymptotic efficiency
of the LSE for the values of $\al$ specified in Corollary \ref{Cor1} has been analyzed in the paper by Samarov and Taqqu \cite{ST}.}
\end{rem}

Denote by $\bold{C}^{1,\g}$ ($0<\g\leq 1$) the class of differentiable functions whose derivative is H\"older-continuous of order $\g$.

Beran and K\"unsch \cite{BK}, utilizing the results of Bleher \cite{Bl}
on the inversion of Toeplitz matrices, proved the following result.
\begin{thm}
\label{BK-T1}
Let the spectral density $f(\la)$ belong to the class $\mathcal{F}_\al(g)$ with $-{1}/{2} < \alpha < 0$ (see equation \eqref{Fal-1}). Assume that the short-memory factor $g(\la)$ is
from the class $\bold{C}^{1,\g}$. Then the limit in equation \eqref{eff2X}
exists and 
\beq
\label{ef-BK}
e(\infty, f, \bar X_n) = 1- (1-\pi^2/12)(2\al)^2 + o(\al^2) \q {\rm as}\q \al\uparrow 0. 
\eeq
\end{thm}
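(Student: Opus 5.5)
The plan is to avoid Bleher's Toeplitz-inversion machinery and deduce the statement from Theorem \ref{thm2}(i), then expand the closed-form efficiency in $\alpha$. Since $g$ is the spectral density of a short-memory process, it is bounded away from zero and infinity. Being in $\bold{C}^{1,\g}$, it is in particular continuous, so $g\in ZL_0^+$ (with $h=g$ and no zero factors in \eqref{3.6g}), $g(0)>0$, and $g$ is trivially slowly varying at $0$. Hence everything in Theorem \ref{thm2} is available except hypothesis \eqref{3.2}. Once \eqref{3.2} holds, Theorem \ref{thm2}(i) gives existence of the limit in \eqref{eff2X} together with
$$
e(\infty,f,\bar X_n)=\frac{\pi\alpha(1-2\alpha)}{B(\alpha+1,\alpha+1)\sin\pi\alpha}.
$$

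\textbf{Step 1: verify \eqref{3.2}.} This is the main obstacle. I would write
$$
R_\al(k)-g(0)r_\al(k)=\inl e^{ik\la}f_\al(\la)h(\la)\,d\la,\q h:=g-g(0).
$$
Because $Y$ is real, $f$ and $f_\al$ are even, so $g$ is even and $g'(0)=0$. The $\bold{C}^{1,\g}$ assumption then gives $|h(\la)|\le C|\la|^{1+\g}$ and $|h'(\la)|\le C|\la|^{\g}$ near $0$. I would split the integral at $|\la|=1/k$.
\begin{itemize}
\item[(a)] The inner piece is bounded by $C\int_0^{1/k}\la^{2\al+1+\g}\,d\la=O(k^{-2\al-2-\g})$.
\item[(b)] On the outer piece I would integrate by parts once. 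The boundary terms are $O(k^{-1}\cdot k^{-2\al-1-\g})$. The remaining integral $k^{-1}\int_{1/k}^{\pi}|(f_\al h)'|\,d\la$ is bounded by $Ck^{-1}\int_{1/k}^{\pi}\la^{2\al+\g}\,d\la$, which is $O(k^{-1})$ when $2\al+\g>-1$ and $O(k^{-2\al-1-\g}\log k)$ otherwise.
\end{itemize}
Since $-2\al-1\in(-1,0)$, in either case the difference is $o(k^{-2\al-1})$. By \eqref{2-r}, $r_\al(k)\asymp k^{-2\al-1}$, so \eqref{3.2} follows.

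If the $O(k^{-1})$ term turns out to be too crude, I would first replace $g$ by a $C^\infty$ function that agrees with it near $0$. Away from $0$ this only changes $f$ by a smooth function, whose Fourier coefficients decay faster than $k^{-1}$. As a fallback, Corollary \ref{Cor1}(1) applies directly, since $\bold{C}^{1,\g}$ functions are of bounded variation and Zygmund slowly varying.

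\textbf{Step 2: expand the closed form.} I would write $B(\al+1,\al+1)=\G(1+\al)^2/\G(2+2\al)$ and use
$$
\ln\G(1+x)=-\g_E x+\frac{\pi^2}{12}x^2+O(x^3),
$$
where $\g_E$ is Euler's constant. The individual expansions are:
\begin{itemize}
\item[(a)] $\ln B(\al+1,\al+1)=-2\al+2\al^2-\frac{\pi^2}{6}\al^2+O(\al^3)$;
\item[(b)] $\ln\frac{\pi\al}{\sin\pi\al}=\frac{\pi^2}{6}\al^2+O(\al^4)$;
\item[(c)] $\ln(1-2\al)=-2\al-2\al^2+O(\al^3)$.
\end{itemize}
Summing (b) and (c) and subtracting (a) gives
$$
\ln e(\infty,f,\bar X_n)=-4\al^2+\frac{\pi^2}{3}\al^2+O(\al^3)=-(1-\pi^2/12)(2\al)^2+O(\al^3).
$$
Exponentiating yields \eqref{ef-BK}.
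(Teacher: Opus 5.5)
Your proof is correct, but it takes a different route from the one the paper points to. The paper gives no argument of its own. It attributes the result to Beran and K\"unsch, who derived it from Bleher's asymptotic inversion of Toeplitz matrices with symbols in $\mathcal{F}_\alpha(g)$; that amounts to analysing $\mathbf{1}^T B_n^{-1}(f)\mathbf{1}$ in \eqref{T-2.2a} directly.

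You instead reduce everything to results already in the paper. These are Adenstedt's exact BLUE for $f_\alpha$, combined with the kernel argument of Theorem \ref{thm6.1}, and the Samarov--Taqqu computation of $\Var_f(\bar X_n)$, packaged as Theorem \ref{thm2}(i). In your route the smoothness of $g$ enters only through the covariance asymptotics \eqref{3.2}, and your splitting at $|\lambda|=1/k$ does establish it. Your worry about the $O(k^{-1})$ term is unfounded: $-2\alpha-1>-1$ gives $k^{-1}=o(k^{-2\alpha-1})$. This is exactly the step that would fail for $\alpha>0$, the case in which the paper notes the expansion also holds.

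Your route yields more than the statement:
\begin{itemize}
\item the exact limiting efficiency for every $\alpha\in(-1/2,0)$;
\item a remainder that is actually $O(|\alpha|^3)$ rather than merely $o(\alpha^2)$;
\item the same conclusion under weaker hypotheses on $g$, for instance those of Corollary \ref{Cor1}(1).
\end{itemize}
The Toeplitz-inversion route works with the inverse covariance matrix itself. It does not rely on the special Beta/Gegenbauer structure available only for the pure power $f_\alpha$. As stated, however, it delivers only the second-order expansion near $\alpha=0$.

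There are two harmless slips.
\begin{itemize}
\item For $2\alpha+\gamma<-1$, the last integral is $O(k^{-2\alpha-2-\gamma})$, not $O(k^{-2\alpha-1-\gamma}\log k)$. Your bound is weaker but still $o(k^{-2\alpha-1})$.
\item You should account for the boundary terms at $\lambda=\pm\pi$ in the integration by parts. They cancel by evenness and periodicity, and are in any case $O(k^{-1})$.
\end{itemize}
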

Observe that the spectral density of the fractional Gaussian noise with Hurst index $H$, given by equation \eqref{MT2}, satisfies the conditions of Theorem \ref{BK-T1} with $\al =1/2-H$ and $\g=1$.
Additionally, Beran and K\"unsch \cite{BK} observed that the asymptotic relation
in \eqref{ef-BK} remains valid also in the case where $\alpha > 0$
as $\al\downarrow 0$.
In this case, the  the spectral density $f(\la)$ has a zero at
$\la=0$ and is not differentiable there.
\begin{rem}
	{\rm In Sections \ref{ND-BLU} and \ref{EFF}, we have discussed spectra that near $\lambda=0$ behave like $\lambda^{2\alpha}$ with $\al>-1/2$.  It is important to note that spectra corresponding to $\alpha > 1$ are unlikely to occur in practice, except possibly through difference filtering. In contrast, spectra with $-{1}/{2} < \alpha \leq 1$ appear to be practically useful.}
\end{rem}

\begin{rem}
	\label{r6.5}
	{\rm The results presented in Sections \ref{ND-BLU} and \ref{EFF} regarding
		the model $X(t)=m+Y(t)$ can be easily extended to the following simple regression model: 
		$$X(t)=me^{i\la_0 t}+Y(t),\q t\in\mathbb{Z},$$ 
		where $\la_0$ is a fixed constant in $[\pi, \pi]$. 
		
		Indeed, by defining $\widetilde{X}(t):=X(t)e^{-i\la_0 t}$ and $\widetilde{Y}(t): = Y(t) e^{-i\la_0 t}$, this new model reduces to the original: $\widetilde{X}(t) = m+ \widetilde{Y}(t)$. If $Y(t)$ has a spectral density $f(\lambda)$, then $\widetilde Y(t)$ has a spectral density given by $\widetilde{f}(\lambda) = f(\lambda + \la_0)$, where the definition of $f(\lambda)$ is extended outside the interval $[-\pi, \pi]$ by periodicity.
		In this new model, the behavior of \( f(\lambda) \) near \( \lambda = \la_0 \) becomes particularly significant.}
\end{rem}

\s{Extensions}
\label{Ext}

In this section, we present extensions of the results discussed in the previous sections for the model \(X(t) = m + Y(t)\), where \(t \in \mathbb{Z}\) (see \eqref{1.0}). We specifically focus on estimating the mean for continuous time processes and for models characterized by their spectral functions. Furthermore, we explore the estimation procedure within a Hilbert space framework, introduce the concept of pseudo-best estimators, and address their efficiency.

\sn{Estimating the mean for processes specified by the spectral function} 
We revisit the model specified by equation \eqref{1.0}, defined as \( X(t) = m + Y(t) \) for \( t \in \mathbb{Z} \), where \( Y(t) \) is a zero-mean, stationary process. In Sections \ref{ND-BLU} and \ref{EFF}, we assumed that the model's spectrum is absolutely continuous, characterized by the spectral density function \( f(\lambda) \).

But what happens when the spectrum also includes a singular component?

Specifically, we examine the case in which the model is described by the spectral function \( F(\lambda) \), which contains both an absolutely continuous component 
\( f(\lambda) \) and a singular part. This can be expressed as (see equation \eqref{di1SF}):
\beq
\label{di1SF-1}
dF(\lambda) = f(\lambda)d\lambda + dF_s(\lambda), \quad \lambda \in [-\pi, \pi],
\eeq
where \( dF_s(\lambda) \) is singular with respect to \( d\lambda \).

We are particularly interested in the asymptotic behavior of the variance of the BLUE and the efficiency of the LSE $\widehat m_{LS}=\bar X_n$ relative to the BLUE based on the sample $\{X(t), t=0,1,\ldots,n\}$. 

The following theorem demonstrates that the efficiency of the LSE relative to the BLUE, even in this broader context, is strongly influenced by the behavior of the spectral function at the origin (see Adenstedt and Eisenberg \cite{AdE}).
\begin{thm}
	\label{AE-T3}
	Let the noise process $Y(t)$ have spectral function $F(\la)$
	given by equation \eqref{di1SF-1}. The following assertions hold.
	\begin{itemize}
		\item[(a)]  $\lim_{n\to\f}\Var(\widehat m_{LS}) =\lim_{n\to\f}\Var(\widehat m_{BLU})=2\pi dF(0)$.
		\item[(b)] Assume also that $f(\lambda)$ is positive and continuous at $\la=0$, that $F_s(\la)-F_s(-\la)$ is constant in 
		$0<|\la|<\de$ for some $\de>0$, and that
		\beq
		\label{A22E}
		\int_{-\pi}^\pi\frac{|p(\la)|^2}{f(\la)}d\la<\f
		\eeq
		for some trigonometric polynomial $p(\la)$. Then as $n\to\f$,
		\beq
		\label{A23E}
		\Var(\widehat m_{LS})-dF(0)\sim\Var(\widehat m_{BLU})-dF(0)
		\sim 2\pi f(0)/n.
		\eeq
	\end{itemize}
\end{thm}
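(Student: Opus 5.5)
The plan is to work on the unit circle with $\mathrm{Var}(\widehat m_n)=\int|q_n(e^{i\la})|^2\,dF(\la)$ for $q_n\in\mathcal{Q}_n(1)$ (cf. \eqref{Formula}). Write $J:=dF(0)$ for the mass of $dF$ at the point $\la=0$. The constant multiplying $dF(0)$ must be matched to the normalization in \eqref{mo1}: with $r(t)=\int e^{it\la}dF(\la)$, the atom at $0$ contributes exactly $J\,|q_n(1)|^2=J$.

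\emph{Part (a).} Every unbiased estimator satisfies
\[
\mathrm{Var}(\widehat m_n)\ge J|q_n(1)|^2=J .
\]
Hence $\sigma_n^2(F)\ge J$, and also $\sigma_n^2(F)\le \mathrm{Var}(\widehat m_{LS})$. For the LSE, $|q_n(e^{i\la})|^2=\frac{2\pi}{n+1}F_{n+1}(\la)$ is bounded by $1$, equals $1$ at $\la=0$, and tends to $0$ for every $\la\ne0$. Dominated convergence with respect to the finite measure $dF$ then gives $\mathrm{Var}(\widehat m_{LS})\to J$, and both limits follow. Alternatively, the BLUE limit is just \eqref{Sz21} combined with \eqref{si-la}.

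\emph{Part (b), upper bound.} The process is real, so $dF$ is symmetric. Together with the hypothesis on $F_s(\la)-F_s(-\la)$, this shows that $dF_s$ puts no mass on $0<|\la|<\de$. Split
\[
\mathrm{Var}(\widehat m_{LS})-J=\frac1{n+1}\int F_{n+1}(\la) f(\la)\,d\la+\int_{|\la|\ge\de}|q_n|^2\,dF_s .
\]
By Proposition \ref{F-Ker}, the first term is $\sim 2\pi f(0)/n$, since $f$ is continuous at $0$. The second term is $O(n^{-2})$, because $|q_n(e^{i\la})|^2\le (n+1)^{-2}\sin^{-2}(\de/2)$ there. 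Since $\sigma_n^2(F)\le\mathrm{Var}(\widehat m_{LS})$, this yields $\limsup_n n(\sigma_n^2(F)-J)\le 2\pi f(0)$.

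\emph{Part (b), lower bound (the main step).} We have $\sigma_n^2(F)-J\ge \int|p_n|^2 f\,d\la\ge \sigma_n^2(f)$, so it suffices to show $\liminf_n n\sigma_n^2(f)\ge 2\pi f(0)$. Here $f$ is merely continuous at $0$, so Theorems \ref{Gr01}--\ref{MNT-C} do not apply directly; the plan is a duality/Cauchy--Schwarz argument that uses \eqref{A22E}. Let $p$ be the trigonometric polynomial from \eqref{A22E}, of degree $\nu$, viewed as an analytic polynomial after multiplying by a power of $z$. Let $q\in\mathcal{Q}_n(1)$ and $N=n+\nu$. Since $qp$ has degree at most $N$, the Dirichlet kernel $D_N(\la)=\sum_{k=0}^N e^{ik\la}$ reproduces its value at $1$:
\[
p(1)=\frac1{2\pi}\int q(e^{i\la})p(e^{i\la})\overline{D_N(\la)}\,d\la .
\]
Cauchy--Schwarz then gives
\[
|p(1)|^2\le \|q\|_f^2\cdot\frac1{4\pi^2}\int \frac{|p|^2|D_N|^2}{f}\,d\la .
\]
In the last integral, the region $|\la|\ge\eta$ contributes $O(1)$ by \eqref{A22E}, because $|D_N|^2\le\sin^{-2}(\eta/2)$ there. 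The region $|\la|<\eta$ contributes at most $\frac{2\pi N\,\max_{|\la|<\eta}|p|^2}{\min_{|\la|<\eta}f}(1+o(1))$, using $\int|D_N|^2=2\pi(N+1)$ and the continuity and positivity of $f$ at $0$. Letting $n\to\infty$ and then $\eta\to0$ gives $\liminf n\|q\|_f^2\ge 2\pi f(0)$ uniformly over $q\in\mathcal{Q}_n(1)$, which is exactly what is needed.

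The delicate point is ensuring $p(1)\ne0$. If $p$ vanishes at $1$, I would replace $p$ by $p\cdot s$, where $s$ is a trigonometric polynomial that vanishes only away from $0$ (or by $\la\mapsto p(\la-\theta)$ for small $\theta$) while keeping $|ps|^2/f$ integrable. Since $f$ is bounded below near $0$, integrability near the origin is automatic, so only the factor's behaviour away from $0$ matters. This is the step I expect to require the most care. Combining the upper and lower bounds gives \eqref{A23E}.
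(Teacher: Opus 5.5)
Your overall strategy is sound. The paper gives no proof of Theorem~\ref{AE-T3}; it only cites Adenstedt and Eisenberg. Your lower bound is in effect the Cram\'er--Rao-type bound of Theorem~\ref{AE-T2} made explicit. The function with $\overline{\psi}=p\,\overline{D_N}/(2\pi p(1) f)$ satisfies $\int e^{it\lambda}\overline{\psi} f\,d\lambda=1$ for $0\le t\le n$, and \eqref{A22E} is exactly what puts $\psi$ in $L^2(f)$.

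The following are fine:
\begin{itemize}
\item part (a);
\item your reading of the normalization (the limit is $dF(0)$, which matches part (b));
\item the reduction $\sigma_n^2(F)-dF(0)\ge\sigma_n^2(f)$;
\item the upper bound.
\end{itemize}
There are two small slips. First, the first term in your split of $\Var(\widehat m_{LS})-J$ should be $\frac{2\pi}{n+1}\int F_{n+1}f\,d\lambda$, not $\frac{1}{n+1}\int F_{n+1}f\,d\lambda$. Second, $f$ is only integrable and continuous at $0$, so Proposition~\ref{F-Ker} as stated does not apply; use instead $\sup_{|\lambda|\ge\eta}F_{n+1}(\lambda)\to0$ together with $f\in L^1$. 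Also, symmetry of $dF$ is not needed: constancy of $F_s(\lambda)-F_s(-\lambda)$ on $(0,\delta)$ already forces $dF_s$ to vanish on $0<|\lambda|<\delta$, by positivity.

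The genuine gap is the reduction to $p(1)\neq0$, and neither repair you propose works.
\begin{itemize}
\item Multiplying by a trigonometric polynomial $s$ cannot help, since $(ps)(1)=p(1)s(1)=0$.
\item Shifting $p$ moves its zeros away from the points off the origin where $f$ vanishes, and \eqref{A22E} may depend on exactly those zeros. Take $f(\lambda)=|e^{i\lambda}-e^{i\lambda_1}|^2g(\lambda)$ with $\lambda_1\neq0$ and $g$ positive and continuous, and $p(\lambda)=(e^{i\lambda}-1)(e^{i\lambda}-e^{i\lambda_1})$. Then $\int|p|^2/f<\infty$ and $p(1)=0$. But for small $\theta\neq0$ you get $|p(\lambda-\theta)|^2/f(\lambda)\asymp|\lambda-\lambda_1|^{-2}$ near $\lambda_1$, which is not integrable.
\end{itemize}

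The right move is to divide rather than multiply. Write $p$ as an analytic polynomial with $p\not\equiv0$; this must be read into the hypothesis, since for $p\equiv0$ condition \eqref{A22E} holds trivially. Factor $p(z)=(z-1)^kp_1(z)$ with $p_1(1)\neq0$.
\begin{itemize}
\item For $\eta\le|\lambda|\le\pi$ you have $|p_1(e^{i\lambda})|^2\le(2\sin(\eta/2))^{-2k}|p(e^{i\lambda})|^2$.
\item For $|\lambda|<\eta$ the ratio $|p_1|^2/f$ is bounded, because $f$ is bounded below there.
\end{itemize}
Hence $\int|p_1|^2/f\,d\lambda<\infty$. Your Dirichlet-kernel and Cauchy--Schwarz argument then goes through verbatim with $p_1$ in place of $p$, which completes the proof.
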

Note that the proof of Theorem \ref{AE-T3} presented in Adenstedt and Eisenberg \cite{AdE} does not require knowledge of the covariance function of $Y(t)$.
\begin{rem}
	{\rm The condition \eqref{A22E} essentially states that the function $1/f(\lambda)$ is integrable, but allows the spectral density
		$f(\lambda)$ to have polynomial zeros that can be removed by $|p(\la)|^2$. 
		It can be shown that \eqref{A22E} implies that the model process $X(t)$ is nondeterministic.}
		%The condition on singular component $F_s$, certainly allows superposition of a finite discrete spectrum on an absolutely continuous spectrum.}
\end{rem}

\sn{Estimating the mean of continuous time processes} 
In this section, we explore the estimation of the unknown mean for continuous time processes. Specifically, we will now consider the following model:
\begin{equation}
	\label{C-1.0}
	X(t)=m+Y(t), \q t\in \mathbb{R},
\end{equation}
where $m$ is the constant unknown mean of $X(t)$, and the noise $Y(t)$ is assumed to be a centered mean-square continuous stationary process. Additionally, we assume that $Y(t)$ has an absolutely continuous 
spectrum and is nondeterministic. In this context, $Y(t)$ has a spectral density $f(\la)$, $\la\in \mathbb{R}$, satisfying the following condition: 
\begin{equation}
	\label{cdet}
	\int_{\mathbb{R}} \frac{\ln f(\la)}{1+\la^2}d\la >-\f.
\end{equation}
Then, $Y(t)$ can be represented as a linear process with a time-invariant filter $g(t)$ (see, e.g., Gihman and Skorokhod \cite{GiSk}, Section 4.7):  
\begin{equation}
	\label{clp}
	Y(t)=\int_{-\f}^t g(t-s)d\xi(s), \q
	\int_{\mathbb{R}}|g(s)|^2ds < \f,
\end{equation}
where $\{\xi(s), s\in \mathbb{R}\}$ is a real-valued orthogonal increment process satisfying
${\Exp}|d\,\xi(s)|^2 = \si^2ds$.	

The problem of interest is the estimation of the unknown mean $m$ using unbiased linear estimators $\widehat{m}_T$, based on a random sample 
\beq
\label{C-Sam}
\{X(t), \, 0\leq t\leq T\},
\eeq
%$\{X(t), 0\leq t\leq T\}$, 
and the efficiency of the LSE relative to the BLUE. 

The next theorem provides a formula for BLUE of the unknown mean $m$
for a class of nondeterministic processes (see Grenander \cite{Gr-4}, p. 189, and Grenander and Szeg\H{o} \cite{GS}, Section 11.4).
\begin{thm}
	\label{C-GS-1}
	Let the noise process $Y(t)$ be a nondeterministic linear process with a time-invariant filter $g(t)$ and spectral density $f(\la)$
	satisfying the following conditions:
	\begin{itemize}
		\item[(a)] The spectral density $f(\la)$ is positive at $\la=0$, i.e., $f(0)>0$.
		\item[(b)] The filter $g(t)$ is such that $g(t)=(1+t^2)^{-1}\cd O(1)$ as $t\to\f$.
	\end{itemize}
	Then, there exists a constant $\g$ such that
	\beq
	\label{C-G-S-3}
	\widehat m_{BLU}=\g\int_{-\infty}^{\infty}\widehat X(t)dt,
	\eeq
	where $\widehat X(t)$ is the projection of $X(t)$ onto the space
	$L^2_X(0,T)$.
\end{thm}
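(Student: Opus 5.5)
The plan is to treat the problem in the Hilbert space $H_T:=L^2_X(0,T)$, the closed linear span in $L^2(\mathbb{P})$ of $\{Y(t),\,0\le t\le T\}$. In the continuous-time setting, the linear unbiased estimators based on the sample \eqref{C-Sam} are the elements $\eta=m+\zeta$ with $\zeta\in H_T$ that are unbiased for every $m$. The BLUE is then characterized geometrically: $\widehat m_{BLU}-m$ is the element of minimal norm in an affine subspace determined by the unbiasedness constraint. Equivalently, $\widehat m_{BLU}$ is orthogonal (in the centered sense) to every linear statistic with zero expectation for all $m$. The first step is to record this characterization, in the spirit of Proposition \ref{L0}. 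There, in discrete time, the optimal polynomial is $S_n(z,1)/S_n(1,1)$, i.e. the normalized reproducing kernel at the point $\xi=1$. The continuous-time analogue of $S_n(\cdot,1)$ is the function in the frequency-domain image of $H_T$ (via Kolmogorov's isomorphism, Theorem \ref{Kolm}, with $e^{it\la}$ in place of $e^{it\la}$, $t\in[0,T]$) that reproduces evaluation at $\la=0$. The estimator will therefore be proportional to the element of $H_T$ whose spectral image represents the functional $\varphi\mapsto\varphi(0)$.

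Next, I want to identify this representer with $\int\widehat X(t)\,dt$. Project each $X(t)$, $t\in\mathbb{R}$, onto $L^2_X(0,T)$ to obtain $\widehat X(t)$, and consider $\zeta:=\int_{-\infty}^{\infty}\widehat X(t)\,dt$. The key points are the following.
\begin{itemize}
\item[(i)] The integral converges in mean square.
\item[(ii)] For every $s\in[0,T]$, we have $\E[\zeta\,\overline{Y(s)}]=\int_{\mathbb{R}}r(t-s)\,dt=2\pi f(0)$, which is independent of $s$. This holds because projection is self-adjoint, so $\E[\widehat Y(t)\overline{Y(s)}]=\E[Y(t)\overline{Y(s)}]$ for $s\in[0,T]$.
\end{itemize}
A constant covariance with every observation is exactly the normal equation $R\,c\propto\mathbf 1$, the continuous analogue of \eqref{2.1}. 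Hence $\zeta$ is the (unnormalized) BLUE direction, and $\g$ is fixed by the unbiasedness normalization: $\g=1/(\text{coefficient of } m \text{ in } \int\widehat X)$. Condition (a), $f(0)>0$, guarantees that this normalization is nonzero and finite.

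The main obstacle is (i): the convergence of $\int_{\mathbb R}\widehat X(t)\,dt$, equivalently the decay of $\|\widehat Y(t)\|$ as $|t|\to\infty$. Here I would use the linear representation \eqref{clp}. For $t>T$, $\widehat Y(t)$ is bounded in norm by the prediction component, and $\|\widehat Y(t)\|$ is controlled by the part of the filter $g(t-s)$ with $s$ near $[0,T]$, roughly $\big(\int_{t-T}^{t}|g(u)|^2du\big)^{1/2}$. Condition (b), $g(u)=O((1+u^2)^{-1})$, gives $\|\widehat Y(t)\|=O(t^{-2})$, which is integrable. The case $t<0$ is handled symmetrically using time reversal of the stationary process. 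The same decay justifies interchanging expectation and integral in (ii).

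Finally, I would check that the mean part of $\widehat X(t)$ is handled correctly. Since the constant $m$ lies in the span of the observations only through the estimator, I will write $\widehat X(t)=m\,a(t)+\widehat Y(t)$ with $a(t)$ bounded and integrable, and set $\g^{-1}=\int a(t)\,dt$.
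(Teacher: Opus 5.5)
The paper gives no proof of this theorem; it only quotes it from Grenander and Grenander--Szeg\H{o}. Your strategy is the natural one and it is sound. Once $\zeta=\int_{\mathbb R}\widehat Y(t)\,dt$ exists in mean square, the relation $\E[\zeta\,\overline{Y(s)}]=\int_{\mathbb R}r(u)\,du=2\pi f(0)$ for all $s\in[0,T]$ says that $\varphi=\zeta/(2\pi f(0))$ solves \eqref{A18E} with $\beta\equiv1$. Theorem \ref{AE-T1} (via Proposition \ref{AL0}) then identifies the BLUE. Condition (a) is used exactly where you say: it guarantees $\zeta\neq0$. Integrability of $r$ and the interchange of $\E$ and $\int$ come for free from (i), because $|r(t-s)|=|\E[\widehat Y(t)\overline{Y(s)}]|\le\|\widehat Y(t)\|\,\|Y(s)\|$. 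So $f$ has a continuous version, and Fourier inversion gives $\int r=2\pi f(0)$.

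\textbf{The decay estimate in (i) is wrong as written.} You cannot keep only the innovations $d\xi(u)$ with $u$ near $[0,T]$. Every observation $Y(s)$, $s\in[0,T]$, also contains $\int_{-\infty}^{0}g(s-u)\,d\xi(u)$. Hence the part of $Y(t)$ driven by innovations before time $0$ is not orthogonal to $L^2_X(0,T)$, and $\|\widehat Y(t)\|$ is not bounded by $\sigma(\int_{t-T}^{t}|g|^2)^{1/2}$. For example, take $g(v)=e^{-v}$, $\sigma=1$, which satisfies (a) and (b). Then $\widehat Y(t)=e^{-(t-T)}Y(T)$ for $t>T$, so $\|\widehat Y(t)\|^2=\frac12e^{-2(t-T)}>\int_{t-T}^{t}e^{-2v}\,dv$.

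The correct argument is as follows. For $t>T$ write $Y(t)=\int_{-\infty}^{T}g(t-u)\,d\xi(u)+\int_{T}^{t}g(t-u)\,d\xi(u)$. By causality of \eqref{clp}, the second term is orthogonal to $\overline{sp}\{Y(s),\,s\le T\}\supset L^2_X(0,T)$. Hence
\[
\|\widehat Y(t)\|^2\le\sigma^2\int_{t-T}^{\infty}|g(v)|^2\,dv=O(t^{-3}).
\]
So the decay is $O(t^{-3/2})$, not $O(t^{-2})$, but this is still integrable. Your time-reversal step for $t<0$ is fine: for a real process $r(-u)=r(u)$, which gives $\|\widehat Y(-t)\|=\|\widehat Y(t+T)\|$. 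With this correction (i) holds and the proof goes through.

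Your last paragraph should also be made precise rather than appealing to ``$m$ lies in the span only through the estimator.'' Define $\widehat X(t)$ by applying to $X$ the linear operation that produces $\widehat Y(t)$ from $Y$. Its coefficient of $m$ is then $a(t)=L(\widehat Y(t))$, where $L$ equals $\sum c_k$ on $\sum c_kY(s_k)$. This $L$ is a bounded functional on $L^2_X(0,T)$ precisely because $L(\psi)=(\psi,\zeta)/(2\pi f(0))$ there. Consequently:
\begin{itemize}
\item $|a(t)|\le\|\widehat Y(t)\|\,\|\zeta\|/(2\pi f(0))$, which is integrable;
\item $\int a=\|\zeta\|^2/(2\pi f(0))>0$;
\item $\gamma=2\pi f(0)/\|\zeta\|^2$, with $\Var(\widehat m_{BLU})=(2\pi f(0))^2/\|\zeta\|^2$.
\end{itemize}
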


In the continuous time case, the LSE $\widehat m_{LS}$ 
of $m$ is given by
\beq
\label{C-LSE}
\widehat m_{LS}=\bar X=\frac1T\int_0^TX(t)dt.
\eeq

The next theorem shows that the LSE is asymptotically efficient relative to BLUE (see Grenander \cite{Gr-4}, p. 248, and Grenander and Szeg\H{o} \cite{GS}, Section 11.4).
\begin{thm}
	\label{C-GS-2}
	Under the conditions of Theorem \ref{C-GS-1}, the LSE $\widehat m_{LS}$ of $m$ defined as in \eqref{C-LSE} is asymptotically efficient relative to BLUE $m_{BLU}$ of $m$. Specifically, the following relations hold as $T\to\f$:	
	\beq
	\label{C-G-S-1}
	\Var(\widehat m_{LS})\sim\Var(\widehat m_{BLU})\sim 2\pi f(0)/T.
	\eeq
\end{thm}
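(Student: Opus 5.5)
The plan is to prove the two halves of \eqref{C-G-S-1} separately: an exact asymptotic for $\Var(\widehat m_{LS})$, and a matching lower bound for $\Var(\widehat m_{BLU})$. Since $\Var(\widehat m_{BLU})\le\Var(\widehat m_{LS})$ trivially, the two together give the claim.

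\textbf{Variance of the LSE.} By the spectral representation of $Y(t)$ on $\mathbb{R}$,
\[
\Var(\widehat m_{LS})=\frac1{T^2}\int_{\mathbb{R}}\Bigl|\int_0^T e^{it\la}dt\Bigr|^2 f(\la)\,d\la
=\frac{2\pi}{T}\int_{\mathbb{R}}\Phi_T(\la)f(\la)\,d\la ,
\qquad
\Phi_T(\la)=\frac{\sin^2(T\la/2)}{2\pi T(\la/2)^2}\cdot\frac{1}{2\pi}.
\]
Here $\Phi_T$ is the continuous-time Fej\'er kernel, normalized so that $\int_{\mathbb{R}}\Phi_T=1$. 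It is an approximate identity on $\mathbb{R}$, and it is bounded by $C/(T\la^2)$ away from the origin. Condition (b) on the filter gives $g\in L^1$, so $f=|\hat g|^2$ is bounded and continuous. The argument of Proposition \ref{F-Ker} (as used in Theorem \ref{Gr2}) then yields $\int\Phi_T f\to f(0)$. Hence $\Var(\widehat m_{LS})\sim 2\pi f(0)/T$.

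\textbf{Lower bound for the BLUE: frequency-domain reformulation.} By Kolmogorov's isomorphism in continuous time, an unbiased linear estimator corresponds to a function $\psi$ in the closure of $\mathrm{span}\{e^{it\la}:0\le t\le T\}$ in $L^2(f)$, normalized by $\psi(0)=1$. Its variance is $\int|\psi|^2 f\,d\la$.

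\textbf{Lower bound: the main obstacle.} We need
\[
\liminf_{T\to\f} T\inf_{\psi}\int|\psi|^2 f \;\ge\; 2\pi f(0).
\]
This is the analogue of Lemma \ref{lem4.1} and Theorem \ref{Gr01}. I would use the linear-process structure. Write $X(t)-m=\int_{-\f}^t g(t-s)\,d\xi(s)$, with $\hat g(0)=\int g$, so that $2\pi f(0)=\si^2|\hat g(0)|^2$ up to the normalization constant, and $\hat g(0)\neq0$ by (a). The decay $g(t)=O((1+t^2)^{-1})$ makes $\int_{\mathbb{R}}|t|\,|g(t)|\,dt<\f$. Now any unbiased $\widehat m=\int_0^T X\,d\nu$ with $\nu[0,T]=1$ yields the innovation-domain representation
\[
\widehat m-m=\int h(s)\,d\xi(s),\qquad h=\nu * \tilde g .
\]
By Cauchy--Schwarz over a window of length roughly $T+2L$, we get $\int|h|^2\ge |\int h|^2/(T+2L)-\text{tail}$, and $\int h=\hat g(0)$. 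The tails of $g$ beyond distance $L$ contribute $O(\int_{|t|>L}|g|)$. Letting $T\to\f$ and then $L\to\f$ should give $\Var(\widehat m)\ge \si^2|\hat g(0)|^2(1-o(1))/T$ uniformly in $\nu$.

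The delicate point, and the one I expect to be hardest, is controlling the tails uniformly in $\nu$, which may have large total variation. If this fails, the fallback is the sandwich argument of Theorem \ref{Gr01}. One bounds $f$ below by a continuous-time rational density $f_1=|p_1|^{-2}$ with $f_1\le f$ and $f_1(0)\ge f(0)-\vs$. For $f_1$ the BLUE variance is explicitly computable, as in Grenander and Szeg\H{o}, Section 11.4, and is $\sim 2\pi f_1(0)/T$. Monotonicity in $f$, as in Proposition \ref{R2}(b), which carries over verbatim, then yields the lower bound. Combining the lower bound with the LSE asymptotic proves \eqref{C-G-S-1}.
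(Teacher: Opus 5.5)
Your computation of $\Var(\widehat m_{LS})$ is fine. There is one normalization slip: your $\Phi_T$ carries an extra factor $1/(2\pi)$, so $\int\Phi_T=1/(2\pi)$; with $\Phi_T=F_T$ from \eqref{s6} the display is correct.

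The gap is the lower bound for $\Var(\widehat m_{BLU})$, which is the real content of the theorem, and neither of your routes delivers it.

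\textbf{The innovation route.} The error term is $\int_{-\f}^{-L}h(s)\,ds=\int_{[0,T]}G(t+L)\,d\nu(t)$, where $G(u)=\int_u^\f g$. The only bound you have for it is $\|\nu\|_{TV}\int_L^\f|g|$, and nothing controls $\|\nu\|_{TV}$ for near-optimal weights. Add $M\cos(Kt)\,dt$ on $[0,T]$ with $\sin KT=0$. This keeps the estimator unbiased, and for every fixed $M$ the extra variance tends to $0$ as $K\to\f$, because $f(\la)\to0$ as $|\la|\to\f$. Meanwhile $\|\nu\|_{TV}$ grows like $MT$. So this is not a technicality fixable by choosing $L$. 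Your argument closes only when $g$ has compact support, where the tail term vanishes for large $L$.

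\textbf{The fallback.} It fails because the hypotheses of Theorem \ref{C-GS-1} need not admit any minorant $f_1=|p_1|^{-2}\le f$ with $f_1(0)\ge f(0)-\vs$. For example, $g=e^{-t}\mathbf{1}_{[0,\f)}*\mathbf{1}_{[0,1]}$ satisfies (b) and gives $f(0)>0$. Yet $f(\la)\propto|1+i\la|^{-2}\,|1-e^{-i\la}|^2\la^{-2}$ vanishes at every $\la=2\pi k$, $k\neq0$. The density $f$ may also decay faster than any power. Without such a minorant, monotonicity in $f$ does not help.

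\textbf{The missing idea.} The paper gives no proof and only cites Grenander and Grenander--Szeg\H{o}. The phrase ``under the conditions of Theorem \ref{C-GS-1}'' points to the mechanism: test against an \emph{exact} dual vector rather than the truncated constant $\mathbf{1}_{[-L,T]}$.

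\begin{itemize}
\item Let $\widehat Y(t)$ be the projection of $Y(t)$ onto $H_{[0,T]}(Y)$. For $t>T$, write $Y(t)=\int_{-\f}^{T}g(t-s)\,d\xi(s)+\int_T^t g(t-s)\,d\xi(s)$. The second piece is orthogonal to $H_{[0,T]}(Y)$, so $\|\widehat Y(t)\|\le\si\bigl(\int_{t-T}^\f g^2\bigr)^{1/2}$. This is bounded and, by (b), $O((t-T)^{-3/2})$.
\item The time-reversed process has the same covariance, which gives the same bound for $t<0$ with $|t|$ in place of $t-T$.
\item Hence $\varphi:=(2\pi f(0))^{-1}\int_{\mathbb{R}}\widehat Y(t)\,dt\in H_{[0,T]}(Y)$ is well defined. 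For every $u\in[0,T]$ we get $\E[Y(u)\varphi]=(2\pi f(0))^{-1}\int_{\mathbb{R}}r(u-t)\,dt=1$ exactly, using $r\in L^1$ since $g\in L^1$.
\item So every unbiased $\widehat m$, whatever the total variation of its weights, satisfies $1=\E[(\widehat m-m)\varphi]\le\Var(\widehat m)^{1/2}\|\varphi\|$ (cf. Theorems \ref{AE-T1}--\ref{AE-T2}).
\item Finally, $\|\varphi\|^2=(2\pi f(0))^{-1}\bigl(T+\int_{\mathbb{R}\setminus[0,T]}\E[\varphi Y(t)]\,dt\bigr)$, and $|\E[\varphi Y(t)]|\le\|\varphi\|\,\|\widehat Y(t)\|$ is integrable uniformly in $T$. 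Therefore $\|\varphi\|^2=T/(2\pi f(0))+O(\|\varphi\|)$, and $\Var(\widehat m_{BLU})=\|\varphi\|^{-2}\sim 2\pi f(0)/T$.
\end{itemize}

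Combined with your LSE asymptotics, this proves \eqref{C-G-S-1}.
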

\begin{rem}
	\label{r7.2}
	{\rm The results of Theorems \ref{C-GS-1} and \ref{C-GS-2} remain valid if condition (b) in \ref{C-GS-1} is replaced by the following (see Grenander \cite{Gr-4}, p. 248):
	\beq
	\label{C-G-S-12}
	\int_0^\f u^3|g(u)|^2du<\f.
	\eeq
	}
\end{rem}
Now, assume that the noise $Y(t)$ in \eqref{C-1.0} is specified by the spectral function $F(\la)$ of the form \eqref{di1SF-1} with
$\la\in\mathbb{R}$.

The following theorem is a continuous analogue of Theorem \ref{AE-T3}  (see Adenstedt and Eisenberg \cite{AdE}).

\begin{thm}
	\label{C-AE-T3}
	Let the noise process $Y(t)$ have spectral function $F(\la)$
	given by equation \eqref{di1SF-1} with $\la\in\mathbb{R}$. 
	The following assertions hold.
	\begin{itemize}
		\item[(a)]  $\lim_{T\to\f}\Var(\widehat m_{LS}) =\lim_{T\to\f}\Var(\widehat m_{BLU})=dF(0)$.
		\item[(b)] Assume in addition that $f$ is positive and continuous at $\la=0$, that $F_s(\la)-F_s(-\la)$ is constant in 
		$0<|\la|<\de$ for some $\de>0$, and that
		\beq
		\label{C-A22E}
		\int_{-\pi}^\pi\frac{|p(\la)|^2}{(1+\la^2)^nf(\la)}d\la<\f
		\eeq
		for some positive integer $n$ and function $p(\la)$ of the form:
		$$
		p(\la)=\sum_{k=1}^\f b_k\exp(-it_k\la).
		$$
		Then, as $T\to\f$,
		\beq
		\label{C-A23E}
		\Var(\widehat m_{LS})-dF(0)\sim\Var(\widehat m_{BLU})-dF(0)
		\sim 2\pi f(0)/T.
		\eeq
	\end{itemize}
\end{thm}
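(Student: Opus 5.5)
Both estimators are handled through the frequency domain: the LSE by a Fej\'er-kernel argument in continuous time, and the BLUE by sandwiching its variance between the LSE variance and a lower bound obtained from a test function. For part (a), the LSE has
\[
\Var(\widehat m_{LS})=\int_{\mathbb{R}}\Big|\frac{\sin(T\la/2)}{T\la/2}\Big|^2 dF(\la).
\]
The kernel is bounded by $1$, equals $1$ at $\la=0$, and tends to $0$ pointwise for $\la\neq0$. Dominated convergence with respect to the finite measure $dF$ then gives the limit $F(\{0\})=dF(0)$. For the BLUE, the trivial bound $\Var(\widehat m_{BLU})\le\Var(\widehat m_{LS})$ supplies the upper estimate. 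For the lower estimate, note that any unbiased linear estimator corresponds, via Kolmogorov's isomorphism (Theorem \ref{Kolm}, continuous-time version), to a function $\phi_T\in L^2(dF)$ in the closed span of $\{e^{it\la},0\le t\le T\}$ with the normalization $\phi_T(0)=1$. Hence $\Var\ge\int|\phi_T|^2dF\ge|\phi_T(0)|^2F(\{0\})=dF(0)$. The normalization makes sense because the atom at $0$ forces evaluation at $0$ to be continuous on this span. Together these give (a).

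For part (b), the first step is to remove the atom and the symmetric singular part near the origin. Since $F_s(\la)-F_s(-\la)$ is constant on $0<|\la|<\de$ and we may take real (even) weights, the singular measure restricted to $0<|\la|<\de$ contributes nothing near zero. I would then split $dF = dF(0)\,\delta_0 + f\,d\la + dF_s^{\,\text{away}}$. For the LSE, the middle term gives $\frac{1}{T}\int T\,\mathrm{sinc}^2(T\la/2)f(\la)\,d\la\sim 2\pi f(0)/T$, because $\frac{T}{2\pi}\mathrm{sinc}^2(T\la/2)$ is an approximate identity and $f$ is continuous at $0$. The contribution of the singular mass on $|\la|\ge\de$ is $O(T^{-2})$, since the kernel is bounded there by $4/(T\la)^2$. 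This yields $\Var(\widehat m_{LS})-dF(0)\sim 2\pi f(0)/T$, and therefore the upper bound for the BLUE.

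The remaining task, and the main obstacle, is the lower bound $\Var(\widehat m_{BLU})-dF(0)\gtrsim 2\pi f(0)/T$. I would mimic the discrete argument behind Theorem \ref{AE-T3}(b), i.e.\ the Adenstedt--Eisenberg approach in the spirit of Lemma \ref{lem4.1}. Take the optimal $\phi_T$ with $\phi_T(0)=1$. Then
\[
\Var(\widehat m_{BLU})-dF(0)\ge\int|\phi_T|^2 f\,d\la\ge\int_{|\la|<\de}|\phi_T|^2f\,d\la ,
\]
and it suffices to show that $\int|\phi_T|^2 f\ge (2\pi f(0)-\varepsilon)/T$. Condition \eqref{C-A22E} is used here. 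The function $\psi_T:=\phi_T\,p/(1+\la^2)^{n/2}$ (suitably adjusted to be entire of exponential type, e.g.\ replacing $(1+\la^2)^{-n/2}$ by an entire factor such as $(\sin(\eta\la)/\eta\la)^{n}$) has $\psi_T(0)=p(0)$, up to normalization, and has exponential type $T+O(1)$. By Cauchy--Schwarz,
\[
|\psi_T(0)|^2=\Big|\frac{1}{2\pi}\int\hat\psi_T\Big|^2\le\frac{T+O(1)}{2\pi}\int|\psi_T|^2d\la ,
\]
which uses the Paley--Wiener bound $\int|\hat\psi|^2$ supported on an interval of length $\approx T$. Finally, $\int|\psi_T|^2 d\la$ is compared with $\int|\phi_T|^2 f\,d\la$ through continuity of $f$ at $0$ and the integrability \eqref{C-A22E}. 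To make the constant sharp rather than just of order $1/T$, I would localize: the optimal $|\phi_T|^2f/\Var$ should concentrate near $0$ (an approximate-identity analogue of Lemma \ref{lem4.2}), so that $f$ can be replaced by $f(0)$ up to $\varepsilon$. I expect establishing this concentration, or an equivalent sharp Paley--Wiener-type lower bound, to be the hardest step. I would first attempt it via the comparison-monotonicity trick of Lemma \ref{lem4.2}, using the weight $2\cdot\mathbf 1_{|\la|<\de}+\mathbf 1_{|\la|\ge\de}$ together with an entire minorant in place of the trigonometric polynomial $t(\la)$.
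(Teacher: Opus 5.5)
Part (a) and the upper half of (b) are fine. The $\mathrm{sinc}^2$ representation of $\Var(\widehat m_{LS})$, dominated convergence, the bound $\int|\phi_T|^2dF\ge|\phi_T(0)|^2dF(0)$, the continuous Fej\'er-kernel asymptotics and $\Var(\widehat m_{BLU})\le\Var(\widehat m_{LS})$ all work. One small correction: $dF_s$ has no mass on $0<|\lambda|<\delta$ because $dF_s$ is positive, not because of a choice of even weights.

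The gap is the lower bound $\Var(\widehat m_{BLU})-dF(0)\ge(1-o(1))\,2\pi f(0)/T$. You leave it open, and the chain you sketch does not give even the order $1/T$. After Paley--Wiener you have $|p(0)|^2\le\frac{T+O(1)}{2\pi}\int|\phi_T|^2|p|^2s_\eta^{2n}\,d\lambda$, with $s_\eta(\lambda)=\sin(\eta\lambda)/(\eta\lambda)$. Passing from this to $\int|\phi_T|^2f\,d\lambda$ requires a pointwise bound $|p|^2s_\eta^{2n}\le Cf$. But \eqref{C-A22E} is only an $L^1$ condition on $|p|^2/((1+\lambda^2)^nf)$. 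Used through Cauchy--Schwarz it gives $\|\psi_T\|_{L^1}\le C\|\phi_T\|_{L^2(f)}$, hence $|p(0)|\le\frac{T+O(1)}{2\pi}\|\psi_T\|_{L^1}$, and therefore only $\int|\phi_T|^2f\,d\lambda\gtrsim T^{-2}$.

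Your proposed localization does not repair this. The difficulty sits away from the origin, where $f$ may be small, and knowing $\int_{|\lambda|\ge\delta}|\phi_T|^2f=o(1/T)$ says nothing about $\int_{|\lambda|\ge\delta}|\phi_T|^2|p|^2s_\eta^{2n}$. Moreover, the mechanism of Lemmas \ref{lem4.1}--\ref{lem4.2} needs the analogue of \eqref{3.5}, namely $\sigma^2_{T+\tau}(f)/\sigma^2_T(f)\to1$, which here is essentially the conclusion you want.

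The missing idea is to put the weight $1/f$ on an explicit test function rather than on the unknown optimal $\phi_T$. This is the Cram\'er--Rao-type bound of Theorem \ref{AE-T2}; the paper states the present theorem citing Adenstedt and Eisenberg without proof, but this is the tool it sets up. The construction, assuming bounded $t_k$ and $\sum_k|b_k|<\infty$:

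Take $w_T\in C^\infty(\mathbb{R})$ equal to $1$ on $[-\max_kt_k,\,T-\min_kt_k]$, with transition pieces of fixed shape. Let $\check w_T(\lambda)=\frac{1}{2\pi}\int w_T(u)e^{-iu\lambda}\,du$, and set $\psi_T:=p\,\check w_T/p(0)$. A zero of $p$ at the origin can be divided out, since $f(0)>0$. Then $\int e^{is\lambda}\psi_T(\lambda)\,d\lambda=\sum_kb_kw_T(s-t_k)/p(0)=1$ for $s\in[0,T]$. Hence every unbiased $\phi$ satisfies $\int\phi\,\psi_T\,d\lambda=1$: directly for finite combinations, and for limits because this functional is continuous on $L^2(f\,d\lambda)$. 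Cauchy--Schwarz then gives
\[
\Var(\widehat m_{BLU})-dF(0)\ \ge\ \int|\phi_T|^2f\,d\lambda\ \ge\ \Big(\int_{\mathbb{R}}\frac{|\psi_T|^2}{f}\,d\lambda\Big)^{-1}.
\]
It remains to bound the right-hand integral in two ranges.
\begin{itemize}
\item For $|\lambda|\ge\delta$: integrating by parts $n$ times, only the fixed transition pieces of $w_T$ contribute. This gives $|\check w_T(\lambda)|^2(1+\lambda^2)^n\le C_\delta$ uniformly in $T$, so \eqref{C-A22E} yields $\int_{|\lambda|\ge\delta}|\psi_T|^2/f\,d\lambda=O(1)$.
\item For $|\lambda|<\delta$: Plancherel gives $\int|\check w_T|^2=\frac{1}{2\pi}\int|w_T|^2=\frac{T+O(1)}{2\pi}$. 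Together with continuity of $f$ and $p$ at $0$, this contribution is at most $\frac{T}{2\pi f(0)}(1+o_\delta(1))$.
\end{itemize}
This yields the sharp lower bound with no information about the optimal estimator and no concentration argument. Note also that the hypothesis on $F_s$ is needed only for your LSE (upper) half.
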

\begin{rem}
	{\rm The condition \eqref{C-A22E} requires that the spectral density \( f(\lambda) \) does not decrease too rapidly as \( |\lambda| \) approaches infinity. It also allows for the presence of zeros in \( f(\lambda) \) that can be removed by \( |p(\lambda)|^2 \). Additionally, it can be shown that \eqref{C-A22E} implies that the model process \( X(t) \) is nondeterministic.}
\end{rem}

\sn{Estimating the mean within a Hilbert space framework} 
In this section, we will examine the estimation problem within a Hilbert space framework, without imposing restrictions on the index set or the set of observations.
Specifically, we consider the model:
 \begin{equation}
 	\label{E1.0}
 	X(t)=m\be(t)+Y(t), \q t\in \mathbb{U},
 \end{equation}
 where $\mathbb{U}$ is an arbitrary index set, $m$ is the unknown regression constant, $\be(t)$ is a known regression term, and the noise $Y(t)$
 is a zero-mean, second-order process with a covariance function $R(t,s)=\Exp[Y(t)Y(s)]$, $t,s\in \mathbb{U}$.
  
 The problem of interest is finding the BLUE for the unknown regression constant
 $m$ based on a random sample $\{X(t), t\in \mathbb{S}\}$, where $\mathbb{S}$ 
 is a fixed subset of $\mathbb{U}$.
 
Note that the previous sections discussed the cases where $\mathbb{U}=\mathbb{Z}$ or $\mathbb{R}$, and $\mathbb{S}=\{0,1,\ldots,n\}$ or $[0,T]$,  with $\be(t)\equiv 1$. 

Denote by $\Ph_m$ the probability measure on the sample path space $\Omega$ 
for a process that is equal in law to $m \be(t)+Y(t)$. Then, for $\omega \in \Omega$, 
we regard $X(t, \omega) =Y(t, \omega) = \omega(t)$ as the evaluation
of the sample path at point $t$. The processes $Y(t)$ and $X(t)$ differ only in that $Y(t)$ 
is treated as an element of the space $L^2(\Ph_0)$, whereas $X(t)$ is considered 
as an element of the space $L^2(\Ph_m)$. This approach allows us to focus on a single set of sample paths. 

An estimator $\widehat{m}(\omega)$ 
for $m$ is now a function of the sample path. It is unbiased if 
$\int_\Omega\widehat m(\omega)d\Ph_m = m$, and it is linear if it lies within the  
subspace $L^2_\mathbb{S}(X)$ of $L^2(\Ph_m)$, which is spanned by the family 
$\{X(t) : t \in \mathbb{S}\}$. (For the definition of the space $L^2_\mathbb{S}(X)$, we refer to Section \ref{HSR}.)

We first present an extension of Proposition \ref{L0}, which provides the solution 
of Szeg\H{o}'s extremum problem in an abstract Hilbert space setting
(see, e.g., Simon \cite{Sim-1}, p. 17).
\begin{pp}
	\label{AL0}
Let \( H \) be a Hilbert space equipped with an inner product \( (\cdot, \cdot) \) and a norm \( \| \cdot \| \). 
Let $\varphi\in H$ be a non-zero vector. Then, the following holds:
	\beq
	\label{Aop17}
	\min\{||\psi||^2: \,\, (\psi,\varphi)\} =1\}=\frac1{||\varphi||^2}.
	\eeq
The vector \( \psi \) that achieves the minimum in \eqref{Aop17} is given by  $\psi=\varphi/||\varphi||^2$. 
If $\psi$ is restricted to lie in a closed subspace $H_1$, and $P$ is the projection onto $H_1$, then the minimum becomes  $1/{||P\varphi||^2}$.
\end{pp}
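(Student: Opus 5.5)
The minimization $\min\{\|\psi\|^2 : (\psi,\varphi)=1\}$ should be handled the same way as Proposition \ref{L0}: a Cauchy--Schwarz lower bound, then a vector that attains it.

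\emph{Lower bound.} Take any $\psi\in H$ with $(\psi,\varphi)=1$. By the Cauchy--Schwarz inequality,
$$1=|(\psi,\varphi)|\le \|\psi\|\,\|\varphi\|.$$
Since $\varphi\neq 0$, this gives $\|\psi\|^2\ge 1/\|\varphi\|^2$.

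\emph{Attainment and uniqueness.} Set $\psi_0:=\varphi/\|\varphi\|^2$. Then $(\psi_0,\varphi)=1$ and $\|\psi_0\|^2=1/\|\varphi\|^2$, so the infimum is a minimum and $\psi_0$ is a minimizer. Equality in Cauchy--Schwarz forces $\psi=c\varphi$, and the constraint then gives $c=1/\|\varphi\|^2$, so $\psi_0$ is the unique minimizer.

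\emph{Restriction to a closed subspace.} Now require $\psi\in H_1$, and let $P$ be the orthogonal projection onto $H_1$. Since $P$ is self-adjoint and $P\psi=\psi$, we have
$$(\psi,\varphi)=(P\psi,\varphi)=(\psi,P\varphi).$$
So the constraint is the same problem posed inside the Hilbert space $H_1$, with $\varphi$ replaced by $P\varphi\in H_1$. If $P\varphi\neq 0$, the first part applied in $H_1$ gives minimum $1/\|P\varphi\|^2$, attained by $P\varphi/\|P\varphi\|^2$. If $P\varphi=0$, the constraint cannot be satisfied, and the formula should be read as $+\infty$. This degenerate case is the only point requiring care; I would note it explicitly and assume $P\varphi\neq 0$ for the statement.
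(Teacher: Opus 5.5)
Your proof is correct. The reduction to the subspace case, $(\psi,\varphi)=(\psi,P\varphi)$ for $\psi\in H_1$ followed by applying the first part inside the Hilbert space $H_1$, is exactly the paper's.

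The difference is in the unrestricted part. The paper does not use Cauchy--Schwarz. It notes that any feasible $\psi$ satisfies $(\psi-\psi_0,\varphi)=0$, so $\psi-\psi_0\perp\psi_0$, and hence $\|\psi\|^2=\|\psi_0\|^2+\|\psi-\psi_0\|^2$. This single Pythagorean identity gives the lower bound, the attainment and the uniqueness at once. It also shows exactly how much a feasible $\psi$ exceeds the minimum, namely by $\|\psi-\psi_0\|^2$. Your route instead needs the equality case of Cauchy--Schwarz as a separate step to get uniqueness. In exchange, it parallels the paper's own Schwarz-inequality proof of Proposition~\ref{L0}.

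Your explicit remark on the degenerate case $P\varphi=0$ is a genuine refinement. There the constraint set in $H_1$ is empty. The paper passes over this case silently, even though the formula $1/\|P\varphi\|^2$ in the statement tacitly presupposes $P\varphi\neq 0$.
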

\begin{proof}
Let $\psi_0=\varphi/||\varphi||^2$. If $(\psi,\varphi) =1$, then we have 
$(\psi-\psi_0,\varphi) =0$. Consequently, it follows that $||\psi||^2 =||\psi_0||^2+||\psi-\psi_0||^2$. Thus, the minimum occurs at $\psi=\psi_0$, sine $||\psi_0||^2=1/{||\varphi||^2}$, thereby proving the result.

For the case of subspace, make $H_1$ the Hilbert space and note that for $\psi\in H_1$,
we have $(\psi,\varphi) =(\psi,P\varphi) =1$, and $P\varphi\in H_1$. 
This completes the proof.
\end{proof}

Using Proposition \ref{AL0}, we can now state the following result regarding the BLUE for the unknown parameter \(m\) in the model described in \eqref{E1.0} (see also Adenstedt and Eisenberg \cite{AdE}).
\begin{thm}
	\label{AE-T1}
Let the model $X(t)$ be defined as in equation \eqref{E1.0} with the unknown parameter \(m\). 
Then the BLUE $\widehat{m}_{BLU}$ for $m$, based on observations $\{X(t), t\in \mathbb{S}\}$, satisfies the following relationships:
	\beq
\label{A17E}
\widehat{m}_{BLU}=\frac{\varphi(\omega)}{||\varphi||^2}, \q
\Var(\widehat{m}_{BLU})=\frac{1}{||\varphi||^2},
\eeq
where $\varphi$ is the solution of the equation
\beq
\label{A18E}
\Exp[Y(t)\varphi]=\be(t), \q t\in \mathbb{S}, \, \, \varphi\in H_\mathbb{S}(Y).
\eeq
 If the equation in \eqref{A18E} has no solution, then $\widehat{m}_{BLU}=m$, indicating that \( m \) is perfectly estimable.
\end{thm}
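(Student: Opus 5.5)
The plan is to transfer the estimation problem into the Hilbert space $H_\mathbb{S}(Y)$ spanned by $\{Y(t), t\in\mathbb{S}\}$ in $L^2(\Ph_0)$, and then apply Proposition \ref{AL0}. A linear estimator is an element $\psi\in L^2_\mathbb{S}(X)$. Via the identification $X(t)=Y(t)=\omega(t)$, it corresponds to an element $\psi\in H_\mathbb{S}(Y)$: finite combinations $\sum c_kX(t_k)$ correspond to $\sum c_kY(t_k)$, and we pass to $L^2$-limits. The mean of $\psi$ under $\Ph_m$ is then $m\,\ell(\psi)$, where $\ell$ is the linear functional defined on finite combinations by $\ell(\sum c_kY(t_k))=\sum c_k\be(t_k)$. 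Under $\Ph_m$ the variance of $\psi$ equals $\|\psi\|^2$ computed in $L^2(\Ph_0)$, since $X-m\be$ has the law of $Y$. Unbiasedness for all $m$ therefore means $\ell(\psi)=1$, and the task becomes minimizing $\|\psi\|^2$ subject to $\ell(\psi)=1$.

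\emph{Case 1: $\ell$ is bounded on $H_\mathbb{S}(Y)$.} By the Riesz representation theorem there is a unique $\varphi\in H_\mathbb{S}(Y)$ with $\ell(\psi)=(\psi,\varphi)$ for all $\psi$. Testing with $\psi=Y(t)$ gives $\Exp[Y(t)\varphi]=\be(t)$ for $t\in\mathbb{S}$, which is exactly \eqref{A18E}. Conversely, any solution of \eqref{A18E} in $H_\mathbb{S}(Y)$ represents $\ell$ on a dense set, so it is the Riesz vector, and uniqueness follows. Proposition \ref{AL0} then says the minimizer is $\psi_0=\varphi/\|\varphi\|^2$ with minimal variance $1/\|\varphi\|^2$. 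Reading $\psi_0$ back as a function of the sample path gives $\widehat m_{BLU}=\varphi(\omega)/\|\varphi\|^2$, which is \eqref{A17E}. We also need $\varphi\neq0$. This holds unless $\be\equiv0$ on $\mathbb{S}$, a degenerate case we exclude.

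\emph{Case 2: \eqref{A18E} has no solution.} By Case 1 this means $\ell$ is unbounded. Then we can choose finite combinations $\psi_k$ with $\ell(\psi_k)=1$ and $\|\psi_k\|\to0$: take $\eta_k$ with $|\ell(\eta_k)|\ge k\|\eta_k\|$ and set $\psi_k=\eta_k/\ell(\eta_k)$. These are unbiased linear estimators with variance tending to zero. Their $L^2(\Ph_m)$-limit is the constant $m$, so $m$ is perfectly estimable and $\widehat m_{BLU}=m$.

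The main obstacle is making the correspondence between $L^2_\mathbb{S}(X)\subset L^2(\Ph_m)$ and $H_\mathbb{S}(Y)\subset L^2(\Ph_0)$ rigorous for limits, not just finite sums. In particular, the unbiasedness functional must extend correctly to limit points: the expectation under $\Ph_m$ of an $L^2$-limit must equal $m$ times the extended $\ell$. I would handle this by checking directly that $\Exp_m|\sum c_kX(t_k)|^2=\|\sum c_kY(t_k)\|^2+m^2|\ell(\sum c_kY(t_k))|^2$. This identity shows that Cauchy sequences in one space are Cauchy in the other exactly when $\ell$ converges along them. That suffices in Case 1, where $\ell$ is continuous, and it also yields the degenerate conclusion in Case 2.
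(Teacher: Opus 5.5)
Your proposal is correct and follows the paper's route: the paper gives no argument beyond applying Proposition~\ref{AL0} in $H_\mathbb{S}(Y)$, with a reference to Adenstedt and Eisenberg. The steps you add are exactly what that invocation leaves implicit: identifying $\varphi$ via Riesz representation as the vector representing the unbiasedness functional $\ell$, excluding $\beta\equiv0$ on $\mathbb{S}$, using the norm identity that links $L^2_\mathbb{S}(X)$ with $H_\mathbb{S}(Y)$, and the unbounded-$\ell$ argument for perfect estimability (read $\ell$ on formal finite combinations, so that a null combination with nonzero $\beta$-sum also counts as unboundedness).
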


\begin{rem}
{\rm Theorem \ref{AE-T1} can be used to show that the BLUE $\widehat{m}_{BLU}$ is also the maximum likelihood estimator when the process $Y(t)$ is Gaussian  (see Adenstedt and Eisenberg \cite{AdE} for details)}.
\end{rem}

Calculating the BLUE $\widehat{m}_{BLU}$, or equivalently solving equation \eqref{A18E}, is often a challenging problem in most practical situations. 
Therefore, appropriate approximations are necessary. 
We outline a procedure that, in the spirit of the Cram\'er-Rao inequality,  
provides a lower bound for the variance $\Var(\widehat{m}_{BLU})$. 

Denote by $L^2_\mathbb{U}(Y)$ the Hilbert space spanned by the family $\{Y(t), t\in\mathbb{U}\}$. Note that $L^2_\mathbb{S}(X)$ is a subspace of $L^2_\mathbb{U}(Y)$. 
While solving the equation \eqref{A18E} may be challenging, it may be easier to construct a function \(\tilde\varphi \in L^2_\mathbb{U}(Y)\) that satisfies the equation:
\beq
\label{A19E}
\Exp[Y(t)\tilde\varphi]=\be(t), \q t\in \mathbb{S}.
\eeq

It is important to note that such a function \(\tilde\varphi\) exists if and only if \(m\) is not perfectly estimable from observations over \(\mathbb{S}\). Furthermore, \(\tilde\varphi\) is not unique.

If $P$ is the projection operator onto  $L^2_\mathbb{S}(Y)$ and $\tilde\varphi\in L^2_\mathbb{U}(Y)$ satisfies equation \eqref{A19E}, then we have
\beq
\label{A20E}
\Exp[Y(t)P\tilde\varphi]=\Exp[Y(t)\tilde\varphi]=\be(t), \q t\in \mathbb{S}.
\eeq
This implies that \( \varphi = P\tilde{\varphi} \) is the solution to equation \eqref{A18E}. 

Furthermore, taking into account that $||\varphi||=||P\tilde\varphi||\leq||\tilde\varphi||$,
we can deduce the following result.	
	\begin{thm}
		\label{AE-T2}
If $\tilde\varphi$ is in $H_\mathbb{U}(Y)$ and satisfies equation \eqref{A19E}, then  
\beq
\label{A21E}
\Var(\widehat{m}_{BLU})\geq{1}/{||\tilde\varphi||^2}.
\eeq
If no such $\tilde\varphi$ exists, then	$\Var(\widehat{m}_{BLU})=0$.
\end{thm}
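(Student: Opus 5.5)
The inequality follows by comparing the projection $P\tilde\varphi$ with $\tilde\varphi$. We will use Theorem \ref{AE-T1} together with the orthogonal projection argument that precedes the statement.

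First, suppose $\tilde\varphi\in H_\mathbb{U}(Y)$ satisfies \eqref{A19E}. Let $P$ denote the orthogonal projection of $H_\mathbb{U}(Y)$ onto the closed subspace $H_\mathbb{S}(Y)$. For each $t\in\mathbb{S}$ the variable $Y(t)$ lies in $H_\mathbb{S}(Y)$, so $\tilde\varphi-P\tilde\varphi\perp Y(t)$. This gives \eqref{A20E}: $\Exp[Y(t)P\tilde\varphi]=\Exp[Y(t)\tilde\varphi]=\be(t)$ for all $t\in\mathbb{S}$. Hence $\varphi:=P\tilde\varphi$ belongs to $H_\mathbb{S}(Y)$ and solves \eqref{A18E}.

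Next we check that this solution is unique, so it is the one appearing in Theorem \ref{AE-T1}. If $\varphi_1$ and $\varphi_2$ both solve \eqref{A18E}, their difference lies in $H_\mathbb{S}(Y)$ and is orthogonal to every generator $Y(t)$, $t\in\mathbb{S}$. It is therefore orthogonal to all of $H_\mathbb{S}(Y)$, so it is zero. By \eqref{A17E} we get $\Var(\widehat m_{BLU})=1/\|P\tilde\varphi\|^2$. Since a projection does not increase norms, $\|P\tilde\varphi\|\le\|\tilde\varphi\|$, and \eqref{A21E} follows.

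We must also rule out $P\tilde\varphi=0$. In that case \eqref{A20E} would force $\be\equiv0$ on $\mathbb{S}$, which is excluded because the model is nondegenerate. This is the delicate point: the statement should be read with $\be\not\equiv0$ on $\mathbb{S}$, and we add that assumption explicitly.

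For the second assertion, suppose no $\tilde\varphi\in H_\mathbb{U}(Y)$ satisfies \eqref{A19E}. Since $H_\mathbb{S}(Y)\subset H_\mathbb{U}(Y)$, in particular \eqref{A18E} has no solution. The last clause of Theorem \ref{AE-T1} then says $m$ is perfectly estimable, so $\Var(\widehat m_{BLU})=0$.

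The main obstacle is justifying that last clause rather than just quoting it. We would argue it via the Riesz representation theorem. The linear functional $\ell\bigl(\sum c_tY(t)\bigr):=\sum c_t\be(t)$, defined on finite combinations, extends to a bounded functional on $H_\mathbb{S}(Y)$ exactly when \eqref{A18E} is solvable. If no solution exists, then either $\ell$ is not well defined or $\ell$ is unbounded.
\begin{itemize}
\item[(i)] If $\ell$ is not well defined, some finite combination satisfies $\sum c_tY(t)=0$ while $\sum c_t\be(t)=1$. The estimator $\sum c_tX(t)$ is then unbiased with zero variance.
\item[(ii)] If $\ell$ is unbounded, there are combinations $\psi_k$ with $\ell(\psi_k)=1$ and $\|\psi_k\|\to0$. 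The corresponding estimators are unbiased and have variance tending to zero, so the infimum of variances over unbiased linear estimators, i.e.\ $\Var(\widehat m_{BLU})$, equals $0$.
\end{itemize}
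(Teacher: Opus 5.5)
Your proof is correct and follows the paper's argument: you project $\tilde\varphi$ onto $H_\mathbb{S}(Y)$, observe that $P\tilde\varphi$ solves \eqref{A18E}, and combine \eqref{A17E} with $\|P\tilde\varphi\|\le\|\tilde\varphi\|$, while the second assertion rests on the last clause of Theorem~\ref{AE-T1}. Your uniqueness check, your explicit exclusion of $\beta\equiv0$ on $\mathbb{S}$, and your Riesz-representation justification of the perfect-estimability clause (which the paper only asserts) add rigor but do not change the route.
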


\begin{rem}
	{\rm Using the reproducing kernel Hilbert space (RKHS) technique and the projection theorem in abstract Hilbert spaces (see Section \ref{HSR}), Parzen \cite{P-4} provided a general solution to the problem of minimum variance unbiased linear estimation for the following general model:
		\begin{equation}
			\label{P-E1.0}
			X(t)=m(t)+Y(t), \q t\in \mathbb{U},
		\end{equation}
		where $\mathbb{U}$ is an arbitrary index set,  $Y(t)$
		is a zero-mean, second-order process with a known covariance function $R(t,s)=\Exp[Y(t)Y(s)]$, $t,s\in \mathbb{U}$, and
		the unknown mean-value function $m(t):=\Exp[X(t)]$.
		
		Specifically, let $H(R)$ be the RKHS generated by the
		covariance function $R(t,s)$ (see Section \ref{HSR}). 
		We assume that the unknown mean function $m(t)$ belongs to a known class $M$, which is a subset of $H(R)$. 
		A random variable $(h,X)_R\in L^2_\mathbb{S}(X)$, with $h\in H(R)$, is said to be an unbiased linear estimator for $m(t)$ if the following condition holds:
		\beq
		\label{A19E-1}
		\Exp[(h,X)_R]=(h,m)_R=m(t) \q \text{for all}\q m\in M.
		\eeq
		
		In Parzen \cite{P-4}, it was proven
		that the BLUE $\widehat{m}_{BLU}(t)$ for $m(t)$ is given by the following formula:
			\begin{equation}
			\label{P-E1.02}
		\widehat{m}_{BLU}(t)=(P_{\bar{M}}R(\cd,t), X)_R,
		\end{equation}
		where \(\bar{M}\) represents the smallest Hilbert subspace of \(H(R)\) that contains \(M\), and \(P_{\bar{M}}R(\cd,t)\) is the projection of \(R(\cd,t)\) onto the space \(\bar{M}\).
		
	In particular, for the model described by equation \eqref{E1.0}, when \( m(t) \) is expressed as \( m(t) = m \beta(t) \), where \( \beta(t) \) is a known function and \( m \) is a constant to be estimated based on a random sample \( \{X(t), t \in [0,T]\} \), the following formulas for the BLUE \( \widehat{m}_{BLU} \) of \( m \) and for the variance of the BLUE were derived:
		\beq
		\label{P-A17E}
		\widehat{m}_{BLU}=\frac{(\be,X)_R}{||\be||_R^2}, \q
		\Var(\widehat{m}_{BLU})=\frac{1}{||\be||_R^2},
		\eeq
		where $(\be,X)_R$ is the random variable from $L^2_\mathbb{S}(X)$ corresponding to $\be\in H(R)$, and  $||\cd||_R$ is the the norm of the RKHS  $H(R)$ (for details, see Parzen \cite{P-4}, pp. 459--462).}
\end{rem}

\begin{rem}
	{\rm Consider the model defined by equation \eqref{E1.0} with $\mathbb{U}=\mathbb{R}$ (continuous time), observed for $t \in [0,T]$. We assume that the process $Y(t)$ is mean-square continuous and that the regression function $\be(t)$ lies within the range of the covariance function $R(t,s)=\Exp[Y(t)Y(s)]$. Thus, for some function \(a(s)\), where \(s \in [0,T]\), we have
		\begin{equation}
			\label{C-E11}
			\be(t)=\int_0^TR(t,s)a(s)ds, \q 0\leq t\leq T.
		\end{equation}
		This relation yields
		\begin{equation}
			\label{C-E12}
			\Exp\left[Y(t)\int_0^Ta(s)Y(s)ds\right] =\be(t), \q 0\leq t\leq T.
		\end{equation}
		Therefore, the function
		\begin{equation}
			\label{C-E13}
			\psi_T=\int_0^Ta(s)Y(s)ds
		\end{equation}
		characterizes the BLUE based on the observations $\{X(t), \, 0\leq t\leq T\}$.
		Observe that 
		\begin{equation}
			\label{C-E14}
			||\psi_T||^2=\int_0^Ta(t)\be(t)dt.
		\end{equation}
		Then, for the BLUE of the unknown regression constant $m$ and its variance we have the following formulas:
		\begin{align}
			\label{C-E15}
			&\widehat m_{BLU}=\int_0^Ta(s)X(t)dt\left[\int_0^Ta(t)\be(t)dt\right]^{-1},\\
			\label{C-E16}
			&\Var(\widehat m_{BLU})=\left[\int_0^Ta(t)\be(t)dt\right]^{-1}.
		\end{align}
		These results can be compared to Parzen's estimator, shown in equation \eqref{P-A17E}, where we have \( a(t) = \beta(t) \).}
\end{rem}

\sn{Pseudo-best estimators}

The concept of pseudo-best estimators for the general model of the form:
\begin{equation}
	\label{Roz1}
	X(t)=m(t)+Y(t), \q t\in \mathbb{U},
\end{equation}
where $m(t)$ is an unknown deterministic function and the noise $Y(t)$ is a centered Gaussian stationary process with a spectral
function $F(\lambda)$, $\lambda \in {\Lambda}$,
was introduced by Yu. Rozanov (see Ibragimov and Rozanov \cite{IR}, Section 7.3).
%, p. 245). 

We are focusing on the special case where $m(t) = m$ is constant,  
and $F(\lambda)$ is absolutely continuous
(with respect to the Lebesgue measure) with a spectral density $f(\lambda)$.

The concept of pseudo-best estimators is based 
on the following observation (see Ibragimov and Rozanov \cite{IR}, Section 7.3, and Grenander and Rosenblatt \cite{GRo}, Section 2.1). 

Let \( F(\lambda) \) and \( G(\lambda) \) be two spectral functions, and let \( L^2(F) \) and \( L^2(G) \) represent the corresponding weighted \( L^2 \)-spaces (see Section \ref{dnd} for definitions). For an index set \( I \), we denote by \( H_I(F) \) and \( H_I(G) \) the closed subspaces of \( L^2(F) \) and \( L^2(G) \), respectively, 
spanned by the family of functions \( \{ e^{ik\lambda} : k \in I \} \). 
Specifically, we define:
$$H_I(F): = \ol{sp}\{e^{ik\la}, \, k\in I\}_{L^2(F)} \q \text{and} \q H_I(G): = \ol{sp}\{e^{ik\la}, \, k\in I\}_{L^2(G)}.
$$
Observe that if $G(\la) \leq F(\la)$, then $L^2(F)\subseteq L^2(G)$ and $H_I(F)\subseteq H_I(G)$. 

Let the function $h(\la)$ be an element of both $L^2(F)$ and $L^2(G)$. Consider the errors of approximation:
$$\si^2_F: = \inf_{t\in H_I(F)}||h-t||_{L^2(F)} \q \text{and} \q \si^2_G: = \inf_{t\in H_I(G)}||h-t||_{L^2(G)}.
$$
Then, we have 
$$\si^2_G\leq\si^2_F,$$
since the function that minimizes the error of approximation in \( L^2(F) \) produces an error that cannot be smaller than the error obtained when it is used as an approximation in the space \( L^2(G) \).

We revisit the model \( X(t) = m + Y(t) \), where \( t \in \mathbb{U} \) with \( \mathbb{U} = \mathbb{Z} \) or \( \mathbb{U} = \mathbb{R} \), and \( Y(t) \) is a centered stationary process that has a spectral density \( f(\lambda) \).
\begin{den}
A spectral density $\hat f$ that satisfies the inequality  $f(\la)\leq c \hat f(\la)$ for 
some positive
constant $c$ is referred to as a {\it pseudo-spectral density}.
A {\it pseudo-best estimator}, corresponding to a pseudo-spectral density $\hat f$, is defined as the best linear estimator with respect to $\hat f$. It is also referred to as an $\hat f$-estimator.

\end{den}
Note that for the least squares estimators, $\hat f$= constant; for the BLUE $\hat f =f$; and for
estimators corresponding to spectral densities from the class $\mathcal{F}_\al(g)$
(see equation \eqref {Fal-1}), $\hat f =f_\al:=(2\pi)^{-1}|1 - e^{i\lambda}|^{2\al}$. 

Let $\si^2(\hat f,f)$ denote the variance of the $\hat f$-estimator for $m$, and let $\si^2(f):=\si^2(f,f)$  represent the variance of the BLUE
$\hat m_{BLU}$.

Pseudo-best estimators for regression coefficients
for discrete time models with spectral densities that exhibit polynomial-type zeros were examined in the paper by
Rasulov \cite{Ras}. Specifically, for the class of spectral densities of the form:
\begin{equation}
	\label{Ras1}
	f(\la) =(2\pi)^{-1}\prod_{k=1}^n|e^{i\lambda_k} - e^{i\lambda}|^{2q_k}g(\la)
	=(2\pi)^{-1}|P(e^{i\lambda})|^{2}g(\la),
\end{equation}
where $q_k$ are non-negative integers and $g(\la)$ is a continuous positive
function, it was shown that the pseudo-best estimators corresponding to the pseudo-spectral density 
$\hat f(\la) =(2\pi)^{-1}|P(e^{i\lambda})|^{2}$
are asymptotically efficient relative to the BLUE.

The asymptotic efficiency of pseudo-best estimators for continuous time stationary models was established in the paper by Rasulov and Kholevo \cite{RKh}.

Consider the continuous time model as defined in \eqref{C-1.0}, where the noise process $Y(t)$ has a spectral density given by: 
\begin{equation}
	\label{RK-1}
	f(\la) =(2\pi)^{-1}|\lambda|^{2\al}g(\la) =f_\al(\la)g(\la), \q \al>-1/2,
\end{equation}
where the function $g(\la)$ is bounded, positive and continuous for $\la=0$ but may have
a finite number of power-type zeros at other points.

It is important to note that the class of spectral densities of the form given in \eqref{RK-1} serves as the continuous counterpart to the class \( \mathcal{F}_\alpha(g) \), as defined in equation \eqref{Fal-1}.

The following theorem describes the asymptotic behavior of the variance of the pseudo-best $f_\al$-estimator for an unknown mean $m$ in the model defined by equation \eqref{C-1.0}.
\begin{thm}
	\label{RK-T01}
	Let $f(\la)$ be defined as in equation \eqref{RK-1}, with $g(\la)$ satisfying the conditions stated therein. Then the following relation holds:
	\begin{equation}
		\label{RK-02}
		\lim_{T\to\f}T^{2\al+1}\si^2_T(f,f_\al)=c(\al)g(0),
	\end{equation}
where the constant $c(\al)$ is given by the following formula:
	\begin{equation}
	\label{RK-022}
	c(\al)=(2\al+1)\G^2(2\al+1)\G^{-2}(\al+1).
\end{equation}

\end{thm}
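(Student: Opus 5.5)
The plan is to separate the problem into an exact computation for the pure power density $f_\al(\la)=(2\pi)^{-1}|\la|^{2\al}$ and a kernel (approximate identity) argument that transfers the result to $f=f_\al g$. The scheme is the continuous-time analogue of the discrete-time argument in Theorem \ref{thm4.1} and Theorem \ref{thm7.2}. First I identify the $f_\al$-estimator on $[0,T]$. By scaling, $f_\al$ is self-similar: the process with spectral density $f_\al$ satisfies $Y(Tt)\overset{d}{=}T^{\al+1/2}Y(t)$ in the generalized sense. Hence the $f_\al$-estimator has the form $\widehat m_T=\int_0^T T^{-1}w(t/T)X(t)\,dt$, where $w$ is the optimal weight on $[0,1]$ with $\int_0^1 w=1$. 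By the continuous analogue of Lemma \ref{lem6.1}, I expect $w(u)\propto (u(1-u))^{\al}$, i.e. the limit of the Beta-type coefficients \eqref{5.7}. Its variance is $\si_T^2(f_\al,f_\al)=T^{-2\al-1}\si_1^2(f_\al,f_\al)$.

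Second, I write the variance of this fixed estimator under the true density:
\[
\si^2_T(f,f_\al)=\int_{\mathbb{R}}|\widehat w_T(\la)|^2 f_\al(\la)g(\la)\,d\la,\q \widehat w_T(\la)=\int_0^1 w(u)e^{i\la Tu}du .
\]
Substituting $\la=\mu/T$ gives
\[
T^{2\al+1}\si^2_T(f,f_\al)=\int_{\mathbb{R}}|\widehat w(\mu)|^2 (2\pi)^{-1}|\mu|^{2\al}\,g(\mu/T)\,d\mu ,
\]
where $\widehat w(\mu)=\int_0^1 w(u)e^{i\mu u}du$. Since $g$ is bounded and continuous at $0$, dominated convergence yields the limit $g(0)\int|\widehat w(\mu)|^2(2\pi)^{-1}|\mu|^{2\al}d\mu=g(0)\,\si_1^2(f_\al,f_\al)$. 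The dominating function is $\sup g\cdot|\widehat w|^2|\mu|^{2\al}$. Its integrability near $0$ needs $\al>-1/2$. Near infinity it needs $|\widehat w(\mu)|^2=O(|\mu|^{-2\al-2})$, which follows from the endpoint behavior $w(u)\sim c\,u^\al$ (the Fourier transform of an endpoint singularity $u^\al$ decays like $|\mu|^{-\al-1}$). The zeros of $g$ away from the origin are harmless, because only boundedness and continuity at $0$ are used.

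Third, I compute $c(\al)=\si_1^2(f_\al,f_\al)$ explicitly. The cleanest route is a limit from the discrete case. By \eqref{5.8} and \eqref{5.11}, $\si_n^2(f_\al)\sim n^{-2\al-1}\G(2\al+1)/B(\al+1,\al+1)$, and the discrete and continuous normalizations of the power density differ by a factor that must be tracked. Alternatively, I would use Theorem \ref{AE-T1}: find $a(s)=C(s(T-s))^{\al}$ solving $\int_0^T R(t-s)a(s)\,ds=1$ on $[0,T]$, and then $\Var=1/\int_0^T a$. Here $R(t)\propto|t|^{-2\al-1}$ is the Riesz-potential kernel, and $\int_0^T(s(T-s))^{\al}|t-s|^{-2\al-1}ds$ is constant in $t$ by the classical Riesz/Beta integral identity. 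Evaluating the constants with Gamma-function identities should produce $(2\al+1)\G^2(2\al+1)/\G^2(\al+1)$.

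The main obstacle is this last step, fixing the exact constant $c(\al)$. The difficulty is that the covariance of $|\la|^{2\al}$ is only a generalized function when $\al\ge 1/2$ or $\al<0$ (the Riesz kernel requires regularization), so the Beta-integral identity must be applied by analytic continuation in $\al$. The decay estimate on $\widehat w$ in the second step is routine by comparison.
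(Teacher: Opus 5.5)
The paper does not prove this theorem; it only cites Rasulov and Kholevo. So your argument has to stand on its own. In outline it does, and it uses a different and simpler mechanism than the discrete-time argument of Theorems \ref{thm4.1} and \ref{thm7.2}.

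Because $f_\al$ is exactly homogeneous, the map $\psi(\la)\mapsto\psi(T\la)$ carries the admissible class for $[0,1]$ onto the admissible class for $[0,T]$ and multiplies $\|\cdot\|^2_{f_\al}$ by $T^{-2\al-1}$. Hence the $f_\al$-estimator is an exact rescaling, and $T^{2\al+1}|\widehat w(T\la)|^2f_\al(\la)/\si_1^2(f_\al,f_\al)=TK(T\la)$ is automatically a Fej\'er-type kernel. You need no analogue of Lemmas \ref{lem4.1}, \ref{lem4.2}, \ref{lem4.5} and no weak variation; dominated convergence, using only that $g$ is bounded and continuous at $0$, completes Step 2.

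Step 2 needs a few small repairs:
\begin{itemize}
\item The scaling is $Y(T\cdot)\stackrel{d}{=}T^{-\al-1/2}Y(\cdot)$, not $T^{\al+1/2}$. Your conclusion $\si_T^2=T^{-2\al-1}\si_1^2$ is nevertheless correct.
\item The decay estimate on $\widehat w$ is superfluous. Integrability of $\sup g\,|\widehat w|^2f_\al$ is exactly the finiteness of $\si^2_1(f_\al,f_\al)$.
\item You should say explicitly that $f\le(\sup g)f_\al$ makes the $L^2(f_\al)$-limit defining the $f_\al$-estimator also its $L^2(f)$-representative. This is what justifies $\si_T^2(f,f_\al)=\int|\psi|^2f$.
\item Since $f_\al\notin L^1(\mathbb{R})$, there is no process with density $f_\al$, and $e^{it\la}\notin L^2(f_\al)$. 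The admissible class must therefore be built from smoothed observations $\int a(t)X(t)\,dt$, and Theorem \ref{AE-T1} cannot be quoted literally. Argue with Proposition \ref{AL0} in $L^2(f_\al)$ instead.
\end{itemize}

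Step 3 carries the real content, and as written it is not yet a proof. The ``discrete limit'' is only a heuristic: it would need a transfer between the discrete and continuous extremal problems. Note also that no normalization factor appears, since $c(\al)=\G(2\al+1)/B(\al+1,\al+1)$ is literally the constant in \eqref{5.11}.

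In the Riesz route you have the ranges reversed. For $-1/2<\al<0$ the kernel $R(t)=\pi^{-1}\G(2\al+1)(-\sin\pi\al)|t|^{-2\al-1}$ is locally integrable. There the identity $\int_0^T(s(T-s))^\al R(t-s)\,ds=\G(2\al+1)$ for $0<t<T$ holds directly and gives $T^{2\al+1}\si_T^2=c(\al)$. Regularization is needed for every $\al>0$, including $0<\al<1/2$, where $|t|^{-2\al-1}$ is not locally integrable.

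What must be continued in $\al$ is the optimality relation, not the variance. Put $w_\al=(u(1-u))^\al/B(\al+1,\al+1)$ and fix $\phi\in C_c^\infty(0,1)$. Both sides of $\int\widehat w_\al\overline{\widehat\phi}\,f_\al\,d\mu=c(\al)\overline{\widehat\phi(0)}$ are holomorphic on $\mathrm{Re}\,\al>-1/2$ and agree on $(-1/2,0)$, hence everywhere. This shows that $\widehat w_\al/c(\al)$ represents $\psi\mapsto\psi(0)$ on the admissible subspace. By Proposition \ref{AL0}, $w_\al$ is then the $f_\al$-optimal weight and $\si_1^2(f_\al,f_\al)=c(\al)$.

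The value $\int|\widehat w_\al|^2f_\al=c(\al)$ can also be checked directly for all $\al>-1/2$. Use the Poisson integral for $J_{\al+1/2}$ together with $\int_0^\infty J_\nu^2(x)x^{-1}dx=1/(2\nu)$. That computation alone does not prove optimality.
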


We define the class $\mathcal{G}$ as the set of functions $g(\lambda)$ that satisfy the following condition uniformly in $\lambda$:
$$|P(i\la)Q^{-1}(i\la)|^2g^{-1}(\la)\leq c <\f,$$
where $Q(z)$ is a polynomial with all its zeros in the left half-plane, and $P(z)=\sum_{k=0}^{n}p_k\exp\{zt_k\}$, with $t_k$, 
$k=0, \ldots, n$, being non-negative numbers,  and $P(0)\neq0$.

\begin{thm}
	\label{RK-T1}
		Let $f(\la)$ be defined as in equation \eqref{RK-1}, with $g(\la)$ satisfying the conditions stated therein. If, in addition, $g(\la)\in \mathcal{G}$, 
		then the following relation holds: 
	\begin{equation}
		\label{RK-2}
		\lim_{T\to\f}T^{2\al+1}\si^2_T(f)=c(\al)g(0),
	\end{equation}
	where the constant $c(\al)$ is defined as in \eqref{RK-022}.
	Moreover, the $f_\al$-estimator is asymptotically efficient:
	\begin{equation}
		\label{RK-3}
		\lim_{T\to\f}{\si^2_T(f)}[\si^{2}_T(f_\al,f)]^{-1}=1.
	\end{equation}
	\end{thm}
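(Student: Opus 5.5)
The plan is a sandwich argument for $\si^2_T(f)$: an upper bound from the pseudo-best $f_\al$-estimator, and a matching lower bound from the Cram\'er--Rao type inequality of Theorem \ref{AE-T2}. Both bounds will have the form $c(\al)g(0)T^{-2\al-1}(1+o(1))$.

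\emph{Upper bound.} Since the BLUE minimizes the variance among all unbiased linear estimators, we have $\si^2_T(f)\le \si^2_T(f_\al,f)$. Theorem \ref{RK-T01} then gives
\[
\limsup_{T\to\f} T^{2\al+1}\si^2_T(f)\le c(\al)g(0).
\]
Once \eqref{RK-2} is established, this inequality also yields \eqref{RK-3} at once.

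\emph{Lower bound, construction.} By Theorem \ref{AE-T2} it suffices to build, for every $\vs>0$, an element $\tilde\varphi_T\in H_{\mathbb{R}}(Y)$ with $\Exp[Y(t)\tilde\varphi_T]=1$ for $0\le t\le T$ and
\[
\|\tilde\varphi_T\|^2\le (1+\vs)\,T^{2\al+1}/\bigl(c(\al)g(0)\bigr)
\]
for large $T$. In the frequency domain, via Kolmogorov's isomorphism, this means finding $\psi_T\in L^2(f)$ with
\[
\int_{\mathbb{R}} e^{it\la}\ol{\psi_T(\la)}f(\la)\,d\la=1 \quad (0\le t\le T)
\]
and $\int|\psi_T|^2f\,d\la$ small. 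I would take $\psi_T=\psi^{(\al)}_T\,h$, where $\psi^{(\al)}_T$ is the frequency-domain representer of the optimal $f_\al$-estimator and $h$ is a correction factor.

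\emph{The correction factor.} The factor $h$ is built from the class $\mathcal{G}$: set $h(\la)=P(i\la)Q^{-1}(i\la)/\bigl(P(0)Q^{-1}(0)\bigr)$, divided by $g$. Since $P$ is a finite sum of exponentials $e^{zt_k}$ with $t_k\ge0$ and $Q$ has its zeros in the left half-plane, $h$ (or its reciprocal, depending on which side the analytic structure sits) is the boundary value of a function analytic in a half-plane. Multiplying by it therefore corresponds to a causal convolution in time with a kernel of bounded support shift $\max t_k$ plus an exponentially decaying part. The condition $|PQ^{-1}|^2g^{-1}\le c$ is exactly what keeps $\int|\psi_T|^2 f\,d\la$ finite and controlled.

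\emph{Lower bound, adjustment and evaluation.} The reproducing identity now holds only approximately on $[0,T]$, with errors near the endpoints. I would repair this by working on a slightly shorter interval $[-\de T,(1+\de)T]$, that is, by using the $f_\al$-optimal function for a time window of length $(1+2\de)T$, so that after convolution the constraint holds exactly on $[0,T]$. The resulting norm is then evaluated asymptotically. Because $\psi^{(\al)}_T$ concentrates near $\la=0$ at scale $1/T$ (an approximate identity, in analogy with Lemma \ref{lem4.2}), continuity of $g$ and of $|P/Q|^2$ at $0$ gives
\[
\|\psi_T\|_f^2\sim \|\psi^{(\al)}_{(1+2\de)T}\|^2_{f_\al}\cdot\frac{1}{g(0)}\sim \frac{\bigl((1+2\de)T\bigr)^{2\al+1}}{c(\al)g(0)}.
\]
Here the exact $f_\al$ computation is the continuous analogue of \eqref{5.8} and \eqref{5.11}, obtained by scaling. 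Letting $\de\to0$ then gives $\liminf T^{2\al+1}\si_T^2(f)\ge c(\al)g(0)$.

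\emph{Main obstacle.} The hard step is the exact-constraint adjustment: showing that the convolution error is absorbed by the enlarged window, and that the tail contribution $\int_{|\la|>\eta}|\psi_T|^2f\,d\la$ is $o(T^{-2\al-1})$ despite the zeros of $g$ away from $0$. The latter is where the bound $|PQ^{-1}|^2g^{-1}\le c$ must be used, together with a pointwise decay estimate for $\psi^{(\al)}_T$ away from the origin, the continuous analogue of Lemma \ref{lem6.2}.
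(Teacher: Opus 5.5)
\paragraph{Where the proposal stands.}
Your upper bound is correct. The strategy for the lower bound is also the right one; note that the paper gives no proof of this theorem, only a citation to Rasulov and Kholevo. By Theorem~\ref{AE-T2} it suffices to exhibit $\tilde\psi_T\in L^2(f)$ satisfying the constraint \emph{exactly} on $[0,T]$, with $\|\tilde\psi_T\|_f^2\le(1+o(1))\,T^{2\alpha+1}/(c(\alpha)g(0))$. Your norm computation is sound: by scale invariance of $f_\alpha$, the kernel $|\psi^{(\alpha)}_T|^2f_\alpha/\|\psi^{(\alpha)}_T\|^2_{f_\alpha}$ equals $TK_1(T\lambda)$, a Fej\'er-type kernel, and your multiplier makes $|R|^2/g$ bounded and continuous at $0$.

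\paragraph{The gap: the constraint is not exact.}
Your multiplier is $R(\lambda)=P(i\lambda)Q(0)/\bigl(P(0)Q(i\lambda)\bigr)$.
\begin{itemize}
\item The factor $P(i\lambda)$ is harmless: it corresponds to finitely many point masses at the $t_k$.
\item When $\deg Q\ge 1$, the factor $1/Q(i\lambda)$ (or its conjugate) is the Fourier transform of a one-sided kernel $q$. This kernel is a combination of terms $s^je^{-as}$ and is supported on a whole half-line.
\end{itemize}
Hence $\mathbb{E}[Y(t)\tilde\varphi_T]$ is $u$ convolved with $q$ (and the shifts), where $u$ equals $1$ only on the enlarged window. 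For every $t\in[0,T]$ this convolution samples $u$ outside the window. You therefore get $1$ up to an exponentially small error, but not $1$.

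The error really is nonzero. For $\deg Q=1$ and $-\tfrac12<\alpha<0$ one has $q>0$ and $u<1$ off the window, so the error has a strict sign.

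No finite enlargement of the window removes this, and Theorem~\ref{AE-T2} needs \eqref{A19E} to hold exactly. Turning an approximate constraint into a variance bound would require control of the BLUE weights for $f$, which may be signed and singular.

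Nor is $\deg Q\ge1$ an edge case. $P(i\lambda)$ is a nonzero almost periodic function and $f$ must be integrable on $\mathbb{R}$. So the inequality $|P(i\lambda)|^2\le c|Q(i\lambda)|^2g(\lambda)$ forces $Q$ to be nonconstant for every genuine spectral density covered by the theorem.

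\paragraph{How to repair it.}
Use only multipliers whose time-domain kernels have compact support, and obtain the decay against $|Q|$ from a smoothing factor instead of $1/Q$. Take
\[
R(\lambda)=\frac{P(i\lambda)}{P(0)}\Bigl(\frac{\sin(\eta\lambda)}{\eta\lambda}\Bigr)^{N},\qquad N\ge\deg Q,\quad \eta>0\ \text{fixed}.
\]
This choice has the properties you need.
\begin{itemize}
\item $R(0)=1$.
\item $R$ is the Fourier transform of a finite signed measure supported in a fixed interval $[-M,M]$, with $M=\max_k t_k+N\eta$.
\item By the $\mathcal{G}$ condition, $|R|^2/g\le c\,|Q(i\lambda)|^2\bigl(\sin(\eta\lambda)/(\eta\lambda)\bigr)^{2N}/|P(0)|^2$. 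This is bounded, and $|R|^2/g$ is continuous at $0$ with value $1/g(0)$. That is all your norm estimate used.
\end{itemize}
Now set $\tilde\psi_T=\psi^{(\alpha)}\overline{R}/g$, where $\psi^{(\alpha)}$ is the $f_\alpha$-representer for the window $[-M,T+M]$.
\begin{itemize}
\item The constraint then holds exactly on $[0,T]$. For $\alpha\ge0$, $u$ is only a distribution. Convolution with a compactly supported measure still preserves $u=1$ on the shrunken open interval, and $\overline{\tilde\psi_T}f\in L^1$ makes the result continuous, so the identity holds pointwise.
\item You obtain $\|\tilde\psi_T\|_f^2\sim(T+2M)^{2\alpha+1}/(c(\alpha)g(0))$.
\item Since $|R|^2/g$ is bounded, the part of the integral away from $\lambda=0$ is handled by the approximate-identity property alone. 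No continuous analogue of Lemma~\ref{lem6.2} is needed.
\end{itemize}

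\paragraph{A minor slip.}
With your normalization $\|\psi_T\|_f^2\asymp T^{2\alpha+1}$, the tail contribution must be $o(T^{2\alpha+1})$, not $o(T^{-2\alpha-1})$.
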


The condition in Theorem \ref{RK-T1} that $g(\la)$ be uniformly bounded can be relaxed to allow $g(\la)$ to have integrable singularities. In this case, Theorem \ref{RK-T1} extends Theorem \ref{thm5.2} to continuous time models.

For $\al=0$, Theorem \ref{RK-T1} implies the asymptotic efficiency of the LSE for $m$ with significantly weaker restrictions on the spectral density than those typically assumed in such problems (see Theorem \ref{C-GS-2} and Remark \ref{r7.2}). 
Specifically, the spectral density $f(\la)$ may exhibit power-type zeros.
%may have zeros of power type located outside the regression spectrum. 

\begin{rem}
	\label{r6.5c}
	{\rm The results of Theorems \ref{RK-T01} and \ref{RK-T1} regarding
		the model $X(t)=m+Y(t)$ ($t\in\mathbb{R}$) with a spectral density as in
		\eqref{RK-1} can be easily extended to the following simple regression model: 
		$$X(t)=me^{i\la_0 t}+Y(t),\q t\in\mathbb{R},$$ 
		where $\la_0\in\mathbb{R}$ is a fixed constant (see Remark \ref{r6.5} for the discrete-time case).}
\end{rem}

\s{Asymptotic behavior of the variance of BLUE for deterministic models}
\label{Det}
Summarizing the results presented in Sections \ref{ND-BLU} - \ref{Ext}, regarding the asymptotic behavior of the BLUE for 
the unknown mean $m$ in the model:
$$	
X(t)=m+Y(t), \q t\in \mathbb{Z}:=\{0,\pm 1,\pm 2,\ldots\},
$$
where the noise $Y(t)$ is a centered stationary process with a spectral density function $f(\lambda)$,
we conclude that the variance
$\sigma^2_n(f)$ of the BLUE depends asymptotically only on the behavior of
the spectral density $f$ near the origin. 
Furthermore, for certain classes of nondeterministic models with spectral
densities that satisfy the condition:
$$ f(\la) \simeq|\lambda|^\nu\q (\nu> -1) \q {\rm as} \q\lambda \rightarrow 0,$$
the variance $\sigma^2_n(f)$ decreases hyperbolically (like a power):
$$ \sigma^2_n(f) \simeq n^{-\nu -1}\q (\nu> -1) \q {\rm as} \q n\to\f.$$

The following question arises naturally: 
Is it possible to increase the order of zero of the spectral density $f$ at the origin to achieve exponential decay of the variance $\sigma^2_n(f)$ as $n\to\infty$?

In this section, we address the question from a negative perspective. Specifically, we show that if the spectral density
$f(\lambda)$ vanishes only at the origin, it is impossible to achieve exponential
decay of $\sigma^2_n(f)$, regardless of the order of the zero of
$f(\lambda)$ at the origin. 
Moreover, to achieve exponential decay of the variance
$\sigma^2_n(f)$ as $n \to \infty$, the underlying process $X(t)$ must be purely deterministic. This means that the spectral density
$f(\lambda)$ should vanish on a set of positive Lebesgue measure in any vicinity of zero.

It is worth mentioning that a similar problem in the prediction theory of stationary processes has been considered in the classical paper by Rosenblatt \cite{Ros}
(see also the survey paper by Babayan and Ginovyan \cite{BG2019}).
The key difference is that in the prediction problem, the asymptotic behavior of the prediction error variance is determined
by the spectral density $f(\lambda)$ over the entire interval 
$[- \pi, \pi]$, rather than just at the origin (see also Remark \ref{RMT4}).
Additionally, in Rosenblatt's work \cite{Ros},
the technique of orthogonal polynomials on the unit circle was used to establish the exponential decay of the prediction error variance. 
However, this approach is not applicable in the case of "BLUE variance." Instead, we adopt the method developed by Babayan et al. \cite{BGT}, which employs tools from complex analysis and measure theory.

\sn{Setting and results}
\label{S-R}
Let $E_f$ denote the spectrum of the process $X(t)$:
	\begin{equation}
	\label{Sp}
	E_f:=\{e^{i\la}, \,\, f(\la)>0\},
\end{equation} 
%Thus, the closure $\ol E_f$ of $E_f$ is the support of the spectral density $f$.
and let $\widetilde\tau_f:=\widetilde\tau(E_f)$  represent the generalized Chebyshev constant of the set $E_f$ (for the definition of $\widetilde\tau(E_f)$, see Section \ref{P} ). 

The main results of this section are presented in the following theorems.
\begin{thm}
	\label{bas}
	If the spectrum $E_f$ of the process $X(t)$
is either an arc of the unit circle or the unit circle itself, then the sequence
	$\{\sqrt[n]{\sigma_n(f)}, \, n\in\mathbb{N}\}$ converges to a limit
	$\widetilde{\tau}(E_f)\leq 1$. Specifically, we have 
	\begin{equation}
		\label{km1}
		\lim_{n \rightarrow \infty} \sqrt[n]{\sigma_n(f)} = \widetilde{\tau}(E_f)\leq 1.
	\end{equation}
\end{thm}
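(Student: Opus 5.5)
The plan is to squeeze $\sqrt[n]{\sigma_n(f)}$ between two quantities that both tend to $\widetilde\tau(E_f)$ as $n\to\infty$. The tool is the extremal characterization \eqref{OSm2}, $\sigma_n^2(f)=\min_{q_n\in\mathcal{Q}_n(1)}\|q_n\|_f^2$. I take the generalized Chebyshev constant (Section \ref{P}) in its extremal form
$$\widetilde\tau(E)=\lim_{n\to\infty}\Big(\min_{q\in\mathcal{Q}_n(1)}\max_{z\in\ol E}|q(z)|\Big)^{1/n}.$$
Set $m_n(E):=\min_{q\in\mathcal{Q}_n(1)}\max_{\ol E}|q|$. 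Since $\mathcal{Q}_n(1)\cdot\mathcal{Q}_k(1)\subset\mathcal{Q}_{n+k}(1)$, the sequence $\ln m_n$ is subadditive, so Fekete's lemma gives existence of the limit. The constant polynomial $q\equiv1$ gives $m_n\le1$, hence $\widetilde\tau\le1$. If $1\in\ol E_f$, the constraint $q(1)=1$ forces $m_n\ge1$, so $\widetilde\tau=1$. Thus the interesting case is an arc bounded away from $1$.

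\emph{Upper bound.} Let $q\in\mathcal{Q}_n(1)$ be a Chebyshev-type extremal polynomial for $\ol E_f$. Since $f=0$ off $E_f$,
$$\sigma_n^2(f)\le\int_{E_f}|q|^2f\,d\la\le m_n(E_f)^2\,\|f\|_1 .$$
Taking $n$-th roots gives $\limsup_n\sqrt[n]{\sigma_n(f)}\le\widetilde\tau(E_f)$.

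\emph{Lower bound.} Fix $\varepsilon>0$ and let $A_\varepsilon:=\{\la: e^{i\la}\in E_f,\ f(\la)\ge\varepsilon\}$. Because $f>0$ on the arc $E_f$, the Lebesgue measure of $E_f\setminus A_\varepsilon$ tends to $0$ as $\varepsilon\to0$. Now let $p_n$ be the optimal polynomial for $f$. I would argue in three steps.
\begin{itemize}
\item[(i)] Restricting the integral to $A_\varepsilon$ gives $\sigma_n^2(f)\ge\varepsilon\int_{A_\varepsilon}|p_n|^2d\la$.
\item[(ii)] A Nikolskii-type inequality on arcs bounds $\max_{A'}|p_n|$ by $Cn^{c}\,\|p_n\|_{L^2(A_\varepsilon)}$. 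Here $A'\subset A_\varepsilon$ is chosen so that the sup over $A'$ is controlled; $C$ and $c$ are polynomial-size constants, harmless after $n$-th roots.
\item[(iii)] Remez's inequality for trigonometric polynomials on an arc $I$ gives $\max_{I}|p_n|\le e^{Cn\,\eta}\max_{A'}|p_n|$ whenever $|I\setminus A'|\le\eta$, with $\eta\to0$ as $\varepsilon\to0$.
\end{itemize}
Combining (i)--(iii) with $p_n\in\mathcal{Q}_n(1)$ yields
$$\sigma_n(f)\ge\sqrt\varepsilon\,(Cn^{c})^{-1}e^{-Cn\eta}\,m_n(\ol E_f).$$
Hence $\liminf_n\sqrt[n]{\sigma_n(f)}\ge\widetilde\tau(E_f)e^{-C\eta}$. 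Letting $\varepsilon\to0$, so that $\eta\to0$, completes the proof of \eqref{km1}.

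\emph{Main obstacle.} The hard part is steps (ii)--(iii): passing from $L^2$ control on a set of almost full measure, where $f$ may be tiny, to sup control on the whole arc with only a loss $e^{o(n)}$. I would first try the Remez inequality in its form for subsets of arcs of positive measure, where the loss factor is $\exp\{Cn\sqrt{|I\setminus A'|}\}$. To control $\max_{A'}|p_n|$ by the $L^2$ norm, I would take $A'$ as a large compact subset of $A_\varepsilon$. The exponent $\eta$ must be chosen depending on $\varepsilon$ before letting $n\to\infty$, and the order of limits must be respected carefully.
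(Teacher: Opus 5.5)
\textbf{Verdict: there is a genuine gap, in step (ii).} Your upper bound is the paper's. The skeleton of your lower bound parallels the paper's: your set $A_\varepsilon$ plays the role of the absolute continuity of Lebesgue measure with respect to $\mu_f$ on $E_f$, and your Remez inequality plays the role of the Mazurkiewicz theorem (Proposition~\ref{maz}). Step (ii), which you yourself flag as the main obstacle, does not work as stated.

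\textbf{Why step (ii) fails.} The set $A_\varepsilon=\{\lambda: f(\lambda)\ge\varepsilon\}$ is only measurable, and it can contain no arc at all for any $\varepsilon>0$ even though $f>0$ on the whole arc. For example, exhaust the arc up to a null set by fat Cantor sets $C_k$ and put $f=2^{-k}$ on $C_k\setminus\bigcup_{j<k}C_j$. Then every $A_\varepsilon$ is a closed nowhere dense set plus a null set. So there are no subarcs of $A_\varepsilon$ to which a Nikolskii inequality could be applied.

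Taking $A'$ to be a fixed large compact subset of $A_\varepsilon$ does not help either. For a general compact set, $\max_{A'}|p_n|$ is not controlled by $\|p_n\|_{L^2(A_\varepsilon)}$ with polynomial constants: near points of $A'$ where $A_\varepsilon$ is very thin, a polynomial can be of size $1$ while its $L^2(A_\varepsilon)$ norm is exponentially small. Selecting $A'$ to avoid such points and proving even an $e^{o(n)}$ bound is a regularity statement for $\mathbb{1}_{A_\varepsilon}\,d\lambda$, of the same depth as the theorem. Its standard proof is precisely the level-set argument below.

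\textbf{The missing idea.} Let the good set depend on $n$ through a level set of $p_n$, using Chebyshev's inequality instead of a Nikolskii inequality. For a constant $K>1$ put
$$A'_n:=\{\lambda\in A_\varepsilon:\ |p_n(e^{i\lambda})|\le K\|p_n\|_{L^2(A_\varepsilon)}\}.$$
Then $|A_\varepsilon\setminus A'_n|\le K^{-2}$ for every $n$, and $\max_{A'_n}|p_n|\le K\|p_n\|_{L^2(A_\varepsilon)}$ holds by construction. If your Remez inequality needs a closed set, use the closed set $\{|p_n|\le K\|p_n\|_{L^2(A_\varepsilon)}\}\cap\overline{E_f}$, which contains $A'_n$.

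Apply your step (iii) with $|\overline{E_f}\setminus A'_n|\le\eta:=|E_f\setminus A_\varepsilon|+K^{-2}$. This gives
$$\widetilde m_n(E_f)\le\max_{\overline{E_f}}|p_n|\le e^{Cn\sqrt{\eta}}K\|p_n\|_{L^2(A_\varepsilon)}\le e^{Cn\sqrt{\eta}}K\varepsilon^{-1/2}\sigma_n(f).$$
Let $n\to\infty$, then $K\to\infty$ and $\varepsilon\to0$; this yields \eqref{m9}.

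\textbf{Comparison with the paper.} The paper's set $G_n$ in \eqref{kb1} does exactly this job. Its threshold $\sqrt{\sigma_n(f)\widetilde m_n(E_f)}$ is what forces the split of $\mathbb{N}$ into $J_1$ and $J_2$. With a threshold that is a fixed multiple of $\|p_n\|_{L^2(A_\varepsilon)}$, your repaired route needs no such dichotomy.

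\textbf{A minor point.} The case $1\in\overline{E_f}$ is not trivial for the lower bound; it is the content of Theorem~\ref{MT}(a). Once repaired, your argument covers it uniformly, so nothing needs to be done differently there.
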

\begin{thm}
	\label{MT}
	The following assertions hold:
	\begin{itemize}
		\item[(a)] If the spectral density $f(\lambda)$ is positive almost
		everywhere in some neighborhood  of zero, then
		$$
		\lim_{n \rightarrow \infty} \sqrt[n]{\sigma_n(f)} = 1.
		$$
		\item[(b)] If the spectral density $f(\lambda)$ vanishes almost
		everywhere for $|\lambda| < \alpha$, where $0<\alpha<\pi,$ %and is positive otherwise,
		then $\sigma_n(f)$ decreases at least exponentially. More precisely, we have
		$$
		\lim_{n \rightarrow \infty} \sqrt[n]{\sigma_n(f)}
		\leqslant \cos({\alpha}/{2}).
		$$
	\end{itemize}
\end{thm}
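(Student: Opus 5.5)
We prove the two parts separately. Both rest on the extremal characterization \eqref{OSm2}, $\sigma_n^2(f)=\min_{q_n\in\mathcal{Q}_n(1)}\|q_n\|_f^2$, together with the monotonicity in Proposition \ref{R2}.

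\textbf{Part (a): the upper bound.} By Proposition \ref{R2}(a), $\sigma_n^2(f)\le\sigma_0^2(f)=r(0)$. Hence $\limsup_n\sqrt[n]{\sigma_n(f)}\le 1$.

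\textbf{Part (a): the lower bound.} Let $f>0$ a.e. on $(-\delta,\delta)$. Put $E_k:=\{|\lambda|<\delta:\ f(\lambda)\ge 1/k\}$. For $k$ large, $E_k$ has positive measure, and its density point structure lets us choose a set accumulating near $0$; it suffices that $E_k$ is a compact set of positive measure. Then for every $q_n\in\mathcal{Q}_n(1)$,
$$\|q_n\|_f^2\ \ge\ k^{-1}\int_{E_k}|q_n(e^{i\lambda})|^2d\lambda .$$
The task is therefore to bound $\min_{q_n(1)=1}\int_{E}|q_n|^2d\lambda$ from below by a quantity that is not exponentially small, for a compact $E\subset\mathbb{T}$ of positive measure. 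The plan is a Remez/Nazarov-type inequality for polynomials on the circle, which can also be phrased via the capacity of $E$. For a set $E$ of measure $m$ it gives
$$\sup_{\mathbb{T}}|q_n|\le \left(\frac{C}{m}\right)^{n}\sup_E|q_n| .$$
That bound is exponential in $n$, so it does not close the argument as stated. The refinement is to use the \emph{local} behaviour at $1$: we need the estimate at the point $1$ only, and $E$ is dense in measure near $1$. Shrinking $\delta\to0$ and applying the Remez inequality on the arc $\{|\lambda|<\delta\}$, where $E$ has relative measure $\ge 1-\eta$, the constant becomes $(1+o(1))^n$ as $\eta,\delta\to0$. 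More precisely, combining Remez with the Bernstein--Walsh bound for the arc (whose Green function vanishes at the interior point $1$) gives $|q_n(1)|\le e^{\epsilon n}\sup_E|q_n|$. We then pass from sup to $L^2$ using Nikolskii's inequality $\sup|q_n|\le Cn\,\|q_n\|_{L^2}$, adapted to the set; the polynomial factor is harmless. This yields
$$1=|q_n(1)|^2\le e^{2\epsilon n}\,Cn^2\,k\,\|q_n\|_f^2 ,$$
so $\liminf_n\sqrt[n]{\sigma_n(f)}\ge e^{-\epsilon}$ for every $\epsilon>0$, and (a) follows.

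\textbf{Part (b).} Since $f=0$ a.e. on $|\lambda|<\alpha$, we have $\|q_n\|_f^2\le \sup_{\alpha\le|\lambda|\le\pi}|q_n(e^{i\lambda})|^2\,r(0)$. It therefore suffices to exhibit $q_n\in\mathcal{Q}_n(1)$ with $\sup_{\alpha\le|\lambda|\le\pi}|q_n|\le C(\cos(\alpha/2))^n$ up to subexponential factors. Substitute $z=e^{i\lambda}$ and $x=\cos(\lambda/2)$, and consider the even-type polynomial $q_{2m}(z)=z^{m}T_{m}(\ldots)$ built from Chebyshev polynomials. Concretely, take the polynomial in $\cos\lambda$
$$P(\lambda)=T_m\!\left(\frac{2\cos\lambda-(1+\cos\alpha)}{1-\cos\alpha}\right)\Big/T_m\!\left(\frac{1-\cos\alpha}{1-\cos\alpha}\cdot\ldots\right),$$
normalized to be $1$ at $\lambda=0$. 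Multiplied by $z^{m}$, it becomes an algebraic polynomial of degree $2m$. On $\alpha\le|\lambda|\le\pi$ its size is $1/T_m(\rho)$, where $\rho$ is the image of $\lambda=0$. The asymptotics $T_m(\rho)\approx\frac12(\rho+\sqrt{\rho^2-1})^m$ give a decay rate per degree equal to $(\rho+\sqrt{\rho^2-1})^{-1/2}$, which evaluates to $\cos(\alpha/2)$-type quantities. Checking this computation, and handling odd $n$ by monotonicity, is the main technical verification. Part (b) then follows from Theorem \ref{bas}, since $\widetilde\tau$ of the arc $\alpha\le|\lambda|\le\pi$ is bounded by $\cos(\alpha/2)$.

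\textbf{Main obstacle.} The delicate step is part (a): obtaining the subexponential lower bound from positivity a.e. near $0$ alone. This requires the local Remez-plus-Green-function argument at the point $1$.
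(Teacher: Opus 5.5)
\textbf{Part (a).} Your upper bound ($\sigma_n^2(f)\le\sigma_0^2(f)=r(0)$) is fine. So is the first half of your lower bound. On $\Gamma=\{e^{i\lambda}:|\lambda|\le\beta\}$, where $f>0$ a.e., Proposition~\ref{maz} gives $1=|q_n(1)|\le\max_\Gamma|q_n|\le(1+\epsilon)^n\max_F|q_n|$ for any closed $F\subset\Gamma$ with $\mu(\Gamma\setminus F)<d\,\delta(\epsilon)$. No Green-function argument is needed here, since $1\in\Gamma$.

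The gap is the next step, ``Nikolskii's inequality $\sup|q_n|\le Cn\|q_n\|_{L^2}$, adapted to the set.'' Nikolskii holds on the circle or on an arc, not on an arbitrary closed set. Moreover, $\{f\ge 1/k\}$ can be nowhere dense even when its complement in $\Gamma$ is tiny, so you cannot reduce to arcs. The point where $\max_F|q_n|$ is attained may be a point where $F$ has density zero. Bernstein--Markov only guarantees $|q_n|\ge\frac12\max_F|q_n|$ on an arc of length about $n^{-2}(1+\epsilon)^{-n}$ around that point, and this arc may contain almost none of $F$. Concretely, let $F$ be $\Gamma$ with the punctured arc $0<|\lambda|<\eta$ removed, so $1$ is an isolated point of $F$. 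Polynomials with $q_n(1)=1$ that are exponentially small on $F\setminus\{1\}$ then make $\max_F|q_n|/\|q_n\|_{L^2(F)}$ grow exponentially. Nor can you apply Nikolskii on all of $\Gamma$, since $\int_\Gamma|q_n|^2$ is not controlled by $\int_{\{f\ge 1/k\}}|q_n|^2$. So the step you yourself flag as the main obstacle is asserted, not proved.

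The missing idea is to use Mazurkiewicz in level-set form. The paper does this inside the proof of Theorem~\ref{bas} (the sets $G_n$, $J_1$, $J_2$). It then gets (a) by passing to $f\mathbf{1}_{\{|\lambda|\le\alpha\}}$ (Proposition~\ref{R2}), applying Theorem~\ref{bas}, and using $\widetilde\tau(\Gamma_\alpha)=1$. The direct version goes as follows.
Set $M_n=\max_\Gamma|q_n|\ge 1$ and take the closed set $F_n=\{z\in\Gamma:|q_n(z)|\le\frac12(1+\epsilon)^{-n}M_n\}$. If $\mu(\Gamma\setminus F_n)<d\,\delta(\epsilon)$, Mazurkiewicz would give $M_n\le M_n/2$. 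Hence $|q_n|>\frac12(1+\epsilon)^{-n}$ on a subset of $\Gamma$ of measure at least $d\,\delta(\epsilon)$, uniformly in $n$ and in $q_n\in\mathcal{Q}_n(1)$. Choose $k$ with $|\{|\lambda|\le\beta:\ f(\lambda)<1/k\}|<d\,\delta(\epsilon)/2$. Then $\|q_n\|_f^2\ge\frac{d\,\delta(\epsilon)}{8k}(1+\epsilon)^{-2n}$, so $\liminf_n\sqrt[n]{\sigma_n(f)}\ge(1+\epsilon)^{-1}$. No sup-to-$L^2$ inequality is needed at all.

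\textbf{Part (b).} The reduction $\|q_n\|_f^2\le r(0)\max_{\alpha\le|\lambda|\le\pi}|q_n(e^{i\lambda})|^2$ is right, but the explicit polynomial is wrong as written. The map $\lambda\mapsto\frac{2\cos\lambda-(1+\cos\alpha)}{1-\cos\alpha}$ sends the arc $|\lambda|\le\alpha$ (the one containing $1$) onto $[-1,1]$. So the normalization at $\lambda=0$ is just $T_m(1)=1$, and $P$ is exponentially \emph{large} on $\alpha\le|\lambda|\le\pi$.

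The correct map is $u=\frac{2\cos\lambda+1-\cos\alpha}{1+\cos\alpha}$. It sends $[-1,\cos\alpha]$ onto $[-1,1]$ and $\lambda=0$ to $\rho=\frac{3-\cos\alpha}{1+\cos\alpha}$. The resulting rate per degree is $(\rho+\sqrt{\rho^2-1})^{-1/2}=\frac{\cos(\alpha/2)}{1+\sin(\alpha/2)}$, which actually improves on $\cos(\alpha/2)$. None of this is needed, though. As in the paper, $q_n(z)=((1+z)/2)^n\in\mathcal{Q}_n(1)$ satisfies $|q_n(e^{i\lambda})|=|\cos(\lambda/2)|^n\le\cos^n(\alpha/2)$ on $\alpha\le|\lambda|\le\pi$, for every $n$ with no parity issue. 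Your direct bound controls the $\limsup$. The existence of the limit is what the paper takes from Theorem~\ref{bas}, under the assumption that the spectrum is the arc $\alpha\le|\lambda|\le\pi$.
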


As an immediate consequence of Theorem \ref{MT}(a)
we have the following result.

\begin{cor}
	\label{cor}
	A necessary condition for the variance $\sigma^2_n(f)$ to decrease to zero
	exponentially as $n\to\infty$ is that the spectral density $f(\lambda)$
	vanishes on a set of positive Lebesgue measure in any neighborhood of zero.
\end{cor}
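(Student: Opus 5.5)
The corollary is a contrapositive of Theorem \ref{MT}(a). Say the variance decreases to zero exponentially if there are constants $C>0$ and $0<q<1$ with $\sigma_n^2(f)\le Cq^n$ for all large $n$. In that case
$$\limsup_{n\to\infty}\sqrt[n]{\sigma_n(f)}\le \sqrt q<1.$$
Suppose, to the contrary, that there is a neighborhood $(-\delta,\delta)$ of zero in which $f$ does not vanish on any set of positive Lebesgue measure. Then $f>0$ almost everywhere on $(-\delta,\delta)$, so Theorem \ref{MT}(a) applies and gives $\lim_{n\to\infty}\sqrt[n]{\sigma_n(f)}=1$. This contradicts the bound above. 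Hence in every neighborhood of zero the set $\{\lambda: f(\lambda)=0\}$ must have positive Lebesgue measure.

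The only step needing care is the negation of the conclusion. The phrase ``vanishes on a set of positive measure in any neighborhood of zero'' fails exactly when some neighborhood contains only a null set of zeros of $f$. That is precisely the hypothesis ``$f$ is positive almost everywhere in some neighborhood of zero'' of Theorem \ref{MT}(a). The rest is immediate, because exponential decay of $\sigma_n^2(f)$ and of $\sigma_n(f)=\sqrt{\sigma_n^2(f)}$ are equivalent, with ratio $q$ replaced by $\sqrt q$.

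The real work is in Theorem \ref{MT}(a) itself, which we take as given. To justify it, the plan would be as follows. Since $f$ may vanish or be small away from zero, first use Proposition \ref{R2}(b) to compare $f$ from below with a simpler density. Then obtain a lower bound $\sigma_n^2(f)\ge \sigma_n^2(\tilde f)$ that decays only subexponentially. One route is to reduce to $\tilde f=\varepsilon\,\mathbf 1_{A}$, where $A=\{|\lambda|<\delta,\ f>\varepsilon\}$ has positive measure. Then show that polynomials normalized by $q_n(1)=1$ cannot have $\|q_n\|_{L^2(A)}$ exponentially small. This would use a Remez/Nikolskii-type inequality, or the Chebyshev-constant machinery behind Theorem \ref{bas}, which yields $\widetilde\tau=1$ when the spectrum accumulates at $1$ with positive density. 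This is the main obstacle, but it is not needed for the corollary.
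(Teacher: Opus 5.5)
Your argument is correct and is exactly the paper's: the corollary is stated there as an immediate consequence of Theorem~\ref{MT}(a), i.e., as its contrapositive. Your negation of ``vanishes on a set of positive measure in every neighborhood of zero'' as ``$f>0$ a.e.\ on some neighborhood of zero'' is precisely that theorem's hypothesis, and the resulting limit $\lim_{n\to\infty}\sqrt[n]{\sigma_n(f)}=1$ contradicts $\limsup_{n\to\infty}\sqrt[n]{\sigma_n(f)}<1$. Your closing sketch of how one might prove Theorem~\ref{MT}(a) is not needed for the corollary; the paper proves that theorem by restricting $f$ to an arc $\Gamma_\alpha$ and invoking Theorem~\ref{bas} with $\widetilde\tau(\Gamma_\alpha)=1$.
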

	
	\begin{rem}
\label{RMT3}
{\rm
Corollary \ref{cor} demonstrates that if the spectral density \( f(\lambda) \) only vanishes at the origin, it is impossible to achieve exponential decay of \( \sigma_n^2(f) \), regardless of how high the order of the zero of \( f(\lambda) \) is at that point. In this context, it is important to describe the asymptotic behavior of
$\sigma^2_n(f)$ for the flat-zero and Pollaczek-Szeg\H{o} models (see Example \ref{Ex-FZM} and Remark \ref{PSM}).}
	\end{rem}
	\begin{rem}
	\label{RMT1}
	{\rm It follows from relation \eqref{km1} that the question of the exponential decay of $\sigma_n(f)$ as $n\to\f$ does not  depend on the specific form of $f(\lambda)$ but is determined solely by the value of the generalized Chebyshev constant $\widetilde\tau(E_f)$.
	Denoting $\g_n:=\si_n(f)/\widetilde\tau_f^n $, from \eqref{km1} we infer that $\lim_{n \to \infty} \sqrt[n]{\g_n} =1$ and
	\begin{equation}
	\label{sfg}
	\si_n(f)=\widetilde\tau_f^n\cd \g_n.
	\end{equation}
	Thus, when $\widetilde{\tau}_f <1$, the error $\sigma_n(f)$ can be decomposed into a product of two factors: one, $\widetilde{\tau}_f^n$, is a geometric progression, and the other, $\g_n$, is an exponentially neutral sequence (see Definition \ref{ekd1}).}
	\end{rem}

	\sn{Preliminaries}
	\label{P}
	
To prove Theorems \ref{bas} and \ref{MT}, we will need some auxiliary results involving Chebyshev polynomials and the Chebyshev constant for the class $\mathcal{Q}_n(1)$.
		
First, we recall the definitions of the Chebyshev polynomial and the Chebyshev constant for the class $\mathcal{Q}_n$ of monic polynomials. For the definitions of the classes  $\mathcal{Q}_n$ and $\mathcal{Q}_n(1)$, see Section \ref{SzPr}.	

Let $F$ be a bounded closed (compact) 
set in the complex plane $\mathbb{C}$. We define 
	$$m_n(F): = \inf_{q_n\in\mathcal{Q}_n}\max_{z\in F}|q_n(z)|.$$
	
		It is well-known (see, e.g., Goluzin \cite{GMG}, Section 7.1) that there exists a unique monic polynomial $T_n(z,F)$ from the class $\mathcal{Q}_n$,
	called the {\it Chebyshev polynomial} of $F$ of order $n$, such that
	\beq
	\label{td6}
	m_n(F)=\max_{z\in F}|T_n(z,F)|.
	\eeq
	
	Fekete \cite{F} (see also Goluzin \cite{GMG}, Section 7.1) proved that $\lim_{n\to\f}(m_n(F))^{1/n}$ exists.
	This limit, denoted by $\tau(F)$, is called the {\it Chebyshev constant}
	for the set $F$. Thus,
	\beq
	\label{td7}
	\tau(F): = \lim_{n\to\f}(m_n(F))^{1/n}.
	\eeq
The Chebyshev constant $\tau(F)$ is a metric characteristic of the 
compact set $F$. 
One of the fundamental results of geometric complex analysis is the
classical theorem by Fekete and Szeg\H{o}, which states that for any compact set $F$ in the complex plane  $\mathbb{C}$, the Chebyshev constant, the transfinite diameter, and the capacity of $F$ coincide, even though they are defined from very different perspectives.
Specifically, the Chebyshev constant of $F$ characterizes the minimal uniform deviation of a monic polynomial on $F$, the transfinite diameter of the set $F$ characterizes its asymptotic size,  and the capacity of $F$
describes the asymptotic behavior of the Green function at infinity
(for details, see, e.g., Fekete \cite{F}, Goluzin \cite{GMG}, Chapter 7, and Szeg\H{o} \cite{Sz1}, Chapter 16).	
	
The next lemma is analogous to the assertion regarding the existence of Chebyshev polynomials for the class $\mathcal{Q}_n(1)$. 
\begin{lem}
\label{L1}
Let $F$ be an arbitrary infinite bounded closed set in the complex plane $\mathbb{C}$.
Then, in the class $\mathcal{Q}_n(1)$, there exists a polynomial
$\widetilde T_n(z):= \widetilde T_n(z,F)$ with the least maximum modulus on $F$:
\begin{equation}
	\label{m3}
	\max_{z \in F} |\widetilde T_n(z)|=\min_{q_n\in\mathcal{Q}_n(1)}\max_{z \in F} |q_n(z)|.
\end{equation}
\end{lem}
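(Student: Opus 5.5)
The plan is a standard compactness argument in the finite-dimensional space of polynomials of degree at most $n$. Identify $q_n(z)=c_0z^n+\cdots+c_n$ with its coefficient vector $c=(c_0,\ldots,c_n)\in\mathbb{C}^{n+1}$. The class $\mathcal{Q}_n(1)$ then becomes the affine hyperplane $\{c:\sum_k c_k=1\}$, which is closed and nonempty; for instance it contains the constant polynomial $q_n\equiv1$. Define the functional
\[
N(c):=\max_{z\in F}|q_n(z)|.
\]
This is a seminorm on $\mathbb{C}^{n+1}$. It is finite and continuous because $F$ is compact and $q_n$ depends continuously on $c$, uniformly for $z\in F$. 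Moreover, $N$ is a genuine norm, because $F$ is infinite: if $N(c)=0$, then $q_n$ vanishes at infinitely many points, so $q_n\equiv 0$ and $c=0$.

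With this in hand, set $\mu_n:=\inf_{q_n\in\mathcal{Q}_n(1)}N(c)$. Testing with $q_n\equiv1$ gives $\mu_n\le N(1)=1$. Consider the set
\[
K:=\{c:\ \textstyle\sum_k c_k=1,\ N(c)\le 1\}.
\]
It is closed, since it is the intersection of a closed hyperplane with a closed sublevel set of a continuous function. It is also bounded, because all norms on $\mathbb{C}^{n+1}$ are equivalent and so $N(c)\le1$ forces $\|c\|\le C_n$ for some constant $C_n$. Hence $K$ is compact and nonempty. The continuous function $N$ therefore attains its minimum on $K$ at some $\widetilde c$. That minimum equals $\mu_n$, since any competitor with $N>1$ cannot beat the value $1$ already available. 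Take $\widetilde T_n(z,F)$ to be the polynomial with coefficient vector $\widetilde c$; it then satisfies \eqref{m3}.

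The only step that needs care is the norm property, that is, the boundedness of $K$. This is exactly where the hypothesis that $F$ is infinite enters. If $F$ were finite with at most $n$ points, $N$ would only be a seminorm, $K$ could be unbounded, and the compactness argument would break down. So the proof should state this step explicitly and invoke the finite-dimensional equivalence of norms. Uniqueness is not claimed in the statement, so I will not address it. Strict convexity of the sup-norm fails in general, and any uniqueness result would need a separate Haar/Kolmogorov-type argument.
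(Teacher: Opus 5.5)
Your proof is correct and is essentially the paper's argument. Both are finite-dimensional compactness arguments in which $F$ having more than $n$ points makes the maximum modulus on $F$ control the coefficients of polynomials of degree at most $n$. The paper makes this explicit through Lagrange interpolation at $n+1$ fixed points of $F$ and extracts a convergent subsequence of a minimizing sequence via the condensation principle. You instead use equivalence of norms on $\mathbb{C}^{n+1}$ and minimize directly over the compact set $K$, which is slightly cleaner.
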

\begin{proof}
Denote
\begin{equation}
	\label{m4}
	\widetilde{m}_n(F): = \inf_{q_n\in\mathcal{Q}_n(1)}\max_{z \in F} |q_n(z)|.
\end{equation}
For a fixed $n\in \mathbb{N}$, let $\{q_{n,k}(z), \, k \in \mathbb{N}\}$
be the sequence of polynomials from the class $\mathcal{Q}_n(1)$,
whose maxima of moduli on $F$ tend to $\widetilde{m}_n(F)$ as $k\to\f$.
We fix $n + 1$ points $z_1, z_2, \ldots, z_{n+1}\in F$ and represent the polynomials
$q_{n,k}(z)$  according to Lagrange's formula as follows: 
\[
q_{n,k}(z) = \sum_{\nu=1}^{n+1} P_{n}(z,{\bold z}_\nu)\, q_{n,k}(z_\nu),
\]
where ${\bf z}_\nu:=(z_1, \ldots,  z_{\nu-1}, z_{\nu+1},\ldots z_{n+1})$, and 
\[
P_{n}(z,{\bf z}_\nu): = \frac{(z - z_1)(z - z_2) \cdots (z - z_{\nu-1})(z - z_{\nu+1})
	\cdots (z - z_{n+1})}{(z_\nu - z_1)(z_\nu - z_2) \cdots (z_\nu - z_{\nu-1})(z_\nu - z_{\nu+1})
	\cdots (z_\nu - z_{n+1})}.
\]

It follows from this representation that the polynomials $q_{n,k}(z)$ are uniformly bounded in modulus with respect to $k$ on any arbitrary bounded closed set of $\mathbb{C}$.
Hence, based on the known condensation principle (see, e.g., Goluzin \cite{GMG},
Sections 1.2 and 7.1), the sequence $\{q_{n,k}(z), \, k \in \mathbb{N}\}$ contains a
subsequence of polynomials that converges uniformly on every bounded set of $\mathbb{C}$.
Additionally, the coefficients of this subsequence also converge. Therefore, the limiting function 
$\widetilde T_n(z):= \widetilde T_n(z,F)$ is a polynomial from the class $\mathcal{Q}_n(1)$,
and satisfies the condition \eqref{m3}. Thus, we have
\begin{equation}
	\label{km3}
	\max_{z \in F} |\widetilde T_n(z,F)|=\widetilde{m}_n(F)
	=\min_{q_n\in\mathcal{Q}_n(1)}\max_{z \in F} |q_n(z)|.
\end{equation}

Lemma \ref{L1} is proved.
\end{proof}

\begin{rem}
\label{KR2}
{\rm The polynomial $\widetilde T_n(z,F)$ serves as an analog of the Chebyshev polynomial $T_n(z,F)$, which has the least maximum modulus on the set $F$ within the class $\mathcal{Q}_n(1)$. 
 Consequently, we refer to $\widetilde T_n(z,F)$ as the {\it Chebyshev polynomial for the set $F$ with respect to the point} $\xi=1$, 
or simply as the {\it the generalized Chebyshev polynomial}.}
\end{rem}
The next lemma is an analogue, within the class $\mathcal{Q}_n(1)$, of the assertion regarding the existence of the Chebyshev constant for any bounded closed set $F$ in the complex plane $\mathbb{C}$
(see, e.g., Goluzin \cite{GMG}, Section 7.1).
\begin{lem}
\label{L2}
For any bounded closed set $F$ in the complex plane $\mathbb{C}$, the sequence
$$\{ \widetilde{\tau}_n(F): = \sqrt[n]{\widetilde{m}_n(F)}, \, n\in \mathbb{N}\},$$
where $\widetilde{m}_n(F)$ is defined as in \eqref{m4}, converges to some finite limit
$\widetilde{\tau}(F)$. Specifically, we have
$$
\lim_{n \rightarrow \infty} \sqrt[n]{\widetilde{m}_n(F)}=\widetilde{\tau}(F) <\f.
$$
\end{lem}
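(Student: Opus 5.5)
The plan is to imitate Fekete's proof of the existence of the classical Chebyshev constant, replacing monic polynomials by the class $\mathcal{Q}_n(1)$. The key structural fact is that $\mathcal{Q}_n(1)$ is closed under multiplication. If $q_n\in\mathcal{Q}_n(1)$ and $q_k\in\mathcal{Q}_k(1)$, then $q_nq_k$ has degree at most $n+k$ and satisfies $(q_nq_k)(1)=1$, so $q_nq_k\in\mathcal{Q}_{n+k}(1)$.

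First I dispose of finiteness and trivial cases. The constant polynomial $q\equiv1$ belongs to every $\mathcal{Q}_n(1)$, so $0\le\widetilde m_n(F)\le 1$ for all $n$. Hence $\widetilde\tau_n(F)\in[0,1]$, and any limit is finite, indeed $\le1$. If $\widetilde m_N(F)=0$ for some $N$, then submultiplicativity (below) forces $\widetilde m_n(F)=0$ for all $n\ge N$, and the limit is $0$.

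Next comes submultiplicativity. Take the generalized Chebyshev polynomials $\widetilde T_n(\cdot,F)$ and $\widetilde T_k(\cdot,F)$ from Lemma \ref{L1}; if $F$ is finite, work with near-minimizers instead, since the infimum in \eqref{m4} is all that is needed. Their product lies in $\mathcal{Q}_{n+k}(1)$, and
$$\widetilde m_{n+k}(F)\le \max_{z\in F}|\widetilde T_n(z)\widetilde T_k(z)|\le \widetilde m_n(F)\,\widetilde m_k(F).$$
Therefore $a_n:=\ln\widetilde m_n(F)\in[-\infty,0]$ is a subadditive sequence. By Fekete's lemma, $a_n/n\to\inf_n a_n/n$, which means $\sqrt[n]{\widetilde m_n(F)}\to\widetilde\tau(F):=\inf_n\sqrt[n]{\widetilde m_n(F)}$.

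I will write out the Fekete argument briefly for completeness. Fix $k$ and write $n=sk+r$ with $0\le r<k$. Iterating submultiplicativity gives
$$\widetilde m_n\le \widetilde m_k^{\,s}\,\widetilde m_r,$$
with the convention $\widetilde m_0=1$, which is allowed since $\mathcal{Q}_0(1)=\{1\}$. Because $\widetilde m_r\le1$, this yields $\sqrt[n]{\widetilde m_n}\le \widetilde m_k^{\,s/n}$. Letting $n\to\infty$ with $s/n\to1/k$ gives $\limsup_n\sqrt[n]{\widetilde m_n}\le\sqrt[k]{\widetilde m_k}$. Taking the infimum over $k$, the limsup is at most the infimum, which in turn is at most the liminf.

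The only delicate point is the case $\widetilde m_k=0$, where logarithms are infinite. I handle it by the direct inequality above rather than through logarithms, so no separate argument is needed.
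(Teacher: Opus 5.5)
Your proof is correct and is essentially the paper's argument: both use the fact that products of polynomials from $\mathcal{Q}_n(1)$ and $\mathcal{Q}_k(1)$ lie in $\mathcal{Q}_{n+k}(1)$, and both run Fekete's $n=sk+r$ decomposition. The only difference is the padding: the paper pads with $z^r$, which produces the factor $R^{r/n}$ and the bound $\widetilde\tau(F)\le R$, whereas you pad with the constant $1$ and so obtain slightly more, namely $\widetilde\tau(F)=\inf_n\sqrt[n]{\widetilde m_n(F)}\le 1$ for every $F$; your near-minimizer remark also covers finite $F$, where Lemma \ref{L1} as stated does not apply.
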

\begin{proof}
Taking into account that the set $F$ is bounded, we have
$$
R=R_F:= \max_{z \in F} |z|<\f.
$$
Hence, observing that the polynomial $q_n(z)=z^n$ belongs to the class $\mathcal{Q}_n(1)$,
in view of \eqref{km3}, we conclude that the sequence
$\{ \widetilde{\tau}_n(F), \, n\in \mathbb{N}\}$ is bounded:
\begin{equation}
	\label{Km56}
	\widetilde{\tau}_n(F) = \sqrt[n]{\widetilde{m}_n(F)}
	\leq\sqrt[n]{\max_{z \in F}|z^n|} = R <\f. %= \max_{z \in F} |z|
\end{equation}

Define
$$
\liminf_{n \rightarrow \infty} \widetilde{\tau}_n (F)= a \q \text{and} \q
\limsup_{n \rightarrow \infty} \widetilde{\tau}_n(F) = b,
$$
and observe that $a \leqslant b$.
Thus, to complete the proof, we need to show that $b \leqslant a$.
To this end, for a given $\varepsilon > 0$, we choose $n_0\in\mathbb{N}$
such that $\widetilde{\tau}_{n_0}(F) < a + \varepsilon$. Then, we have
$$ |\widetilde T_{n_0}(z)| < (a + \varepsilon)^{n_0}, \q z \in F.$$

Next, observe that for any $q \in \mathbb{N}$ and $r\in \mathbb{Z}_+$,
the polynomial $s_n(z): = z^r[\widetilde T_{n_0}(z)]^q$ of degree $n=n_0q + r$
satisfies the conditions:
\beq
\label{b1}
s_n(z)\in \mathcal{Q}_n(1) \q {\rm and} \q |s_n(z)| < R^r(a + \varepsilon)^{n_0q}, \q z\in F.
\eeq
Therefore, in view of \eqref{km3} and \eqref{b1}, we have
$$
\widetilde{m}_n(F) \leq \max_{z \in F} |s_n(z)|\leqslant R^r (a + \varepsilon)^{{n_0q}},
$$
and
\begin{equation}
	\label{m6}
	\widetilde{\tau}_n(F) \leqslant R^{\frac{r}{n}}(a + \varepsilon)^{\frac{n_0q}{n}}.
\end{equation}

Now, let the subsequence $\{\widetilde{\tau}_{n_\nu}(F),\, n_\nu \in \mathbb{N}\}$
converge to $b$ as $\nu \rightarrow \infty$. We apply the inequality \eqref{m6} for
$n_\nu := n_0q_\nu + r_\nu $ with $0 \leq r_\nu < n_0$ to obtain
\begin{equation}
	\label{Km6}
	\widetilde{\tau}_{n_\nu}(F) \leqslant R^{\frac{r_\nu}{n_\nu}}(a + \varepsilon)^{\frac{n_0q_\nu}{n_\nu}}.
\end{equation}

Finally, by letting $\nu$ tend to infinity and applying inequality \eqref{Km6}, we find that $b \leqslant a + \varepsilon$. Since $\varepsilon$ is arbitrary, we conclude that $b \leqslant a$. 

Thus, Lemma \ref{L2} is proven.
\end{proof}
\begin{rem}
\label{RL2}
{\rm 1. The quantity $\widetilde\tau(F)$ is a metric characteristic of a closed set
$F$, similar to the Chebyshev constant $\tau(F)$ (see Goluzin \cite{GMG}, Section 7.1).
Therefore, we refer to $\widetilde\tau(F)$ as the {\it Chebyshev constant of the 
set $F$} in the class $\mathcal{Q}_n(1)$, or simply as the {\it generalized Chebyshev constant of $F$}.\\
	2. It follows from \eqref{Km56} that $\widetilde\tau(F)\leq R$. In particular,
	for the unit circle $\mathbb{T}$ we have
	\begin{equation}
		\label{K15}
		\widetilde\tau(\mathbb{T})\leq 1.
	\end{equation}
	3. Using \eqref{km3}, it is straightforward to verify that $\widetilde\tau(F)$
	is a non-decreasing set function:
	\begin{equation}
		\label{K1}
		\widetilde\tau(F_1)\leq\widetilde\tau(F_2) \q {\rm if} \q F_1\subset F_2.
	\end{equation}
	Additionally, from  \eqref{K15} and \eqref{K1}, we obtain
	\begin{equation}
		\label{K35}
		\widetilde{\tau}(F) \leq 1  \q \text{ for any } \q F\subset \mathbb{T}.
	\end{equation}
	4. If the set $F$ contains the point $\xi = 1$, then we have
	$\widetilde{\tau}(F) \geq 1$. Hence, according to \eqref{K35}, for any set $F\subset \mathbb{T}$ that contains $\xi = 1$, it follows that 
	$
	\widetilde{\tau}(F) = 1.
	$
	In particular, for the unit circle $\mathbb{T}$ and any of its arcs 
	$$\G_\al:=\{e^{i\la}, \,\, |\la|\leq\al, \, 0<\al<\pi\},$$ we have
	\begin{equation}
		\label{K5}
		\widetilde{\tau}(\mathbb{T}) =\widetilde{\tau}(\G_\al)= 1.
	\end{equation}
	Note that for the unit circle $\mathbb{T}$, the Chebyshev constant $\tau(\mathbb{T})$
	is also equal to 1 (see Goluzin \cite{GMG}, Section 7.1). Thus, we have 
	$\widetilde{\tau}(\mathbb{T}) =\tau(\mathbb{T})= 1.$}
	\end{rem}
	
The following result by Mazurkievicz \cite{Maz} is crucial for proving Theorem \ref{bas}. 
First, we introduce the key concepts and definitions.
A \textit{continuum} is defined as a continuous, rectifiable curve in the complex plane $\mathbb{C}$.
A \textit{linear set} $G$ in $\mathbb{C}$ is defined as any subset of a continuum.
The \textit{linear measure} $\mu(G)$ of a linear set $G$ is defined as the Lebesgue measure generated by the length of an arc of the continuum.
\begin{pp}[The Mazurkievicz Theorem]
		\label{maz}
	For any $\epsilon>0$, there exists $\delta=\delta(\epsilon)>0$ 
		such that for any continuum $\G$ of diameter $d$
		and any closed subset $F\subset\G$, the following inequality holds:
		$$
		M_n:=\max_{x \in \G}|q_n(z)|\leq(1+\vs)^n\max_{z\in F}|q_n(z)|,
		$$
		provided that $\mu(\G\setminus F)<d\delta$, where $q_n(z)$ is an arbitrary
		polynomial of degree $n$, and $\mu(e)$ denotes the linear measure of a set $e$.
	\end{pp}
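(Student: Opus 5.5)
The plan is to reduce the statement to a uniform estimate for the Green function of $F$ on $\G$, and to get that estimate from a local harmonic-measure bound. First normalize by $\max_{z\in F}|q_n(z)|=1$. Let $g_F$ denote the Green function of the unbounded component of $\mathbb{C}\setminus F$ with pole at infinity, extended by $0$ on $F$. The Bernstein--Walsh inequality gives
$$|q_n(z)|\le \exp\{n\,g_F(z)\}\max_{w\in F}|q_n(w)|,\qquad z\in\mathbb{C}.$$
So it suffices to show the following: for every $\vs>0$ there is $\de>0$ such that $\mu(\G\setminus F)<d\de$ implies $g_F(z)\le\log(1+\vs)$ for all $z\in\G$. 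If $F$ has capacity zero the hypothesis must fail for small $\de$, and that will fall out of the same estimate. Both $g_F$ and the condition $\mu(\G\setminus F)<d\de$ are invariant under similarities $z\mapsto az+b$ (since $g_{aF+b}(az+b)=g_F(z)$), so we may take $d=1$.

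Fix $z\in\G\setminus F$ and set $r=1/2$. Since $\G$ is connected with diameter $1$, the subarc of $\G$ starting at $z$ reaches the circle $|w-z|=r$. Let $K\subset\G\cap\ol{B(z,r)}$ be this subcontinuum; it joins $z$ to $\partial B(z,r)$. Because $F\cap K\subset F$, monotonicity of Green functions gives $g_F(z)\le g_{F\cap K}(z)$. Moreover $F\cap K$ misses only a set of linear measure at most $\de$ of $K$.

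The key step is then the following. Take the circular projection $w\mapsto |w-z|$ of $F\cap K$ onto the ray $[z,z+r]$. Its image is a closed subset $E\subset[0,r]$, and since projection does not increase length, $E$ misses at most a set of measure $\de$. The continuum $K$ projects onto all of $[0,r]$, so $E$ has at least measure $r-\de$. Beurling's projection theorem, applied to the harmonic measure of $\partial B(z,R)$ in $B(z,R)\setminus(F\cap K)$ for a fixed $R>r$, says circular symmetrization onto a ray can only increase this harmonic measure at $z$.

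It then remains to handle the symmetrized problem explicitly: a subset $E$ of a segment of length $r$ starting at the evaluation point, with complement of measure at most $\de$. For this model one shows that the harmonic measure of $\partial B(z,R)$ at $z$ tends to $0$ as $\de\to0$, uniformly in the configuration. Then, since $g_{F\cap K}$ is bounded on $\partial B(z,R)$ by an absolute constant depending only on $r$ (the capacity of $F\cap K$ is bounded below by that of a set of linear measure $\ge r-\de$), the maximum principle yields $g_{F\cap K}(z)\le C\,\om_\de\to0$.

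The lower bound on capacity follows from the classical fact that a linear set of measure $\ell$ has capacity $\ge \ell/4$, via the same projection argument. Choosing $\de$ with $C\om_\de\le\log(1+\vs)$ finishes the proof.

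I expect the main obstacle to be the uniform model estimate: the evaluation point $z$ may sit at the end of a gap of length up to $\de$. I would first try a direct comparison with the slit domain $B(0,R)\setminus[\de,r]$, for which the harmonic measure at $0$ is explicitly $O(\sqrt{\de})$ via the square-root map. Then I would argue that removing the remaining holes of total measure $\le\de$ from $[\de,r]$ changes the harmonic measure only by an amount controlled by their total length, using subadditivity of harmonic measure over the pieces.
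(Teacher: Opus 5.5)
The paper does not prove this proposition (it is quoted from Mazurkiewicz), so your argument has to stand on its own. Most of it does. Bernstein--Walsh correctly reduces the claim, after scaling to $d=1$, to $\sup_{\Gamma} g_F\le\log(1+\varepsilon)$. The following steps are all sound:
the subarc $K$ and the monotonicity $g_F\le g_{F\cap K}$;
the $1$-Lipschitz map $w\mapsto|w-z|$, which yields both $|[0,r]\setminus E|\le\delta$ and $\mathrm{cap}(F\cap K)\ge(r-\delta)/4$;
Beurling's projection theorem with the centre as evaluation point, which is legitimate because $z\notin F$ and $F$ closed give $0\notin E$;
the concluding maximum-principle step.
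Everything therefore rests on the model estimate: $\omega(0,\partial B_R,B_R\setminus E)\to0$ as $\delta\to0$, uniformly over closed $E\subset[0,r]$ with $0\notin E$ and $|[0,r]\setminus E|\le\delta$. That estimate is true, but the argument you sketch for it fails.

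Comparing with the slit $[\delta,r]$ and using subadditivity gives $\omega(0,\partial B_R,B_R\setminus E)\le\omega(0,\partial B_R,B_R\setminus[\delta,r])+\omega(0,H,B_R\setminus[\delta,r])$ with $H=[\delta,r]\setminus E$. The last term is not controlled by $|H|$. Seen from $0$, which is at distance $\delta$ from the tip, harmonic measure on the slit has density of order $\delta^{-1/2}(t-\delta)^{-1/2}$ for $t-\delta\lesssim\delta$. So a hole of length comparable to $\delta$ next to the tip carries harmonic measure bounded below independently of $\delta$. Concretely, $E=[\delta/2,\delta]\cup[3\delta/2,r]$ is admissible with $H=(\delta,3\delta/2)$. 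By scaling and the map $w\mapsto\sqrt{\delta-w}$, the last term tends to $\frac{2}{\pi}\arctan\sqrt{1/2}\approx0.39$. The true harmonic measure is small here only because $E\supset[3\delta/2,r]$, which your bound does not use.

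The missing idea is to separate scales. Discard $E\cap[0,a)$ by monotonicity and compare with the slit $[a,r]$, where $\delta\ll a\ll r$.
The slit term is at most $\frac{4}{\pi}\arctan\sqrt{a/r}$, by comparison with $B_r\setminus[a,r)$.
Harmonic measure at $0$ of subsets of $[a,r]$ is dominated by that for $\widehat{\mathbb{C}}\setminus[a,r]$, whose density is $\sqrt{ar}\,/\bigl(\pi t\sqrt{(t-a)(r-t)}\bigr)$. So holes of total length at most $\delta$ now carry only $O(\sqrt{\delta/a})$.
Taking $a=\sqrt{\delta r}$ gives $O((\delta/r)^{1/4})$ uniformly in $E$.
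Alternatively, run a Wiener-type iteration over the annuli $\{\rho\le|w-z|\le2\rho\}$, $2\delta\le\rho\le r/2$; in each, $F\cap K$ has capacity at least $\rho/8$. This gives a bound $(\delta/r)^{\kappa}$ directly and makes Beurling's theorem unnecessary.
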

	
\sn{Proof of Theorems \ref{bas} and \ref{MT}}
\label{PMT}

\begin{proof}[Proof of Theorem \ref{bas}]
Based on the definition of the optimal polynomial $p_n(z,f)$ (see Definition \ref{def3.1}), along with formulas \eqref{OSm2} and \eqref{km3},  we can write
\begin{eqnarray}
	\label{km7}
	\nonumber
	\sigma_n^2(f) %= \int_{-\pi}^{\pi} |p_n(e^{i\lambda}, f)|^2 f(\lambda)d\lambda
	\leq \int_{-\pi}^{\pi}|\widetilde T_n(e^{i\lambda}, E_f)|^2 f(\lambda) d\lambda
	\leq \widetilde{m}_n^2(E_f) \cdot \int_{-\pi}^{\pi} f(\lambda)d\lambda.
\end{eqnarray}
Hence, by Lemma \ref{L2} and \eqref{K35}, we get
\begin{equation}
	\label{km8}
	\limsup_{n \rightarrow \infty} \sqrt[n]{\sigma_n(f)} \leqslant
	\widetilde{\tau}(E_f) \leqslant 1.
\end{equation}

Now we proceed to prove the inequality:
\begin{equation}
	\label{m9}
	\liminf_{n \rightarrow \infty} \sqrt[n]{\sigma_n(f)} \geqslant
	\widetilde{\tau}(E_f).
\end{equation}
To this end, we consider a sequence of subsets $G_n$ of $E_f$, defined by
\beq
\label{kb1}
G_n:=\{z= e^{i\lambda} \in E_f: \,|p_n(z, f)| > \sqrt{\sigma_n(f) \cdot \widetilde{m}_n (E_f)}\},
\eeq
and we denote by $\mu_f$ a measure on the unit circle defined as follows:
$$
\mu_f(G)=\int_{W^{-1}(G)}f(\lambda)d\lambda, \q G\subset E_f,
$$
where $W^{-1}(G):=\{\lambda\in\Lambda: \, e^{i\lambda} \in G\}.$
It is clear that
$$
\limsup_{n \rightarrow \infty} \sqrt[n]{\mu_f(G_n)}\leqslant 1.
$$

Next, for a given sufficiently small number $\rho$ ($0 < \rho < 1$),
we can express the set of natural numbers $\mathbb{N}$ in the form
$\mathbb{N}=J_1\cup J_2$, where
\begin{align*}
&J_1 : = \{n\in\mathbb{N}: \mu_f(G_n)> (1 - \rho)^n\},\\
&J_2 : = \{n\in\mathbb{N}: \mu_f(G_n)\leq (1 - \rho)^n\}.
\end{align*}

It is clear that at least one of the sets $J_1$ and $J_2$ is infinite.
We can assume, without loss of generality, that both $J_1$ and $J_2$ are infinite sets.
For each $n \in J_1$, we have
$$
\sigma_n^2(f) = \int_{E_f} |p_n(z, f)|^2d\mu_f \geqslant
\int_{G_n}|p_n(z, f)|^2 d\mu_f > \sigma_n(f)\widetilde{m}_n(E_f)\mu_f(G_n),
$$
implying that
\begin{equation}
	\label{m10}
	\liminf_{n \rightarrow \infty} \sqrt[n]{\sigma_n(f)}
	\geq \lim_{n \rightarrow \infty} \sqrt[n]{\widetilde{m}_n(E_f)} \cdot
	\liminf_{n \rightarrow \infty}\sqrt[n]{\mu_f(G_n)}
	\geq \widetilde\tau(E_f)(1 -\rho).
\end{equation}

For each $n \in J_2$, we have
\begin{equation}
	\label{m11}
	\lim_{n \rightarrow\infty} \mu_f(G_n)= 0.
\end{equation}
Since $f(\lambda)$ is positive on the set
$W^{-1}(E_f):=\{\lambda\in\La: \, e^{i\lambda} \in E_f\}$,
the linear  measure $\mu$ is absolutely continuous
with respect to the measure $\mu_f$. 
Taking into account that the measure $\mu$ is also finite,  we obtain  from equation \eqref{m11} that
\begin{equation}
	\label{m12}
	\lim_{n \rightarrow \infty} \mu(G_n)  = 0, \q n \in J_2.
\end{equation}

For $n \in J_2$, we define the sets $E_n: = E_f \backslash G_n$ and use
\eqref{kb1} to obtain
\begin{equation}
	\label{m13}
	|p_n(z, f)| \leqslant \sqrt{\sigma_n(f) \cdot \widetilde{m}_n(E_f)}, \q z \in E_n.
\end{equation}
Let $\varepsilon > 0$ be an arbitrary number satisfying
\begin{equation}
	\label{m14}
	(1 + \varepsilon)^{-2}\geq (1 - \rho), %\frac{1}{(1 + \varepsilon)^2} \geqslant 1 - \rho,
\end{equation}
and let $\delta:=\delta(E_f,\varepsilon)$ be chosen according to the Mazurkievicz theorem (see Proposition \ref{maz}).
Then, in view of \eqref{m12}, for sufficiently large  $n \in J_2$, we have
$$
\mu(E_f\backslash E_n) = \mu(G_n) < \delta(E_f, \varepsilon).
$$
Therefore, based on inequality \eqref{m13} and Proposition \ref{maz}, we can write
\[
\widetilde{m}_n(E_f) =\max_{z \in E_f} |\widetilde T_n(z, E_f)| \leqslant
\max_{z \in E_f} |p_n(z, f)|
\]
\[
\leq (1 + \varepsilon)^n \max_{z \in E_n} |p_n(z, f)| \leqslant
(1 + \varepsilon)^n\sqrt{\sigma_n(f)\cdot \widetilde{m}_n(E_f)}, \q n \in J_2,
\]
implying that
$$ \sigma_n(f)\geqslant (1 + \varepsilon)^{-2n}\widetilde{m}_n(E_f), \quad n \in J_2.$$
%\frac{\widetilde{m}_n(E_f)}{(1 + \varepsilon)^{2n}}$ for $n \in J_2.
Letting $n$ tend to infinity and using the inequality \eqref{m14}, we obtain
\begin{equation}
	\label{m15}
	\liminf_{n \rightarrow \infty} \sqrt[n]{\sigma_n(f)}
	\geqslant \widetilde{\tau}(E_f) (1 - \rho).
\end{equation}

By combining \eqref{m10} and \eqref{m15} and taking into account
the arbitrariness of $\rho$, we derive the desired inequality \eqref{m9}.
The combination of \eqref{km8} and \eqref{m9} leads to \eqref{km1},
thereby completing the proof of the theorem.
\end{proof}

\begin{proof}[Proof of Theorem \ref{MT}]
Both assertions (a) and (b) of the theorem are derived from Theorem \ref{bas}.
To prove assertion (a), we first note that the spectral density \( f(\lambda) \) is positive almost everywhere in a neighborhood of zero. Therefore, we can assume that $E_f\supset\G_\al$ for some $0<\al<\pi$, where
$\G_\al=\{e^{i\la}, \,\, |\la|\leq\al\}$.

Let $f_\al(\la)$ denote the contraction of $f(\lambda)$ on the set $W^{-1}(\G_\al)$. Specifically,  
\beq
\label{cont}
f_\al(\la):= \left \{
\begin{array}{ll}
	f(\lambda) & \mbox {if \, $|\la|\leq\al$}\\
	0 & \mbox {if \, $|\la|>\al$}.
\end{array}
\right.
\eeq
Taking into account the obvious inequality $f(\la)\geq f_\al(\la)$
and Proposition \ref{R2}, we conclude that $\si_n(f)\geq \si_n(f_\al)$.
Thus, in light of Theorem \ref{bas} and equation \eqref{K5}, we obtain
\begin{equation}
	\label{kb10}
	\liminf_{n \rightarrow \infty} \sqrt[n]{\sigma_n(f)}
	\geq \lim_{n \rightarrow \infty} \sigma_n(f_\al)=\widetilde\tau(\G_\al)=1.
\end{equation}
By combining relations \eqref{km8} and \eqref{kb10}, we complete the proof
of assertion (a) of the theorem.

To prove assertion (b) of the theorem, we can assume without loss of generality that \(E_f=\G'_\alpha=\{e^{i\lambda}, \, \alpha \leq |\lambda| \leq \pi\}\). Thus, the relation in equation \eqref{km1} holds. 
Therefore, to complete the proof, it remains to show that
\begin{equation}
	\label{m88}
	\widetilde{\tau}(E_f)=\widetilde{\tau}(\G'_\al)  \leqslant \cos({\alpha}/{2}).
\end{equation}
To this end, consider the polynomial of degree $n$:
$$
q_{n}(z) = \left[{(z + 1)}/{2} \right]^{n},
$$
and observe that
$
q_{n}(z) \in \mathcal{Q}_n(1)$ and $ |q_{n}(e^{i\lambda})| = \left(\cos({\lambda}/{2}) \right)^{n}.
$

Next, according to the definition of the polynomial
$\widetilde T_n(z)$, with $E_f=\G'_\al$, we have
$$
\widetilde{m}_n(E_f) =\max_{z \in E_f} |\widetilde T_n(z, E_f)|
\leqslant \max_{z \in E_f} |q_n(z)| = \max_{|\la|\geq\al} |q_n(e^{i\la})|
=\left(\cos({\alpha}/{2}) \right)^{n}.
$$

By taking the $n$-th root and passing to the limit as $n \to \infty$, we obtain \eqref{m88}. The combination of \eqref{km1} and \eqref{m88} completes the proof of assertion (b) of the theorem. Thus, Theorem \ref{MT} is proven.
\end{proof}

\section{Tools}
\label{Tl}
In this section, we present several known concepts, results, and tools that are used 
to state and prove the results presented in Sections \ref{model} - \ref{Det}.

\subsection{Hilbert spaces associated with second-order processes}
\label{HSR}

\ssn{Hilbert space spanned by a family of elements.}
Let $H$ be a Hilbert space with a norm $\Vert\cdot\Vert_H$ and an inner
product $(\cdot,\cdot)_H$. Let ${\bf T}$ be an index set, and $\{u(t),t\in {\bf T}\}$
be a family of elements of $H$. We define the linear manifold {\it spanned} by the family $\{u(t),t\in {\bf T}\}$, denoted by
$$L:={\rm sp}\{u(t), \, t\in {\bf T}\}=L(u(t), \, t\in {\bf T}),$$
as the set consisting of all elements $u(t)\in H$ that can be represented
as a linear combination:
\beq\label{rk3}
u=\sum_{k=1}^n c_ku(t_k), \q n\in\mathbb{N}
\eeq
for some real constants $c_1,\ldots,c_n$ and $t_1,\ldots,t_n\in{\bf T}$.

We define the {\it Hilbert space spanned by the family}  $\{u(t),t\in {\bf T}\}$ 
as the closure of $L$ in $H$:
\beq\label{rk4}
\ol L=\ol L(u(t), \, t\in {\bf T})=\overline{sp}\{u(t), \, t\in {\bf T}\}_H.
\eeq

Note that the space \(\ol L\) consists of all finite linear combinations of the form given in \eqref{rk3}, along with their limits in the norm of \(H\). 

Generally, \(\ol L\) is a subspace of \(H\), and it is possible for \(\ol L\) to be equal to \(H\). In this case, we say that the family $\{u(t), t \in {\bf T}\}$
{\it spans} \(H\).

Note that the family $\{u(t), \, t\in {\bf T}\}$ spans $H$ if and only if the only vector in $H$ that satisfies the condition 
$(g,u(t))_H=0$ for every $t\in {\bf T}$ is $g = 0$ (see, e.g., Parzen \cite{P-3}).

\begin{pp}[The Basic Congruence Theorem]
	\label{BCT}
	Let $H_1$ and $H_2$ be two abstract Hilbert spaces with inner products
	$(\cdot,\cdot)_1$ and $(\cdot,\cdot)_2$, respectively.
	Let  $\{\varphi(t), \, t\in {\bf T}\}$ and $\{\psi(t), \, t\in {\bf T}\}$ be families of vectors that span $H_1$ and $H_2$, respectively.
	Suppose that, for every $s,t\in {\bf T}$,
	\beq\label{rk5}
	(\varphi(s),\varphi(t))_1=(\psi(s),\psi(t))_2.
	\eeq
	Then, the spaces $H_1$ and $H_2$ are congruent (isometrically isomorphic), 
	allowing us to define a congruence $V$ between $H_1$ and $H_2$ with the property that
	\beq\label{rk6}
	V[\varphi(t)]=\psi(t) \,  \mbox{ for all } \, t\in {\bf T}.
	\eeq
\end{pp}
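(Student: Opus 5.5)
The statement to prove is the Basic Congruence Theorem, Proposition \ref{BCT}. I would build the congruence $V$ in three stages: first on the linear manifolds spanned by the two families, then on their closures by continuity, and finally check that it is onto.

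\textbf{Stage 1: definition on the linear span.} Let $L_1={\rm sp}\{\varphi(t), t\in{\bf T}\}$ and $L_2={\rm sp}\{\psi(t), t\in{\bf T}\}$. For an element $u=\sum_{k=1}^n c_k\varphi(t_k)\in L_1$, set
$$Vu:=\sum_{k=1}^n c_k\psi(t_k).$$
The key point is that this is well defined even though $u$ may have several representations. Expanding with \eqref{rk5} gives
$$\Big\|\sum_k c_k\psi(t_k)\Big\|_2^2=\sum_{j,k}c_j\bar c_k(\psi(t_j),\psi(t_k))_2=\sum_{j,k}c_j\bar c_k(\varphi(t_j),\varphi(t_k))_1=\Big\|\sum_k c_k\varphi(t_k)\Big\|_1^2 .$$
If two representations of $u$ are given, apply this identity to their difference, which is a representation of $0$. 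The corresponding image then has norm zero, so the two images coincide. Consequently $V$ is well defined, linear, and norm-preserving on $L_1$, with $V[\varphi(t)]=\psi(t)$. Polarization, or \eqref{rk5} directly, shows that $V$ also preserves inner products.

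\textbf{Stage 2: extension to $H_1$.} By hypothesis $\ol L_1=H_1$. Take $u\in H_1$ and choose $u_m\in L_1$ with $u_m\to u$. The sequence $(u_m)$ is Cauchy, so by isometry $(Vu_m)$ is Cauchy in $H_2$. Define $Vu$ as its limit. Interleaving two approximating sequences shows that the limit does not depend on the choice of sequence. By continuity of the norm and inner product, the extended map is still a linear isometry.

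\textbf{Stage 3: surjectivity.} The range $V(H_1)$ is the isometric image of a complete space, so it is complete and therefore closed in $H_2$. It contains $L_2$, and $\ol L_2=H_2$, so $V(H_1)=H_2$. Alternatively, $V^{-1}$ can be built symmetrically on $L_2$ and extended in the same way.

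The only genuinely delicate step is the well-definedness in Stage 1, namely the independence of the chosen representation. The norm computation above handles it. Everything after that is a routine density and completeness argument.
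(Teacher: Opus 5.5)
Your proof is correct. The paper gives no proof of this proposition: it quotes it in the Tools section as a standard fact, referring to Parzen. Your three stages are exactly the standard argument: well-definedness and isometry on the linear span via the norm identity obtained from the Gram-matrix hypothesis, extension by continuity to the closure, and surjectivity because the range is closed and contains the dense span of $\{\psi(t),\, t\in{\bf T}\}$.
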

\begin{pp}[The Projection Theorem]
	\label{PrT}
	Let $H$ be an abstract Hilbert space equipped with a norm $||\cdot||$ and an 
	inner product $(\cdot,\cdot)$. 
	Let $M$ be a Hilbert subspace of $H$, and let $h\in H$ and $h^*\in M$. A necessary and sufficient condition that $h^*$ to be the unique
	vector in $M$ that satisfies the following condition:
	\beq\label{proj}
   ||h^*-h||=\min_{g\in M}||g-h||
	\eeq
	is that 
	$$
	(h^*,g)=(h,g)\q \text{for every}\q g\in M.
	$$
	The vector $h^*$ that satisfies the minimization condition \eqref{proj} 
	is referred to as the projection of $h\in H$ onto $M$, and is denoted by $P_Mh$.
\end{pp}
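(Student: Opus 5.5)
We prove the Projection Theorem (Proposition \ref{PrT}) by the standard argument, in two parts. First, existence and uniqueness of a minimizer in the closed subspace $M$. Second, that a vector $h^*\in M$ minimizes $\|g-h\|$ over $M$ if and only if $(h^*,g)=(h,g)$ for all $g\in M$, i.e. $h-h^*\perp M$.

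\emph{Sufficiency of the orthogonality condition.} Suppose $h^*\in M$ satisfies $(h-h^*,g)=0$ for every $g\in M$. For any $g\in M$ we have $g-h^*\in M$, so by the Pythagorean identity
$$\|g-h\|^2=\|(g-h^*)+(h^*-h)\|^2=\|g-h^*\|^2+\|h^*-h\|^2\ge \|h^*-h\|^2.$$
Equality holds only when $g=h^*$. Hence $h^*$ is the unique minimizer, which gives \eqref{proj} together with the uniqueness claim.

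\emph{Necessity.} Suppose $h^*\in M$ attains the minimum. Fix $g\in M$ and a scalar $t$; then $h^*+tg\in M$, so the function
$$\phi(t)=\|h^*+tg-h\|^2=\|h^*-h\|^2+2t\,\mathrm{Re}(h^*-h,g)+t^2\|g\|^2$$
has a minimum at $t=0$. Therefore $\mathrm{Re}(h^*-h,g)=0$. In the complex case we replace $g$ by $ig$ to get $\mathrm{Im}(h^*-h,g)=0$ as well. Thus $(h^*,g)=(h,g)$.

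\emph{Existence.} This is the main step, and it is where completeness and closedness of $M$ are used. Let $d=\inf_{g\in M}\|g-h\|$ and pick $g_n\in M$ with $\|g_n-h\|\to d$. By the parallelogram law,
$$\|g_n-g_m\|^2=2\|g_n-h\|^2+2\|g_m-h\|^2-4\Big\|\tfrac{g_n+g_m}{2}-h\Big\|^2.$$
Since $(g_n+g_m)/2\in M$, the last norm is at least $d$, so
$$\|g_n-g_m\|^2\le 2\|g_n-h\|^2+2\|g_m-h\|^2-4d^2\to0.$$
Hence $(g_n)$ is Cauchy. It converges in $H$, and the limit $h^*$ lies in $M$ because $M$ is closed (a Hilbert subspace). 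By continuity of the norm, $\|h^*-h\|=d$. The necessity part then shows that $h^*$ satisfies the orthogonality condition, and the sufficiency part shows it is unique. This justifies the notation $P_Mh$.
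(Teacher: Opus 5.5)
The paper gives no proof of Proposition~\ref{PrT}; it is quoted in Section~\ref{Tl} as a standard tool. Your argument is the standard complete proof, and it is correct.

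\begin{itemize}
\item The Pythagorean identity gives sufficiency together with uniqueness.
\item The first-order variation gives necessity. It uses only that $h^*$ is \emph{a} minimizer, which is more than the statement requires.
\item The parallelogram-law argument gives existence. The equivalence itself does not need this step, but it is exactly what justifies calling $h^*$ \emph{the} projection $P_Mh$ for every $h\in H$.
\end{itemize}

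Two small points of precision. First, in the necessity step the parameter $t$ must be taken real for your expansion of $\phi(t)$ to be valid. The replacement $g\mapsto ig$ then handles the imaginary part, as you say. Second, in the existence step it is enough that $M$ itself is complete, which is what ``Hilbert subspace'' means. The Cauchy sequence $(g_n)\subset M$ therefore converges in $M$ directly, without appealing to completeness of $H$ and then closedness.

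Your sufficiency step is the same orthogonal-decomposition idea the paper uses in its proof of Proposition~\ref{AL0}.
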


\ssn{Reproducing kernel Hilbert spaces (RKHS's)}

Let $H$ be a class of functions defined on a set ${\bf T}$, forming a Hilbert
space with a norm $\Vert\cdot\Vert$  and an inner
product $(\cdot,\cdot)$, and let $K(t,s)$ be a function defined on
${\bf T}\times{\bf T}$.
\begin{den}
	The function $K(t,s)$, $t,s\in {\bf T}$, is called a reproducing kernel (r.k.)
	of the space $H$  if it satisfies the following conditions:
	\begin{itemize}
		\item[(a)] \,
		For every $s\in {\bf T}$, $ K(t,s)$ as a function of $t$ belongs to $H$; i.e., $K(\cdot,s)\in H$.
		\item[(b)] \,
		The reproducing property holds: for every $s\in {\bf T}$ and every
		$f\in H$,
		\beq\label{rk1}
		(f(\cdot),K(\cdot,t))=f(t).
		\eeq
	\end{itemize}
\end{den}

The Hilbert space $H$ with reproducing kernel $K(t,s)$ is called a {\it reproducing kernel Hilbert space (RKHS)} and is often denoted by $H(K)$. The space $H(K)$ is uniquely determined by conditions (a) and (b).

From the formula \eqref{rk1} with $f(\cdot)=K(\cdot,t)$, we derive the following important equality:
\beq\label{rk2}
(K(\cdot,t),K(\cdot,s))=K(s,t).
\eeq
	\begin{pp}[The Moore-Aronszajn Theorem]
		\label{MAT}
		Let ${\bf T}$ be a given set, and let $K(t,s)$ be a symmetric non-negative definite kernel defined on ${\bf T}\times {\bf T}$. Then, there exists a unique Hilbert space $H:=H(K)$ of functions defined on ${\bf T}$ for which $K(t,s)$ serves as a reproducing kernel. Specifically, we have
		\begin{itemize}
			\item[a)] the system $\{K(\cdot,t), \, t\in {\bf T}\}$ spans the entire space $H$: 
			$$H(K)=\overline{sp}\{K(\cdot,t), \, t\in {\bf T}\}_H;$$
			\item[b)] the inner product in $H$ satisfies the condition:
			$$(f(\cdot),K(\cdot,t))_H=f(t) \mbox{ for any } f\in H.$$
		\end{itemize}
		\end{pp}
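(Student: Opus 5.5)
The statement to prove is the Moore--Aronszajn theorem: a symmetric non-negative definite kernel $K$ on ${\bf T}\times{\bf T}$ is the reproducing kernel of a unique Hilbert space $H(K)$ of functions on ${\bf T}$. I would build this space explicitly, starting from finite combinations of kernel sections, and then check completeness, the reproducing property and uniqueness.

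\textbf{Step 1 (pre-Hilbert space).} Let $H_0:={\rm sp}\{K(\cdot,t),\,t\in{\bf T}\}$. For $f=\sum_i a_iK(\cdot,t_i)$ and $g=\sum_j b_jK(\cdot,s_j)$, define
\[
(f,g)_0:=\sum_{i,j}a_ib_jK(s_j,t_i).
\]
To see that this depends only on $f$ and $g$ as functions, and not on their representations, I would rewrite it as $\sum_j b_jf(s_j)$, which involves only the values of $f$, and symmetrically as $\sum_i a_ig(t_i)$. Bilinearity and symmetry follow from the symmetry of $K$, and $(f,f)_0\ge0$ is exactly non-negative definiteness. This already gives the reproducing property on $H_0$: $(f,K(\cdot,t))_0=f(t)$. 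Cauchy--Schwarz for the semi-inner product then yields
\[
|f(t)|\le\|f\|_0\sqrt{K(t,t)}.
\]
In particular $\|f\|_0=0$ forces $f\equiv0$, so $(\cdot,\cdot)_0$ is a genuine inner product.

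\textbf{Step 2 (completion inside the function space).} This is the main obstacle, because the abstract completion of $H_0$ must be realised as a space of functions on ${\bf T}$. Take a Cauchy sequence $f_n$ in $H_0$. By the pointwise bound of Step 1, $f_n(t)$ is Cauchy for each $t$, so it has a pointwise limit $f(t)$. Let $H$ be the set of all such pointwise limits, and define $\|f\|:=\lim\|f_n\|_0$.

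The delicate point is well-definedness. I must show that two Cauchy sequences with the same pointwise limit are equivalent in norm, equivalently: if $f_n\to0$ pointwise and $f_n$ is Cauchy, then $\|f_n\|_0\to0$. The argument I would use is as follows. Fix $\epsilon>0$ and choose $N$ with $\|f_n-f_m\|_0<\epsilon$ for $n,m\ge N$. Since $f_N$ is a finite combination $\sum_i a_iK(\cdot,t_i)$, the reproducing property gives
\[
(f_n,f_N)_0=\sum_i a_if_n(t_i)\to0 \quad\text{as } n\to\infty.
\]
Hence
\[
\|f_n\|_0^2=(f_n,f_n-f_N)_0+(f_n,f_N)_0\le\epsilon\sup_k\|f_k\|_0+o(1),
\]
and since Cauchy sequences are bounded, $\|f_n\|_0\to0$. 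With well-definedness settled, the map sending the class of $(f_n)$ to its pointwise limit is an injective isometry from the abstract completion onto $H$. Therefore $H$ is a Hilbert space of functions containing $H_0$ as a dense subspace.

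\textbf{Step 3 (reproducing property and spanning).} Property (a) is immediate, since $K(\cdot,t)\in H_0\subset H$ and $H_0$ is dense in $H$. For (b), take $f=\lim f_n$ with $f_n\in H_0$. By continuity of the inner product,
\[
(f,K(\cdot,t))=\lim_n(f_n,K(\cdot,t))_0=\lim_n f_n(t)=f(t),
\]
where the last equality holds because norm convergence in $H$ coincides with the construction by pointwise limits.

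\textbf{Step 4 (uniqueness).} Suppose $H'$ is another Hilbert space of functions on ${\bf T}$ with reproducing kernel $K$. By (\ref{rk2}), inner products of kernel sections in $H'$ agree with $(\cdot,\cdot)_0$, so $H_0\subset H'$ isometrically. If $g\in H'$ is orthogonal to $H_0$, then $g(t)=(g,K(\cdot,t))'=0$ for all $t$, so $H_0$ is dense in $H'$. Finally, limits of $H_0$-Cauchy sequences in $H'$ are the same pointwise limits as in $H$, again by the reproducing property in $H'$. Hence $H'=H$ with the same norm.
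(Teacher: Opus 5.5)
Your argument is correct and complete. It is Aronszajn's classical construction. The only non-routine point is the well-definedness lemma in Step 2: an $H_0$-Cauchy sequence tending to $0$ pointwise must tend to $0$ in norm. Your estimate $\|f_n\|_0^2\le\epsilon\sup_k\|f_k\|_0+|(f_n,f_N)_0|$, together with $(f_n,f_N)_0=\sum_i a_if_n(t_i)\to0$, settles it. Steps 3 and 4 are also correct.

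The paper gives no proof of Proposition \ref{MAT}; it quotes it as a standard tool. So there is no argument of the author's to compare with. Two optional remarks follow.

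First, Step 2 can be compressed. In the abstract completion $\overline{H_0}$, let $k_t$ be the image of $K(\cdot,t)$ and consider the map $\xi\mapsto\hat\xi$ with $\hat\xi(t):=(\xi,k_t)$. This map is linear and returns the pointwise limit of any Cauchy sequence representing $\xi$. It is injective, because $\hat\xi\equiv0$ forces $\xi\perp\overline{\mathrm{sp}}\{k_t\}=\overline{H_0}$. This replaces your $\epsilon$-argument.

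Second, Proposition \ref{RKHS}(b)--(c) suggests a different route. Take a centered process $X$ with covariance $K$, for example a Gaussian process from Kolmogorov's extension theorem. For $Y\in L^2_{\bf T}(X)$ put $f_Y(t):=\E[YX(t)]$ and $\|f_Y\|:=\|Y\|$. Injectivity and completeness are then inherited from $L^2_{\bf T}(X)$, and $(f_Y,K(\cdot,t))=(f_Y,f_{X(t)})=\E[YX(t)]=f_Y(t)$. This trades your self-contained completion for an existence theorem for processes. Since the paper derives Proposition \ref{RKHS} from Moore--Aronszajn, this route avoids circularity only if the congruence is built directly as just described.
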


\ssn{RKHS representation of $L^2$-processes}

The purpose of the Hilbert space representation of an $L^2$-process is to identify specific Hilbert spaces of functions that are congruent
(isometrically isomorphic) to the Hilbert space generated by that process.

Recall the Hilbert space $L^2(\Ph)$ of all centered random variables $\xi= \xi(\om)$ defined on a probability space $(\Om, \mathcal{F}, \Ph)$ (see Section \ref{dnd}).
Given an index set $\bf T$ and a centered $L^2$-process $\{X(t),t\in {\bf T}\}$
with covariance function $R(t,s),\ t,s\in {\bf T}$, let $L^2_{\bf T}(X)$ denote the Hilbert space spanned by the family $\{X(t),t\in {\bf T}\}$:
\begin{equation}
	\label{rep3}
L^2_{\bf T}(X):=\overline{sp}\{X(t),\ t\in {\bf T}\} _{L^2(\Ph)}.
\end{equation}
Note that the space  $L^2_{\bf T}(X)$ consists of all finite linear combinations of the form
$\sum_{k=1}^n c_k X(t_k)$, $t_k\in{\bf T}$, as well as their $L^2(\Ph)$-limits.

\begin{den}
	Let $H$ be a Hilbert space with inner product $(\cdot,\cdot)_H$.
	A family of vectors $\{f(t),\ t\in {\bf T}\}$ in the Hilbert space $H$
	is said to be a representation of the process $\{X(t),\ t\in {\bf T}\}$ if
	for every $s,t\in {\bf T}$, the following holds:
	\begin{equation}
		\label{rep4}
		(f(s),f(t))_H=(X(s),X(t))_{L^2(\Ph)}=E[X(s)X(t)]=R(s,t).
	\end{equation}
\end{den}

In other words, the family of functions $\{f(t), t \in \mathbf{T}\}$ represents 
the $L^2$-process $\{X(t), t \in \mathbf{T}\}$ if the Hilbert spaces  
$L^2_{\bf T}(X)$ and 
\begin{equation}
	\label{rep5}
	\ol L(f,{\bf T}):=\overline{sp}\{f(t),\ t\in {\bf T}\}_H
\end{equation}
are congruent. The Hilbert space $\ol L(f,{\bf T})$ is also referred to as a representation of the process $\{X(t),\ t\in {\bf T}\}$.

Since the covariance function $R(t, s)$ is a symmetric, non-negative definite kernel, 
we can apply the Moore-Aronszajn theorem (see Proposition \ref{MAT}), to state 
the following result, which shows that an $L^2$-process $\{X(t), t \in \mathbf{T}\}$ can be represented by the RKHS generated by its covariance function. 
\begin{pp}
	\label{RKHS}
	Let $\{X(t), t\in{\bf T}\}$ be a centered $L^2$-process with a covariance
	function $R(t,s)$, $t,s\in{\bf T}$. The following assertions hold.
	\begin{itemize}
		\item[(a)] There exists a unique RKHS, denoted by $H(R)$,
		for which $R(t, s)$ is the reproducing kernel.
		\item[(b)] The space $H(R)$ consists of functions $f(\cd)$ of the form: for some $Y\in L^2_{\bf T}(X)$,
		$$f(t)=\Exp[Y\cdot X(t)] \q \text{for all} \q t\in {\bf T}.$$
		\item[(c)] The inner product in the space $H(R)$ is defined by
		$$(f,f)_R=\Exp[Y^2].$$
		\item[(d)] The space $H(R)$ is spanned by the family $\{R(\cdot,s),\ s\in {\bf T}\}$.
		\item[(e)] The Hilbert spaces \( L^2_{\bf T}(X) \) and \( H(R) \) are isometrically isomorphic. In this context, \( (f,Y)_R \) denotes the random variable in \( L^2_{\bf T}(X) \) that corresponds to the function \( f \in H(R) \).
	\end{itemize}
\end{pp}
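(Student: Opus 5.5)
The plan is to construct $H(R)$ explicitly from the process and then identify it with the space given by the Moore--Aronszajn theorem (Proposition \ref{MAT}). For $Y\in L^2_{\bf T}(X)$ I would define the function $f_Y(t):=\Exp[Y\cdot X(t)]$, $t\in{\bf T}$, and let $H$ be the set of all such functions. The map $J:Y\mapsto f_Y$ is linear. It is injective: if $f_Y\equiv0$, then $Y$ is orthogonal to every $X(t)$, hence to their closed span $L^2_{\bf T}(X)$. Since $Y$ itself lies in this span, $Y=0$. This is the criterion recalled in Section \ref{HSR} that a family spans a space iff only $0$ is orthogonal to all its members.

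Injectivity makes the inner product $(f_Y,f_Z)_R:=\Exp[YZ]$ well defined on $H$, and in particular $(f,f)_R=\Exp[Y^2]$, which is assertion (c). With this inner product, $J$ is by construction a linear isometry of $L^2_{\bf T}(X)$ onto $H$. Hence $H$ is complete, i.e.\ a Hilbert space of functions on ${\bf T}$, and $J$ is the congruence claimed in (e). Assertion (b) holds by definition of $H$.

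Next I would verify the reproducing kernel axioms. Taking $Y=X(s)$ gives $f_{X(s)}(t)=\Exp[X(s)X(t)]=R(t,s)$, so $R(\cdot,s)\in H$. For any $f_Y\in H$,
$$(f_Y,R(\cdot,t))_R=\Exp[Y\,X(t)]=f_Y(t),$$
which is the reproducing property. Thus $H$ is an RKHS with kernel $R$. By the uniqueness part of Proposition \ref{MAT}, $H$ coincides with $H(R)$, which gives (a). For (d), $J$ is an isometric isomorphism sending $X(s)$ to $R(\cdot,s)$, so it maps $\overline{sp}\{X(s)\}=L^2_{\bf T}(X)$ onto $\overline{sp}\{R(\cdot,s)\}$, which must therefore equal $H$. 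Equivalently, one can invoke Proposition \ref{BCT} with $\varphi(t)=X(t)$ and $\psi(t)=R(\cdot,t)$, since $(R(\cdot,s),R(\cdot,t))_R=R(s,t)$ by \eqref{rk2}.

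The main point needing care is the well-definedness of the inner product, i.e.\ the injectivity of $Y\mapsto f_Y$. This relies essentially on restricting $Y$ to $L^2_{\bf T}(X)$ rather than allowing all of $L^2(\Ph)$. Everything else is formal once that is established.
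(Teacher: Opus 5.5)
Your proof is correct, but it runs in the opposite direction from the paper's. The paper gives no separate proof. It obtains the proposition by applying the Moore--Aronszajn theorem (Proposition \ref{MAT}) to the symmetric non-negative definite kernel $R$. This produces $H(R)$ abstractly as the closed span of $\{R(\cdot,s),\ s\in{\bf T}\}$, which settles (a) and (d) at once. The congruence (e) then comes from the Basic Congruence Theorem (Proposition \ref{BCT}) applied to $X(t)\mapsto R(\cdot,t)$, using \eqref{rk2}. Assertions (b) and (c) follow by transporting through this congruence $V$: for $f=V[Y]$ the reproducing property gives $f(t)=(f,R(\cdot,t))_R=\Exp[Y X(t)]$ and $(f,f)_R=\Exp[Y^2]$.

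You instead build the space concretely as the image of $L^2_{\bf T}(X)$ under $Y\mapsto \Exp[Y X(\cdot)]$. Then (b), (c) and (e) hold by construction, and you prove existence of the RKHS without the existence half of Moore--Aronszajn. You use Moore--Aronszajn only for uniqueness, and you derive (d) from the isometry carrying $X(s)$ to $R(\cdot,s)$.

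Your route is more self-contained and shows explicitly which functions lie in $H(R)$, something the abstract completion in Moore--Aronszajn leaves implicit. Its one substantive step is the injectivity of $Y\mapsto\Exp[YX(\cdot)]$ on the closed span, which you correctly identified and justified. The paper's route is shorter given the cited theorems and applies directly to any non-negative definite kernel, not only one presented as the covariance of a given process.
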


\subsection{Fej\'er-type singular integrals}
\label{FSI}

Let $T$ be a parameter that ranges over some set ${\bf T}$, which is
either an interval $(a,b)$ with $-\f\leq a<b\leq\f$ or the set 
$\mathbb{N}$.
A set of functions $\{K_T(\la), \, T\in{\bf T}\}$ is called a
{\it kernel} if $K_T(\la)\in L^1(\Lambda)$ for any $T\in{\bf T}$,
where $\Lambda=\mathbb{T}$ or $\Lambda=\mathbb{R}$, and 
\beq
\label{Ker1}
\int_{\Lambda} K_T(\la)\,d\la = 1.
\eeq

A sequence of kernels  $\{K_T(\la), \, T\in{\bf T}\}$ is called an {\it approximate identity} if, with some constant $C>0$, it satisfies the following conditions
(see, e.g., Butzer and Nessel \cite{BN}, pp. 31, 121):
	\begin{align}
		\label{Ker2}
		&\sup_{T}\int_{{\Lambda}} K_T(\la)\,d\la=C<\f \q T\in{\bf T};\\
		\label{F14}
		&\lim_{T\to \f}\int_{|\la|\geq\de} K_T(\la)d\la
		=0 \q\text{for any} \q\delta>0.
	\end{align}
Apart from \eqref{F14} sometimes we use the following stronger condition:
\beq
\label{F14-1}
\lim_{T\to\f} \sup_{|\la|\geq\de}K_T(\la)\,d\la = 0,\q \de>0.
\eeq
An example of the approximate identity is the classical Fej\'er kernel $F_T(\la)$ defined for both the real line ($\la\in\mathbb{R}$) and the periodic case ($\la\in\mathbb{T}$), given by
\begin{equation}
	\label{s6}
	F_T(\la):= \left \{
	\begin{array}{ll}
		\frac1{2\pi T}
		\cdot\left[\frac{\sin(T\la/2)}{\sin(\la/2)}\right]^2 &
		\mbox { for  $\la \in \mathbb{T}, \, T \in \mathbb{N}$},\\
		\\
		\frac1{2\pi T}
		\cdot\left[\frac{\sin(T\la/2)}{\la/2}\right]^2 &
		\mbox { for  $\la \in \mathbb{R}, \, T \in \mathbb{R}^+$}.
	\end{array}
	\right.
\end{equation}

Let $\psi(\la)\in L^1(\Lambda)$ and $K_T(\la)$ be a kernel.
Then an expression of the form
\begin{equation}
	\label{Ker3}
	\psi_T(\la)	=\psi_T(\psi;\la):=\int_{\Lambda} K_T(x)\psi(\la-x)dx 
\end{equation}
is called a {\it singular integral} (or {\it convolution integral}) corresponding to the function $\psi(\la)$.

If the dependence of the kernel $K_T(\la)$ on the parameter $T$ takes the special form $K_T(\la)=TK(T\la)$, it defines an approximate identity.
Kernels of this type, along with the corresponding singular integrals, are known as {\it Fej\'er-type kernels} and {\it Fej\'er-type singular integrals}, respectively.

Note that if $K_T(\la)=F_T(\la)$ is the Fej\'er kernel, then $\psi_T(\la)$, as defined in equation \eqref{Ker3}, is an entire analytic function of exponential type $T$ when $\Lambda=\mathbb{R}$, and a trigonometric polynomial of degree $T$ when $\Lambda=\mathbb{T}$.

In the following proposition, we present an important and well-known property of the singular integral \eqref{Ker3} 
(see, e.g., Butzer and Nessel \cite{BN}, pp. 32 and 121, and Simon \cite{Sim}, p. 139).
\begin{pp}
	\label{F-Ker} If the kernel $K_T(\la)$ in the integral \eqref{Ker3} is an approximate identity and $\psi(\la)\in \mathbb{C}(\Lambda)$, then	
\begin{equation}
	\label{Ker4}
\lim_{T\to \f}\psi_T(\la)	=\psi(\la).
\end{equation}
\end{pp}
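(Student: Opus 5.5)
The plan is the standard $\varepsilon$–$\delta$ argument for approximate identities, organized so that it also gives the pointwise version used in Lemma \ref{lem4.3}: there $g\in B_0$ is only bounded and continuous at $\la=0$. Fix $\la\in\Lambda$. Since $\int_\Lambda K_T(x)\,dx=1$ by \eqref{Ker1}, I will first rewrite
\[
\psi_T(\la)-\psi(\la)=\int_{\Lambda}K_T(x)\,\bigl[\psi(\la-x)-\psi(\la)\bigr]\,dx .
\]
Everything then reduces to showing that this integral tends to $0$ as $T\to\infty$.

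Given $\varepsilon>0$, I choose $\delta>0$ such that $|\psi(\la-x)-\psi(\la)|<\varepsilon$ whenever $|x|<\delta$; this uses continuity of $\psi$ at the point $\la$. I then split the integral into the region $|x|<\delta$ and the region $|x|\ge\delta$.

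On $|x|<\delta$, the contribution is at most $\varepsilon\int_\Lambda |K_T(x)|\,dx\le C\varepsilon$ by \eqref{Ker2}. Here I read \eqref{Ker2} as a uniform bound on the $L^1$ norms. For the nonnegative kernels actually used in the paper (the Fej\'er kernel and $K_n$ from \eqref{K-n}), this is the same as the stated condition.

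On $|x|\ge\delta$, I bound the integrand by $2\sup|\psi|\,|K_T(x)|$, so this contribution is at most $2\|\psi\|_\infty\int_{|x|\ge\delta}|K_T(x)|\,dx$. By \eqref{F14} this tends to $0$ as $T\to\infty$.

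Combining the two pieces gives $\limsup_{T\to\infty}|\psi_T(\la)-\psi(\la)|\le C\varepsilon$, and letting $\varepsilon\to0$ yields \eqref{Ker4}. If $\psi$ is continuous on the whole circle, it is uniformly continuous there, so $\delta$ can be chosen independently of $\la$ and the convergence is uniform in $\la$.

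The only delicate point is the boundedness of $\psi$ needed in the tail estimate. On $\mathbb{T}$ it is automatic, since a continuous function on a compact set is bounded. For the pointwise version with $\psi$ merely bounded, such as $g\in B_0$, it is part of the hypothesis. On $\Lambda=\mathbb{R}$, however, a continuous $\psi$ need not be bounded, so I will state the result there for bounded continuous $\psi$. For unbounded $g\in L_0$ on the circle, I would instead use the stronger condition \eqref{F14-1}, as in the proof of Theorem \ref{thm6.1}: the tail is then bounded by $\sup_{|x|\ge\delta}K_T(x)\cdot\|\psi\|_1$, which tends to $0$.
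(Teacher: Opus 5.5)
Your argument is correct and is the standard $\varepsilon$--$\delta$ splitting proof from Butzer and Nessel; the paper gives no proof of its own and simply cites that source. Your caveats also hold: for signed kernels both \eqref{Ker2} and \eqref{F14} must be read with $|K_T|$ (your tail estimate already relies on this for \eqref{F14}). On $\Lambda=\mathbb{R}$, an unbounded continuous integrable $\psi$ can genuinely make \eqref{Ker4} fail under \eqref{F14} alone, so restricting to bounded $\psi$, or invoking \eqref{F14-1}, is necessary.
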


\sn{Some properties of the geometric mean and trigonometric polynomials.}
Recall that a trigonometric polynomial $t(\lambda)$ of degree $\nu$ is a
function of the form:
\beq
\label{tp1}
\nonumber
t(\lambda)= a_0+\sum_{k=1}^\nu(a_k\cos k\la + b_k\sin k\la)
= \sum_{k=-\nu}^\nu c_ke^{ik\la}, \q \la\in \mathbb{R},
\eeq
where $a_0, a_k, b_k \in \mathbb{R}$, $c_0=a_0$, $c_k=1/2(a_k-ib_k)$,
$c_{-k}=\ol c_k$, $k=1,2,\ldots, \nu$.

For a non-negative function $h$, we denote the geometric mean of $h$ by $G(h)$ (see formula \eqref{a2}). In the following proposition, we list some properties of the geometric mean $G(h)$ and trigonometric polynomials (see Babayan et al. \cite{BGT}).
\begin{pp}
	\label{p4.2}
	The following assertions hold.
	\begin{itemize}
		\item[(a)]
		Let $c>0$, $\al\in\mathbb{R}$, $f\geq0$ and $g\geq0$. Then
		\beq
		\label{gt1}
		G(c)=c; \q G(fg)=G(f)G(g); \q G(f^\al)=G^\al(f) \, \, (G(f)> 0).
		\eeq
		\item[(b)]
		$G(f)$ is a non-decreasing functional of $f$: if \ $0\leq f(\la)\leq g(\la)$,
		then $0\leq G(f)\leq G(g)$.
		In particular, if $0\leq f(\la)\leq 1$, then $0\leq G(f)\leq 1$.
		\item[(c)]
		{\rm (Fej\'er-Riesz theorem, see. e.g., Grenander and Szeg\H{o}, Sec. 1.12)}. Let $t(\la)$ be a non-negative trigonometric polynomial of degree $\nu$. Then
		there exists an algebraic polynomial  $s_{\nu}(z)$ $(z\in \mathbb{C})$
		of the same degree $\nu$, such that $s_{\nu}(z)\neq0$ for $|z|<1$, and
		\beq
		\label{ss2}
		t(\lambda)=|s_{\nu}(e^{i\lambda})|^2.
		\eeq
		Under the additional condition $s_{\nu}(0)>0$ the polynomial $s_{\nu}(z)$
		is determined uniquely.
		\item[(d)]
		Let $t(\lambda)$ and  $s_{\nu}(z)$ be as in assertion (c). Then
		$G(t)=|s_{\nu}(0)|^2>0$.
	\end{itemize}
\end{pp}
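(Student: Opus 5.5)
The statement to prove is Proposition \ref{p4.2} on geometric means and trigonometric polynomials. I would go through the four assertions in order, using only the definition \eqref{a2} of $G$ and, for the polynomial parts, elementary facts about zeros of polynomials together with Jensen's formula.

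\emph{Assertion (a).} If $\ln f,\ln g\in L^1$, then $\ln(fg)=\ln f+\ln g$ and $\ln f^\alpha=\alpha\ln f$. Exponentiating the averages gives $G(fg)=G(f)G(g)$ and $G(f^\alpha)=G^\alpha(f)$, and $G(c)=c$ is immediate. In the degenerate cases I would use the convention $G=0$: if one of $\ln f,\ln g$ has integral $-\infty$, then since $\ln(fg)\le$ an integrable majorant, $\int\ln(fg)=-\infty$ as well, and both sides of the product formula vanish. For the power formula the hypothesis $G(f)>0$ puts us in the integrable case.

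\emph{Assertion (b).} Monotonicity of $\ln$ gives $\ln f\le\ln g$. If $G(f)>0$, then $\ln g$ is bounded below by an integrable function and above by $g\in L^1$, so $\ln g\in L^1$ and the inequality passes through the integral and the exponential. If $G(f)=0$ there is nothing to prove. The special case follows from $G(1)=1$.

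\emph{Assertion (c), Fej\'er--Riesz.} Write $t(\lambda)=e^{-i\nu\lambda}P(e^{i\lambda})$ with $P(z)=\sum_{k=-\nu}^{\nu}c_kz^{k+\nu}$, a polynomial of degree $2\nu$ (assuming $c_\nu\ne0$). The relation $c_{-k}=\bar c_k$ gives $z^{2\nu}\overline{P(1/\bar z)}=P(z)$, so the zeros of $P$ are symmetric under $z\mapsto1/\bar z$. Zeros on $\mathbb T$ have even multiplicity, because $t\ge0$ cannot change sign there. I then pair the zeros, choosing from each pair $\{a,1/\bar a\}$ the one with $|a|\ge1$ and taking half of each unimodular zero, and set $s_\nu(z)=C\prod(z-a_j)$ with $\nu$ factors. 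A modulus computation on $|z|=1$, using $|z-1/\bar a|=|z-a|/|a|$, shows $|s_\nu|^2=t$ for a suitable $C>0$. Since all zeros of $s_\nu$ lie in $|z|\ge1$, it has none in $|z|<1$. For uniqueness, any two such factorizations have the same zero set, so they differ by a unimodular constant, which is fixed by requiring $s_\nu(0)>0$.

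\emph{Assertion (d).} By (a), $G(t)=G(|s_\nu|^2)=\exp\{\frac1\pi\int\ln|s_\nu(e^{i\lambda})|\,d\lambda\}$. Jensen's formula for a polynomial with no zeros in $\mathbb D$ gives $\frac1{2\pi}\int\ln|s_\nu(e^{i\lambda})|d\lambda=\ln|s_\nu(0)|$, which I check factor by factor. For $|a|>1$, $\ln|z-a|$ is harmonic on the closed disc. For $|a|=1$, $\frac1{2\pi}\int\ln|e^{i\lambda}-a|\,d\lambda=0=\ln|0-a|$, a known integral. Hence $G(t)=|s_\nu(0)|^2$, and this is positive because $s_\nu(0)\ne0$.

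\emph{Main obstacle.} I expect the hardest step to be the careful handling of zeros on the unit circle, both in the even-multiplicity argument in (c) and in the boundary case of Jensen's formula in (d).
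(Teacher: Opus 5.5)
Your proof is correct. The paper gives no proof of Proposition~\ref{p4.2}; it only refers to Babayan et al.\ and to Grenander and Szeg\H{o}, Sec.~1.12. Your argument is the classical one behind such citations: the reflection symmetry of the zeros of $P$, where $P(e^{i\la})=e^{i\nu\la}t(\la)$; even multiplicity of zeros on $\mathbb{T}$; selection of the zeros in $|z|\ge 1$; and Jensen's formula applied factor by factor, with $\frac1{2\pi}\inl\ln|e^{i\la}-a|\,d\la=0$ for $|a|=1$ taking care of boundary zeros. So your write-up gives a self-contained proof where the paper has only a citation.

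Two points deserve to be made explicit.

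First, in (a) and (b) you rely on $f,g\in L^1$. The degenerate case of (a) really uses the bound $\ln(fg)\le \ln f+\ln^+ g\le \ln f+g$; a bare integrable majorant of $\ln(fg)$ alone would not give $\int\ln(fg)=-\infty$. Part (b) uses $\ln f\le \ln g\le g$. This hypothesis is implicit in the statement, since $f,g$ are spectral densities, but it is genuinely needed under the convention \eqref{a2}. With $f=e^{1/|\la|}$ and $g=e^{-1/|\la|}$ one gets $G(fg)=1\ne 0=G(f)G(g)$. With $f\equiv 1\le g=e^{1/|\la|}$ one gets $G(f)=1>0=G(g)$.

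Second, the uniqueness claim in (c), that any two admissible factors have the same zero set, needs its one-line justification. Suppose $\deg s=\nu$ and $|s(e^{i\la})|^2=t(\la)$. Then $s(z)\,z^\nu\overline{s(1/\bar z)}=P(z)$ on $\mathbb{T}$, and hence identically as polynomials. So the zeros of $P$ are those of $s$ together with their reflections $1/\bar a$. Since $s$ has no zeros in $\mathbb{D}$, its zeros are forced: all zeros of $P$ in $|z|>1$, plus half of each unimodular zero. The normalization $s(0)>0$ then fixes the remaining unimodular constant.
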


\sn{Weakly varying sequences}
\label{ww}
We introduce the concept of weakly varying sequences and 
outline some of their properties (see Babayan et al. \cite{BGT}).
\begin{den}
	\label{kd1}
	A sequence of non-zero numbers $\{a_n, \, n \in\mathbb{N}\}$ is said
	to be weakly varying if
	$$\lim_{n\to\f} {a_{n+1}}/{a_{n}} =1.$$
\end{den}
For example, the sequence $\{n^{\alpha}, \, \alpha \in \mathbb{R}, n \in \mathbb{N}\}$
is weakly varying (it is weakly decreasing for $\alpha < 0$
and weakly increasing for $\alpha > 0$). In contrast, the geometric progression 
$\{q^n, \, 0<q<1,  n \in \mathbb{N}\}$ is not weakly varying.

In the following proposition, we outline some simple properties of weakly varying sequences, which can be easily verified.
\begin{pp}
	\label{p4.1}
	The following assertions hold.
	\begin{itemize}
		\item[(a)]
		If $a_n$ is a weakly varying sequence, then
		$\lim_{n\to\f} {a_{n+\nu}}/{a_{n}} =1$ for any $\nu\in \mathbb{N}$.
		\item[(b)]
		If $a_n$ is such that $\lim_{n\to\f}a_n=a\neq0$,
		then $a_n$ is a weakly varying sequence.
		\item[(c)]
		If $a_n$ and $b_n$ are weakly varying sequences, then $ca_n$ $(c\neq0),$
		$a_n^\al \, (\al\in\mathbb{R}, a_n>0)$, $a_nb_n$ and $a_n/b_n$  are also
		weakly varying sequences.
		\item[(d)]
		If $a_n$ is a weakly varying sequence,
		and $b_n$ is a sequence of non-zero numbers such that
		$\lim_{n\to\infty}{b_n}/{a_n}=c\neq 0,$
		then $b_n$ is also a weakly varying sequence.
		\end{itemize}
\end{pp}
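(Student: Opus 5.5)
Each of the four assertions of Proposition \ref{p4.1} follows from the defining limit $a_{n+1}/a_n\to1$ together with elementary algebra of limits. I will take them in the order stated, since (d) uses (b) and (c).

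For (a), I use the telescoping identity
$$\frac{a_{n+\nu}}{a_n}=\prod_{j=0}^{\nu-1}\frac{a_{n+j+1}}{a_{n+j}} .$$
Here $\nu$ is fixed and each of the $\nu$ factors is a shifted copy of the weakly varying ratio, so each tends to $1$. A finite product of sequences tending to $1$ tends to $1$.

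For (b), since $a_n\to a\neq0$, we also have $a_{n+1}\to a$. The quotient rule for limits then gives $a_{n+1}/a_n\to a/a=1$.

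For (c), I write out each ratio directly:
$$\frac{ca_{n+1}}{ca_n}=\frac{a_{n+1}}{a_n},\qquad \frac{a_{n+1}b_{n+1}}{a_nb_n}=\frac{a_{n+1}}{a_n}\cdot\frac{b_{n+1}}{b_n},\qquad \frac{a_{n+1}/b_{n+1}}{a_n/b_n}=\frac{a_{n+1}}{a_n}\cdot\Big(\frac{b_{n+1}}{b_n}\Big)^{-1}.$$
All three tend to $1$; the last uses that $b_n\neq0$, so $a_n/b_n$ is a sequence of nonzero numbers. For the power, $a_n>0$ gives $(a_{n+1}/a_n)^\al\to1^\al=1$, because $x\mapsto x^\al$ is continuous at $x=1$.

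For (d), I set $c_n:=b_n/a_n$. By hypothesis $c_n\to c\neq0$, and each $c_n$ is nonzero since both $a_n$ and $b_n$ are. By (b), $c_n$ is weakly varying. Writing $b_n=a_nc_n$ as a product of two weakly varying sequences, (c) shows that $b_n$ is weakly varying.

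No step is a genuine obstacle. The only places needing care are bookkeeping: the nonvanishing of the terms, so that every quotient is defined, and the positivity of $a_n$ in the power case, so that real powers make sense.
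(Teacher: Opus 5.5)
Your argument is correct and is essentially the verification the paper intends: the paper gives no proof, only remarking that these properties ``can be easily verified.'' Your telescoping product for (a), limit algebra for (b)--(c), and the factorization $b_n=a_n\cdot(b_n/a_n)$ for (d) supply that verification. The only tacit point is in (b): the definition requires $a_n\neq0$ for every $n$, which holds automatically for large $n$ and must simply be assumed for the finitely many remaining terms.
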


We will need the following definition, which characterizes
the rate of variation of a sequence of non-negative numbers compared with a geometric progression
(see Babayan et al. \cite{BGT} and Simon \cite{Sim-1}, p. 91).
\begin{den}
	\label{ekd1}
	(a) A sequence $\{a_n\geq 0, \, n \in\mathbb{N}\}$ %of non-negative numbers
	is called exponentially neutral if
	$
	\lim_{n \rightarrow \infty} \sqrt[n]{a_n} =1.
	$
	(b) A sequence $\{b_n\geq 0, \, n \in\mathbb{N}\}$ %of non-negative numbers
	is called exponentially decreasing if
	$
	\limsup_{n \rightarrow \infty} \sqrt[n]{b_n} <1.
	$
\end{den}
For instance, the sequence $\{a_n=n^\al, \, \al\in\mathbb{R}, \,
n \in\mathbb{N}\}$ is exponentially neutral because
$\log \sqrt[n]{n^\alpha} = ({\alpha}/{n}) \log {n}\rightarrow 0$ as $n\to\f$.
The geometric progression $\{b_n=q^n, \, 0<q<1, \, n \in\mathbb{N}\}$
is exponentially decreasing because
$\sqrt[n]{b_n} = q^{n/n} = q<1$.
The sequence $\{b_n=n^\al q^n, \, \al\in\mathbb{R}, \, 0<q<1, \,
n \in\mathbb{N}\}$ is also exponentially decreasing because
$\sqrt[n]{b_n} = n^{\alpha/n}q \rightarrow q<1.$
It can be easily shown that a sequence $\{b_n\geq0, \, n \in\mathbb{N}\}$
		is exponentially decreasing %, that is, \eqref{em2} is satisfied
		if and only if there exists a number $q$ ($0<q<1$) such that $b_n =O(q^{n})$ as $n\to\f$.

It is well-known that for an arbitrary sequence of positive numbers $a_n$,
the convergence $a_{n+1}/a_n\to a$ implies the convergence 
$\sqrt[n]{a_n}\to a$. Therefore, if $a_n$
is a weakly varying sequence, it is exponentially neutral. 
However, the converse is not generally true (see Remark 3.7 in Babayan et al. \cite{BGT}).

\sn{Slowly varying functions}
\label{S2.5}
We introduce the concept of slowly varying functions and briefly discuss their properties.
For a comprehensive treatment of this subject, see the monographs by Bingham et al. \cite{BGT} and Zygmund \cite{Zyg}.

\begin{den}
	\label{sv1}
A measurable function $u:(c,\f)\rightarrow \mathbb{R}$ $(c\geq 0)$ is called {\it slowly varying at zero (in Karamata's sense)} if it is positive for $x$ large enough, and for any $t>0$
\[
\lim_{x\rightarrow 0} \frac{u(xt)}{u(x)}= 1.
\]
\end{den}

\begin{den}
	\label{sv2}
	A measurable function $u(x)$ is called  {\it slowly varying at 0 in Zygmund’s sense} if for $x$ large enough, it is positive, and for any $\de > 0$, there exists a finite number $x_0(\de) > 0$ such that $x^{\de}u(1/x)$ monotonically increases, and $x^{-\de}u(1/x)$ monotonically decreases for  $x\geq x_0(\de)$.
\end{den}

It is known that the class of functions that are slowly varying in the Zygmund sense is a subclass of those that are slowly varying in the Karamata sense.

Note that a measurable function $u(x)$ is called {\it slowly varying at infinity} if the function $\tilde u(x):=u(1/x)$ is slowly varying at zero.	
	
For the next proposition see Zygmund \cite{Zyg} (Sect. V, Theorem 2.24). 
\begin{pp}
	\label{Zg-1}
Let $0<\be<1$ and $g(\la)$ be a function of bounded variation on 
any interval $(\vs,\pi)$, which is slowly varying at 0. 
Then, for the coefficients $a_n$ and $b_n$ of the function $\la^{-\be}g(\la)$ 
$(0<\la\leq\pi)$, the following relations hold:	
\begin{align*}
		&a_n\sim 2\pi^{-1}n^{\be-1}g(1/n)\G(1-\be)\sin(\pi\be/2),\\
		&b_n\sim 2\pi^{-1}n^{\be-1}g(1/n)\G(1-\be)\cos(\pi\be/2).
\end{align*}
\end{pp}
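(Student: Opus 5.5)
The plan is to treat the two relations of Proposition \ref{Zg-1} as the real and imaginary parts of one complex Fourier integral. Write $a_n-ib_n$ up to the paper's normalization as
$$\frac{2}{\pi}\int_0^\pi e^{in\la}\la^{-\be}g(\la)\,d\la ,$$
so that $a_n$ and $b_n$ are, respectively, the cosine and sine coefficients of $\la^{-\be}g(\la)$ on $(0,\pi]$. Substituting $\la=x/n$ gives
$$n^{\be-1}g(1/n)\int_0^{n\pi} e^{ix}x^{-\be}\frac{g(x/n)}{g(1/n)}\,dx .$$
The claimed constants then come from the classical integrals
$$\int_0^\infty x^{-\be}\cos x\,dx=\G(1-\be)\sin(\pi\be/2), \qquad \int_0^\infty x^{-\be}\sin x\,dx=\G(1-\be)\cos(\pi\be/2),$$
valid for $0<\be<1$ and obtained by rotating the contour in $\int_0^\infty x^{-\be}e^{-sx}dx$ to $s=-i$. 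So the task reduces to showing
$$\int_0^{n\pi}e^{ix}x^{-\be}\frac{g(x/n)}{g(1/n)}\,dx\to\int_0^\infty e^{ix}x^{-\be}\,dx .$$

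I would split the range $[0,n\pi]$ into three parts.

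On $[A^{-1},A]$ with $A$ fixed, I use that a Zygmund slowly varying function is Karamata slowly varying. The uniform convergence theorem for slowly varying functions then gives $g(x/n)/g(1/n)\to1$ uniformly on compacts, so this piece converges to $\int_{1/A}^A e^{ix}x^{-\be}dx$.

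On $(0,A^{-1}]$, the monotonicity in Zygmund's definition, with some $\de<1-\be$, gives $g(x/n)/g(1/n)\le x^{-\de}$. This is valid for $x/n$ small, which holds here since $x\le 1/A$. The piece is therefore bounded by $\int_0^{1/A}x^{-\be-\de}dx$, which tends to $0$ as $A\to\infty$ uniformly in $n$.

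The tail $[A,n\pi]$ is the main obstacle, because absolute integrability fails there and the oscillation of $e^{ix}$ has to be used. I would integrate by parts in Stieltjes form, writing $\phi_n(x)=x^{-\be}g(x/n)/g(1/n)$. The tail integral is then bounded by
$$2\sup_{[A,n\pi]}|\phi_n|+\int_A^{n\pi}|d\phi_n| .$$

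For $x\le \eta n$ with $\eta$ small, the Zygmund condition makes $x^{-\be}g(x/n)$ monotone decreasing when $\de<\be$ is chosen so that $x^{-\be+\de}\cdot x^{-\de}g(x/n)$ is a product of decreasing factors. On that range the total variation is at most $2\phi_n(A)\le 2A^{-\be+\de}$, which is small.

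For $\eta n\le x\le n\pi$, i.e. $\la\in[\eta,\pi]$, bounded variation of $g$ on $(\eta,\pi)$ gives a variation of order
$$\frac{(\eta n)^{-\be}\,\mathrm{Var}_{[\eta,\pi]}(g)}{g(1/n)} .$$
Since $g(1/n)\ge n^{-\de'}$ for any $\de'>0$ by slow variation, choosing $\de'<\be$ makes this tend to $0$ as $n\to\infty$.

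Letting first $n\to\infty$ and then $A\to\infty$ gives the asymptotics for both $a_n$ and $b_n$. The factor $2\pi^{-1}$ comes from the paper's Fourier normalization over the full period, with $g$ interpreted as extended evenly (respectively oddly) for $a_n$ (respectively $b_n$).
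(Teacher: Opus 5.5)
The paper does not prove this proposition; it only cites Zygmund (\emph{Trigonometric Series}, Ch.~V, Thm.~(2.24)). So there is no in-paper argument to compare with. Your proof is correct and follows the classical route for this result:
\begin{itemize}
\item rescale $\lambda=x/n$;
\item use uniform convergence of $g(x/n)/g(1/n)$ on $[A^{-1},A]$;
\item dominate near the origin by $x^{-\beta-\delta}$;
\item treat the conditionally convergent tail by Stieltjes integration by parts. On $[A,\eta n]$ the variation is controlled by the monotonicity of $\lambda^{-\beta}g(\lambda)$ near $0$. On $[\eta n,n\pi]$ it is controlled by the bounded variation of $g$ on $(\eta,\pi)$ together with the lower bound $g(1/n)\ge c\,n^{-\delta'}$.
\end{itemize}

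A few small points are worth fixing or making explicit.
\begin{itemize}
\item $\frac{2}{\pi}\int_0^\pi e^{in\lambda}\lambda^{-\beta}g(\lambda)\,d\lambda$ equals $a_n+ib_n$, not $a_n-ib_n$. Since you then read off the cosine and sine integrals separately, the constants you get, $\Gamma(1-\beta)\sin(\pi\beta/2)$ and $\Gamma(1-\beta)\cos(\pi\beta/2)$, are the right ones.
\item You should state that ``slowly varying'' is taken in Zygmund's monotone sense, as in the paper's class $\mathbf{Z}$. You use this essentially in the tail. Karamata slow variation plus bounded variation on each $(\varepsilon,\pi)$ would give no control of $\mathrm{Var}_{[A,\eta n]}\phi_n$.
\item Once Zygmund's monotonicity is available, you do not need the uniform convergence theorem. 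For $n$ large it directly gives $x^{-\delta}\le g(x/n)/g(1/n)\le x^{\delta}$ on $[1,A]$, and the reverse inequalities on $[A^{-1},1]$, for any $\delta>0$. This single inequality handles the middle piece, the piece near $0$, and the bound $\phi_n(A)\le A^{-\beta+\delta}$.
\item The factor $2/\pi$ comes from Zygmund's normalization $a_n=\frac{1}{\pi}\int_{-\pi}^{\pi}(\cdot)\cos n\lambda\,d\lambda$. It does not come from the convention $\int_{\mathbb T}e^{ik\lambda}f\,d\lambda$ used elsewhere in the paper.
\end{itemize}
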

As noted in Section \ref{mem}, 
the definitions of memory in both the time and frequency domains are generally not equivalent due to the influence of slowly varying functions. However, they can become equivalent depending on the choice of these functions, as demonstrated in the following proposition
(see Beran et al. \cite{BFGK}, Theorem 1.3, and Ginovyan \cite{G2025}, Proposition 3.3.1).
\begin{pp}
	\label{memp1}
	Let $\{X(t),\, t \in \mathbb{Z}\}$ be a stationary process with
	covariance function $r(t)$ $(t \in \mathbb{Z})$ and spectral density $f(\la)$ $(\la\in[-\pi,\pi))$. The following assertions hold.
	\begin{itemize}
		\item[a)]
		If the covariance function $r(t)$ has the form
		\begin{equation}
			\label{mem2}
			r(t)=L_r(t)|t|^{2d-1},
		\end{equation}
		where $L_r(t)$ is slowly varying at infinity in Zygmund's sense, and either
		$d\in(0,1/2)$, or $d\in(-1/2, 0)$ and $\sum_{t \in \mathbb{Z}}r(t)=0$, then
		the process $X(t)$ has a spectral density $f$ that satisfies:
		\beq\label{mem3}
		f(\la)\sim L_f(\la)|\la|^{-2d} \q {\rm as} \q \la\to0
		\eeq
		with
		\beq\label{mem4}
		L_f(\la)=L_r(1/\la)\pi^{-1}\G(2d)\sin(\pi/2-\pi d).
		\eeq
		
		\item[b)] If the spectral density $f(\la)$ has the form
		\beq\label{mem5}
		f(\la)= L_f(\la)|\la|^{-2d}, \q 0<\la<\pi,
		\eeq
		where $d\in(-1/2, 0)\cup(0,1/2)$, and $L_f(\la)$ is slowly varying at zero in Zygmund's sense and is of bounded variation on $(a, \pi)$ for any $a>0$, then
		\begin{equation}
			\label{mem6}
			r(t)\sim L_r(t)|t|^{2d-1} \q {\rm as} \q t\to\f,
		\end{equation}
		where
		\begin{equation}
			\label{mem7}
			L_r(t)=2L_f(1/t)\G(1-2d)\sin(\pi d).
		\end{equation}
	\end{itemize}
\end{pp}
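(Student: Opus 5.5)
The plan is to reduce both parts to Zygmund's theorems on trigonometric series and integrals with regularly varying coefficients. Proposition \ref{Zg-1} (Zygmund, Sect. V, Thm. 2.24) handles the direction from spectral density to covariance. Its series counterpart (Zygmund, Sect. V, Thm. 2.6) handles the direction from covariance to spectral density. Since $X(t)$ is real, $f$ is even and $r$ is even, so everything reduces to cosine transforms on $(0,\pi)$ or cosine series over $t\ge1$.

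\emph{Part b).} We have $r(t)=2\int_0^\pi \cos(t\la)\,f(\la)\,d\la=\pi a_t$, where $a_t$ is the cosine coefficient of $\la^{-\be}g(\la)$ with $\be=2d$ and $g=L_f$.
\begin{itemize}
\item[(i)] For $d\in(0,1/2)$ we have $\be\in(0,1)$, so Proposition \ref{Zg-1} applies directly. It gives $r(t)\sim 2\,t^{2d-1}L_f(1/t)\Gamma(1-2d)\sin(\pi d)$, which is (\ref{mem6})--(\ref{mem7}).
\item[(ii)] For $d\in(-1/2,0)$ the exponent $-\be=-2d$ lies in $(0,1)$, so $f$ vanishes at the origin and Proposition \ref{Zg-1} is not directly applicable. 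I would integrate by parts in Stieltjes form, which is legitimate because $L_f$ has bounded variation on $(a,\pi)$ and $f(0+)=0$. This writes $r(t)$ as $-(2/t)\int_0^\pi \sin(t\la)\,df(\la)$. The measure $df$ behaves near $0$ like $-2d\,\la^{-2d-1}L_f(\la)\,d\la$, where the Zygmund-sense monotonicity of $\la^{\pm\de}L_f$ is what controls the error term. The exponent $1+2d$ lies in $(0,1)$, so the sine-coefficient formula in Proposition \ref{Zg-1} yields a term of order $t^{-1}\cdot t^{-2d-1+1}$. Gamma-function identities then reassemble the constant into $2L_f(1/t)\Gamma(1-2d)\sin(\pi d)$.
\end{itemize}

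\emph{Part a).} Because $L_r$ is slowly varying in Zygmund's sense, the coefficients $|t|^{2d-1}L_r(t)$ are eventually monotone and tend to $0$. Hence $\sum_t r(t)e^{-it\la}$ converges for $\la\neq0$ and equals $2\pi f(\la)$ there, so
\[
f(\la)=\frac{1}{2\pi}\Big(r(0)+2\sum_{t\ge1} r(t)\cos t\la\Big).
\]
\begin{itemize}
\item[(i)] For $d\in(0,1/2)$, set $\g=1-2d\in(0,1)$. Zygmund's Theorem V.2.6 gives $\sum_{t\ge1} t^{-\g}L_r(t)\cos t\la\sim \la^{\g-1}L_r(1/\la)\Gamma(1-\g)\sin(\pi\g/2)$, which dominates the bounded term $r(0)$. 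Multiplying by $1/\pi$ gives (\ref{mem3})--(\ref{mem4}).
\item[(ii)] For $d\in(-1/2,0)$, the assumption $\sum r(t)=0$ lets me write $f(\la)=\pi^{-1}\sum_{t\ge1} r(t)(\cos t\la-1)$. Apply Abel summation with the tails $\rho(t)=\sum_{s>t} r(s)\sim t^{2d}L_r(t)/(-2d)$; these tails are again regularly varying with exponent $-2d\in(0,1)$ and eventually monotone. This converts $f$ into a series of the form $\sum_t\rho(t)[\cos(t+1)\la-\cos t\la]$, whose leading part is $-2\sin(\la/2)\sum_t\rho(t)\sin((t+\tfrac12)\la)$. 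The sine version of Theorem V.2.6 then gives order $\la\cdot\la^{2d-1}L_r(1/\la)$, and the constant simplifies to the one in (\ref{mem4}).
\end{itemize}

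The main obstacle is the antipersistent case $d<0$ in both directions. There the leading cancellation, namely $f(0)=0$ or $\sum r=0$, must be removed by a summation or integration by parts before the Zygmund asymptotics can be applied. One must also check that the remainder terms are negligible, and this is exactly where the Zygmund-sense (rather than Karamata-sense) slow variation and the bounded-variation hypothesis are used. Matching the Gamma and sine constants after the reduction is then a routine computation with the reflection and duplication formulas.
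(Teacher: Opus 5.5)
Your treatment of part a) and of part b) for $d\in(0,1/2)$ is a sound reduction to Zygmund's Theorems V.2.6 and V.2.24. The paper gives no proof of its own and only cites Beran et al.\ and Ginovyan. There are two exponent slips: in b)(ii) the order should be $t^{-1}\cdot t^{(1+2d)-1}=t^{2d-1}$, and in a)(ii) it should be $\lambda\cdot\lambda^{-2d-1}$. You should also check that $t^{-2d}\rho(t)$ is itself Zygmund slowly varying, not merely asymptotic to $L_r(t)/(-2d)$.

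The genuine gap is part b) for $d\in(-1/2,0)$. After integrating by parts you have $r(t)=-(2/t)\int_0^\pi\sin(t\lambda)\,df(\lambda)$. You then need $\int_0^\pi\sin(t\lambda)\,df(\lambda)$ to be asymptotic to a constant times $t^{2d}L_f(1/t)$, which tends to $0$ because $d<0$. On $(a,\pi)$, however, the hypotheses only make $df$ a finite measure, and its sine transform need not tend to $0$ at all: a jump $J$ of $f$ at $\lambda_1$ contributes $J\sin(t\lambda_1)$. So the remainder you treat as negligible is $O(1)$. It adds a term of order $1/t$ to $r(t)$, and $1/t$ is not $o(t^{2d-1})$ when $d<0$. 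This cannot be repaired, because the assertion is false as stated in this range. Take $f(\lambda)=|\lambda|^{-2d}$ for $|\lambda|<\pi/2$ and $f(\lambda)=c\neq(\pi/2)^{-2d}$ for $\pi/2\le|\lambda|\le\pi$. Then $L_f\equiv1$ near $0$ and $L_f$ has bounded variation on every $(a,\pi)$, yet
\[
r(t)=2\bigl((\pi/2)^{-2d}-c\bigr)\frac{\sin(t\pi/2)}{t}+O\bigl(t^{2d-1}\bigr),
\]
so $t^{1-2d}|r(t)|\to\infty$ along odd $t$.

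The near-origin step has the same defect. Zygmund's monotonicity of $\lambda^{\pm\delta}L_f$ gives $df=(-2d+\epsilon(\lambda))\lambda^{-1}f(\lambda)\,d\lambda$ with $|\epsilon|\le\delta$ on $(0,\lambda_0(\delta))$. Bounding the error in absolute value gives only $\delta\int_0^{\lambda_0}\lambda^{-2d-1}L_f(\lambda)\,d\lambda=O(\delta)$, since $\lambda^{-2d-1}$ is integrable at $0$. That is not small relative to $t^{2d}L_f(1/t)$, and nothing in the hypotheses makes $\epsilon$ regular enough to produce cancellation. For instance, perturb $L_f$ by a smoothly cut-off term of size $\eta_k/(t_k\lambda_k)$ oscillating like $\cos(t_k\lambda)$ on $[\lambda_k/2,\lambda_k]$, with $\eta_k(t_k\lambda_k)^{-2d}\to\infty$. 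This keeps $L_f$ Zygmund slowly varying and $f$ smooth on $(0,\pi]$, but shifts $r(t_k)$ by roughly $\eta_k\lambda_k^{-2d}/t_k$, which dominates $t_k^{2d-1}$.

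In the antipersistent case the hypothesis has to be placed on $f'$ rather than on $L_f$. Suppose $f$ is absolutely continuous on $(0,\pi]$ with $f'(\lambda)=\lambda^{-2d-1}\ell(\lambda)$, where $\ell$ is Zygmund slowly varying at $0$ and of bounded variation on each $(a,\pi)$. Then Proposition~\ref{Zg-1} with $\beta=1+2d\in(0,1)$ applies directly to $f'$, and $L_f\sim\ell/(-2d)$ by Karamata's theorem. Your Gamma-function bookkeeping then yields \eqref{mem7}.
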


\sn{Approximation of the inverse of covariance matrix}

We revisit the model $X(t)=m+Y(t)$ $(t\in \mathbb{Z})$,
where $m$ is the constant unknown mean of $X(t)$ that needs to be estimated, and the noise $Y(t)$
is a centered, stationary process with a spectral
density  $f(\lambda)$ and a covariance function $r(t)$.

By equation (\ref{2.2}) for the variance $\Var(\widehat{m}_{BLU})$ of the BLUE $\widehat{m}_{BLU}$ we have 
\begin{equation}
	\label{2.2a}
	\Var( \widehat{m}_{BLU}) = (\mathbf{1}^TR_n^{-1}\mathbf{1})^{-1},
\end{equation}
where  $R_n$ is the covariance matrix and $\mathbf{1}^T=(1,\ldots, 1)$.

Thus, to calculate $\Var(\widehat{m}_{BLU})$, we need the explicit form
of the inverse $R_n^{-1}$ of the covariance matrix $R_n$. This is a challenging problem that can be solved for only a few simple models. Consequently, an approximation of 
the inverse $R_n^{-1}$ of the covariance matrix $R_n$ is required.

Observe first that the covariance matrix $R_n$ is a Toeplitz matrix generated by the spectral density $f(\la)$.
Specifically, we  have (see equations \eqref{2.1cm} and \eqref{mo1}):
\begin{equation}
	\label{Ap0}
R_n=B_n(f)=\|r(k-j)\|, \, \,k,j=1,\ldots, n, 
\end{equation}
 where
\begin{equation}
	\label{9ss4}
	r(k-j)=\int_\mathbb{T}e^{i(k-j)\la} f(\la)\,d\la. 
\end{equation}

For large values of \( n \), we anticipate that the Toeplitz matrix \( B_n(1/f) \) will serve as an approximation for the inverse of the matrix \( B_n(f) \), denoted as \( B_n^{-1}(f) \). It is important to note that the inverse of a Toeplitz matrix is generally not a Toeplitz matrix itself.
This expectation is based on the following consideration (see, e.g., Grenander and Szeg\H{o} \cite{GS}, Section 8.1).

Let $g_i\in L^2(\Lambda)$, $i=1,2$. Consider the infinite Toeplitz matrices 
$B(g_i)$ ($i=1,2$) generated by the functions $g_i$. Specifically,
\begin{equation}
	\label{Ap1}
B(g_i)=\|\widehat g_i(k-j)\|, \, \,  k, j \in \mathbb{Z}, \q i=1,2, 
\end{equation}
where $\widehat g_i$ are the Fourier coefficients of functions $g_i$, $i=1,2$:
\begin{equation}
	\label{Ap2}
	\widehat g_i(k)=\int_\mathbb{T}e^{ik\la} g_i(\la)\,d\la, \q i=1,2, \q  k \in \mathbb{Z}. 
\end{equation}

Then, we can form the product $B(g_1)B(g_2)$ and use Parseval's identity to obtain
\begin{equation}
	\label{Ap3}
	\sum_{t=-\f}^{\f}\widehat g_1(k-t)\widehat g_2(t-j) =\frac1{2\pi}\int_\mathbb{T}e^{-i(k-j)\la}g_1(\la)g_2(\la)\,d\la,
\end{equation}
which shows that the product $B(g_1)B(g_2)$ is an infinite Toeplitz matrix generated by the product function $g:=g_1g_2$. Thus, we have 
\begin{equation}
	\label{Ap4}
	B(g_1)B(g_2)=B(g_1g_2).
\end{equation}
If the inverse of $g_1$ is quadratically integrable, then $B(g_1)$ has an inverse generated by the function $1/g_1$. Specifically, we have $B^{-1}(g_1)=B(1/g_1)$.

Now, let $B_n(g_1)$ and $B_n(g_2)$  be the finite (truncated) Toeplitz matrices
generated by the functions $g_1$ and $g_2$, respectively.
The relation in \eqref{Ap4} does not hold for the matrices $B_n(g_1)$ and $B_n(g_2)$. 
 In other words, the product of truncated Toeplitz matrices is not necessarily a Toeplitz matrix.

Thus, it is necessary to find an analogous form of \eqref{Ap4}.
To establish a relation similar to \eqref{Ap4}, Szeg\H{o} proposed approximating
the trace of the product of Toeplitz matrices by the trace of a single Toeplitz 
matrix generated by the product of their respective generating functions. 
This approach is known as the trace approximation problem (see, e.g., Grenander and Szeg\H{o} \cite{GS}, Section 8.1).

More specifically,
let $\mathcal{H}=\{g_1, g_2,\ldots,g_m\}$ be a collection of integrable
real symmetric functions defined on $\Lambda$. Define
$$S_n:=\frac1n\tr\left[\prod_{i=1}^m B_n(h_i)\right]
\q {\rm and} \q
M:=\frac1n\tr\left[B_n\left(\prod_{i=1}^m g_i(\la)\right)\right],
$$
and let $$\De_n:=S_n - M.$$

The trace approximation problem involves identifying sufficient conditions that ensure
$\De_n\to 0$ as $n\to\f$ and estimating the rate of convergence of the approximation error $\De_n$ to zero as $n \to \infty$.

For a comprehensive treatment of the trace approximation problem, we refer to
Chapter 4 in Ginovyan \cite{G2025} and Chapter 8 in Grenander and Szeg\H{o} \cite{GS}. Additionally, see the papers by Dahlhaus \cite{D,D6}.

Below, we discuss the idea that the Toeplitz matrix $B_n(1/f)$ can approximate the inverse $B_n^{-1}(f)$ in a non-rigorous manner.

Denote the entries of the matrix $B_n(1/f)$ by $a(k-j)$, $k,j=1,\ldots, n$, which are the Fourier coefficients of the function $f^{-1}/(2\pi)$:
\begin{equation}
	\label{9ss3}
	a(k-j)=\frac 1{(2\pi)^2}\int_\mathbb{T}e^{i(k-j)\la} \frac{1}{f(\la)}\,d\la. 
\end{equation}

For large $n$, by using Parseval's identity, we can write 
\begin{align*}
	c_{k,j}&= \left(B_n(1/f)B_n(f)\right)_{k,j}
	=\sum_{t=1}^{n}a(k-t)r(t-j)\\	\nonumber
	&\approx \sum_{t=-\f}^{\f}a(k-t)r(t-j)=\frac1{2\pi}\int_\mathbb{T}e^{-i(k-j)\la}\widehat a(\la)
	\ol{\widehat r(\la)}\,d\la\\
	&=\frac1{2\pi}\int_\mathbb{T}e^{-i(k-j)\la}{f(\la)}\frac1{f(\la)} \,d\la=\de_{k,j}, 
\end{align*}
where $\de_{k,j}$ is Kronecker's delta, and $\approx$ indicates an approximation for large $n$.
This leads to the following approximation of the quadratic form in \eqref{2.2a}:
\begin{equation}
	\label{9ss5}
	\frac1{n}\mathbf{1}^T[B_n(f)]^{-1}\mathbf{1}\approx \frac1{n}\mathbf{1}^TB_n(1/f)\mathbf{1}
	= \frac{1}{2\pi}\int_{-\pi}^{\pi} f(t) F_n(t)\,dt,
\end{equation}
where $\mathbf{1}^T=(1,\ldots, 1)$ and $F_n(t)$ is the Fej\'er kernel (see \eqref{s6}). 
If $1/f$ is continuous at $0$, then by Proposition \ref{F-Ker}, we have
\begin{equation}
	\frac{1}{n}\mathbf{1}^\top B_n(1/f)\mathbf{1} \longrightarrow 1/f(0).
\end{equation}

Results on the inversion of Toeplitz matrices with generating functions
from the class $\mathcal{F}_\alpha(g)$, where $\alpha> -\frac{1}{2}$
and $g$ is a sufficiently smooth and positive factor, were obtained 
by Bleher \cite{Bl} and in a series of 
papers by Rambour and Seghier (see \cite{RaSe} and references therein).

%\s{Remarks,	complements and bibliographical notes}

%\section*{Acknowledgments}
%The author would like to thank the associate editor and two anonymous referees for their careful review of the manuscript 
%and valuable remarks, comments and suggestions.

%\section*{Acknowledgements}
%The author would like to thank the referees for their constructive
%comments and suggestions.

%\section{Proofs}
%\label{Proofs}

\end{document}